\documentclass[11pt, oneside, reqno]{amsart}
\usepackage[usenames,dvipsnames,svgnames,table]{xcolor}
\usepackage[utf8]{inputenc}
\usepackage[margin = .95in]{geometry}     
\usepackage[mathscr]{euscript}
\usepackage{mathrsfs}
\usepackage{mathtools}
\usepackage{graphicx}
\usepackage{dsfont}
\usepackage{booktabs, multirow}
\usepackage{pst-node}
\usepackage{tikz-cd} 
\usepackage{amssymb}
\usepackage[utf8]{inputenc}
\usepackage{bm}
\usepackage{amsthm}
\usepackage[square,sort,comma,numbers]{natbib}
\usepackage{doi}
\usepackage[capitalize,nameinlink,noabbrev]{cleveref}
\usepackage[inline]{enumitem}
\newtheorem{theorem}{Theorem}[section]
\newtheorem{lemma}[theorem]{Lemma}
\newtheorem{prop}[theorem]{Proposition}
\newtheorem{cor}[theorem]{Corollary}

\newtheorem*{theorem*}{Theorem}
\theoremstyle{definition}
\newtheorem{definition}[theorem]{Definition}

\newtheorem{example}[theorem]{Example}
\newtheorem{remark}[theorem]{Remark}
\numberwithin{equation}{section}
\crefname{equation}{}{}
\Crefname{equation}{Equation}{Equations}
\date{\today}
\DeclareMathOperator{\Z}{\mathbb{Z}}
\DeclareMathOperator{\NN}{\mathbb{N}}
\DeclareMathOperator{\R}{\mathbb{R}}
\DeclareMathOperator{\C}{\mathbb{C}}
\DeclareMathOperator{\Q}{\mathbb{Q}}
\DeclareMathOperator{\Des}{Des}

\newcommand{\defn}[1]{{\color{RoyalBlue}\bf#1}}

\def \A{{\mathcal{A}}}
\def \BB{{\mathcal{B}}}
\def \GG{{\mathcal{G}}}
\def \MR {{\sf{MR}}}
\def \SS{{\mathcal{S}}}
\def \TT{{\mathcal{T}}}
\def \cE{{\mathcal{E}}}
\def \frakg{{\mathfrak{g}}}
\def \kk{\C}
\def \lat{{\mathcal{L}}}
\def \ol{\overline}
\def \sym{\mathfrak{S}}
\def \uu{{\mathfrak{u}}}
\def \ee{{\mathfrak{e}}}
\def \wh{\widehat}

\def \I {{\mathcal{I}}}
\def \ztwo {{C_2}}
\def \gsetcomposition{{\Sigma_n[G]}}
\def \gsetpartition{{\Pi_n[G]}}
\def \gcomposition{{\Gamma_n[G]}}
\def \gpartition{{\Lambda_n[G]}}
\def \ztwodual {{\widehat{\ztwo}}}

\def \ztwosetcomposition{{\Sigma_n[\ztwo]}}
\def \ztwosetpartition{{\Pi_n[\ztwo]}}
\def \ztwocomposition{{\Gamma_n[\ztwo]}}
\def \ztwopartition{{\Lambda_n[\ztwo]}}

\def \ztwosetcompositiondual{{\Sigma_n[\ztwodual]}}
\def \ztwosetpartitiondual{{\Pi_n[\ztwodual]}}
\def \ztwocompositiondual{{\Gamma_n[\ztwodual]}}
\def \ztwopartitiondual{{\Lambda_n[\ztwodual]}}

\def \hsiao {{\gsetcomposition}}
\DeclareMathOperator{\chleq}{\unlhd}
\DeclareMathOperator{\chgeq}{\unrhd}

\def\qqiff{\qquad\text{if and only if}\qquad}
\def\qqand{\qquad\text{and}\qquad}


\def \supp{\mathsf{supp}}
\def \type{\mathsf{type}}
\def \sgn{\mathbf{sgn}}
\def \trv{\mathbf{trv}}

\DeclareMathOperator{\sol}{Sol} 		
\DeclareMathOperator{\co}{Co} 		
\DeclareMathOperator{\rad}{rad} 		

\newcommand{\B}[1]{\mathtt{#1}} 			
\newcommand{\SB}[1]{\mathcal{#1}} 		
\newcommand{\abs}[1]{|{#1}|} 				
\title[Left Regular Bands of Groups and the Mantaci--Reutenauer algebra]{Left Regular Bands of Groups and \\ the Mantaci--Reutenauer algebra}
\author{Jose Bastidas, Sarah Brauner, and Franco Saliola}
\address{LACIM - Universit\'e du Qu\'ebec \`a Montr\'eal, Montr\'eal, Canada}
\email{bastidas\_olaya.jose\_dario@uqam.ca, saliola.franco@uqam.ca}
\urladdr{\url{https://sites.google.com/view/bastidas}, \url{https://saliola.github.io/}}
\address{Max Planck Institute for Mathematics in the Sciences, Leipzig, Germany}
\email{sarah.brauner@mis.mpg.de}
\urladdr{\url{https://www.sarahbrauner.com/}}

\subjclass{
05E10, 
05E18, 
16Gxx, 
20M10
}

\keywords{left-regular band, semigroup, monoid, idempotents, Mantaci--Reutenauer algebra, Solomon's Descent algebra, representation theory of finite groups, character theory}


\usepackage{xargs}
\usepackage[colorinlistoftodos,prependcaption,textsize=tiny,linecolor=red,backgroundcolor=red!25,bordercolor=red]{todonotes}
\setlength{\marginparwidth}{2cm}
\newcommandx{\sarah}[2][1=]{\todo[linecolor=blue,backgroundcolor=blue!25,bordercolor=blue,#1]{Sarah: #2}}
\newcommandx{\franco}[2][1=]{\todo[linecolor=orange,backgroundcolor=orange!25,bordercolor=orange,#1]{Franco: #2}}
\newcommandx{\jose}[2][1=]{\todo[linecolor=green,backgroundcolor=green!25,bordercolor=green,#1]{Jose: #2}}


\frenchspacing

\begin{document}
\begin{abstract}
We develop the idempotent theory for algebras over a class of semigroups called
left regular bands of groups (LRBGs), which simultaneously generalize group
algebras of finite groups and left regular band (LRB) algebras.  Our techniques
weave together the representation theory of finite groups and
LRBs, opening the door for a systematic study of LRBGs in an
analogous way to LRBs. We apply our results to construct complete systems of
primitive orthogonal idempotents in the Mantaci--Reutenauer algebra $\MR_n[G]$
associated to any finite group $G$. When $G$ is abelian, we give closed form
expressions for these idempotents, and when $G$ is the cyclic group of order
two, we prove that they recover idempotents introduced by Vazirani.
\end{abstract}
\dedicatory{To the memory of Georgia Benkart.}

\maketitle

\section{Introduction}
This paper studies a class of semigroup algebras called \defn{left regular
bands of groups} (\defn{LRBG}), which simultaneously generalize \defn{left
regular band} (\defn{LRB}) algebras and group algebras of finite groups. Throughout, all semigroups are assumed to be finite.

A LRB is a semigroup $\BB$ such that for any $x,y \in \BB$, one has $x^2 = x$ and $xyx= xy$.
To obtain a LRBG, one relaxes these conditions as we now explain.
Following the standard semigroup theory conventions, for any $s \in \SS$, we let $s^{\omega} \in \SS$ denote the unique idempotent that is a positive power of $s$.
This element necessarily exists because $\SS$ is finite, and the particular power $N \in \Z_{> 0}$ that makes $s^N = s^\omega$ depends on $s$.
A semigroup $\SS$ is a LRBG if for any $s,t \in \SS$ one has
\[ s^{\omega}s = s \qqand st s^{\omega} = st.\]
Note that if $s^\omega = s$ for all $s \in \SS$, this recovers the definition of a LRB;
while if $s^{\omega} = t^{\omega}$ for all $s,t \in \SS$, then $\SS$ is a group.

We obtain from a LRBG $\SS$ two important pieces of data:
\begin{enumerate}
    \item a LRB $E(\SS)$, formed by the idempotent elements of $\SS$:
    \[ E(\SS)= \{ x \in \SS: x^2 = x \};\]
    \item a collection of groups $\{ G_{x} \}$, one for each $x \in E(\SS)$, where 
    \[G_x = \{ s \in \SS : s^\omega = x \}. \]
\end{enumerate}
Together, the $G_x$ ``glue together'' to form $\SS$, in the sense that 
\[
    \SS = \bigsqcup_{x \in E(\SS)} G_x,
\]
where $\bigsqcup$ denotes the disjoint union of sets.

Both LRBs and group algebras are important in their own right; the latter are fundamental to the representation theory of finite groups, while the former algebras play a key role in the theory of hyperplane arrangements, Coxeter theory, and certain Markov chains such as card shuffling. One way of understanding the role of LRBGs in this context is that they are to the representation theory of wreath products with symmetric groups what LRBs are to the representation theory of Coxeter groups. 

\subsection*{Idempotent theory for LRBGs}
Constructing complete systems of primitive orthogonal idempotents (CSoPOI) of any finite dimensional algebra is a crucial piece of understanding its structure and representation theory. 
In this paper, we produce such systems for the $\kk$-algebra\footnote{Because we will be dealing with group representation theory, we will take our coefficients throughout to be in $\C$. However, in general any field where Maschke's Theorem holds would be adequate.} $\kk \SS$ generated by the LRBG $\SS$. Our work builds up the general theory of LRBGs, whose study was initiated by Margolis and Steinberg in \cite{ms11HomologySemigroups}, where they compute the quiver\footnote{Though they compute the quiver of $\kk \SS$, they do not construct CSoPOI to do so.} of $\kk \SS$.

Our approach intertwines the idempotent theory of LRBs with the idempotent theory of $\kk G$ for a $G$ a finite group.  We review these theories briefly.
\begin{enumerate}
\item \emph{Idempotents for LRBs algebras}: In \cite{saliola07quiverLRB, sal2008quiv, saliola2009face, saliola2012eigenvectors}, the third author constructed families of complete systems of primitive orthogonal idempotents for a LRB algebra $\kk \BB$ using the \defn{support map} 
\[ \supp: \kk \BB \longrightarrow \kk \lat,\]
where $\lat$ is the \defn{support lattice} of $\BB$ (see \ \cref{ss:CSoPOI-LRB}).
He show that one obtains a CSoPOI $\{\ee^\circ_{X}\}_{X \in \lat}$
parameterized by certain sections of $\supp$ (see \cref{thm:CSoPOIforLRB}).
Moreover, Aguiar and Mahajan showed that all CSoPOIs for
a LRB algebra arise in this way \cite[Prop. 11.9]{am17}.

    \item \emph{Idempotents for group algebras}:
In general, there is no canonical way to produce a CSoPOI for $\kk G$ when $G$ is an arbitrary finite group, although some case-by-case constructions exist (for example, Young's idempotents for the symmetric group, $\sym_n$). However, the character theory of finite groups does give a method of constructing complete family of orthogonal idempotents that lie in the center $Z(\kk G)$ of the group algebra; these idempotents will be a CSoPOI if and only if $G$ is abelian.

\end{enumerate}

Our main result is to prove that one can obtain complete families of primitive orthogonal idempotents for any LRBG algebra using the idempotents for the LRB and group algebras it contains. 
    \begin{theorem}[\cref{thm:CSoPOI-LRBG}]\label{thm:mainthmintro}
        Let $\SS$ be a left regular band of groups and $E(\SS)$ its left regular band of idempotents.
        \begin{enumerate}
            \item Let $\{ \ee^\circ_X \}_{X \in \lat}$ be a complete system of primitive orthogonal idempotents for $\C E(\SS)$ and,

            \item
                for each $X \in \lat$, let
                $\{ E_X^{(i)} \}_{i}$ be a complete system of primitive orthogonal idempotents for $\C G_x$, where $x \in E(\SS)$ is a fixed element with $\supp(x) = X$.
        \end{enumerate}
        Then the elements
        \[ \ee_{(X,i)} := \ee^\circ_X E_X^{(i)} \ee^\circ_X \]
        form a complete system of primitive orthogonal idempotents for $\C \SS$.
    \end{theorem}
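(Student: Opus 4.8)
The plan is to read the statement as a two-step \emph{refinement of idempotents}. Since $E(\SS)$ is a subsemigroup of $\SS$, the inclusion $\C E(\SS)\subseteq\C\SS$ is a subalgebra sharing the identity (the identity of $\SS$ is idempotent, hence lies in $E(\SS)$), so hypothesis (1) already makes $\{\ee^\circ_X\}_{X\in\lat}$ a \emph{complete system of orthogonal idempotents} in $\C\SS$: pairwise orthogonal, each idempotent, and $\sum_X\ee^\circ_X=1$. What fails is primitivity: each $\ee^\circ_X$ will split further inside $\C\SS$. The goal is to show that, for each $X$, one has $\ee^\circ_X=\sum_i\ee_{(X,i)}$ with the $\ee_{(X,i)}$ primitive and pairwise orthogonal, and that pieces attached to distinct $X,Y$ are orthogonal. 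Granting this, $\{\ee_{(X,i)}\}$ is automatically a complete system of primitive orthogonal idempotents for $\C\SS$.

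The engine is an identification of the corner algebra. Writing $x$ for the chosen idempotent with $\supp(x)=X$, I would prove that
\[
\phi_X\colon \C G_x \longrightarrow \ee^\circ_X\,\C\SS\,\ee^\circ_X,\qquad a\longmapsto \ee^\circ_X\,a\,\ee^\circ_X,
\]
is an isomorphism of unital algebras carrying the identity $x\in G_x$ to $\ee^\circ_X$. This is the LRBG analogue of the LRB fact that each corner $\ee^\circ_X\,\C E(\SS)\,\ee^\circ_X$ is one–dimensional (the case of trivial groups). Once $\phi_X$ is an algebra isomorphism, it transports the system $\{E_X^{(i)}\}_i$ of hypothesis (2) to $\{\phi_X(E_X^{(i)})\}_i=\{\ee_{(X,i)}\}_i$, which is then a complete system of primitive orthogonal idempotents \emph{of the corner}: orthogonal idempotents of $\C\SS$ lying in $\ee^\circ_X\,\C\SS\,\ee^\circ_X$, summing to $\phi_X(x)=\ee^\circ_X$, and primitive there.

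With $\phi_X$ in hand, the assembly is formal bookkeeping. Each $\ee_{(X,i)}$ satisfies $\ee_{(X,i)}=\ee^\circ_X\ee_{(X,i)}=\ee_{(X,i)}\ee^\circ_X$. For $X\neq Y$, orthogonality is immediate from $\ee^\circ_X\ee^\circ_Y=\delta_{XY}\ee^\circ_X$ (valid already in $\C E(\SS)$), since $\ee_{(X,i)}\ee_{(Y,j)}=\ee_{(X,i)}(\ee^\circ_X\ee^\circ_Y)\ee_{(Y,j)}=0$. For fixed $X$, idempotency and orthogonality follow from multiplicativity of $\phi_X$, as $\ee_{(X,i)}\ee_{(X,j)}=\phi_X\!\big(E_X^{(i)}E_X^{(j)}\big)=\delta_{ij}\ee_{(X,i)}$. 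Completeness is $\sum_{X,i}\ee_{(X,i)}=\sum_X\phi_X(x)=\sum_X\ee^\circ_X=1$. Finally, primitivity in $\C\SS$ reduces to primitivity in the corner by the standard fact that for an idempotent $f=\ee^\circ_X f\ee^\circ_X$ one has $f\,\C\SS\,f=f\,(\ee^\circ_X\C\SS\,\ee^\circ_X)\,f$, so $f$ is primitive in $\C\SS$ if and only if it is primitive in $\ee^\circ_X\C\SS\,\ee^\circ_X$; via $\phi_X$ this is the primitivity of $E_X^{(i)}$ in $\C G_x$, which is hypothesis (2).

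The main obstacle is therefore Step 2, and within it the crux is multiplicativity, namely $\ee^\circ_X a\,\ee^\circ_X b\,\ee^\circ_X=\ee^\circ_X ab\,\ee^\circ_X$ for $a,b\in\C G_x$; this is the single place where the LRB geometry of $\ee^\circ_X$ must interact with the group layers. I would exploit the LRBG axioms $s^\omega s=s$ and $st\,s^\omega=st$ (which make $x$ a two–sided identity for $G_x$, give each $G_x$ the structure of a group, and realize $E(\SS)$ as the retract $s\mapsto s^\omega$ of $\SS$) together with the explicit support–triangular description of Saliola's idempotents from \cref{thm:CSoPOIforLRB}, in order to prove a collapsing identity that lets the intermediate $\ee^\circ_X$ be absorbed. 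Unitality $\phi_X(x)=\ee^\circ_X$ is comparatively routine, and bijectivity would then follow from surjectivity onto the corner (spanned by the elements $\ee^\circ_X\,g\,\ee^\circ_X$, $g\in G_x$) together with the dimension identity $\dim\big(\ee^\circ_X\C\SS\,\ee^\circ_X\big)=\lvert G_x\rvert$, itself a consequence of the support and radical structure of $\C\SS$. I expect essentially all of the difficulty to reside in this commutation step; the surrounding refinement argument is purely structural.
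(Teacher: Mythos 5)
Your architecture is sound and, modulo one lemma, it would yield the theorem; but that one lemma is precisely where all of the content of the paper's proof lives, and you do not prove it. Concretely, your multiplicativity claim $\ee^\circ_X a\,\ee^\circ_X b\,\ee^\circ_X=\ee^\circ_X ab\,\ee^\circ_X$ for $a,b\in\C G_x$ is the same collapsing phenomenon that the paper isolates as \cref{p:alt-eXi} (which states $\ee^\circ_X \uu_Y v\, \ee^\circ_Y = \uu_Y v\, \ee^\circ_Y$ if $X=Y$ and $0$ otherwise, for $v\in\kk G_y$). The paper proves this by a downward induction on the support lattice, unwinding Saliola's recursion \cref{eq:saliolarecursion} at each stage and killing the error terms with \cref{lem:Saliola's}; nothing softer appears to suffice. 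Your proposal only gestures at ``a collapsing identity'' to be extracted from the LRBG axioms and the support-triangularity of the $\ee^\circ_X$, and even flags that all the difficulty resides there. So the crux is named but not discharged: as written, this is a reduction of the theorem to an unproven identity, not a proof. A second, smaller gap: surjectivity of $\phi_X$ is made to rest on the dimension identity $\dim\bigl(\ee^\circ_X\C\SS\,\ee^\circ_X\bigr)=\abs{G_x}$, which is asserted rather than proved and is not a formal consequence of ``support and radical structure.'' What actually makes the corner claim work is another absorption argument of the same flavor: every $\B{H}$-term of $\ee^\circ_X$ has support $\geq X$, whence $\B{H}_s\ee^\circ_X=0$ unless $\supp(s^\omega)\leq X$ (\cref{lem:Saliola's}), and in the remaining cases $\ee^\circ_X\B{H}_s=\ee^\circ_X\B{H}_{xs}$ with $xs\in G_x$; injectivity then follows by applying $\supp$ and using $\kk\TT\cong\prod_X \kk G_X$ (\cref{cor:semisimplicity-of-kT}).

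For what it is worth, granting those lemmas your endgame is genuinely different from the paper's, and arguably cleaner: the paper verifies idempotency, orthogonality and completeness by direct computation with the homogeneous section $\uu$, and then obtains primitivity by reducing modulo $\rad(\kk\SS)=\ker(\supp)$ to the semisimple quotient $\kk\TT$ and invoking \cref{prop:CSoPOI-SemLatGps}; you instead transfer everything at once, primitivity included, through a single unital isomorphism $\C G_x\cong\ee^\circ_X\C\SS\,\ee^\circ_X$ of corner algebras. That packaging is attractive and would be a legitimate alternative write-up, but the load-bearing computation is identical in both routes, and it is the part missing from yours.
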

Our techniques are in the spirit of Aguiar--Mahajan's monograph \cite{am17}, which develops a vast theory for LRBs
associated to a hyperplane arrangement $\A$. Our work thus opens
the door to a systematic study of LRBGs in an analogous way. This approach is especially fruitful when $\SS$ is a \defn{left regular band of abelian groups} (\defn{LRBaG}), meaning that $G_x$ is abelian for all $x \in E(\SS)$. In this case, we produce several bases for $\kk \SS$, and describe how to multiply them (see \cref{sec:LRBaG}). 

\subsection*{Connections to the Mantaci--Reutenauer algebra}

Our interest in LRBGs is motivated in part by a beautiful connection, due to Hsiao \cite{hsiao2007semigroup}, between certain LRBG algebras $\kk \Sigma_n[G]$ (henceforth, Hsiao's algebra---see \cref{ss:Hsiao's}) and the \defn{Mantaci--Reutenauer algebra} $\MR_n[G] \subseteq \kk \sym_n[G]$ for $G$ a finite group (see \cref{thm/def:mralg}). The Mantaci--Reutenauer algebra has been studied in a variety of contexts related to the representation theory of the wreath product $\sym_n[G]$ by Baumann--Hohlweg \cite{BauHoh2008solomon} and Douglass--Tomlin \cite{DouTom2018decomposition}, the descent and peak algebras by Aguiar--Bergeron--Nyman \cite{abn04peak}, Hopf algebras and shuffling by Pang \cite{pang2021eigenvalues}, and the Whitehouse representations and configuration spaces by the second author \cite{brauner2023type}.

Hsiao showed in \cite{hsiao2007semigroup} that the $\sym_n$-invariant subalgebra of $\kk \Sigma_n[G]$ is anti-isomorphic to $\MR_n[G]$. This establishes a LRBG analogue to a celebrated result of Bidigare for LRBs connecting the face algebra of a reflection arrangement to Solomon's Descent algebra.

Motivated by Hsiao's result, we apply \cref{thm:mainthmintro} to any LRBaG algebra $\kk \SS$ with the action of a finite group $W$. In doing so, we consequently produce families of CSoPOI for the $W$-invariant subalgebras $(\kk \SS)^W$ (\cref{thm:CSoPOI-invar-subalg}).

We then specialize to the case of Hsiao's algebra $\kk \Sigma_n[G]$ when $G$ is abelian. In this setting, we are able to give closed form expressions for several bases of $\kk \Sigma_n[G]$ and their change-of-basis formulae (\cref{s:change-of-basis-hsiao}). Moreover, we obtain an explicit expression of a CSoPOI for $\kk \Sigma_n[G]$ and $(\kk \Sigma_n[G])^{\sym_n}$:

\begin{theorem}[\cref{cor:uniform-CSoPOI-Hsiao}]\label{thm:intro_mridem}
When $G$ is abelian, the collection $\{ \ee_{\lambda} \}_{\lambda \in \Lambda_n[\wh{G}]}$,
where
\begin{align*}
 \ee_{\lambda} &= \frac{1}{\ell(\lambda)!} \sum_{\substack{ \alpha \in \Gamma_n[\wh{G}] \\ \supp(\alpha) = \lambda}} \sum_{\substack{\beta \in \Gamma_n[\wh{G}] \\ \alpha \chleq \beta}} \dfrac{(-1)^{\ell(\beta)-\ell(\alpha)}}{\deg(\beta/\alpha)} \cE_\beta \\
 &= \dfrac{1}{|G|^{\ell(\lambda)} \cdot \ell(\lambda)!} \sum_{\substack{ \alpha \in \Gamma_n[\wh{G}] \\ \supp(\alpha) = \lambda}} \sum_{\substack{p \in \Gamma_n[G] \\ \abs{p} = \abs{\alpha} }} \ol{\alpha(p)} \sum_{\substack{q \in \Gamma_n[G] \\ p \leq q }} \dfrac{(-1)^{\ell(q)-\ell(p)}}{\deg(q/p)} \SB{H}_q,
\end{align*}
gives a complete system of primitive orthogonal idempotents
for $(\kk \Sigma_n[G])^{\sym_n}$.

Applying Hsiao's map gives a complete system of primitive orthogonal idempotents for $\MR_n[G]$; when $G = \{ \pm 1 \}$ this system recovers the idempotents introduced by Vazirani in \cite{vazirani}. 
\end{theorem}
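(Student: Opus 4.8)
The plan is to derive the statement by specializing the general theory to Hsiao's algebra and then making the resulting idempotents explicit. Since $G$ is abelian, $\Sigma_n[G]$ is a left regular band of abelian groups, so I would first apply \cref{thm:CSoPOI-invar-subalg} with $\SS = \Sigma_n[G]$ and $W = \sym_n$; this already guarantees, abstractly, that a complete system of primitive orthogonal idempotents for $(\C\Sigma_n[G])^{\sym_n}$ exists and is indexed by $\sym_n$-orbits of the pairs $(X,i)$ appearing in \cref{thm:mainthmintro}. For abelian $G$ the primitive idempotents of each group algebra $\C G_x$ are the character idempotents, indexed by $\wh{G}$, so a choice of support datum together with a character per block corresponds exactly to a $\wh{G}$-decorated set partition; after passing to $\sym_n$-orbits the indexing set collapses to $\Lambda_n[\wh{G}]$. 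This identifies $\{\ee_\lambda\}_{\lambda \in \Lambda_n[\wh{G}]}$ as the $\sym_n$-symmetrizations of the idempotents $\ee^\circ_X E_X^{(i)} \ee^\circ_X$, and in particular establishes that they form a CSoPOI; the force of the corollary is therefore the two explicit closed forms.

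Most of the work goes into the first displayed expression, obtained by expanding $\ee_\lambda$ in the $\cE_\beta$ basis of $(\C\Sigma_n[G])^{\sym_n}$ indexed by $\beta \in \Gamma_n[\wh{G}]$. Here I would substitute the explicit LRB idempotents $\ee^\circ_X$ from \cref{thm:CSoPOIforLRB}, which carry a Möbius-type alternating sum over the support lattice, together with the character idempotents of $\C G_x$, and then collect terms. The coefficient $(-1)^{\ell(\beta)-\ell(\alpha)}/\deg(\beta/\alpha)$ should emerge as the Möbius function of the refinement order $\chleq$ on $\wh{G}$-compositions, normalized by the relative degree $\deg(\beta/\alpha)$ that records the block sizes, while the prefactor $1/\ell(\lambda)!$ is the averaging over the $\ell(\lambda)!$ orderings $\alpha$ of the fixed partition $\lambda = \supp(\alpha)$. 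Because the abstract statement already supplies idempotency, orthogonality, completeness, and primitivity, this computation only has to pin down the coefficients, not reprove the idempotent properties.

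The second expression comes from the change-of-basis formulae of \cref{s:change-of-basis-hsiao}, passing from the $\wh{G}$-decorated basis $\cE_\beta$ to the $G$-decorated basis $\SB{H}_q$ indexed by $\Gamma_n[G]$. The transition between $\wh{G}$- and $G$-decorations is a blockwise discrete Fourier transform, so character orthogonality on $G$ produces the normalization $1/\abs{G}^{\ell(\lambda)}$ (using $\ell(\alpha) = \ell(\lambda)$ throughout the sum) and the character values $\ol{\alpha(p)}$, while the inner alternating sum over $\beta \chgeq \alpha$ is carried by the transform to the inner sum over $q \geq p$. The main obstacle is bookkeeping: one must keep the two posets, on $\Gamma_n[\wh{G}]$ and on $\Gamma_n[G]$, together with the support maps and the degree normalizations $\deg(\beta/\alpha)$ and $\deg(q/p)$, mutually consistent so that the Fourier transform genuinely intertwines the two Möbius sums and the two displayed formulas agree. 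I expect this to be the most delicate and error-prone step.

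Finally, transporting the system to $\MR_n[G]$ is immediate from Hsiao's anti-isomorphism (\cref{ss:Hsiao's}): an anti-isomorphism preserves idempotency and two-sided orthogonality, sends the identity to the identity, and preserves primitivity because a corner $e A e$ and its opposite algebra are simultaneously local, so the image of a CSoPOI is again a CSoPOI. For the last assertion I would specialize $G = \{\pm 1\}$, where $\wh{G} = \{\trv, \sgn\}$ and $\abs{G} = 2$, and compare the resulting $\ee_\lambda$ directly with Vazirani's idempotents from \cite{vazirani}; this is a finite explicit matching that I would carry out by rewriting both families in a common basis.
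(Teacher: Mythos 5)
Your treatment of the main statement follows essentially the paper's own route: invoke \cref{thm:CSoPOI-invar-subalg} for $\SS = \Sigma_n[G]$, $W = \sym_n$ with the ($\sym_n$-invariant) uniform section, so that the CSoPOI is indexed by $\Lambda_n[\wh{G}]$; obtain the first closed form from $\ee_\lambda = \frac{1}{\ell(\lambda)!}\sum_{\supp(\alpha)=\lambda}\SB{Q}_\alpha$ (\cref{ex:uniform-general}) together with the $\SB{E}$-expansion of $\SB{Q}_\alpha$ (\cref{p:SR-to-SQ}); obtain the second from the blockwise Fourier transform relating $\SB{Q}$ to $\SB{R}$ and the alternating expansion of $\SB{R}$ in $\SB{H}$ (\cref{p:SH-to-SR}); and transfer everything through Hsiao's anti-isomorphism exactly as in \cref{cor:CSoPOIforMR}. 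Up to reorganization (you expand $\ee^\circ_X$ and the character idempotents directly rather than factoring through the $\SB{Q}$-basis), this is the paper's argument and it is sound.

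The genuine gap is the final claim, that for $G = \ztwo$ the system recovers Vazirani's idempotents. You propose a ``finite explicit matching \ldots by rewriting both families in a common basis,'' but this understates what has to be proved and, as described, supplies no mechanism to carry it out. Vazirani's $\frakg_\lambda$ are not given by a closed formula in the $X$-basis: by \cref{def:vazidem} they are built from iterated external products $\I_\alpha = \I_{\alpha_1}\star\cdots\star\I_{\alpha_k}$ of the elements $\I_{(n)} = \frac{1}{2}(r_n + \ol{r_n})$ and $\I_{(\ol{n})} = \frac{1}{2}(r_n - \ol{r_n})$ coming from Reutenauer's idempotents, so expanding $\frakg_\lambda$ in the $X$-basis uniformly in $n$ is itself the hard part of the statement, not bookkeeping. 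The paper resolves this structurally: it proves that Hsiao's map intertwines the $\star$-products (\cref{c:isoexternalprod}, resting on $X_p \star X_q = X_{pq}$ and $\SB{H}_p \star \SB{H}_q = \SB{H}_{pq}$), that the invariant bases are $\star$-multiplicative, $\SB{R}_p \star \SB{R}_q = \SB{R}_{pq}$ and $\SB{Q}_\alpha \star \SB{Q}_\beta = \SB{Q}_{\alpha\beta}$ (\cref{prop:SymQ-basis-star}), and that the length-one cases match, $f(\SB{R}_{(n)}) = r_n$, $f(\SB{R}_{(\ol{n})}) = \ol{r_n}$, hence $f(\SB{Q}_{(n)}) = \I_{(n)}$ and $f(\SB{Q}_{(\ol{n})}) = \I_{(\ol{n})}$; multiplicativity then gives $f(\SB{Q}_\alpha) = \I_\alpha$ and $f(\ee_\lambda) = \frakg_\lambda$ for all $n$ and $\lambda$ at once (\cref{sec:externalproductonHsiao}, \cref{s:typeBMR}). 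Any execution of your ``direct comparison'' would have to reconstruct this $\star$-compatibility; a case-by-case finite check for particular $n$ cannot establish the claim for all $n$, so this step is a missing idea in your proposal rather than a routine verification.
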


Here, $\wh{G}$ is the group of characters of $G$, and $\ol{\alpha(p)} \in \C^{\times}$ is an evaluation of a character of $G^{\ell(\lambda)}$. For any group $H$, the sets $\Gamma_n[H]$ and $\Lambda_n[H]$ are \defn{$H$-colored (integer) compositions} of $n$ and \defn{$H$-colored (integer)  partitions} of $n$, respectively  (see \cref{ex:invariantshsiao}), while $\supp(\alpha) \in \Lambda_n[H]$ is the $H$-colored partition obtained by forgetting the order of the blocks in $\alpha \in \Gamma_n[H]$.
The set $\{ \SB{H}_p \}_{p \in \Gamma_n[G]}$ is the ``canonical'' basis of
$(\kk \Sigma_n[G])^{\sym_n}$ indexed by elements of the semigroup, and  $\{
\SB{E}_{\beta} \}_{\beta \in \Gamma_n[\wh{G}]}$ is another basis (see
\cref{sec:invarLRBaG}). See \cref{def:orderforMR} for the definitions
of the order relations $\leq$ and $\chleq$ and
\cref{def:deg} for the definition of $\deg(\beta/\alpha)$.

We are not aware of other results constructing idempotents for $\MR_n[G]$ beyond the case of $G =\{\pm 1\}$ considered by Vazirani. One advantage of our work is that the idempotents in \cref{thm:intro_mridem} can be written explicitly in terms of a well-known basis for $\MR_n[G]$. In the case that $G =\{\pm 1\}$, this is a vast simplification compared to other expressions of Vazirani's idempotents appearing in the literature (e.g. \cite{vazirani, DouTom2018decomposition}); see \cref{cor:vazclosedformexpression} for our precise formula.

\cref{thm:intro_mridem} introduces many interesting questions. For instance, Vazirani's idempotents have been shown by the second author to generate representations describing the symmetries of cohomology rings of certain orbit configuration spaces \cite{brauner2023type}. Our work provides a new perspective to understand these idempotents, and suggests there may be connections to other orbit configuration spaces.

\subsection*{Summary of the remainder of the paper}
The remainder of the paper proceeds as follows:
\begin{itemize}
    \item  \cref{sec:backgroundLRB} reviews the relevant theory of LRBs, including several interesting examples and a construction of a CSoPOI for any LRB algebra (\cref{thm:CSoPOIforLRB}).
    \item   \cref{sec:LRBG} introduces LRBGs and develops their properties using lattices of semigroups. Hsiao's algebra serves as a running example throughout, beginning with \cref{ss:Hsiao's}. We introduce a basis called the $\B{R}$-basis for the semigroup algebra generated by any LRBG $\SS$.
     \item  \cref{sec:idemforLRBGs} constructs families of CSoPOI for any LRBG, thereby proving \cref{thm:mainthmintro}. We then study properties of LRBGs of abelian groups (LRBaG) in \cref{sec:LRBaG}, introduce several additional bases (the $\B{E}$-basis and the $\B{Q}$-basis), and describe how they can be used to construct CSoPOIs in the abelian case. 
     \item \cref{sec:LRBGsymmetry} considers the case where the LRBG algebra $\kk \SS$ is acted on by a group $W$, and produces bases (the so called $\SB{R}$, $\SB{E}$ and $\SB{Q}$ bases) and CSoPOIs for the $W$-invariant subalgebra $(\kk \SS)^W$ (\cref{thm:CSoPOI-invar-subalg}).
     \item  \cref{sec:hsiaosalgebra} specializes to the case that $\SS$ is Hsiao's algebra over an abelian group. In this case, we give explicit formulae for the $\SB{R}$, $\SB{E}$ and $\SB{Q}$ bases, as well as change-of-basis formulas between them (\cref{s:change-of-basis-hsiao}). We then give an explicit construction of a CSoPOI, thereby proving \cref{thm:intro_mridem}.
 In \cref{sec:externalproductonHsiao} we introduce an external product on Hsiao's algebra and show it is compatible with the aforementioned bases.
 \item In \cref{sec:MRalg}, we define the Mantaci--Reutenauer algebra $\MR_n[G]$ and use Hsiao's map to construct CSoPOI for $\MR_n[G]$ using the results in \cref{sec:hsiaosalgebra}. We then specialize in \cref{s:typeBMR} to Type $B$, and make precise the relationship between our construction and the ``classical'' objects in $\MR_n[\pm 1]$, including Vazirani's idempotents. 

    \item  \cref{s:appendix} clarifies some ambiguous language in the theory of semigroups by outlining the difference between a \defn{band of groups} and a \defn{left regular band of groups}. We characterize the LRBGs that are in fact bands of groups, which we call \defn{strict LRBGs}.

\item \cref{s:appendix-notation} summarizes the notation in the paper. 
\end{itemize}

\subsection*{Acknowledgements}
The authors are very grateful to Marcelo Aguiar, Stuart Margolis, Simon Malenfant, Vic Reiner and Benjamin Steinberg for illuminating discussions. They thank an anonymous referee for helpful suggestions to improve the exposition. The second author acknowledges support from NSF Graduate Research Fellowship while part of this research was conducted.
The first and third authors acknowledge the support of the Natural Sciences and Engineering Research Council of Canada (NSERC).
This research was facilitated by computer exploration using the open-source
mathematical software system \textsc{SageMath}~\cite{SageMath} and its algebraic
combinatorics features developed by the \textsc{Sage-Combinat} community 
\cite{sage-combinat}. 

\subsection*{Historical Note}
After posting our preprint on the arXiv, we were contacted by Benjamin Steinberg, who informed us that he was also working on a paper related to LRBGs (which he calls left dual monoids).
His preprint has now appeared in \cite{steinberg23topology}. Steinberg uses topological methods to compute Ext between certain representations of LRBGs. He describes CSoPOIs for an LRBG algebra, in the same spirit as \cref{thm:mainthmintro}. Here, we additionally study LRBGs with symmetry and compute CSoPOIs for the Mantaci-Reutenauer algebra, something his work does not consider.
\section{Background on Left Regular Bands}\label{sec:backgroundLRB}

\subsection{Left regular bands}
Recall from the introduction the definition of a left regular band:
\begin{definition}\label{def:lrb}
    A \defn{left regular band (LRB)} is a semigroup $\BB$ such that for any $x,y \in \BB$
    \begin{equation*}
        x^2 = x
        \quad\text{and}\quad
        xyx = xy.
    \end{equation*}
\end{definition}
We will restrict our attention to finite semigroups that contain a unit, in which case
the identity $x^2 = x$ in \cref{def:lrb} follows from the identity $xyx = xy$ by setting $y$ equal to the unit.
LRBs play an important role in the theory of hyperplane arrangements~\cite{am17}, in determining mixing times of certain important Markov chains~\cite{BHR}, and in connection with Solomon's Descent algebra~\cite{bidigare}.
We refer the reader to \cite[Appendix B]{BrownOnLRBs} for the basics of left regular bands, which we summarise next.

\begin{example}[Hyperplane arrangement]\label{ex:hyperplanearrangement} \rm
Let $\A = \{ H_i \}_{i \in I} \subseteq \R^k$ be a real, central hyperplane arrangement, meaning that we have a finite
collection of $\R^{k-1}$-dimensional subspaces $H_i$ of $\R^k$.
Each hyperplane $H_i$ divides $\R^k$ into two open half spaces, and the
non-empty intersections of (the closure of) half-spaces defines the \defn{faces}
of $\A$.
An example of a hyperplane arrangement in $\R^3$ is shown in \cref{fig:rank4braid},
intersected with the sphere; here ${\sf G}$ and ${\sf H}$ are instances
of \defn{chambers} (faces of maximal dimension), while ${\sf F}$ is a 2-dimensional
face, and ${\sf F \cap \sf G}$ is a $1$-dimensional face.

\begin{figure}[ht]
\centering
      \includegraphics[width=0.25\textwidth]{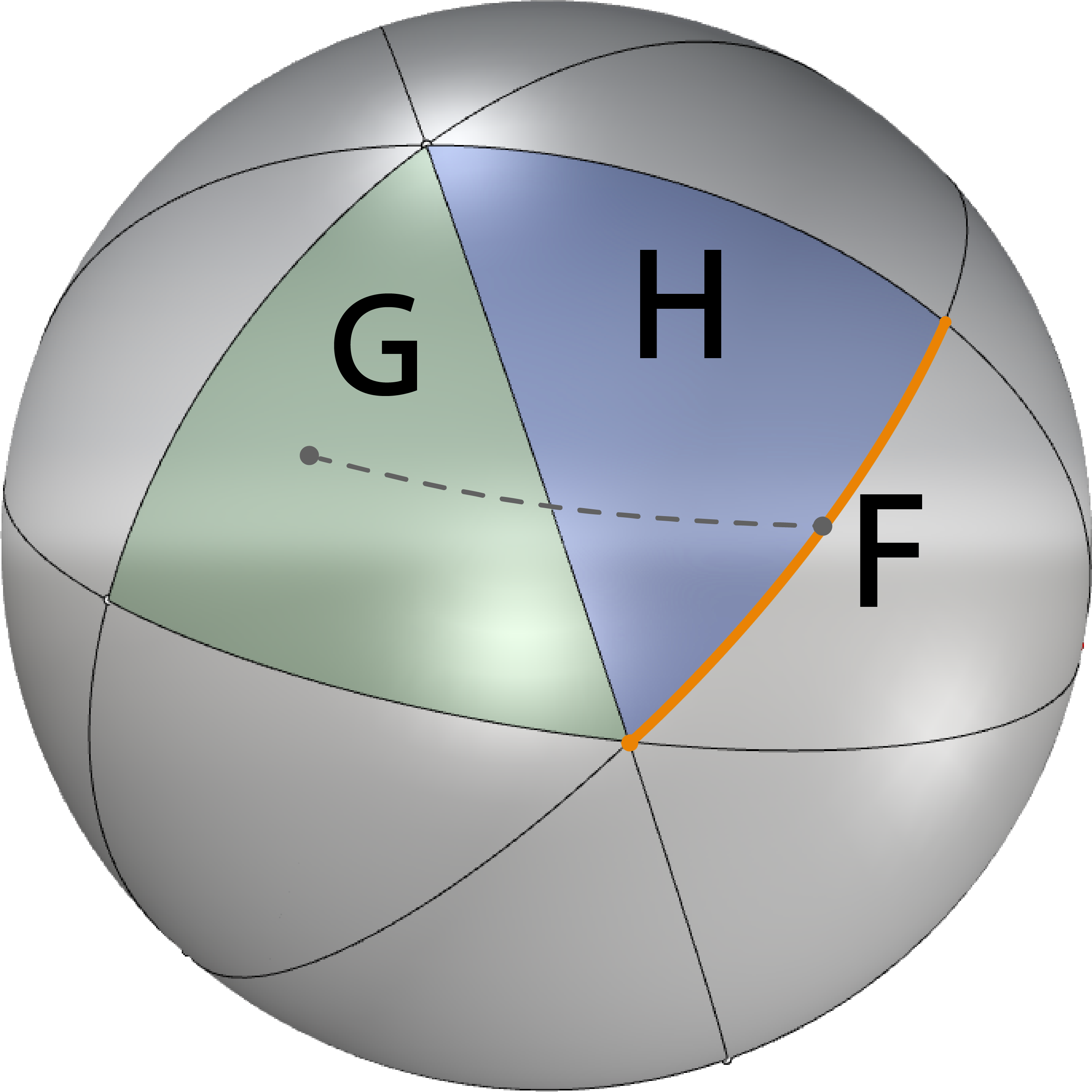}
      \caption{A hyperplane arrangement in $\R^3$, intersected with the sphere.} \label{fig:rank4braid}
\end{figure}
\end{example}
In \cite{tits1974buildings}, Tits famously showed that one can define a product
on the set of faces $\Sigma_{\A}$ of $\A$.
The \defn{Tits product} says that ${\sf F \cdot G} = {\sf H}$,
where ${\sf H}$ is the unique face one encounters starting at a generic point
in ${\sf F}$ and moving a distance of $\epsilon > 0$ on the shortest path
towards a generic point in ${\sf G}$; this is illustrated in
\cref{fig:rank4braid}. With this product, $\Sigma_{\A}$ is a LRB for any
hyperplane arrangement $\A$.

\begin{example}[The Braid arrangement]\label{ex:braidarrangement} \rm
    Consider the special case where the hyperplane arrangement is the
    \defn{Braid arrangement} $\mathcal{A}_{n-1}$
    consisting of hyperplanes $H_{ij}:=
    \{x \in \R^n: x_i = x_j\}$ with $1 \leq i < j \leq n$.
    In this case, both the faces and the Tits product admit a more combinatorial
    description, as follows (see \cite[Chapter 6]{am17} for an in-depth description).

    The faces of $\mathcal{A}_{n-1}$ are in bijection with \defn{ordered} set partitions of
    $[n]:= \{1, \ldots, n\}$: explicitly, an \defn{ordered set partition},
    or a \defn{set composition}, of $[n]$ is an ordered list $(S_1, \ldots, S_\ell)$
    of nonempty subsets of $[n]$ such that $S_i \cap S_j
    = \emptyset$ if $i \neq j$ and $S_1 \cup \cdots \cup S_\ell = [n]$.
    Let $\Sigma_n$ be the set of set compositions of~$[n]$.

    Under the bijection between faces of $\mathcal{A}_{n-1}$ and set compositions in $\Sigma_n$,
    the face of $\mathcal{A}_{n-1}$ corresponding to a set composition $(S_1, \ldots,
    S_\ell)$ with $\ell$ blocks is of dimension $\ell-1$.
    Furthermore, the Tits product of two faces translates to the following product
    on set compositions:
    \[ (S_1, \ldots, S_{\ell}) \cdot (T_1, \ldots T_{m}) = (S_1 \cap T_1, S_1 \cap T_2, \ldots, S_1 \cap T_m, S_2 \cap T_1, S_2 \cap T_2, \ldots, S_{\ell} \cap T_m){^\dagger},\]
    where the superscript $\dagger$ denotes that the empty intersections have been omitted.
    By construction, $(S_1, \ldots, S_{\ell}) \cdot (T_1, \ldots T_{m})$ will always be a refinement of $(S_1, \ldots, S_{\ell})$.
    For instance,
    \[ ( \{ 1,3, 4 \}, \{ 2, 5 \}) \cdot (\{1,2 \}, \{5 \}, \{ 3,4 \}) = (\{ 1 \}, \{3,4 \}, \{ 2 \}, \{ 5 \}). \]
    Note this product is highly non-commutative; multiplying in the other order gives
    \[  (\{1,2 \}, \{5 \}, \{ 3,4 \}) \cdot ( \{ 1,3, 4 \}, \{ 2, 5 \})= (\{ 1 \}, \{ 2 \}, \{ 5 \}, \{3,4 \}). \]
    To simplify notation for set compositions, we concatenate the elements of
    the blocks and replace commas by vertical bars, for example writing $(12|5|34)$ for the
    set composition $(\{1,2 \}, \{5 \}, \{ 3,4 \})$.
\end{example}

\begin{example}[The free LRB]\label{ex:freelrb} \rm
    In \cite{BrownOnLRBs}, Brown considered the \defn{free left regular band}
    $\mathscr{F}_n$, which is a monoid on the set of injective words on
    $[n]$. An \defn{injective word} on $[n]$ is a sequence of elements of $[n]$
    with no repeated entries. Multiplication of words $u,v \in \mathscr{F}_n$
    is given by first concatenating $u$ and $v$ and then removing repeated
    letters from right to left. For example,
    \[ (1, 2, 4) \cdot (3,1,4) = (1,2,4,3)
    \quad\text{and}\quad
    (3,1,4)  \cdot (1, 2, 4) = (3,1,4,2).\]
    To simplify notation, we write $\epsilon$ for the empty sequence and
     write sequences as words. For instance we write $314$ for $(3,1,4)$.
    The free LRB $\mathscr{F}_n$ has connections to card shuffling and derangements; see \cite{brauner2022invariant} for more on this fascinating semigroup.
\end{example}

\subsection{Support lattice and support map}\label{ss:LRBrelations}

Given a LRB $\BB$, we consider two relations on $\BB$.

First, define the relation $\leq$ for $x, y \in \BB$ by
\[ x \leq y \qqiff xy=y. \]
This is a partial order on $\BB$ (i.e. $ \leq$ is reflexive, transitive, and antisymmetric),
so $\BB$ is a poset.

Second, define the relation $\sim$ for $x, y \in \BB$ by
\begin{equation*}
    x \sim y
    \qqiff
    x y = x
    \text{~and~}
    y x = y.
\end{equation*}
This is an equivalence relation and also a semigroup congruence, meaning that
the quotient $\BB/\mathord{\sim}$ inherits a semigroup structure. More
precisely, we have the following.

\begin{prop}[Support lattice and support map]\label{def:supportmap}
    Let $\BB$ be a LRB.
    Let $\lat$ be the quotient of $\BB$ by the equivalence relation $\sim$,
    and let $\supp$ denote the quotient map
    \[ \supp: \BB \longrightarrow  \lat. \]
    Then
    \begin{enumerate}
        \item
            $\lat$ is a join semilattice, and hence a semigroup with multiplication
            $X \cdot Y = X \vee Y$;

        \item\label{item-semigroup-map}
            $\supp: \BB \to \lat$ is a map of semigroups
            (i.e., $\supp(xy) = \supp(x) \vee \supp(y)$);

        \item\label{item-poset-map}
            $\supp: \BB \xrightarrow{} \lat$ is a map of posets
            (i.e., $x \leq y$ in $\BB$ implies $\supp(x) \leq \supp(y)$ in $\lat$).
    \end{enumerate}
\end{prop}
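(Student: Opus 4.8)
The plan is to recognize the quotient $\lat = \BB/\mathord{\sim}$ as a commutative band (a commutative semigroup all of whose elements are idempotent), since such semigroups are exactly the join-semilattices; items (2) and (3) then follow formally from the fact that $\supp$ is the quotient homomorphism. I would take the assertion preceding the statement --- that $\sim$ is a semigroup congruence, so that $\lat$ carries a well-defined product $\supp(x)\supp(y) = \supp(xy)$ --- as given, and build on it.

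First I would record the two consequences of the LRB axioms that drive everything. Since $x^2 = x$ for all $x$, every class is idempotent in $\lat$: $\supp(x)\,\supp(x) = \supp(x^2) = \supp(x)$. Next I would prove the key lemma that $\sim$ collapses the non-commutativity of $\BB$, namely $xy \sim yx$ for all $x,y$. This is a one-line computation from the axioms: $(xy)(yx) = x y^2 x = xyx = xy$ and, symmetrically, $(yx)(xy) = y x^2 y = yxy = yx$, which is precisely the definition of $xy \sim yx$. Passing to the quotient, these two facts say the product on $\lat$ is idempotent and commutative, i.e. $\lat$ is a commutative band.

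I would then invoke (or quickly reprove) the standard fact that a commutative band is a join-semilattice. Defining $X \le Y \iff XY = Y$, one checks that $XY$ is an upper bound of $X$ and $Y$ (using idempotence and commutativity, $X(XY)=XY$ and $Y(XY)=XY$) and is the least such (if $XZ=Z=YZ$ then $(XY)Z=X(YZ)=XZ=Z$), so $X \vee Y = XY$. This proves item (1) and simultaneously identifies the semigroup product in $\lat$ with the join. Item (2) is then immediate: $\supp$ is by definition the quotient map, hence a homomorphism, so $\supp(xy) = \supp(x)\supp(y) = \supp(x) \vee \supp(y)$. For item (3) I would apply $\supp$ to the defining relation of $\le$ on $\BB$: if $x \le y$, then $xy = y$, so $\supp(x) \vee \supp(y) = \supp(xy) = \supp(y)$, which is exactly $\supp(x) \le \supp(y)$ in $\lat$.

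The only genuine subtlety --- hence the step I would treat most carefully --- is matching conventions: verifying that the product $XY$ really is the \emph{join} for the order $X \le Y \iff XY = Y$ (rather than the meet), and confirming that the resulting order on $\lat$ is compatible with $\le$ on $\BB$ in the direction claimed in (3). Everything reduces to short manipulations with $x^2 = x$ and $xyx = xy$; were the congruence property not taken as given, the main labor would instead be the routine but slightly fiddly check that $\sim$ is left- and right-compatible with multiplication, again via repeated use of $xyx = xy$ (e.g. showing $a \sim b$ forces $ac \sim bc$ and $ca \sim cb$, after which the congruence follows by transitivity).
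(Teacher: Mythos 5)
Your proof is correct. A point of comparison worth noting: the paper never actually proves this proposition --- it is presented as background, with a pointer to Brown's appendix on left regular bands --- so there is no in-paper argument to check you against line by line; the closest thing is the LRBG generalization proved later. Your route is the standard one and is precisely the idempotent shadow of those later arguments: your key lemma $xy \sim yx$ is the special case (all elements idempotent) of the computation in \cref{prop:T-cetr-idem} showing that $\TT$ has central idempotents, namely $(sx)^\omega xs = sx$ and $(xs)^\omega sx = xs$, and your derivation of item (3) by applying $\supp$ to the identity $xy = y$ is the same one-line argument the paper uses for the poset-map property in the LRBG setting. The one ingredient you supply that the paper leaves entirely implicit is the identification of a commutative band with a join-semilattice whose join is the product; your verification of this (upper bound via $X(XY) = XY$ and $Y(XY) = XY$, leastness via $(XY)Z = X(YZ) = XZ = Z$) is complete, and your care about the direction of the order convention, $X \le Y \iff XY = Y$, is exactly what is needed to make items (1)--(3) consistent with each other. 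Your parenthetical sketch of the congruence property (which you reasonably take as given, since the paper asserts it just before the statement) also goes through: from $ab = a$, $ba = b$ one gets $acb = (ab)cb = a(bcb) = abc = ac$ and hence $(ac)(bc) = ac$, with the symmetric identities giving $ac \sim bc$ and $ca \sim cb$.
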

We will be concerned with finite LRBs $\BB$ that contain a unit, in which case $\lat$ is a lattice.
We call $\lat$ the \defn{support lattice} of $\BB$ and $\supp$ the \defn{support map}.
These objects will be essential in studying both LRBs and their generalization, LRBGs.

\begin{example}[Lattice of flats of a Hyperplane Arrangement]\label{ex:latticeflats}
    Continuing \cref{ex:hyperplanearrangement}, if $\BB = \Sigma_{\A}$ is
    the LRB of faces of an arrangement $\A$, then $\lat$ is the so-called
    \defn{lattice of flats} associated with $\A$. Recall that a \defn{flat} of an
    arrangement $\A$ is a linear subspace formed by intersecting a subset of
    hyperplanes in $\A$; this includes the ambient vector space viewed as the
    empty intersection. The set of flats has a natural lattice structure
    induced from ordering subspaces by inclusion (although some authors choose
    to order subspaces by reverse inclusion). Properties of the lattice of
    flats have been extensively studied, see for example \cite{am17}.

    The support map in this case has a simple geometric description: given
    a face ${\sf F} \in \Sigma_{\A}$, its support $\supp({\sf F})$ is the
    smallest subspace containing $F$.
\end{example}

\begin{example}[Partition lattice]\label{ex:partitionlattice} \rm
In the case of the braid arrangement $\mathcal{A}_{n-1}$ both the support lattice and
the support map can be described combinatorially.
Recall from \cref{ex:braidarrangement}, that the faces of $\mathcal{A}_{n-1}$
are in bijection with ordered set partitions $\Sigma_n$.
The support lattice is then the lattice $\Pi_n$ of \defn{set
partitions} of $[n]$ ordered by reverse refinement. To simplify notation for set partitions, we concatenate the elements of
the blocks and replace commas by vertical bars,
for example, writing $\{12|34|5\}$ for the
set partition $\{\{1,2 \}, \{5 \}, \{ 3,4 \}\}$. The support map
\[ \supp: \Sigma_n \longrightarrow \Pi_n \]
simply forgets the ordering of an ordered set partition;
see \cref{fig:posets-of-set-compositions-and-set-partitions}. For instance, 
\[
    \supp\big( 134|25 \big) =
    \supp\big(25|134 \big) =
    \{134|25\} \in \Pi_5.
\]

    \begin{figure}[htpb]
        \centering
        \begin{tikzpicture}[xscale=0.7]
            \node (node_00) at (171.0bp,   6.5bp) {$(123)$};
            \node (node_01) at (146.0bp,  55.5bp) {$(12|3)$};
            \node (node_02) at ( 96.0bp,  55.5bp) {$(13|2)$};
            \node (node_03) at ( 46.0bp,  55.5bp) {$(1|23)$};
            \node (node_06) at (296.0bp,  55.5bp) {$(23|1)$};
            \node (node_07) at (246.0bp,  55.5bp) {$(2|13)$};
            \node (node_10) at (196.0bp,  55.5bp) {$(3|12)$};
            \node (node_04) at ( 81.0bp, 104.5bp) {$(1|2|3)$};
            \node (node_08) at (201.0bp, 104.5bp) {$(2|1|3)$};
            \node (node_05) at ( 21.0bp, 104.5bp) {$(1|3|2)$};
            \node (node_11) at (141.0bp, 104.5bp) {$(3|1|2)$};
            \node (node_09) at (321.0bp, 104.5bp) {$(2|3|1)$};
            \node (node_12) at (261.0bp, 104.5bp) {$(3|2|1)$};
            \draw [black] (node_00) -- (node_01);
            \draw [black] (node_00) -- (node_02);
            \draw [black] (node_00) -- (node_03);
            \draw [black] (node_00) -- (node_06);
            \draw [black] (node_00) -- (node_07);
            \draw [black] (node_00) -- (node_10);
            \draw [black] (node_01) -- (node_04);
            \draw [black] (node_01) -- (node_08);
            \draw [black] (node_02) -- (node_05);
            \draw [black] (node_02) -- (node_11);
            \draw [black] (node_03) -- (node_04);
            \draw [black] (node_03) -- (node_05);
            \draw [black] (node_06) -- (node_09);
            \draw [black] (node_06) -- (node_12);
            \draw [black] (node_07) -- (node_08);
            \draw [black] (node_07) -- (node_09);
            \draw [black] (node_10) -- (node_11);
            \draw [black] (node_10) -- (node_12);
        \end{tikzpicture}
        \qquad
        \begin{tikzpicture}[xscale=0.7]
            \node (node_00) at (171.0bp,   6.5bp) {$\{123\}$};
            \node (node_01) at (100.0bp,  55.5bp) {$\{12|3\}$};
            \node (node_02) at (171.0bp,  55.5bp) {$\{13|2\}$};
            \node (node_03) at (242.0bp,  55.5bp) {$\{1|23\}$};
            \node (node_04) at (171.0bp, 104.5bp) {$\{1|2|3\}$};
            \draw [black] (node_00) -- (node_01);
            \draw [black] (node_00) -- (node_02);
            \draw [black] (node_00) -- (node_03);
            \draw [black] (node_01) -- (node_04);
            \draw [black] (node_02) -- (node_04);
            \draw [black] (node_03) -- (node_04);
        \end{tikzpicture}
        \caption{On the left are the elements of $\Sigma_3$ ordered by the
            relation $\leq$; that is, set compositions of $\{1,2,3\}$ ordered by
            reverse refinement. Its support lattice is the lattice $\Pi_3$ of
            set partitions of $\{1,2,3\}$ ordered by reverse refinement (depicted on
            the right).
            The map $\supp: \Sigma_3 \xrightarrow{} \Pi_3$ forgets the ordering
            of the blocks.}
        \label{fig:posets-of-set-compositions-and-set-partitions}
    \end{figure}
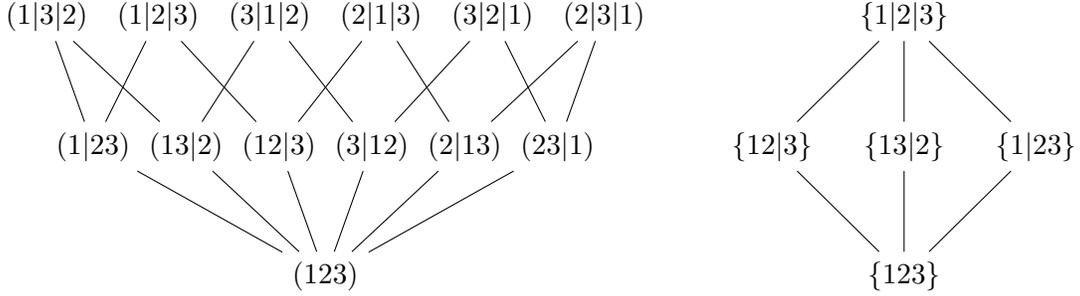
\end{example}

\begin{example}[Boolean lattice]\label{ex:booleanlattice} \rm
    The support lattice of the free LRB $\mathscr{F}_n$ in \cref{ex:freelrb}
    can be identified with the Boolean lattice consisting of
    all subsets of $[n],$ ordered by inclusion;
    see \cref{fig:posets-for-free-lrb-3}. An
    injective word $u \in \mathscr{F}_n$ is sent by the support map to the
    subset $J \subseteq [n]$ where $j \in J$ if and only if $j$ appears in $u$.
    For instance, the word $u = (3,1,4)$ has support $\{ 3,1,4 \}$.
    \begin{figure}[ht]
        \centering
        \begin{tikzpicture}[yscale=0.7, xscale=0.9]
            \node (node_00) at (100bp,  0bp) {$\epsilon$};
            \node (node_01) at (25bp,  50bp) {${{1}}$};
            \node (node_02) at ( 5bp, 100bp) {${{12}}$};
            \node (node_04) at (45bp, 100bp) {${{13}}$};
            \node (node_03) at ( 5bp, 150bp) {${{123}}$};
            \node (node_05) at (45bp, 150bp) {${{132}}$};
            \node (node_06) at (100bp,  50bp) {${{2}}$};
            \node (node_07) at ( 80bp, 100bp) {${{21}}$};
            \node (node_09) at (120bp, 100bp) {${{23}}$};
            \node (node_08) at ( 80bp, 150bp) {${{213}}$};
            \node (node_10) at (120bp, 150bp) {${{231}}$};
            \node (node_11) at (175bp,  50bp) {${{3}}$};
            \node (node_12) at (155bp, 100bp) {${{31}}$};
            \node (node_14) at (195bp, 100bp) {${{32}}$};
            \node (node_13) at (155bp, 150bp) {${{312}}$};
            \node (node_15) at (195bp, 150bp) {${{321}}$};
            \draw [black] (node_00) -- (node_01);
            \draw [black] (node_00) -- (node_06);
            \draw [black] (node_00) -- (node_11);
            \draw [black] (node_01) -- (node_02);
            \draw [black] (node_01) -- (node_04);
            \draw [black] (node_02) -- (node_03);
            \draw [black] (node_04) -- (node_05);
            \draw [black] (node_06) -- (node_07);
            \draw [black] (node_06) -- (node_09);
            \draw [black] (node_07) -- (node_08);
            \draw [black] (node_09) -- (node_10);
            \draw [black] (node_11) -- (node_12);
            \draw [black] (node_11) -- (node_14);
            \draw [black] (node_12) -- (node_13);
            \draw [black] (node_14) -- (node_15);
        \end{tikzpicture}
        \qquad\qquad
        \begin{tikzpicture}[yscale=0.7, xscale=0.65]
            \node (node_0)   at (100bp,  0bp) {$\{\}$};
            \node (node_1)   at (25bp,  50bp) {${\{1\}}$};
            \node (node_2)   at (100bp,  50bp) {${\{2\}}$};
            \node (node_3)   at (175bp,  50bp) {${\{3\}}$};
            \node (node_12)  at ( 25bp, 100bp) {${\{1, 2\}}$};
            \node (node_13)  at (100bp, 100bp) {${\{1, 3\}}$};
            \node (node_23)  at (175bp, 100bp) {${\{2, 3\}}$};
            \node (node_123) at (100bp, 150bp) {$\{1,2,3\}$};
            \draw [black] (node_0) -- (node_1);
            \draw [black] (node_0) -- (node_2);
            \draw [black] (node_0) -- (node_3);
            \draw [black] (node_1) -- (node_12);
            \draw [black] (node_1) -- (node_13);
            \draw [black] (node_2) -- (node_12);
            \draw [black] (node_2) -- (node_23);
            \draw [black] (node_3) -- (node_13);
            \draw [black] (node_3) -- (node_23);
            \draw [black] (node_12) -- (node_123);
            \draw [black] (node_13) -- (node_123);
            \draw [black] (node_23) -- (node_123);
        \end{tikzpicture}
        \caption{On the left are the elements of $\mathscr{F}_3$ ordered by
            the relation $\leq$. Its support lattice is the lattice of subsets
            of $\{1,2,3\}$ ordered by inclusion (depicted on the right). The
            support map sends a word to the set of letters appearing in the
        word.}
        \label{fig:posets-for-free-lrb-3}
    \end{figure}
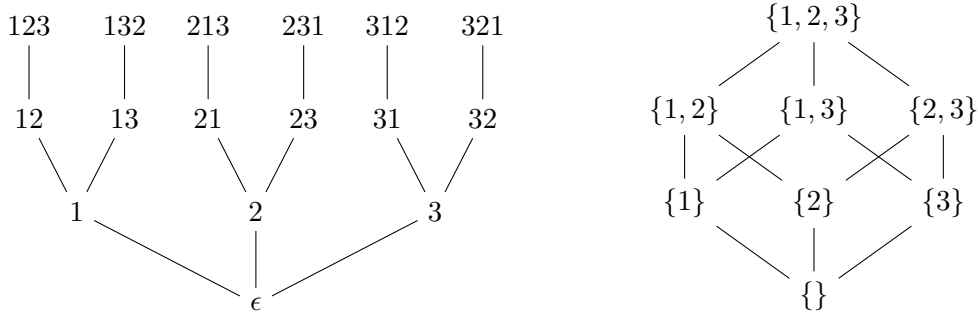
\end{example}

\subsection{Semigroup algebras}
We will be interested in the algebras defined by taking linear combinations of the elements in a semigroup $\SS$. Our notation will follow \cite{am17}.

\begin{definition}[$\B{H}$-basis of a semigroup algebra]
For any semigroup $\SS$, denote by $\kk\SS$ the
$\kk$-algebra with linear basis $\{ \B{H}_x \}_{x \in \SS}$ and multiplication
given by
\[
    \B{H}_x\B{H}_y = \B{H}_{xy}
    \qquad
    \text{for all } x,y \in \SS,
\]
and extended $\kk$-linearly to all of $\kk\SS$.
\end{definition}

For a LRB $\BB$ with support lattice $\lat$,
we have \emph{two} semigroup algebras $\kk \BB$ and $\kk \lat$,
with
\begin{itemize}
    \item
        $\{\B{H}_x : x \in \BB\}$ denoting a basis of $\kk \BB$
        for which $\B{H}_x \B{H}_y = \B{H}_{xy}$ for all $x, y \in \BB$; and

    \item
        $\{\B{H}_X : X \in \lat\}$ denoting a basis of $\kk \lat$
        for which $\B{H}_X \B{H}_Y = \B{H}_{X \vee Y}$ for all $X, Y \in \lat$.
\end{itemize}
These bases are related by the support map $\supp: \BB \to \lat$, which extends
to a surjective morphism of $\kk$-algebras via
\begin{eqnarray*}
    \supp: \kk \BB & \longrightarrow & \kk \lat \\
    \B{H}_{x} & \longmapsto & \B{H}_{\supp(x)}.
\end{eqnarray*}
It turns out that the algebra $\kk \lat$ is split semisimple (i.e.,
isomorphic to a direct product of copies of $\kk$), and that
the kernel of this morphism is the Jacobson radical of $\kk \BB$:
that is, $\ker(\supp) = \rad(\kk\BB)$.

\subsection{Orthogonal idempotents in LRB algebras}\label{ss:CSoPOI-LRB}

Recall that an element $\ee$ of an algebra $A$ is \defn{idempotent} if
$\ee^2 = \ee$, and it is \defn{primitive} if it cannot be written as
$\ee = \ee_1 + \ee_2$ with $\ee_1$ and $\ee_2$ nonzero
idempotents satisfying $\ee_1 \ee_2 = 0 = \ee_2 \ee_1$.
A family of idempotents $\{ \ee_i \}_{i \in I} \subseteq A$
is \defn{mutually orthogonal} if $\ee_i \ee_j = \delta_{ij} \ee_{i}$
for all $i, j \in I$, and it is a \defn{complete system} if $\sum_{i \in I} \ee_i
= 1.$
We are interested in \defn{complete systems of primitive orthogonal
idempotents} (CSoPOI) of semigroup algebras.

There is a well-developed idempotent theory for left regular bands, initiated
by Saliola in \cite{sal2008quiv,saliola2009face,saliola2012eigenvectors} and
further developed by Aguiar and Mahajan in \cite{am17}.
Complete systems of primitive orthogonal idempotents of a LRB algebra $\kk \BB$
are parameterized by \defn{homogeneous sections} of the support map,
by which we mean linear maps
\[ \uu: \kk \lat \to \kk \BB \]
satisfying
\begin{enumerate}
    \item
        the composition
        $\kk \lat \xrightarrow{\uu} \kk \BB  \xrightarrow{\supp} \kk \lat$
        is the identity map on $\kk \lat$ (i.e., $\uu$ is a section
        of $\supp$),

    \item
        for each $X \in \lat$, if we write
        $\uu(\B{H}_X) = \sum_{x \in \BB} c_x \B{H}_x$ with $c_x \in \kk$,
        then $c_y = 0$ if $\supp(y) \neq X$.
\end{enumerate}
To simplify notation, we write $\uu_X:= \uu(\B{H}_X)$ for any $X \in \lat$.
Note that $\sum c_x = 1$ and $\supp(\uu_X) = \B{H}_X$:
\begin{equation*}
    \B{H}_X
    = \supp \left(\uu(\B{H}_X)\right)
    = \supp \Big(\sum_{x \in \BB} c_x \B{H}_x\Big)
    = \sum_{x \in \BB} c_x \supp(\B{H}_x)
    = \Big(\sum_{x} c_x\Big) \B{H}_X.
\end{equation*}

\begin{example}[Uniform section]\label{ex:uniformsection} \rm
Given any LRB, one can define the \defn{uniform section}, which is the
homogeneous section $\uu$ satisfying $c_{x} = c_{y}$ whenever $\supp(x)
= \supp(y)$. For the free LRB $\mathscr{F}_2$, the uniform section is
characterized by
\[
\uu_{\emptyset} = \B{H}_{\epsilon}, \qquad
\uu_{\{ 1 \} } = \B{H}_{1},\qquad
\uu_{\{ 2 \}} = \B{H}_{2}, \qquad
\uu_{\{1,2 \}} = \tfrac{1}{2} \B{H}_{12} + \tfrac{1}{2}\B{H}_{21},
\]
where, as in \cref{ex:freelrb}, we denote the identity element of $\mathscr{F}_2$ by $\epsilon$.
\end{example}

Saliola showed in \cite{saliola07quiverLRB} that any homogeneous section $\uu$
can be used to define a complete system of primitive orthogonal idempotents
$\{\ee^\circ_X \}_{X \in \lat}$ for a LRB algebra, as follows. Aguiar--Mahajan
proved in \cite[Prop. 11.9]{am17} that all such families $\{\ee^\circ_X \}_{X \in
\lat}$ for a LRB algebra arise in this way.
Thus, the data of a homogeneous section $\{\uu_{X}\}_{X \in \lat}$ is equivalent to that of the
family $\{ \ee^\circ_X \}_{X \in \lat}$. More precisely:

\begin{theorem}[{\cite[Theorem~4.2]{saliola07quiverLRB} and \cite[Prop. 11.9]{am17}}] \label{thm:CSoPOIforLRB}
    Let $\BB$ be a finite LRB, $\lat$ its support lattice, and $\kk$ a field.
    Let $\uu: \kk\lat \xrightarrow{} \kk\BB$ be a homogeneous section of the support map
    $\supp: \kk\BB \to \kk\lat$.
    Define elements $\ee^\circ_X$, one for each $X \in \lat$, by
    \begin{equation}\label{eq:saliolarecursion}
    \ee^{\circ}_{X}:= \uu_{X} - \sum_{\substack{Y \in \lat \\ Y > X}} \uu_{X}\cdot \ee^{\circ}_{Y}. \end{equation}
    Then $\{ \ee^\circ_X \}_{X \in \lat}$ is a complete system of primitive orthogonal idempotents for $\kk \BB$.
    Conversely, every complete system of primitive orthogonal idempotents for $\kk\BB$ arises in this way.
\end{theorem}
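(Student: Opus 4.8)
The plan is to prove the forward direction---that the recursively defined $\ee^\circ_X$ form a CSoPOI---by a single downward induction over $\lat$, and to deduce completeness and primitivity from the identity $\ker(\supp)=\rad(\kk\BB)$. Let $\{\B{Q}_X\}_{X\in\lat}$ be the primitive orthogonal idempotents of the split semisimple algebra $\kk\lat$, given by Möbius inversion $\B{Q}_X=\sum_{Y\ge X}\mu(X,Y)\B{H}_Y$, so that $\B{H}_X=\sum_{Y\ge X}\B{Q}_Y$ and $\sum_X\B{Q}_X=\B{H}_{\wh0}=1$, where $\wh0=\supp(1_\BB)$ is the minimum of $\lat$. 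The two facts about LRBs that drive everything are: (i) the \emph{retraction property} $zy=z\iff\supp(y)\le\supp(z)$, which holds because $\supp(y)\le\supp(z)$ says $z\sim zy$, and then $z\cdot(zy)=z^2y=zy$ forces $zy=z$ by the definition of $\sim$; and (ii) $\supp(\uu_X)=\B{H}_X$, with $\uu_X$ a combination of $\B{H}_y$, $\supp(y)=X$, whose coefficients sum to $1$.

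First I would record two consequences of (i)--(ii) that the induction will maintain. For any $z$ with $\supp(z)\ge X$, expanding $\B{H}_z\uu_X=\sum_{\supp(y)=X}c_y\B{H}_{zy}$ and applying the retraction property (since $\supp(y)=X\le\supp(z)$ gives $zy=z$) yields $\B{H}_z\uu_X=\B{H}_z$. Hence $w\,\uu_X=w$ for every $w$ in the span $\kk\BB_{\ge X}$ of $\{\B{H}_z:\supp(z)\ge X\}$. Separately, applying $\supp$ to the recursion $\ee^\circ_X=\uu_X-\sum_{Y>X}\uu_X\ee^\circ_Y$ and using $\B{H}_X\B{Q}_Y=\B{Q}_Y$ for $Y>X$ shows, by downward induction, that $\supp(\ee^\circ_X)=\B{Q}_X$; the same recursion shows $\ee^\circ_X\in\kk\BB_{\ge X}$.

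The heart of the argument is a downward induction proving that $\{\ee^\circ_Y\}_{Y\ge X}$ is a system of orthogonal idempotents. Assuming the hypothesis for all elements strictly above $X$, the recursion gives $\ee^\circ_X\ee^\circ_Y=\uu_X\ee^\circ_Y-\sum_{Z>X}\uu_X\ee^\circ_Z\ee^\circ_Y=0$ for $Y>X$ directly from inductive orthogonality. The reverse products use the key lemma: since $\ee^\circ_W\in\kk\BB_{\ge W}\subseteq\kk\BB_{\ge X}$ for $W\ge X$, we have $\ee^\circ_W\uu_X=\ee^\circ_W$. Consequently $(\ee^\circ_X)^2=\ee^\circ_X\uu_X-\sum_{Z>X}\ee^\circ_X\ee^\circ_Z=\ee^\circ_X$, and for $Y>X$, $\ee^\circ_Y\ee^\circ_X=\ee^\circ_Y\uu_X-\sum_{Z>X}\ee^\circ_Y\ee^\circ_Z=\ee^\circ_Y-\ee^\circ_Y=0$, completing the induction. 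I expect the main obstacle to be precisely this bookkeeping: arranging the induction so that the concentration property $\ee^\circ_W\in\kk\BB_{\ge W}$ and the resulting identity $\ee^\circ_W\uu_X=\ee^\circ_W$ are available at the moment each product is computed.

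Finally, completeness and primitivity follow from $\ker(\supp)=\rad(\kk\BB)$. The element $g=\sum_X\ee^\circ_X$ is idempotent with $\supp(g)=\sum_X\B{Q}_X=1$, so $1-g$ is an idempotent in the radical and hence $g=1$. For primitivity, $\supp$ maps the corner algebra $\ee^\circ_X\,\kk\BB\,\ee^\circ_X$ onto $\B{Q}_X\,\kk\lat\,\B{Q}_X=\kk\B{Q}_X\cong\kk$ with kernel $\ee^\circ_X\rad(\kk\BB)\ee^\circ_X\subseteq\rad(\ee^\circ_X\kk\BB\ee^\circ_X)$; hence this corner is local and $\ee^\circ_X$ is primitive. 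The converse---that every CSoPOI arises from a homogeneous section---is the content of \cite[Prop.~11.9]{am17}, which identifies the data of such a system with that of a section, and I would simply invoke it.
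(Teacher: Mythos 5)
Your overall architecture is sound and most of the individual pieces are correct: the retraction property, the identity $w\,\uu_X = w$ for $w\in\kk\BB_{\ge X}$, the computation $\supp(\ee^\circ_X)=\B{Q}_X$, the concentration $\ee^\circ_X\in\kk\BB_{\ge X}$, and the completeness and primitivity arguments via $\ker(\supp)=\rad(\kk\BB)$ all check out. (For the record, the paper does not prove this statement; it cites \cite{saliola07quiverLRB} and \cite[Prop.~11.9]{am17}, so yours is a genuine reconstruction attempt.) However, the central induction has a real gap. Your inductive hypothesis is indexed by single lattice elements: ``$\{\ee^\circ_Y\}_{Y\ge W}$ is an orthogonal system of idempotents, for every $W>X$.'' A pair $Y,Z>X$ is covered by some such $W$ only when $Y\wedge Z>X$. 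But the sums $\sum_{Z>X}$ in your computations of $\ee^\circ_X\ee^\circ_Y$, $(\ee^\circ_X)^2$ and $\ee^\circ_Y\ee^\circ_X$ require $\ee^\circ_Z\ee^\circ_Y=\delta_{ZY}\ee^\circ_Y$ for \emph{all} pairs $Z,Y>X$, including incomparable pairs with $Z\wedge Y=X$; these are covered neither by your hypothesis nor by your inductive step, so the statement at stage $X$ is never actually established. Concretely, take $\BB=\mathscr{F}_2$, whose support lattice is the Boolean lattice on $\{1,2\}$: at stage $X=\emptyset$ you need $\ee^\circ_{\{1\}}\ee^\circ_{\{2\}}=0$, and none of the hypotheses at $W\in\{\{1\},\{2\},\{1,2\}\}$ involves both $\{1\}$ and $\{2\}$.

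This is not a bookkeeping artifact: the incomparable case is exactly where the hard content of the theorem sits, and your two tools cannot reach it. Your key lemma controls \emph{right} multiplication by $\uu_X$, but for incomparable $Y,Z$ the recursion leaves you with $\ee^\circ_Y\ee^\circ_Z=\uu_Y\ee^\circ_Z$, so what is needed is \emph{left} annihilation: $\B{H}_y\,\ee^\circ_Z=0$ whenever $\supp(y)\not\le Z$. This is precisely \cref{lem:Saliola's} (\cite[Lemma~5.1]{saliola2009face}), and it requires its own downward induction on $Z$: first observe that the recursion plus retraction give $\B{H}_v=\sum_{V\ge\supp(v)}\B{H}_v\ee^\circ_V$ for every $v\in\BB$ (no induction needed); then, for $\supp(y)\not\le Z$, write $\B{H}_y\ee^\circ_Z=\sum_x c_x\bigl(\B{H}_{yx}-\sum_{V>Z}\B{H}_{yx}\ee^\circ_V\bigr)$ with the sum over $\supp(x)=Z$, note $\supp(yx)=\supp(y)\vee Z>Z$, use the inductive hypothesis to discard all terms with $V\not\ge\supp(yx)$, and conclude that each summand vanishes by the displayed identity. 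With this lemma in hand, your concentration property gives $\ee^\circ_Y\ee^\circ_Z=0$ whenever $Y\not\le Z$ (all incomparable pairs, plus $Y>Z$), and your recursion-plus-key-lemma argument then disposes of the remaining case $Y<Z$ and of idempotency, so the proof closes. Two minor remarks: completeness is available even more cheaply, since the only element of $\BB$ with support $\supp(1_\BB)$ is $1_\BB$ itself, forcing $\uu_{\supp(1_\BB)}=\B{H}_{1_\BB}$, so the recursion at the bottom element literally reads $\sum_Y\ee^\circ_Y=1$; and deferring the converse to \cite[Prop.~11.9]{am17} is legitimate, since that is all the paper does as well.
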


Moreover, these idempotents satisfy the following very useful property.

\begin{lemma}[{\cite[Lemma 5.1]{saliola2009face}}]
    \label{lem:Saliola's}
    For all $x \in \BB$ and $Y \in \lat$, if $\supp(x) \not\leq Y$, then $\B{H}_x \ee^\circ_Y = 0$.
\end{lemma}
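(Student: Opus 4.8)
The plan is to bypass the recursion entirely and instead exploit \emph{left}-regularity to reduce the statement to the principle that a radical idempotent vanishes. Set $A := \kk\BB$ and $e := \B{H}_x$; since $x^2 = x$, $e$ is an idempotent of $A$. The crucial observation is that the left-regular identity $xyx = xy$ yields $\B{H}_x\B{H}_y\B{H}_x = \B{H}_x\B{H}_y$, hence $e\,a\,e = e\,a$ for every $a \in A$. Consequently the corner algebra $eAe$ equals $eA = \B{H}_x\,\kk\BB$, which is exactly the semigroup algebra $\kk\BB'$ of the sub-LRB $\BB' := x\BB = \{\, y \in \BB : x \leq y \,\}$ with identity $x$. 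So first I would record that $\BB'$ is again a finite LRB (a subsemigroup of $\BB$ with two-sided unit $x$, since $yx = xwx = xw = y$ for $y = xw$), whose support lattice is the interval $\{\, Z \in \lat : Z \geq \supp(x)\,\}$ and whose support map is the restriction of $\supp$.

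Next I would put $g_Y := \B{H}_x\,\ee^\circ_Y = e\,\ee^\circ_Y\,e \in eAe = \kk\BB'$. Using $e a e = e a$ to absorb the inner $e$, one checks immediately that $g_Y g_{Y'} = e\,\ee^\circ_Y\,\ee^\circ_{Y'} = \delta_{YY'}\,g_Y$ and $\sum_{Y} g_Y = e\sum_Y \ee^\circ_Y = e$, so $\{g_Y\}_{Y \in \lat}$ is a complete system of orthogonal idempotents of the LRB algebra $\kk\BB'$ (some possibly zero). The goal becomes showing $g_Y = 0$ whenever $\supp(x) \not\leq Y$. For this I would apply the support map: since $\supp$ is a morphism of algebras, $\supp(g_Y) = \B{H}_{\supp(x)}\,\supp(\ee^\circ_Y)$. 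The key computational input is that $\supp(\ee^\circ_Y)$ is the primitive idempotent $E_Y$ of the split-semisimple algebra $\kk\lat$ associated to $Y$; this follows from $\supp$ being an algebra map together with the triangularity $\ee^\circ_Y = \uu_Y + (\text{terms of support} > Y)$ and $\supp(\uu_Y) = \B{H}_Y$. Granting this, the elementary lattice identity $\B{H}_X E_Y = E_Y$ if $X \leq Y$ and $0$ otherwise gives $\supp(g_Y) = 0$ precisely when $\supp(x) \not\leq Y$.

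Finally, I would invoke the fact recorded in the excerpt that for a finite LRB the kernel of its support map is the Jacobson radical: thus $\supp(g_Y) = 0$ places $g_Y$ in $\rad(\kk\BB')$. But $g_Y$ is idempotent, and the radical of a finite-dimensional algebra is nilpotent and so contains no nonzero idempotent; hence $g_Y = \B{H}_x\,\ee^\circ_Y = 0$. I expect the main obstacle to be the two compatibility checks that make the reduction legitimate: verifying cleanly that passing to the corner $eAe$ genuinely reproduces the LRB algebra $\kk\BB'$ (so that ``kernel of support $=$ radical'' is available there), and pinning down $\supp(\ee^\circ_Y) = E_Y$. Once these are in place, the orthogonality of the $g_Y$ and the ``no idempotents in the radical'' principle finish the argument with no case analysis on the lattice.
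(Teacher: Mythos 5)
The paper itself gives no proof of \cref{lem:Saliola's}: it quotes it from Saliola's work, where the argument is a self-contained downward induction on $\lat$ using only the defining recursion \cref{eq:saliolarecursion} and the LRB absorption identity $\B{H}_v\B{H}_w=\B{H}_v$ for $\supp(w)\le\supp(v)$; in particular that proof uses nothing of \cref{thm:CSoPOIforLRB}, and indeed in the literature this vanishing lemma is an ingredient in the proof that the $\ee^\circ_X$ form a CSoPOI. Your route is genuinely different: you take \cref{thm:CSoPOIforLRB} as given, observe that the identity $xyx=xy$ makes $g_Y := \B{H}_x \ee^\circ_Y$ an idempotent of $\kk\BB$ (indeed a member of a complete orthogonal family in the corner $\B{H}_x\,\kk\BB\,\B{H}_x$), show $\supp(g_Y)=0$ when $\supp(x)\not\le Y$, and kill $g_Y$ using $\ker(\supp)=\rad$ together with the fact that a nilpotent ideal contains no nonzero idempotent. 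This is valid within this paper's expository order, where \cref{thm:CSoPOIforLRB} is stated before the lemma, and it avoids any induction on the lattice; the price is that it consumes a theorem whose own proof (in the sources cited) relies on this lemma, so your argument could not replace the original one in a from-scratch development. Note also that the excursion through the sub-LRB $\BB'=x\BB$ is dispensable: $g_Y$ already lies in $\kk\BB$, the identities $g_Y^2=g_Y$ and $\supp(g_Y)=0$ are verified there, and the paper records $\ker(\supp)=\rad(\kk\BB)$ for $\kk\BB$ itself, so you can conclude without checking that the corner algebra is again an LRB algebra with the correct support lattice.

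One step is under-justified as written: the claim $\supp(\ee^\circ_Y)=\B{Q}^\circ_Y$ (your $E_Y$). Unitriangularity $\supp(\ee^\circ_Y)=\B{H}_Y+\sum_{Z>Y}c_Z\B{H}_Z$ together with idempotency does not pin down the primitive idempotent: the element $\B{H}_Y=\sum_{W\ge Y}\B{Q}^\circ_W$ is an idempotent of exactly this triangular shape and is not primitive. Two easy repairs: (i) use the orthogonality and completeness you already invoke --- the $|\lat|$ elements $\supp(\ee^\circ_Y)$ are orthogonal idempotents summing to $1$ in the $|\lat|$-dimensional split semisimple algebra $\kk\lat$, and each is nonzero since its $\B{Q}^\circ_Y$-coefficient is $1$ by unitriangularity, so each is primitive and unitriangularity then forces $\supp(\ee^\circ_Y)=\B{Q}^\circ_Y$; or (ii) apply $\supp$ to \cref{eq:saliolarecursion} and check that the $\B{Q}^\circ_Y$ satisfy the same recursion, namely $\B{Q}^\circ_Y=\B{H}_Y-\sum_{Z>Y}\B{H}_Y\B{Q}^\circ_Z$, whence the two families agree by downward induction on $\lat$. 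With either patch your argument is complete.
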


\begin{example}
We again consider the free LRB $\mathscr{F}_2$ and construct idempotents using the uniform section described in \cref{ex:uniformsection}.
As illustrated in \cref{fig:posets-for-free-lrb-3}, the support lattice of $\mathscr{F}_2$ is the Boolean lattice, ordered by inclusion.
Thus the maximum element is $\{ 1,2 \}$, and so
\[ \ee^\circ_{\{ 1,2 \}} = \uu_{\{ 1,2 \}} = \tfrac{1}{2} \B{H}_{12} + \tfrac{1}{2} \B{H}_{21}.\]
Next, we have
\begin{align*}
    \ee^\circ_{\{ 1 \}} =  \uu_{\{ 1 \}} -  \uu_{\{ 1 \}} \cdot \ee^\circ_{\{ 1,2 \}}
    = \B{H}_{1} - \B{H}_1 \left( \tfrac{1}{2} \B{H}_{12} + \tfrac{1}{2} \B{H}_{21} \right)
    = \B{H}_1 - \B{H}_{12}, 
\end{align*}
since $\B{H}_1 \cdot \B{H}_{12}  = \B{H}_1 \cdot \B{H}_{21}  = \B{H}_{12}$ in $\kk \mathscr{F}_2$.
By a similar computation,
\[ \ee^\circ_{\{ 2 \}} = \B{H}_2 - \B{H}_{21}.\]
Finally,
\begin{align*}
    \ee^\circ_{\emptyset} &= \B{H}_{\epsilon} - \uu_{\emptyset} \cdot \ee^\circ_{\{ 1, 2 \}} -  \uu_{\emptyset} \cdot \ee^\circ_{\{ 1 \}} -  \uu_{\emptyset} \cdot \ee^\circ_{\{ 2 \}} \\
    &= \B{H}_{\epsilon} -  \ee^\circ_{\{ 1, 2 \}} -   \ee^\circ_{\{ 1 \}} -  \ee^\circ_{\{ 2 \}} \\
    &= \B{H}_{\epsilon}- \left( \tfrac{1}{2} \B{H}_{12} + \tfrac{1}{2} \B{H}_{21} \right) - \left( \B{H}_1 - \B{H}_{12} \right) - \left( \B{H}_2 - \B{H}_{21} \right)\\
    &= \B{H}_{\epsilon} - \B{H}_{1} - \B{H}_{2} + \tfrac{1}{2} \B{H}_{12} + \tfrac{1}{2} \B{H}_{21}.
\end{align*}
One can check directly that the $\ee^\circ_{J}$ for $J \subseteq \{ 1,2 \}$ do indeed form a complete system of primitive orthogonal idempotents
for $\kk \mathscr{F}_2$.
\end{example}

\subsection{$\B{Q}$-basis and semisimplicity of $\C \lat$}
\label{sec:LRB-Q-basis}

There is a second basis for $\kk\BB$ called the \defn{$\B{Q}$-basis}, defined as follows.
\begin{definition}\label{def:qbasis}
    \label{prop:LRB-Q-Basis}
    Let $\BB$ be a LRB and $\{\ee^\circ_X\}_{X \in \lat}$
    a complete system of primitive orthogonal idempotents for
    $\kk \BB$.
    For each $x \in \BB$, define elements in $\kk \BB$ by
    \begin{equation} \label{eq:LRB-Q-basis}
        \B{Q}^\circ_x = \B{H}_{x} \ee^\circ_{\supp(x)}.
    \end{equation}
By \cite[Proposition 5.4]{saliola2009face},
the set
$\{\B{Q}^\circ_x : x \in \BB\}$ is a basis of $\kk \BB$
consisting of primitive idempotents.
\end{definition}

Since $\lat$ is also a LRB, \cref{prop:LRB-Q-Basis} defines a $\B{Q}$-basis of $\kk\lat$,
which is denoted by $\{\B{Q}^\circ_{X}\}_{X \in \lat}$.
Tracing through the construction, one finds that
\begin{equation*}
    \B{Q}^\circ_X = \sum_{Y \geq X} \mu(X,Y) \, \B{H}_Y \text{ for all } X \in \lat,
\end{equation*}
where $\mu$ is the Möbius function of the support lattice $\lat$,
and that
\[
    \B{Q}^\circ_{X} \B{Q}^\circ_{Y} =
    \begin{cases}
        \B{Q}^\circ_{X}, & \text{if } X = Y, \\
        0,               & \text{otherwise.}
    \end{cases}
\]
The $\B{Q}$-bases of $\kk\BB$ and $\kk\lat$ are related by the support map in
the following way:
\begin{equation} \label{support-of-Q-basis}
    \supp(\B{Q}^\circ_{x}) = \B{Q}^\circ_{\supp(x)}.
\end{equation}

Thus, $\{\B{Q}^\circ_{X}\}_{X \in \lat}$ is both a basis for $\kk\lat$
\emph{and} a complete system of primitive orthogonal idempotents for $\kk\lat$.
Consequently, $\kk\lat$ admits the following decomposition as a direct sum of
algebras:
\begin{equation*}
    \kk\lat = \bigoplus_{X \in \lat} \mathrm{span}(\B{Q}^\circ_{X}) \cong \bigoplus_{X \in \lat} \kk.
\end{equation*}
It thus follows that $\kk\lat$ is semisimple and is in fact the semisimple quotient
of $\kk\BB$.

\section{Left Regular Bands of Groups}\label{sec:LRBG}

Next, we will formally introduce left regular bands of groups. Recall that informally, these semigroups are obtained from a LRB $\BB$ by attaching a finite group $G_x$ to each element $x \in \BB$ according to certain compatibility conditions.
We will then extend several of constructions from LRBs to this context.

\subsection{Main definition}

We start by recalling some standard notions of the theory of finite semigroups.
We refer the reader to \cite{clifford} or \cite[Appendix A]{RhSt09Semigroups} for more information.

Let $\SS$ be a semigroup.
\begin{itemize}
    \item
        The collection of idempotents of $\SS$ is denoted by $E(\SS)$.
        That is, $E(\SS) := \{ x \in \SS : x^2 = x \}$.
    \item
        For each idempotent $x \in E(\SS)$, the set $x \SS x$ forms a submonoid of $\SS$ with identity element~$x$.
        The group of units of this subsemigroup $G_x := (x \SS x)^\times$ is called the \defn{maximal semigroup of $\SS$ at $x$}.
        It is maximal with respect to inclusion among the groups in $\SS$ that contain $x$.
    \item
        If $\SS$ is a finite semigroup, for each element $s \in \SS$ there exists
        a unique idempotent element of $\SS$ that is a positive power of $s$; see for instance \cite[Corollary 1.2]{steinberg16monoids}.
        We let $s^\omega \in E(\SS)$ denote this idempotent.
\end{itemize}

\begin{definition} \label{defn:LRBG}
    A finite semigroup $\SS$ is a \defn{left regular band of groups (LRBG)} if for all $s,t \in \SS$:
    \begin{align}
        \label{eq:lrbg1}\tag{LRBG1}     s^\omega s = s,         \\
        \label{eq:lrbg2}\tag{LRBG2}     s t s^\omega = s t.
    \end{align}
\end{definition}

\begin{remark}
    We follow Margolis and Steinberg \cite{ms11HomologySemigroups} in defining
    a LRBG as a semigroup satisfying \cref{eq:lrbg1,eq:lrbg2}.
    However, there is also a notion of a \emph{band of groups} in the semigroup
    literature \cite{clifford, cp61semigroupsI},
    and it should be noted that \emph{left regular bands of groups} are not
    \emph{bands of groups}.
    More details can be found in \cref{s:appendix}, which characterizes the
    LRBGs that are bands of groups.
\end{remark}

We begin by proving that the set of idempotents $E(\SS)$ of a LRBG $\SS$ is a LRB.

\begin{lemma}
    \label{E(S)-is-a-LRB}
    Let $\SS$ be a LRBG and let $E(\SS)$ denote the set of idempotents of $\SS$.
    Then $E(\SS)$ is a subsemigroup of $\SS$. Moreover, $E(\SS)$ is a LRB.
\end{lemma}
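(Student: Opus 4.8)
The plan is to reduce both assertions to the two defining identities \cref{eq:lrbg1,eq:lrbg2} by first understanding how $s^{\omega}$ behaves on idempotents. First I would observe that if $x \in E(\SS)$, then $x$ is itself idempotent and a positive power of $x$, so by the uniqueness clause in the definition of $s^{\omega}$ we must have $x^{\omega} = x$. This single observation is what makes the general axioms usable on $E(\SS)$: it lets me replace every occurrence of $x^{\omega}$ by $x$ when $x$ is an idempotent.

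The engine of the proof is then the following specialization. Applying \cref{eq:lrbg2} with $s = x \in E(\SS)$ and $t = y$ \emph{arbitrary}, and substituting $x^{\omega} = x$, yields
\[ xyx = xy\,x^{\omega} = xy. \]
I would emphasize that this identity holds for every $y \in \SS$, not merely for idempotent $y$. From here, closure of $E(\SS)$ under multiplication is immediate: for $x, y \in E(\SS)$ I compute
\[ (xy)^2 = xyxy = (xyx)y = (xy)y = xy^2 = xy, \]
where the third equality uses the displayed relation and the last uses $y^2 = y$. Hence $(xy)^2 = xy$, so $xy \in E(\SS)$ and $E(\SS)$ is a subsemigroup of $\SS$.

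To conclude that $E(\SS)$ is a LRB, I would simply check the two conditions of \cref{def:lrb} for $x, y \in E(\SS)$: the relation $x^2 = x$ holds by definition of $E(\SS)$, and the relation $xyx = xy$ is exactly the engine identity above restricted to idempotent $y$. I do not expect a genuine obstacle here; the only point requiring care is the justification that $x^{\omega} = x$ for idempotent $x$, which rests on the uniqueness of the idempotent power. Once that reduction is in place, both the closure computation and the LRB relation follow from \cref{eq:lrbg2} by elementary manipulation, with $y^2 = y$ doing the remaining bookkeeping.
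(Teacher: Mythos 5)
Your proposal is correct and follows essentially the same route as the paper: both observe that $x^\omega = x$ for idempotent $x$, use \cref{eq:lrbg2} to compute $(xy)^2 = xy$ for closure, and then note that (LRB1) holds by definition of $E(\SS)$ while (LRB2) is \cref{eq:lrbg2} specialized to $s = x$, $t = y$. Your extra remark that $xyx = xy$ holds for \emph{all} $y \in \SS$ (not just idempotents) is a harmless strengthening; the argument is otherwise the paper's own.
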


\begin{proof}
    We first prove $E(\SS)$ is a subsemigroup.
    Let $x, y \in E(\SS)$.
    Then $x^\omega = x$ and $y^\omega = y$,
    and so
    \begin{equation*}
        (x y)^2
        = x y x y
        = x y x^\omega y^\omega
        \overset{\cref{eq:lrbg2}}{=}
        x y y^\omega
        \overset{\cref{eq:lrbg1}}{=}
        x y.
    \end{equation*}
    It follows that $x y \in E(\SS)$.
    To see that $E(\SS)$ is a LRB, note that (LRB1) holds because every element
    of $E(\SS)$ is idempotent, and (LRB2) follows by setting $s = x$ and $t
    = y$ in \cref{eq:lrbg2}.
\end{proof}

The next result identifies the maximal subgroups $G_{x}$ of $\SS$ at $x
\in E(\SS)$ with the set of elements in $\SS$ that are mapped to $x$ by the
function $s \mapsto s^\omega$. This implies that $\SS$ is a disjoint union of groups.

\begin{lemma}
    \label{preimages-of-omega-map-are-maximal-subgroups}
    Let $\SS$ be a LRBG and define $\varphi: \SS \xrightarrow{} E(\SS)$ by
    $\varphi(s) = s^\omega$.
    Then for each $x \in E(\SS)$, the preimage
    $\varphi^{-1}(\{x\}) = \{ s \in \SS : s^\omega = x \}$ is the maximal
    subgroup $G_x$ of $\SS$ at $x$.
    Consequently,
    \begin{equation}
        \label{LRBG-disjoint-union-of-groups}
        \SS = \bigsqcup_{x \in E(\SS)} G_x,
        \qquad\text{where~} G_x = \{s \in \SS: s^\omega = x\}.
    \end{equation}
\end{lemma}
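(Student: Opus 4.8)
The plan is to prove two things: first, that $\varphi^{-1}(\{x\}) = G_x$ as sets for each idempotent $x \in E(\SS)$, and second, that the disjoint union decomposition \eqref{LRBG-disjoint-union-of-groups} follows immediately as a formal consequence. The second part is essentially free: every $s \in \SS$ has a \emph{unique} idempotent power $s^\omega$, so the fibers of $\varphi$ partition $\SS$, and once we identify each fiber with $G_x$ we are done. So the real content is the set-equality $\{s \in \SS : s^\omega = x\} = G_x$, which I would prove by showing the two inclusions separately.

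For the inclusion $G_x \subseteq \varphi^{-1}(\{x\})$, I would take $s \in G_x = (x\SS x)^\times$ and argue that $s^\omega = x$. Since $s$ lies in the group $x\SS x$ with identity $x$, every positive power of $s$ also lies in this group, and in a finite group the unique idempotent is the identity element; hence the idempotent power $s^\omega$ of $s$ must equal $x$. This direction is the cleaner of the two. For the reverse inclusion $\varphi^{-1}(\{x\}) \subseteq G_x$, I would take $s$ with $s^\omega = x$ and show $s \in (x\SS x)^\times$. The key is to produce the factorization $s = x s x$ (so that $s \in x\SS x$) together with a two-sided inverse for $s$ relative to the identity $x$. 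Writing $N$ for the exponent with $s^N = s^\omega = x$, the natural inverse candidate is $s^{N-1}$, since $s \cdot s^{N-1} = s^N = x = s^{N-1} \cdot s$; this shows $s$ is a unit once we know $s \in x\SS x$.

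\textbf{The main obstacle} will be verifying the factorization $s = xsx = s^\omega s s^\omega$, i.e.\ that $s$ actually lies in the submonoid $x\SS x$ with $x = s^\omega$ serving as a \emph{two-sided} identity for $s$. The axiom \eqref{eq:lrbg1} gives $s^\omega s = s$ directly, so the left factor is immediate. The right factor, namely $s s^\omega = s$, is not an axiom and is where I expect to spend effort: I would derive it from \eqref{eq:lrbg2} by a careful choice of substitution. For instance, setting $t = s^\omega$ in $st s^\omega = st$ yields $s s^\omega s^\omega = s s^\omega$, i.e.\ $s s^\omega = s s^\omega$ (using idempotency of $s^\omega$), which is not yet what I want; instead I would exploit that $s = s^\omega s$ from \eqref{eq:lrbg1} and substitute into \eqref{eq:lrbg2} with well-chosen arguments, likely using powers of $s$ to leverage $s^N = s^\omega$. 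Concretely, applying \eqref{eq:lrbg2} with the first argument a suitable power of $s$ should collapse a trailing $s^\omega$ and let me conclude $s s^\omega = s$. Once both $s^\omega s = s$ and $s s^\omega = s$ are established, the factorization $s = x s x$ and the inverse $s^{N-1}$ together place $s$ in $(x\SS x)^\times = G_x$, completing the nontrivial inclusion and hence the lemma.
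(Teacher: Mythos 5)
Your proposal follows the paper's proof essentially verbatim: the same two inclusions, the same argument that powers of $s \in G_x$ eventually hit the identity $x$ of the finite group $G_x$, the same factorization $s \in x\SS x$, the same inverse $s^{N-1}$, and the same fiber-partition conclusion. The only divergence is the step you single out as ``the main obstacle,'' namely $s s^\omega = s$, which in fact requires no appeal to (LRBG2) at all: since $s^\omega$ is by definition a positive power of $s$, say $s^\omega = s^N$, the elements $s$ and $s^\omega$ commute, so
\[
    s s^\omega = s^{N+1} = s^\omega s = s
\]
by (LRBG1). This is exactly how the paper obtains $sx = s$; it cites only (LRBG1). For what it is worth, the (LRBG2) route you sketch does also close, provided you take the ``suitable power of $s$'' to be $s^\omega = s^N$ itself: applying (LRBG2) with first argument $s^\omega$ and second argument $s$ gives $s^\omega s \,(s^\omega)^\omega = s^\omega s$, and since $(s^\omega)^\omega = s^\omega$, applying (LRBG1) to both sides yields $s s^\omega = s$. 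But this is needless effort compared with the one-line commuting-powers observation, and your worry that this step would consume real work is unfounded.
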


\begin{proof}
    We first prove $G_x \subseteq \{s \in \SS : s^\omega = x\}$.
    If $s$ is an element of the finite group $G_x$,
    then some positive power of $s$ coincides with the
    identity element of $G_x$, which is $x$.
    Hence, $s^\omega = x$.

    Conversely, suppose $s^\omega = x$.
    By \cref{eq:lrbg1}, $x s = s^\omega s = s$ and
    $s x = s s^\omega = s$, which implies $s$
    belongs to the submonoid $x \SS x$ with identity element $x$.
    Moreover, $s$ is invertible in $x \SS x$ since if $n$ is a positive integer
    such that $s^n = s^\omega = x$, then the inverse of $s$ is $s^{n-1}$.
    Hence, $s \in G_x$.
\end{proof}

\begin{remark}
    There are LRBGs for which the map $s \xmapsto{} s^\omega$
    is not necessarily a semigroup morphism.
    We call LRBGs for which $s \xmapsto{} s^\omega$
    is a semigroup morphism \defn{strict LRBGs}; see \cref{s:appendix} for
    a detailed discussion and examples.
\end{remark}

\subsection{Main examples: Hsiao's LRBGs}\label{ss:Hsiao's}
The following LRBGs were introduced by Hsiao~\cite{hsiao2007semigroup}.
Note that we use the name \defn{(set) composition} for what Hsiao calls \defn{ordered partition (of a set)}.

\subsubsection{$G$-compositions}

Fix a finite group $G$ and a positive integer $n$.
A \defn{$G$-composition of $[n]$} is a tuple
\begin{equation*}
    s = \big( (S_1,g_1),\dots,(S_k,g_k) \big),
\end{equation*}
where $(S_1,\dots,S_k) \in \Sigma_n$ is a composition of $[n]$ and $g_i \in G$ for all $i$.
We say that $(S_1,\dots,S_k)$ is the \defn{underlying composition} of $s$ and denote it by $\abs{s}$. 
Let $\hsiao$ denote the collection of all $G$-compositions of $[n]$.
The product of two $G$-compositions
\[   s = \big( (S_1,g_1),\dots,(S_k,g_k) \big) \qqand t = \big( (T_1,h_1),\dots,(T_\ell,h_\ell) \big)    \]
is
\[
    st := \big(
        (S_1 \cap T_1 , h_1 g_1),\dots,(S_1 \cap T_\ell, h_\ell g_1),
        (S_2 \cap T_1 , h_1 g_2),
        \dots,
        (S_k \cap T_\ell , h_\ell g_k)
    \big)^\dagger,
\]
where the superscript $\dagger$ denotes that the pairs with an empty intersection have been omitted.
Similar to the case of compositions of $[n]$, we simplify notation by concatenating the elements of the blocks, writing group elements as a superscript, and replacing commas by vertical bars.
For example, $( \{1,3,4\} , g_1) , ( \{2,5\}, g_2) \big) $ will be written as $\big(134^{g_1} \mid 25^{g_2}\big)$, and
\[
    \big(134^{g_1} \mid 25^{g_2} \big)  \big( 12^{h_1} \mid 5^{h_2} \mid 34^{h_3} \big) = \big( 1^{h_1 g_1} \mid 34^{h_3 g_1} \mid 2^{h_1 g_2} \mid 5^{h_2 g_2}  \big).
\]
With this product, $\hsiao$ is a LRBG \cite{hsiao2007semigroup}.

Observe that if we omit the group elements $g_i,h_j$, this product agrees with the semigroup structure on $\Sigma_n$ from \cref{ex:braidarrangement}.
In particular, the map $\hsiao \to \Sigma_n : s \mapsto \abs{s}$ sending a $G$-composition to its underlying composition is a semigroup morphism.

\subsubsection{$G$-partitions}
We define the semigroup of $G$-partitions in an identical way.

    We again fix a finite group $G$ and a positive integer $n$.
    A \defn{$G$-partition of $[n]$} is a set 
    \begin{equation*}
        \{ (S_1,g_1),\dots,(S_k,g_k) \},
    \end{equation*}
    where $\{ S_1,\dots,S_k \} \in \Pi_n$ is a partition of $[n]$ and $g_i \in G$ for all $i$.
    We let $\gsetpartition$ denote the collection of $G$-partitions of $[n]$. Again, we omit set brackets and replacing commas with bars.

    The product of two $G$-partitions and the underlying partition $\abs{S} \in \Pi_n$ of a $G$-partition $S \in \gsetpartition$ are defined in an analogous manner.
    For example,

    \[
        \big \{  134^{g_1} \mid 25^{g_2} \big\}  \big \{  12^{h_1} \mid 5^{h_2} \mid 34^{h_3}  \big \} = \big\{ 1^{h_1 g_1} \mid 2^{h_1 g_2} \mid 34^{h_3 g_1} \mid 5^{h_2 g_2}   \big \} .
    \]

Observe again that omitting the group elements in $\gsetpartition$ recovers the semigroup $\Pi_n$ of partitions of~$[n]$
defined in \cref{ex:partitionlattice}.

\subsubsection{Relationship between $\hsiao$, $\gsetpartition$, $\Sigma_n$ and $\Pi_n$}

The surjection $\hsiao \to \gsetpartition$ that forgets the order of a $G$-composition is a semigroup morphism.
Following \cref{ex:partitionlattice}, we let $\supp$ denote this map.
Then we have the following commutative diagram of semigroup morphisms:
\begin{equation}\label{eq:hsiao-quotients}
    \begin{tikzcd}[column sep=4em]
            \hsiao \arrow[d, "\supp"', two heads] \arrow[r, "\abs{\cdot}", two heads] & \Sigma_n \arrow[d, "\supp", two heads] \\
            \gsetpartition \arrow[r, "\abs{\cdot}"', two heads]                        & \Pi_n
    \end{tikzcd}
\end{equation}
The idempotents of $\hsiao$ are precisely the $G$-compositions of the form $\big( (S_1,1_G),\dots,(S_k,1_G) \big)$, where $1_G$ is the identity of the group $G$;
and a similar statement holds for $\gsetpartition$.
We thus have natural identifications $E(\hsiao) \cong \Sigma_n$ and $E(\gsetpartition) \cong \Pi_n$.
In particular, we identify $s^\omega$ with $\abs{s}$ for all $s \in \hsiao$.

\subsection{Support map}\label{ss:sLRBGrelations}

We extend the relationship depicted in \cref{eq:hsiao-quotients} to all LRBGs.
More precisely, given a LRBG $\SS$, we construct a semigroup quotient $\SS/\mathord{\sim}$
such that the following diagram commutes
\begin{equation}\label{eq:LRBG-quotients-commuative-diagram}
    \begin{tikzcd}
        \SS \arrow[d, two heads, "\supp"'] \arrow[rr, two heads] &[2em] &[-3em] E(\SS) \arrow[d, "\supp", two heads] \\
        \SS/\mathord{\sim} \arrow[r, two heads]    & E(\SS/\mathord{\sim}) = & E(\SS)/\mathord{\sim}
        \end{tikzcd}
\end{equation}
where $E(\SS) \xrightarrow{} E(\SS)/\mathord{\sim}$ is the support map for the LRB $E(\SS)$,
and the horizontal maps are $s \mapsto s^\omega$.

\begin{remark}
    Even though the map $s \xmapsto{} s^\omega$ may fail to be
    a morphism, we always have that $\supp(s)^\omega = \supp(s^\omega)$
    since $\supp$ is a semigroup morphism.
    Also, we will see in \cref{LRBG-supp-and-omega}
    that even though $(st)^\omega$ and $s^\omega t^\omega$
    are not necessarily equal, we always have
    $\supp((st)^\omega) = \supp(s^\omega) \supp(t^\omega)$.
\end{remark}

We begin by extending the relations $\leq$ and $\sim$, originally defined only for LRBs, to LRBGs.

\begin{definition}\label{def:sLRBGrelations}
    For $s,t \in \SS$ define
    \begin{align}
        \label{eq:poset-S} s \leq t         \qqiff &          s t^\omega = t;\\
        \label{eq:equiv-S} s \sim t         \qqiff &          s^\omega t = s \text{ and } t^\omega s = t.
    \end{align}
\end{definition}

Observe that for elements in $E(\SS)$, the relations above coincide with those in \cref{ss:LRBrelations}.
Moreover, if $s \sim x$ for some $x \in E(\SS)$, then $s \in E(\SS)$ since $s^\omega x = s$ and $E(\SS)$ is closed under product.

\begin{prop}
    The relations $\leq$ and $\sim$ in \cref{def:sLRBGrelations} are a partial order and a semigroup congruence, respectively.
\end{prop}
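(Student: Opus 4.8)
The plan is to reduce everything to the LRB $E(\SS)$ via the map $s \mapsto s^\omega$, using repeatedly the identity $s^\omega s = s$ (namely \cref{eq:lrbg1}) together with its consequence $s s^\omega = s$ (valid because $s^\omega$ is a power of $s$ and hence commutes with $s$), and \cref{eq:lrbg2}. Throughout I will use that $E(\SS)$ is a LRB (\cref{E(S)-is-a-LRB}), so that the relations $\leq$ and $\sim$ restricted to $E(\SS)$ are already known to be a partial order and an equivalence relation.

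For the partial order on $\SS$, I would first establish the key lemma that $s \leq t$ implies $s^\omega \leq t^\omega$ in $E(\SS)$: from $s t^\omega = t$ one gets $s^\omega t = (s^\omega s)t^\omega = s t^\omega = t$, and then $s^\omega t^k = t^k$ for all $k \geq 1$ by induction, so $s^\omega t^\omega = t^\omega$, which is exactly $s^\omega \leq t^\omega$ in $E(\SS)$. Reflexivity is then $s s^\omega = s$. For transitivity, if $s t^\omega = t$ and $t u^\omega = u$, I combine with $t^\omega u^\omega = u^\omega$ (the lemma applied to $t \leq u$) to get $u = t u^\omega = s t^\omega u^\omega = s u^\omega$. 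For antisymmetry, $s \leq t$ and $t \leq s$ give $s^\omega \leq t^\omega$ and $t^\omega \leq s^\omega$ in $E(\SS)$, so $s^\omega = t^\omega$ by antisymmetry in the LRB; then $s \leq t$ reads $s s^\omega = t$, i.e. $s = t$.

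That $\sim$ is an equivalence relation is immediate for reflexivity (\cref{eq:lrbg1}) and symmetry (built into the definition). The engine for transitivity, and later for the congruence property, is the sub-lemma that $s \sim t$ implies $s^\omega \sim t^\omega$ in $E(\SS)$, concretely $s^\omega t^\omega = s^\omega$ and $t^\omega s^\omega = t^\omega$: right-multiplying $s^\omega t = s$ by $t^\omega$ yields $s t^\omega = s$, whence $s^\omega t^\omega = s^\omega$ by taking powers, and the other identity is symmetric. Transitivity of $\sim$ on $\SS$ then follows from $s \sim t \sim u$: transitivity of $\sim$ on $E(\SS)$ gives $s^\omega u^\omega = s^\omega$, and then $s^\omega u = s^\omega(u^\omega t) = (s^\omega u^\omega)t = s^\omega t = s$, with $u^\omega s = u$ obtained symmetrically.

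It remains to prove that $\sim$ is a congruence, which I would do by establishing right- and left-compatibility separately. Right-compatibility, $s \sim t \Rightarrow sr \sim tr$, follows from the clean auxiliary lemma that for any idempotent $e$ and any $w$ one has $(ew)^\omega w = ew$; its proof uses $ewe = ew$ (i.e. \cref{eq:lrbg2} with $e$ idempotent) to show $(ew)^\omega e = (ew)^\omega$, combined with \cref{eq:lrbg1}. Applying this with $e = s^\omega$, $w = tr$, and writing $sr = s^\omega t r$, gives $(sr)^\omega(tr) = sr$, and the companion identity is symmetric. The main obstacle is left-compatibility, $s \sim t \Rightarrow rs \sim rt$, precisely because the LRBG axioms are left-handed and no dual of the lemma above is available. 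Here I would set $g = (rs)^\omega$ and first show $g t^\omega = g$ (from $g s^\omega = g$, obtained by taking powers of $rs\cdot s^\omega = rs$, together with $s^\omega t^\omega = s^\omega$ from the sub-lemma). The crux is the identity $g r t^\omega = g r$: writing $g = g t^\omega$ and invoking \cref{eq:lrbg2} in the form $t^\omega r t^\omega = t^\omega r$ gives $g r t^\omega = g t^\omega r t^\omega = g t^\omega r = g r$. Then, using $t = t^\omega s$, one computes $(rs)^\omega(rt) = g r t^\omega s = g r s = g(rs) = rs$, with the companion identity symmetric. Combining the two one-sided compatibilities shows $\sim$ is a semigroup congruence.
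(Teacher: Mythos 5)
Your proof is correct — every step checks out, including the two places that require care: the commutation $ss^\omega = s^\omega s = s$, and the deletion identities derived from \cref{eq:lrbg2} — but it is organized quite differently from the paper's argument. The paper works entirely inside $\SS$ by direct manipulation: antisymmetry of $\leq$ is the one-line computation $s = ts^\omega = (st^\omega)s^\omega = st^\omega = t$, transitivity of $\sim$ runs through the identity $(s^\omega t)^\omega = s^\omega t^\omega$, and both halves of the congruence property are proved by the same symmetric trick of substituting $s = s^\omega t$ and $t = t^\omega s$ and then deleting the idempotent $t^\omega$ via (an iterated form of) \cref{eq:lrbg2}. You instead reduce to the LRB $E(\SS)$ through the monotonicity lemmas $s \leq t \Rightarrow s^\omega \leq t^\omega$ and $s \sim t \Rightarrow s^\omega \sim t^\omega$, importing antisymmetry of $\leq$ and transitivity of $\sim$ from the known facts about LRBs recorded in \cref{ss:LRBrelations}; and you treat the two sides of the congruence asymmetrically, packaging right-compatibility in the clean reusable lemma $(ew)^\omega w = ew$ for idempotent $e$, and proving left-compatibility by the stepwise identities $gt^\omega = g$ and $grt^\omega = gr$ for $g = (rs)^\omega$, which avoids having to delete an idempotent buried deep inside a long product. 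What each approach buys: the paper's proof is shorter and fully self-contained (it never needs the LRB poset/equivalence facts as input), while yours makes the structural relationship between $\SS$ and $E(\SS)$ explicit from the start — your monotonicity lemmas are essentially early incarnations of the compatibility statements the paper proves later (the poset-map property of $\supp$ and \cref{LRBG-supp-and-omega}) — and isolates intermediate lemmas that are useful beyond this proposition.
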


\begin{proof}
    We first prove that $\leq$ is a partial order.
    Reflexivity follows from \cref{eq:lrbg1}.
    Suppose $s \leq t$ and $t \leq s$, then an application of \cref{eq:lrbg2} shows that $\leq$ is antisymmetric:
    \[
        s = t s^\omega = (s t^\omega) s^\omega = s t^\omega = t.
    \]
    Now suppose $s \leq t$ and $t \leq u$, then
    \[
        s u^\omega = s (t u^\omega)^\omega = s t^\omega (t u^\omega)^\omega = t (t u^\omega)^\omega = t u^\omega = u,
    \]
    and so $s \leq u$. Thus, $\leq$ is transitive and therefore a partial order.

    We now show that $\sim$ is an equivalence relation.
    Symmetry is clear from the definition, and reflexivity follows again from \cref{eq:lrbg1}.
    Suppose $s \sim t$ and $t \sim u$, then
    \[
        s^\omega u = (s^\omega t)^\omega u = s^\omega t^\omega u = s^\omega t = s,
    \]
    and similarly $u^\omega s = u$. Thus, $\sim$ is transitive and therefore an equivalence relation.

    Finally, we show that $\sim$ is a semigroup congruence:
    i.e., that $su \sim tu$ and $us \sim ut$ for all $s,t,u \in \SS$ with $s \sim t$.
    Let $s,t,u \in \SS$ and suppose $s \sim t$.
    Then,
    \begin{equation*}
        (s u)^\omega t u                                    \overset{(s \sim t)}{=} 
        ( (s^\omega t) u )^\omega ( t^\omega s ) u          \overset{\cref{eq:lrbg2}}{=}
        ( (s^\omega t) u )^\omega ( s ) u                   \overset{(s \sim t)}{=} 
        (s u)^\omega s u                                    \overset{\cref{eq:lrbg1}}{=}
        s u.
    \end{equation*}
    Exchanging $s$ and $t$ above, we obtain $(t u)^\omega s u = t u$ and deduce that $s u \sim t u$.
    The same trick shows that $u s \sim u t$:
    \[
        (u s)^\omega u t = (u s^\omega t)^\omega u (t^\omega s) = (u s^\omega t)^\omega u (s) = (u s)^\omega u s = u s.
    \]
    Therefore, $\sim$ is a semigroup congruence.
\end{proof}

\begin{example}\label{ex:hsiao-order}
    Let $\hsiao$ be the semigroup of $G$-compositions in \cref{ss:Hsiao's}.
    Then,
    \[
        \big( (S_1,g_1),\dots,(S_k,g_k) \big) \leq \big( (T_1,h_1),\dots,(T_\ell,h_\ell) \big)
    \]
    if and only if 
    \begin{enumerate}
        \item the composition $(S_1,\dots,S_k)$ is refined by $(T_1,\dots,T_\ell)$, and
        \item whenever two blocks $T_i$ and $T_j$ of the second composition are contained in the same block $S_m$ of the first composition, we have $h_i = h_j = g_m$.
    \end{enumerate}

    On the other hand, we have
    \[
        \big( (S_1,g_1),\dots,(S_k,g_k) \big) \sim \big( (T_1,h_1),\dots,(T_\ell,h_\ell) \big)
    \]
    if and only if $k = \ell$ and there is a permutation $\sigma \in \mathfrak{S}_k$ such that $(S_i , g_i) = (T_{\sigma(i)} , h_{\sigma(i)})$ for all $i$.
    Therefore, $\hsiao/\mathord{\sim} \cong \gsetpartition$.
    See \cref{fig:hsiao-C2-relations} for an example with $n=2$ and $G = \ztwo = \{\pm 1\}$, where $\ztwo$ is the cyclic group of order two.

    \begin{figure}[ht]
        \centering
        \begin{tikzpicture}[yscale=0.7, xscale=0.8]
            \node (node_0) at (-25bp,  0bp) {$\big( 12^+ \big)$};
            \node (node_1) at ( 25bp,  0bp) {$\big( 12^- \big)$};

            \node (node_00l) at (-150bp,  80bp) {$\big( 1^+ | 2^+ \big)$};
            \node (node_11l) at ( -75bp,  80bp) {$\big( 1^- | 2^- \big)$};
            \node (node_01l) at (-150bp, 130bp) {$\big( 1^+ | 2^- \big)$};
            \node (node_10l) at ( -75bp, 130bp) {$\big( 1^- | 2^+ \big)$};

            \node (node_00r) at (  75bp,  80bp) {$\big( 2^+ | 1^+ \big)$};
            \node (node_11r) at ( 150bp,  80bp) {$\big( 2^- | 1^- \big)$};
            \node (node_01r) at (  75bp, 130bp) {$\big( 2^- | 1^+ \big)$};
            \node (node_10r) at ( 150bp, 130bp) {$\big( 2^+ | 1^- \big)$};

            \draw [blue] (-185bp, 60bp) rectangle (-35bp,150bp); \node [blue] at (-180bp, 45bp) {\small$G_{(1|2)} \cong \ztwo \times \ztwo$};
            \draw [blue] ( 185bp, 60bp) rectangle ( 35bp,150bp); \node [blue] at ( 180bp, 45bp) {\small$\ztwo \times \ztwo \cong G_{(2|1)}$};
            \draw [blue] ( -55bp,-20bp) rectangle ( 55bp, 20bp); \node [blue] at (     0,-35bp) {\small$G_{(12)} \cong \ztwo$};

            \draw [black,thick] (node_0) -- (node_00l);
            \draw [black,thick] (node_0) -- (node_00r);
            \draw [black,thick] (node_1) -- (node_11l);
            \draw [black,thick] (node_1) -- (node_11r);
            \draw [red,dashed] (node_00l) to [bend left=20] (node_00r);
            \draw [red,dashed] (node_11l) to [bend left=20] (node_11r);
            \draw [red,dashed] (node_01l) to [bend left=20] (node_01r);
            \draw [red,dashed] (node_10l) to [bend left=20] (node_10r);
        \end{tikzpicture}
        \caption{The ten elements of the semigroup $\Sigma_2[\ztwo]$.
        We write $+$ or $-$ instead of $+1$ or $-1$ to simplify notation.
        The solid black lines correspond to the partial order $\leq$, the four elements at the top are incomparable to any other.
        The dashed red lines correspond to the congruence $\sim$.
        The boxes group elements according to the maximal subgroup they belong to.}
        \label{fig:hsiao-C2-relations}
    \end{figure}
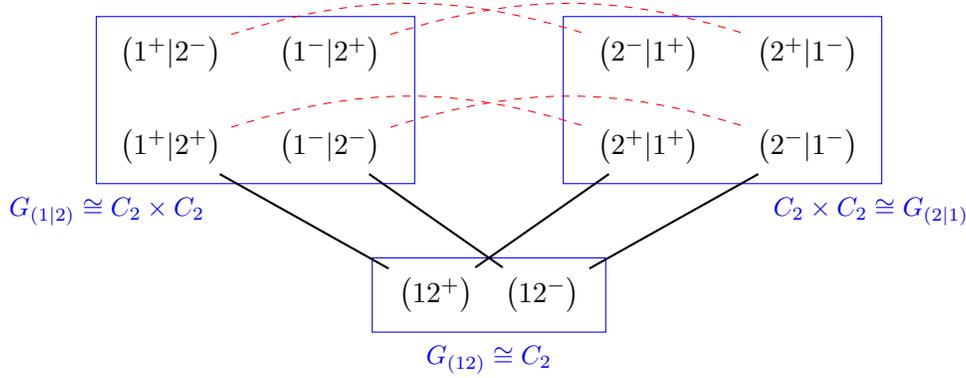
\end{example}

Let $\TT$ be the semigroup quotient $\SS/\mathord{\sim}$ and let $\supp: \SS
\to \TT$ denote the quotient map, which we call the \defn{support map} of
$\SS$. Then $\TT$ is a LRBG with $E(\TT) = E(\SS)/\mathord{\sim}$.
In the next result, we show $\TT$ that it is a \defn{semilattice of groups},
i.e., a regular semigroup with central idempotents \cite{cp61semigroupsI}.
An analysis of the decomposition of $\TT$ into a disjoint union of groups will
be presented in \cref{decomposition-into-maximal-subgroups}.

\begin{prop}[Support map for LRBGs]
    \label{prop:T-cetr-idem}
    Let $\SS$ be a LRBG, let $\TT$ be the semigroup quotient $\SS/\mathord{\sim}$,
    and let $\supp: \SS \longrightarrow  \TT$ denote the quotient map.
    Then
    \begin{enumerate}[start=0]
        \item\label{item-ET=suppES}
            $E(\TT) = \supp(E(\SS))$ is the support lattice of the LRB $E(\SS)$;

        \item\label{item-T-has-central-idempotents}
            $\TT$ is a semilattice of groups; in particular $\TT$ has central idempotents;

        \item\label{item-lrbg-semigroup-map}
            $\supp: \SS \to \TT$ is a map of semigroups
            (i.e., $\supp(st) = \supp(s) \supp(t)$);

        \item\label{item-lrbg-poset-map}
            $\supp: \SS \xrightarrow{} \TT$ is a map of posets
            (i.e., $s \leq t$ in $\SS$ implies $\supp(s) \leq \supp(t)$ in $\TT$).
    \end{enumerate}
\end{prop}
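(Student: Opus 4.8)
The plan is to handle the three ``formal'' claims directly and to devote the real effort to the centrality statement. The two morphism claims are essentially automatic: since $\TT=\SS/\mathord{\sim}$ and $\sim$ is a semigroup congruence (proved above), the quotient map $\supp$ is by construction a morphism of semigroups, which is the third claim. The fourth claim then follows by applying this morphism to the defining relation $st^\omega=t$ of $s\le t$ (\cref{eq:poset-S}) and using $\supp(t^\omega)=\supp(t)^\omega$ — valid because the operation $u\mapsto u^\omega$ is preserved by morphisms of finite semigroups — which yields $\supp(s)\,\supp(t)^\omega=\supp(t)$, i.e.\ $\supp(s)\le\supp(t)$ in $\TT$.

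For the identification of $E(\TT)$, the inclusion $\supp(E(\SS))\subseteq E(\TT)$ is clear, so the point is the reverse, which I would get by showing $\supp^{-1}(E(\TT))=E(\SS)$. If $\supp(s)$ is idempotent then $s^2\sim s$, and the second clause of \cref{eq:equiv-S} reads $s^\omega s^2=s$; but $s^\omega s^2=(s^\omega s)\,s=s^2$ by \cref{eq:lrbg1}, forcing $s^2=s$. Thus every idempotent of $\TT$ lifts to an idempotent of $\SS$, and moreover no idempotent is $\sim$-related to a non-idempotent. Since $\sim$ restricts on $E(\SS)$ to the LRB relation of \cref{def:supportmap} (as observed right after \cref{def:sLRBGrelations}), the restriction $\supp|_{E(\SS)}$ is exactly the support map of the LRB $E(\SS)$, so its image $E(\TT)$ is the support lattice of $E(\SS)$ — in particular a join-semilattice.

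The crux is the central-idempotent claim. Being a homomorphic image of the LRBG $\SS$, the quotient $\TT$ again satisfies \cref{eq:lrbg1,eq:lrbg2} (again because $\omega$ is morphism-invariant), so $\TT$ is itself a LRBG and, by \cref{preimages-of-omega-map-are-maximal-subgroups} applied to $\TT$, is a disjoint union of its maximal subgroups, hence regular. It then remains to prove that its idempotents are central, which is equivalent to $xs\sim sx$ for all $x\in E(\SS)$ and $s\in\SS$. The engine is the identity $xsx=xs$, immediate from \cref{eq:lrbg2} together with $x^\omega=x$; by induction it yields the power identities $(xs)^k=xs^k$ and $(sx)^{k+1}=sxs^{k}$, and hence $(xs)^\omega=xs^\omega$ (noting $xs^\omega$ is idempotent). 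With these, the two clauses of \cref{eq:equiv-S} for $xs\sim sx$ reduce to short computations: first, $(xs)^\omega(sx)=xs^\omega sx=xsx=xs$ using \cref{eq:lrbg1}; second, using that right multiplication by $x$ fixes $sx$ (so $(sx)^\omega x=(sx)^\omega$) together with the power identity, $(sx)^\omega(xs)=(sx)^\omega s=(sx)^{N+1}=(sx)^\omega(sx)=sx$, where $(sx)^\omega=(sx)^N$. This establishes centrality, so $\TT$ is a regular semigroup with central idempotents, i.e.\ a semilattice of groups.

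I expect the centrality claim to be the only real obstacle, and its difficulty is precisely the asymmetry between $xs$ and $sx$: whereas $(xs)^\omega=xs^\omega$ is explicit, $(sx)^\omega$ admits no comparably clean closed form in $s$, so the second clause cannot be checked by direct substitution. The key device is to avoid computing $(sx)^\omega$ altogether and instead use the power identity $(sx)^{k+1}=sxs^{k}$ to recognize $(sx)^\omega s$ as $(sx)^{N+1}=(sx)^\omega(sx)$, which collapses to $sx$ by \cref{eq:lrbg1}. As an alternative endgame, one could instead invoke the structure theorem that a completely regular semigroup with commuting idempotents is a semilattice of groups, the commutativity of $E(\TT)$ being furnished by the identification of $E(\TT)$ as a semilattice in the previous step.
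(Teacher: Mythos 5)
Your proposal is correct and follows essentially the same route as the paper: parts (2) and (3) via the quotient-map/morphism argument, part (0) by showing $s\sim s^2$ forces $s^2=s$ via \cref{eq:lrbg1}, and part (1) by deducing regularity from the fact that $\TT$ is again a LRBG and then verifying $sx\sim xs$ directly from the axioms. Your explicit power identities $(xs)^k=xs^k$ and $(sx)^{k+1}=sxs^k$ are just a more carefully bookkept version of the paper's telescoping computation $(sx)^\omega xs = sxs^\omega = sx$ and $(xs)^\omega sx = xs^\omega sx = xs$, so the two proofs coincide in substance.
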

\begin{proof}
    \eqref{item-lrbg-semigroup-map} Follows immediately from the fact that
    $\TT$ is a semigroup quotient of $\SS$.

    \eqref{item-ET=suppES}
    Suppose $X \in E(\TT)$. There exists $s \in \SS$ such that $\supp(s) = X$.
    Then $s \sim s^2$ since $X$ is an idempotent.
    From the definition of $\sim$, we have $s^\omega s^2 = s$, which implies
    $s^2 = s$ because $s^\omega s = s$.

    \eqref{item-T-has-central-idempotents}
    By \cite{cp61semigroupsI}, $\TT$ is a semilattice of groups if and only if
    it is regular and has central idempotents.
    That $\TT$ is regular follows from the fact that it is a LRBG.
    We now show it has central idempotents.

    Let $S \in \TT$ and $X \in E(\TT)$, we want to prove that $SX = XS$.
    Take $s \in \SS$ and $x \in E(\SS)$ with $\supp(s) = S$ and $\supp(x) = X$.
    It is enough to show that $sx \sim xs$.
    We have
    \[
        (sx)^\omega xs = \underbrace{(sx)(sx) \dots (sx)}_{\omega\text{ times}} xs = s x s^\omega = s x,
    \]
    where we first use \cref{eq:lrbg2} to remove all repetitions of $x$ and then we use it again to remove $s^\omega$.
    Similarly,
    \[
        (xs)^\omega sx = x s^\omega s x = x s^\omega s = x s,
    \]
    where the last equality follows from \cref{eq:lrbg1}.

    \eqref{item-lrbg-poset-map}
    Let $s, t \in \SS$ and suppose that $s \leq t$.
    Then $s t^\omega = t$, and so
    $\supp(s) \supp(t)^\omega = \supp(s t^\omega) = \supp(t)$.
    Hence, $\supp(s) \leq \supp(t)$ in $\TT$.
\end{proof}

\begin{cor}
    \label{LRBG-supp-and-omega}
    Let $\SS$ be a LRBG, $\TT = \SS/\mathord{\sim}$,
    and let $\supp$ denote the quotient map $\supp: \SS \to \TT$.
    Then for all $s, t \in \SS$,
    \begin{equation*}
        \supp\left((st)^\omega\right)
        = \supp(s^\omega) \supp(t^\omega)
        = \supp(s^\omega t^\omega).
    \end{equation*}
    In particular, $\TT \to E(\TT) : T \mapsto T^\omega$ is a semigroup
    morphism; i.e., $(st)^\omega \sim s^\omega t^\omega$ for all $s, t \in \SS$.
\end{cor}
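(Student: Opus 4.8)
The plan is to reduce the entire statement to a single identity inside $\TT$, namely $(ST)^\omega = S^\omega T^\omega$ for all $S,T \in \TT$, and then to prove that identity using the fact, established in \cref{prop:T-cetr-idem}, that the idempotents of $\TT$ are central.

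First I would set $S = \supp(s)$ and $T = \supp(t)$. Since $\supp \colon \SS \to \TT$ is a morphism of semigroups and $\SS$ is finite, the image of an idempotent power is the idempotent power of the image, $\supp(u^\omega) = \supp(u)^\omega$ for every $u \in \SS$ (this is exactly the observation recorded in the remark preceding \cref{def:sLRBGrelations}). Combining this with the multiplicativity of $\supp$ gives $\supp\big((st)^\omega\big) = \supp(st)^\omega = (ST)^\omega$ on one side, and $\supp(s^\omega)\supp(t^\omega) = S^\omega T^\omega = \supp(s^\omega t^\omega)$ on the other. Thus the chain of equalities in the statement collapses to proving $(ST)^\omega = S^\omega T^\omega$ in $\TT$. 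Once this holds, the two ``in particular'' assertions are immediate: $T \mapsto T^\omega$ being a semigroup morphism on $\TT$ is precisely this identity, and $(st)^\omega \sim s^\omega t^\omega$ is merely its reformulation at the level of $\sim$-classes, since $\sim$ is the kernel congruence of $\supp$.

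For the core identity I would work in the decomposition $\TT = \bigsqcup_X G_X$ into maximal subgroups supplied by \cref{preimages-of-omega-map-are-maximal-subgroups} (applicable because $\TT$ is itself a LRBG), writing $e = S^\omega$ and $f = T^\omega$ for the identities of the groups containing $S$ and $T$, and $S^{-1}, T^{-1}$ for the corresponding inverses. Put $h := ef = S^\omega T^\omega$. Using centrality of idempotents in $\TT$ throughout, I would verify three points: that $h$ is idempotent; that $h\cdot ST = ST\cdot h = ST$, so $ST$ lies in the local monoid $h\TT h$ with identity $h$ (for instance $ST\cdot h = S\,S^\omega\,T\,T^\omega = ST$ after commuting the central idempotent $S^\omega$ past $T$); and that $T^{-1}S^{-1}$ is a two-sided inverse of $ST$ relative to $h$, since $(ST)(T^{-1}S^{-1}) = S\,T^\omega\,S^{-1} = T^\omega S S^{-1} = T^\omega S^\omega = h$, and symmetrically for the other order. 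Hence $ST$ is a unit of the finite monoid $h\TT h$, and the idempotent power of any unit of a finite monoid is that monoid's identity; therefore $(ST)^\omega = h = S^\omega T^\omega$, as needed.

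The only genuine obstacle is this last identity in $\TT$, and it is exactly where the structural input of \cref{prop:T-cetr-idem} — that $\TT$ is a semilattice of groups with central idempotents — is indispensable: without centrality one cannot commute $S^\omega$ (resp.\ $T^\omega$) past the neighbouring factors, and the clean cancellations $ST\cdot h = ST$ and $(ST)(T^{-1}S^{-1}) = h$ both fail. Everything else, namely the reduction and the two consequences, is formal bookkeeping with the morphism $\supp$ and the congruence $\sim$.
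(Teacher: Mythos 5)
Your proof is correct, but it takes a genuinely different route from the paper's. The paper never leaves $\SS$: it observes that the claim is equivalent to $(st)^\omega \sim s^\omega t^\omega$, checks $(st)^\omega (s^\omega t^\omega) = (st)^\omega$ by two applications of \cref{eq:lrbg2}, and then runs an explicit chain of equalities and $\sim$-equivalences — using \cref{eq:lrbg1} and centrality of idempotents in the form $ux \sim xu$ for $x \in E(\SS)$, which is how it invokes \cref{prop:T-cetr-idem} — to slide $s^\omega$ and $t^\omega$ around until $(st)^\omega$ is absorbed. You instead push the whole statement down to $\TT$ via $\supp(u^\omega) = \supp(u)^\omega$ and prove the clean identity $(ST)^\omega = S^\omega T^\omega$ natively in $\TT$, by exhibiting $ST$ as a unit of the local monoid $h \TT h$ with $h = S^\omega T^\omega$ and inverse $T^{-1}S^{-1}$, so that its unique idempotent power must be $h$. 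Both arguments rest on \cref{prop:T-cetr-idem} as the essential input, and neither is circular. What the paper's computation buys is self-containedness: it uses only the two LRBG axioms and the congruence $\sim$, and needs neither the (asserted but unproved) fact that $\TT$ is itself a LRBG nor the maximal-subgroup decomposition of $\TT$ from \cref{preimages-of-omega-map-are-maximal-subgroups}. What your argument buys is structure and generality: it isolates the statement as the standard fact that in any finite semigroup with central idempotents whose elements lie in subgroups (a semilattice of groups), $u \mapsto u^\omega$ is a morphism, and it makes transparent exactly where centrality is indispensable, namely in the cancellations $ST \cdot h = ST$ and $(ST)(T^{-1}S^{-1}) = h$. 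One detail you elide: to call $ST$ a unit of $h\TT h$ you should note that a two-sided inverse can be chosen \emph{inside} $h\TT h$, e.g. $hT^{-1}S^{-1}h$, which still satisfies both cancellations; this follows at once from centrality, so it is a cosmetic gap only.
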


\begin{proof}
    Since $\supp$ is a morphism of semigroups,
    it suffices to prove
    $\supp\left((st)^\omega\right)
    = \supp(s^\omega t^\omega)$,
    which is equivalent to
    $(st)^\omega \sim s^\omega t^\omega$.
    First, note that
    \begin{equation*}
        (st)^\omega (s^\omega t^\omega)
        = (st)^{\omega - 1} st s^\omega t^\omega
        \overset{\cref{eq:lrbg2}}{=}
        (st)^{\omega - 1} st s^\omega
        \overset{\cref{eq:lrbg2}}{=}
        (st)^{\omega - 1} st
        = (st)^\omega.
    \end{equation*}
    By \cref{prop:T-cetr-idem}, $\TT$ has central idempotents, which implies
    $sx \sim xs$ for all $s \in \SS$ and $x \in E(\SS)$.
    Applying this several times we have,
    \begin{equation*}
        \begin{aligned}[b]
            (st)^\omega
            & =
            (st)^\omega s^\omega t^\omega
            \sim
            t^\omega (st)^\omega s^\omega
            & (t^\omega \in E(\SS))
            \\
            & =
            t^{\omega-1} t (st)^\omega s^\omega
            \sim
            t^{\omega-1} s^\omega t (st)^\omega
            & (s^\omega \in E(\SS))
            \\
            & =
            t^{\omega-1} s^{\omega-1} (st) (st)^\omega
            \overset{\cref{eq:lrbg1}}{=}
            t^{\omega-1} s^{\omega-1} (st)
            =
            t^{\omega-1} s^{\omega} t
            \sim
            s^{\omega} t^{\omega}
            & (s^\omega \in E(\SS))
        \end{aligned}
            \qedhere
    \end{equation*}
\end{proof}

\subsection{Decomposition into maximal subgroups}
\label{decomposition-into-maximal-subgroups}

Let $\SS$ be a LRBG and $\supp: \SS \to \TT$ its support map.
By \cref{preimages-of-omega-map-are-maximal-subgroups},
both $\SS$ and $\TT$ are disjoint unions of their maximal subgroups:
\begin{equation*}
    \SS = \bigsqcup_{x \in E(\SS)} G_x
    \qqand
    \TT = \bigsqcup_{X \in E(\TT)} G_X,
\end{equation*}
where
\begin{equation*}
    G_{x} = \{ s \in \SS : s^\omega = x \}
    \qqand
    G_{X} = \{ T \in \TT : T^\omega = X \}.
\end{equation*}
We will see that $G_{x} \cong G_{x'}$ whenever $\supp(x) = \supp(x')$,
and that both are isomorphic to $G_{\supp(x)}$.

\begin{prop}
    \label{left-multiplication-induces-group-morphism}
    Let $x, y \in E(\SS)$. Left multiplication by $y$ induces a group morphism
    \begin{eqnarray*}
        \lambda_{y,x} : G_{x} & \to & G_{yx} \\
        \lambda_{y,x}(s) & = & ys,
    \end{eqnarray*}
    If $x, x' \in E(\SS)$ have the same support, then
    $\lambda_{x,x'} : G_{x'} \to G_x$ is an isomorphism with inverse $\lambda_{x,x'}$.
\end{prop}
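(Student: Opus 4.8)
The plan is to extract a single structural identity and let it carry the whole argument. First I would note that since $y \in E(\SS)$ we have $y^\omega = y$, so applying \cref{eq:lrbg2} with the roles $s \mapsto y$ and $t \mapsto s$ gives
\[ y s y = y s y^\omega = y s \qquad \text{for every } s \in \SS. \]
This one collapse immediately yields multiplicativity of $\lambda_{y,x}$ on $G_x$: for $s,t \in G_x$,
\[ \lambda_{y,x}(s)\,\lambda_{y,x}(t) = (ys)(yt) = (ysy)t = (ys)t = y(st) = \lambda_{y,x}(st). \]
In particular, by an easy induction $(ys)^n = y\,s^n$ for all $n \geq 1$.

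Next I would verify that $\lambda_{y,x}$ really lands in $G_{yx}$. Since $E(\SS)$ is a subsemigroup by \cref{E(S)-is-a-LRB}, we have $yx \in E(\SS)$, and by \cref{preimages-of-omega-map-are-maximal-subgroups} it suffices to prove $(ys)^\omega = yx$ for each $s \in G_x$. As $s \in G_x$ we have $s^\omega = x$, hence $s^N = x$ for some $N > 0$; the power identity then gives $(ys)^N = y\,s^N = yx$. Because $yx$ is idempotent and occurs as a positive power of $ys$, it is the unique idempotent power, so $(ys)^\omega = yx$ and thus $ys \in G_{yx}$. Combined with the previous paragraph, $\lambda_{y,x}\colon G_x \to G_{yx}$ is a well-defined group homomorphism: it is multiplicative and sends the identity $x$ of $G_x$ to $\lambda_{y,x}(x) = yx$, the identity of $G_{yx}$.

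For the second assertion, suppose $\supp(x) = \supp(x')$. Since $\supp$ is the quotient map for $\sim$, this means $x \sim x'$, and because $x,x' \in E(\SS)$ this unwinds (as the relations coincide with those for the LRB $E(\SS)$) to the concrete identities $x x' = x$ and $x' x = x'$. Note these also explain the target groups: $\lambda_{x,x'}$ a priori maps into $G_{xx'} = G_x$ and $\lambda_{x',x}$ into $G_{x'x} = G_{x'}$. I would then just compose: for $s \in G_{x'}$,
\[ \lambda_{x',x}\bigl(\lambda_{x,x'}(s)\bigr) = x'(x s) = (x'x)\,s = x'\,s = s, \]
using $x'x = x'$ and that $x'$ is the identity of $G_{x'}$; symmetrically $\lambda_{x,x'} \circ \lambda_{x',x} = \mathrm{id}_{G_x}$ via $xx' = x$. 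Hence $\lambda_{x,x'}$ and $\lambda_{x',x}$ are mutually inverse homomorphisms, so each is an isomorphism. (I read the phrase ``inverse $\lambda_{x,x'}$'' in the statement as a typo for $\lambda_{x',x}$.)

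I do not expect a genuine obstacle: the entire argument rests on the identity $ysy = ys$, after which multiplicativity, the power formula $(ys)^n = y s^n$, and the computation $(ys)^\omega = yx$ are all immediate. The only step needing mild care is the translation of $\supp(x) = \supp(x')$ into the idempotent identities $xx' = x$ and $x'x = x'$; once those are in hand, the inverse check is a one-line computation.
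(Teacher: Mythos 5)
Your proof is correct and takes essentially the same approach as the paper: both arguments rest on the collapse $ysy = ys$ coming from (LRBG2) to obtain multiplicativity and the computation $(ys)^\omega = y s^\omega = yx$ (the paper does your induction step inline by collapsing $ysys\cdots ys$), and both translate equal support into the idempotent identities $xx' = x$, $x'x = x'$ to verify that the two maps are mutually inverse. Your reading of ``inverse $\lambda_{x,x'}$'' as a typo for $\lambda_{x',x}$ also matches the paper's own proof, which concludes that $\lambda_{x,x'}$ is the inverse of $\lambda_{x',x}$.
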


\begin{proof}
    Let $x, y \in E(\SS)$.
    The fact that $\lambda_{y,x}$ is a group morphism follows from property \cref{eq:lrbg2}:
    indeed, for all $s \in G_x$,
    \[
        (y s)^\omega = y s y s \cdots y s \overset{\cref{eq:lrbg2}}{=} y s^\omega = y x
    \]
    so that $y G_x \subseteq G_{yx}$; and for all $s,t \in G_x$,
    \[
        (y s) (y t) \overset{\cref{eq:lrbg2}}{=} y s t = y (st).
    \]

    If $x,x' \in E(\SS)$ have the same support,
    then $x x' = x$ and $x' x = x'$ and so
    \[
        (\lambda_{x,x'} \circ \lambda_{x',x})(s) =
        x x' s = x s = s \text{ for all } s \in G_x
    \]
    and
    \[
        (\lambda_{x',x} \circ \lambda_{x,x'})(s') =
        x' x s' = x' s' = s' \text{ for all } s' \in G_{x'},
    \]
    which proves that $\lambda_{x,x'}$ is the inverse of $\lambda_{x',x}$.
    Thus, $G_{x} \cong G_{x'}$ whenever $\supp(x) = \supp(x')$.
\end{proof}

\begin{remark}
    In general, \textbf{right} multiplication by $y$ \textbf{does not} give a group morphism $G_x \to G_{xy}$.
    In fact $G_x \, y = \{ sy : s \in G_x \}$ might fail to be fully contained in $G_{xy}$.
    See \cref{ex:not-strict}.
\end{remark}

\begin{prop}
    For each $x \in E(\SS)$, the restriction of $\supp: \SS \to \TT$ to $G_{x}$
    induces an isomorphism:
    \begin{equation}
        \supp\big|_{G_x} : G_x \xrightarrow{~\sim~} G_{\supp(x)}.
    \end{equation}
\end{prop}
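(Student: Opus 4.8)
The plan is to verify, in turn, that $\supp|_{G_x}$ is a well-defined group homomorphism landing in $G_{\supp(x)}$, that it is injective, and that it is surjective.

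First I would check well-definedness and the homomorphism property. Since $\supp$ is a morphism of semigroups by \cref{prop:T-cetr-idem}, it suffices to see that it carries $G_x$ into $G_{\supp(x)}$. For $s \in G_x$ we have $s^\omega = x$, and because $\supp$ is a semigroup morphism one has $\supp(s)^\omega = \supp(s^\omega) = \supp(x)$; hence $\supp(s) \in G_{\supp(x)}$. Restricting the morphism $\supp$ to the group $G_x$ then gives a group homomorphism $G_x \to G_{\supp(x)}$, sending the identity $x$ of $G_x$ to the idempotent $\supp(x)$, which is the identity of $G_{\supp(x)}$.

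Next I would establish injectivity directly from the defining relation of the quotient. If $s, s' \in G_x$ satisfy $\supp(s) = \supp(s')$, then $s \sim s'$, meaning $s^\omega s' = s$ and $s'^\omega s = s'$. As $s^\omega = s'^\omega = x$, these become $x s' = s$ and $x s = s'$. But $s, s' \in G_x$ together with \cref{eq:lrbg1} give $x s = s^\omega s = s$ and $x s' = s'$, so $s = x s' = s'$. Thus $\supp|_{G_x}$ is injective.

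Finally, for surjectivity, I would combine an arbitrary lift with the left-multiplication isomorphism of \cref{left-multiplication-induces-group-morphism}. Given $T \in G_{\supp(x)}$, so $T^\omega = \supp(x)$, I choose (using surjectivity of $\supp$) some $s' \in \SS$ with $\supp(s') = T$ and set $x' = s'^\omega$; then $\supp(x') = \supp(s')^\omega = T^\omega = \supp(x)$, so $x$ and $x'$ have the same support. By \cref{left-multiplication-induces-group-morphism}, $\lambda_{x,x'} : G_{x'} \to G_x$ is an isomorphism, so $s := x s' = \lambda_{x,x'}(s')$ lies in $G_x$, and $\supp(s) = \supp(x)\,\supp(s') = T^\omega T = T$, the last step being \cref{eq:lrbg1} in the LRBG $\TT$. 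Each step is short; the only point needing care is this last one, namely producing a preimage inside the correct maximal subgroup $G_x$ rather than merely somewhere in $\SS$, and I expect that to be the main (though mild) obstacle.
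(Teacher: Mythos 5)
Your proof is correct and follows essentially the same route as the paper's: the containment $\supp(G_x) \subseteq G_{\supp(x)}$ (via $\supp(s)^\omega = \supp(s^\omega) = \supp(x)$) and the injectivity argument (using that $x = s^\omega$ is the identity of $G_x$ together with the definition of $\sim$) are exactly the paper's two steps. The only difference is that you also spell out surjectivity --- lifting $T \in G_{\supp(x)}$ to some $s' \in G_{x'}$ with $\supp(x') = \supp(x)$ and transporting it into $G_x$ via $\lambda_{x,x'}$, then checking $\supp(xs') = T^\omega T = T$ --- a step the paper compresses into ``since $\supp$ is a semigroup morphism, we need only prove'' containment and injectivity; your explicit version of this step is correct and fills in that elision.
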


\begin{proof}
    Since $\supp$ is a semigroup morphism,
    we need only prove
    $\supp(G_{x})$ is contained in $G_{\supp(x)}$
    and that $\supp$ is injective when restricted to $G_{x}$.

    If $s \in G_{x}$, then $s^\omega = x$ and
    $\supp(s)^\omega = \supp(s^\omega) = \supp(x)$,
    which proves $\supp(s) \in G_{\supp(x)}$.

    Suppose $s, t \in G_{x}$ with $\supp(s) = \supp(t)$.
    Then $t = s^\omega t$ because $s^\omega = x$ is the identity element of $G_{x}$.
    But $s^\omega t = s$ because $s \sim t$. Hence, $\supp|_{G_{x}}$ is injective.
\end{proof}

The following result shows that the morphisms $\lambda_{x,y}: G_{y} \to G_x$
connect the congruence $\sim$ on $\SS$ with its restriction to $E(\SS)$.

\begin{lemma} \label{p:lambda-cong}
    Let $s,t \in \SS$, $x = s^\omega$ and $y = t^\omega$.
    Then,
    \[
        s \sim t
        \qqiff
        x \sim y \text{ and } \lambda_{y,x}(s) = t
        \qqiff
        x \sim y \text{ and } \lambda_{x,y}(t) = s.
    \]
\end{lemma}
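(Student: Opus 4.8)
The plan is to unwind \cref{eq:equiv-S} and observe that the two defining relations of $\sim$ are literally the two $\lambda$-conditions in the statement. Writing $x = s^\omega$ and $y = t^\omega$, the relation $s \sim t$ asserts precisely that $xt = s$ and $ys = t$; since $\lambda_{x,y}(t) = xt$ and $\lambda_{y,x}(s) = ys$, these read as $\lambda_{x,y}(t) = s$ and $\lambda_{y,x}(s) = t$. Thus the genuine content of the lemma is that, once $x \sim y$ is known, each of these single equalities forces the other, and $s \sim t$ is recovered.

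First I would establish that $s \sim t$ implies $x \sim y$. This is the only step that is not a pure substitution, and the cleanest route is via the support map: since $\supp$ is the quotient of $\SS$ by $\sim$, we have $s \sim t$ if and only if $\supp(s) = \supp(t)$, and because $\supp$ is a semigroup morphism it intertwines $s \mapsto s^\omega$ with the idempotent map on $\TT$, giving $\supp(s^\omega) = \supp(s)^\omega$. Hence $\supp(x) = \supp(s)^\omega = \supp(t)^\omega = \supp(y)$, which is exactly $x \sim y$. (Alternatively, one could take a common power $N$ with $s^N = x$ and $t^N = y$ and invoke that $\sim$ is a congruence.)

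Next, assuming $x \sim y$ — equivalently $xy = x$ and $yx = y$ — I would show that each $\lambda$-condition recovers the full relation $s \sim t$. Given $ys = t$, I compute $xt = x(ys) = (xy)s = xs = s^\omega s = s$, using $xy = x$ and \cref{eq:lrbg1}; together with $ys = t$ this is exactly $s \sim t$. Symmetrically, given $xt = s$, I compute $ys = y(xt) = (yx)t = yt = t^\omega t = t$, again recovering $s \sim t$. Conversely, $s \sim t$ yields both $xt = s$ and $ys = t$ directly from \cref{eq:equiv-S}, and we have already seen it yields $x \sim y$. One can also package the equivalence of the two $\lambda$-conditions more conceptually: by \cref{left-multiplication-induces-group-morphism}, when $x \sim y$ the maps $\lambda_{y,x} : G_x \to G_y$ and $\lambda_{x,y} : G_y \to G_x$ are mutually inverse isomorphisms, so $\lambda_{y,x}(s) = t$ holds if and only if $s = \lambda_{x,y}(t)$.

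Since the proof is essentially a matter of matching the defining relations of $\sim$ to the maps $\lambda_{x,y}$, I do not expect a serious obstacle; the only nonformal input is the implication $s \sim t \Rightarrow x \sim y$, handled by the identity $\supp(s^\omega) = \supp(s)^\omega$. The one point of care is the bookkeeping of maximal subgroups: it is the hypothesis $x \sim y$ (hence $yx = y$) that makes $\lambda_{y,x}$ land in $G_{yx} = G_y$, so that the equality $\lambda_{y,x}(s) = t$ compares elements of the same group.
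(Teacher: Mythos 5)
Your proof is correct and follows essentially the same route as the paper: both derive $x \sim y$ from $s \sim t$ via the fact that $\supp$ is a semigroup morphism (so $\supp(s^\omega) = \supp(s)^\omega$), and both recover $s \sim t$ from a single $\lambda$-condition using that $\lambda_{y,x}$ and $\lambda_{x,y}$ are mutually inverse when $x \sim y$. The only cosmetic difference is that where the paper cites \cref{left-multiplication-induces-group-morphism} for $s^\omega t = \lambda_{y,x}^{-1}(t) = s$, you unfold that inverse property into the inline computation $xt = x(ys) = (xy)s = xs = s$, which is the same calculation that proves the cited result.
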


\begin{proof}
    The last two conditions are equivalent since $\lambda_{x,y}^{-1}
    = \lambda_{y,x}$ whenever $x \sim y$.
    If $s \sim t$, then $\lambda_{y,x}(s) = ys = t^\omega s = t$ and
    $x \sim y$ because $\supp$ is a semigroup morphism, i.e.,
    \begin{equation*}
        \supp(x) = \supp(s^\omega) = \supp(s)^\omega = \supp(t)^\omega = \supp(t^\omega) = \supp(y).
    \end{equation*}
    If $x \sim y$ and $\lambda_{y,x}(s) = t$, then $t^\omega
    s = y s = \lambda_{y,x}(s) = t$ and $s^\omega t = \lambda_{x,y}(t)
    = \lambda_{y,x}^{-1}(t) = s$, so $s \sim t$.
\end{proof}

\subsection{$\B{R}$-basis of $\kk \SS$}

Recall that a $\B{Q}$-basis of the LRB algebra $\kk E(\SS)$ is defined by the formula
\begin{equation*}
    \B{Q}^\circ_x = \B{H}_x \ee^\circ_{\supp(x)}
    \text{~for all~} x \in E(\SS),
\end{equation*}
where $\{\ee^\circ_{X}\}$ is a complete system of primitive orthogonal idempotents
for the LRB algebra $\kk E(\SS)$.
We will define a basis of the LRBG algebra $\kk \SS$ by the formula
\begin{equation*}
    \B{R}_s := \B{H}_{s} \B{Q}^\circ_{s^\omega}
    = \B{H}_{s} \B{H}_{s^\omega} \ee^\circ_{\supp(s^\omega)}
    \overset{\cref{eq:lrbg1}}{=}
    \B{H}_s \ee^\circ_{\supp(s^\omega)}
    \text{~for all~} s \in \SS.
\end{equation*}
Note that $\B{R}_x = \B{Q}^\circ_x$ for $x \in E(\SS)$ because in this case $x^\omega = x$.
Hence, the $\B{R}$-basis of $\kk \SS$ contains the $\B{Q}$-basis of $\kk E(\SS)$.
The next result proves that this is indeed a basis.

\begin{prop}
    \label{prop:LRBG-R-basis}
    Let $\SS$ be a LRBG and $\supp: \SS \to \TT$ its support map.
    Fix a complete system of primitive orthogonal idempotents
    $\{\ee^\circ_{X}\}_{X \in E(\TT)}$ for the LRB algebra $\kk E(\SS)$.
    Define, for $s \in \SS$,
    \begin{equation}
        \label{eq:LRBG-R-basis}
        \B{R}_s = \B{H}_s \ee^\circ_{\supp(s^\omega)}.
    \end{equation}
    Then $\{\B{R}_s\}_{s \in \SS}$ is a basis of $\kk \SS$.
    More precisely, if the elements
    \begin{equation}
        \label{eq:u-homogeneous-section}
        \uu_{X} = \sum_{\substack{x \in E(\SS) \\ \supp(x) = X}} c_x \B{H}_{x}
    \end{equation}
    define the homogeneous section corresponding to
    $\{\ee^\circ_{X}\}_{X \in E(\TT)}$ as in \cref{thm:CSoPOIforLRB},
    then
    \begin{equation}
        \label{LRBG-H-basis-to-R-basis}
        \B{H}_{s} = \sum_{\substack{x \in E(\SS) \\ \supp(s^\omega) \leq \supp(x)}} c_{x} \B{R}_{sx}
    \end{equation}
    and for all $s, t \in \SS$,
    \begin{equation}
        \label{LRBG-HR-product-formula}
        \B{H}_{s} \B{R}_{t} =
        \begin{cases}
            \B{R}_{st}, & \text{if~} \supp(s^\omega) \leq \supp(t^\omega) \\
            0,          & \text{otherwise.}
        \end{cases}
    \end{equation}
\end{prop}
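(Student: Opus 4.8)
The plan is to deduce every assertion from a single \emph{support annihilation} claim that extends \cref{lem:Saliola's} from the LRB $E(\SS)$ to all of $\SS$: for $s \in \SS$ and $X \in E(\TT)$,
\[
    \supp(s^\omega) \not\leq X \implies \B{H}_s \ee^\circ_X = 0.
\]
To prove this I would first record that $s = s\,s^\omega$, since $s^\omega$ is the identity of the group $G_{s^\omega}$ containing $s$ (equivalently, writing $s^\omega = s^N$ one has $s\,s^\omega = s^{N+1} = s^\omega s = s$ by \cref{eq:lrbg1}). Hence $\B{H}_s \ee^\circ_X = \B{H}_s\bigl(\B{H}_{s^\omega}\ee^\circ_X\bigr)$, and because $s^\omega \in E(\SS)$ with $\supp(s^\omega)\not\leq X$, \cref{lem:Saliola's} applied to the LRB $E(\SS)$ gives $\B{H}_{s^\omega}\ee^\circ_X = 0$. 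This reduction to the idempotent case is the heart of the argument; once it is in hand the remaining steps are bookkeeping.

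Next I would prove the product formula \cref{LRBG-HR-product-formula}. Expanding, $\B{H}_s\B{R}_t = \B{H}_s\B{H}_t\ee^\circ_{\supp(t^\omega)} = \B{H}_{st}\ee^\circ_{\supp(t^\omega)}$, while $\B{R}_{st} = \B{H}_{st}\ee^\circ_{\supp((st)^\omega)}$. By \cref{LRBG-supp-and-omega}, $\supp((st)^\omega) = \supp(s^\omega)\vee\supp(t^\omega)$. When $\supp(s^\omega)\leq\supp(t^\omega)$ this join is $\supp(t^\omega)$, so $\B{H}_s\B{R}_t = \B{R}_{st}$; otherwise $\supp((st)^\omega)\not\leq\supp(t^\omega)$, and the support annihilation claim (with $u = st$ and $X = \supp(t^\omega)$) forces $\B{H}_{st}\ee^\circ_{\supp(t^\omega)} = 0$.

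For the change of basis \cref{LRBG-H-basis-to-R-basis}, I would start from the right-hand side. For $x \in E(\SS)$ with $\supp(s^\omega)\leq\supp(x)$, the same support computation gives $\supp((sx)^\omega) = \supp(x)$, so $\B{R}_{sx} = \B{H}_{sx}\ee^\circ_{\supp(x)} = \B{H}_s\B{Q}^\circ_x$. Summing and grouping by $X = \supp(x)$, and using $\sum_{\supp(x)=X} c_x\B{Q}^\circ_x = \uu_X\ee^\circ_X = \ee^\circ_X$ (the last equality obtained by right-multiplying \cref{eq:saliolarecursion} by $\ee^\circ_X$ and invoking orthogonality), the right-hand side collapses to $\B{H}_s\sum_{X\geq\supp(s^\omega)}\ee^\circ_X$. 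The support annihilation claim kills every term with $X\not\geq\supp(s^\omega)$, so this equals $\B{H}_s\sum_{X\in E(\TT)}\ee^\circ_X = \B{H}_s$ by completeness of $\{\ee^\circ_X\}$. Finally, \cref{LRBG-H-basis-to-R-basis} expresses each $\B{H}_s$ as a $\kk$-combination of the $\B{R}_t$, so $\{\B{R}_s\}_{s\in\SS}$ spans $\kk\SS$; as this spanning family is indexed by $\SS$ and $\dim_\kk \kk\SS = \abs{\SS}$, it is automatically linearly independent, hence a basis.

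The main obstacle is isolating and proving the support annihilation claim in exactly the right form. Everything downstream is formal, but the claim hinges on recognizing that right multiplication by $s^\omega$ fixes $s$, which is what lets one transfer \cref{lem:Saliola's} from $E(\SS)$ to $\SS$; the compatibility $\supp((st)^\omega)=\supp(s^\omega)\vee\supp(t^\omega)$ supplied by \cref{LRBG-supp-and-omega} then handles the interaction between the $\omega$-map and the support lattice that both formulas require.
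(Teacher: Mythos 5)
Your proof is correct and takes essentially the same route as the paper's: both arguments reduce everything to \cref{lem:Saliola's} on $\kk E(\SS)$ by inserting an idempotent factor (the paper uses \cref{eq:lrbg2} to write $\B{H}_{s}\B{H}_t = \B{H}_{st}\B{H}_{s^\omega}$, where you use $u\,u^\omega = u$ with $u = st$), and both invoke \cref{LRBG-supp-and-omega} to identify $\supp((st)^\omega)$ with the join $\supp(s^\omega)\vee\supp(t^\omega)$. Your explicitly isolated ``support annihilation'' lemma and your reverse-direction verification of \cref{LRBG-H-basis-to-R-basis} (the paper instead expands $1_{\kk\SS} = \sum_x c_x \B{R}_x$ and left-multiplies by $\B{H}_s$) are cosmetic reorganizations of the same argument, not a different proof.
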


\begin{proof}
    To prove that we have a basis, it suffices to prove
    \cref{LRBG-H-basis-to-R-basis} since this implies $\{\B{R}_s\}_{ s \in \SS}$
    spans $\kk\SS$.

    Since $\{\ee^\circ_{X}\}_{X \in E(\TT)}$ is a complete system of primitive orthogonal idempotents
    and $\uu_{X} \ee^\circ_{X} = \ee^\circ_{X}$,
    \begin{equation*}
        1_{\kk\SS}
        = \sum_{X} \ee^\circ_{X}
        = \sum_{X} \uu_{X} \ee^\circ_{X}
        \overset{\cref{eq:u-homogeneous-section}}{=}
        \sum_{x \in E(\SS)} c_x \B{H}_{x} \ee^\circ_{\supp(x)}
        =
        \sum_{x \in E(\SS)} c_x \B{R}_{x}.
    \end{equation*}
    Thus,
    \begin{equation*}
        \B{H}_{s}
        = \sum_{x \in E(\SS)} c_x \B{H}_{s} \B{R}_{x}
    \end{equation*}
    so that it suffices to prove
    \begin{equation*}
        \B{H}_{s} \B{R}_{x} =
        \begin{cases}
            \B{R}_{sx}, & \text{if~} \supp(s^\omega) \leq \supp(x), \\
            0, & \text{otherwise}.
        \end{cases}
    \end{equation*}
    But this follows from \cref{LRBG-HR-product-formula} by setting $t = x$
    since $x^\omega = x$, so we turn to proving \cref{LRBG-HR-product-formula}.

    Let $s, t \in \SS$.
    By \cref{eq:lrbg2}, we have $\B{H}_{s t} = \B{H}_{s t s^\omega}$
    so that
    \begin{equation*}
        \B{H}_{s} \B{R}_{t}
        =
        \B{H}_{s} \B{H}_{t} \ee^\circ_{\supp(t^\omega)}
        =
        \B{H}_{st} \B{H}_{s^\omega} \ee^\circ_{\supp(t^\omega)}.
    \end{equation*}
    By \cref{lem:Saliola's},
    this is $0$ if $\supp(s^\omega) \not\leq \supp(t^\omega)$.
    Suppose $\supp(s^\omega) \leq \supp(t^\omega)$.
    By \cref{LRBG-supp-and-omega},
    we have
    $\supp\left((st)^\omega\right) = \supp(s^\omega) \supp(t^\omega)$,
    which equals $\supp(t^\omega)$ because
    $\supp(s^\omega) \leq \supp(t^\omega)$.
    Thus,
    \begin{equation*}
        \B{H}_{s} \B{R}_{t}
        = \B{H}_{s} \B{H}_{t} \ee^\circ_{\supp(t^\omega)}
        = \B{H}_{st} \ee^\circ_{\supp((st)^\omega)}
        = \B{R}_{st}.
        \qedhere
    \end{equation*}
\end{proof}

\subsection{$\B{R}$-basis of $\kk \TT$ and semisimplicity}
\label{sec:R-basis-for-T}

Similarly, one has a $\B{R}$-basis of $\kk \TT$
given by the formula
\begin{equation*}
    \B{R}_{S} = \B{H}_{S} \B{Q}^\circ_{S^\omega}
    \text{~for all~} S \in \TT,
\end{equation*}
where $\{\B{Q}^\circ_{X}\}_{X \in E(\TT)}$ is the $\B{Q}$-basis of $\kk E(\TT)$ defined
in \cref{sec:LRB-Q-basis} as
\[
    \B{Q}^\circ_X = \sum_{Y \geq X} \mu(X,Y) \, \B{H}_Y \text{ for all } X \in E(\TT).
\]
Notice that the $\B{R}$-basis of $\kk \TT$ contains the $\B{Q}$-basis of $\kk E(\TT)$ since
for all $X \in E(\TT)$, we have
\begin{equation}
    \label{R-basis-contains-Q0-basis}
    \B{R}_X = \B{H}_X \B{Q}^\circ_X = \sum_{Y \geq X} \mu(X,Y) \B{H}_{X \vee Y} = \sum_{Y \geq X} \mu(X,Y) \B{H}_{Y} = \B{Q}^\circ_X.
\end{equation}
This basis has the following product formula,
which will allow us to explicitly identify
$\kk \TT$ with a direct product of group algebras.

\begin{prop}
    \label{T-R-basis-product-formula}
    Let $\TT$ be a LRBG with central idempotents.
    The elements $\{ \B{R}_S \}_{S \in \TT}$ form a basis of $\kk\TT$.
    Moreover, they satisfy
    \[
        \B{R}_S \B{R}_T =
        \begin{cases}
            \B{R}_{ST},      & \text{if~} S^\omega = T^\omega, \\
            0,               & \text{otherwise,}
        \end{cases}
        \qqand
        \B{H}_{S} \B{R}_{T} =
        \begin{cases}
            \B{R}_{ST},      & \text{if~} S^\omega \leq T^\omega, \\
            0,               & \text{otherwise.}
        \end{cases}
    \]
\end{prop}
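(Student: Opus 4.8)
The plan is to deduce the basis claim and the formula for $\B{H}_S \B{R}_T$ directly from \cref{prop:LRBG-R-basis}, and then to establish the formula for $\B{R}_S \B{R}_T$ by a short Möbius-inversion computation built on top of the $\B{H}_S \B{R}_T$ formula.

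First I would observe that, because $\TT$ has central idempotents, the LRB $E(\TT)$ is a commutative band, i.e.\ a semilattice, and hence coincides with its own support lattice; consequently the relation $\sim$ is trivial on $\TT$ and the support map $\supp\colon \TT \to \TT$ is the identity. In particular the CSoPOI $\{\ee^\circ_X\}$ of $\kk E(\TT)$ furnished by \cref{thm:CSoPOIforLRB} is the (unique) $\B{Q}$-basis $\{\B{Q}^\circ_X\}$ of \cref{sec:LRB-Q-basis}, so that $\B{R}_S = \B{H}_S \B{Q}^\circ_{S^\omega} = \B{H}_S \ee^\circ_{\supp(S^\omega)}$ agrees with the definition used in \cref{prop:LRBG-R-basis}. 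Applying that proposition to the LRBG $\TT$ then immediately yields both that $\{\B{R}_S\}_{S \in \TT}$ is a basis and the stated formula for $\B{H}_S \B{R}_T$, the condition $\supp(S^\omega) \leq \supp(T^\omega)$ becoming simply $S^\omega \leq T^\omega$.

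For the product $\B{R}_S \B{R}_T$, I would write $\B{R}_S \B{R}_T = \B{H}_S\,(\B{Q}^\circ_{S^\omega} \B{R}_T)$ and expand $\B{Q}^\circ_{S^\omega} = \sum_{Y \geq S^\omega} \mu(S^\omega, Y)\,\B{H}_Y$ in the $\B{H}$-basis of $\kk E(\TT)$. The formula for $\B{H}_Y \B{R}_T$ just established kills every term with $Y \not\leq T^\omega$ and turns the rest into $\B{R}_{YT}$, leaving a sum over the interval $S^\omega \leq Y \leq T^\omega$. The key simplification is that for an idempotent $Y$ with $Y \leq T^\omega$ one has $YT = (Y T^\omega)T = T^\omega T = T$, since $Y T^\omega = T^\omega$ in the semilattice $E(\TT)$ and $T^\omega$ is the identity of $G_{T^\omega}$; hence $\B{R}_{YT} = \B{R}_T$ for every $Y$ in the interval. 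Factoring out $\B{R}_T$ and invoking the Möbius identity $\sum_{S^\omega \leq Y \leq T^\omega}\mu(S^\omega, Y) = \delta_{S^\omega, T^\omega}$ collapses the sum to $\B{Q}^\circ_{S^\omega}\B{R}_T = \delta_{S^\omega, T^\omega}\,\B{R}_T$. Multiplying on the left by $\B{H}_S$ and using the $\B{H}_S\B{R}_T$ formula once more (the hypothesis $S^\omega \leq T^\omega$ holds automatically when $S^\omega = T^\omega$) gives $\B{R}_S \B{R}_T = \delta_{S^\omega, T^\omega}\,\B{R}_{ST}$, as desired.

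The proof is not hard once \cref{prop:LRBG-R-basis} is in hand; the only genuinely delicate point is the interaction between the order condition produced by the $\B{H}_S\B{R}_T$ formula and the Möbius collapse. One must confirm that the surviving terms are indexed exactly by the interval $[S^\omega, T^\omega]$ and that each contributes the same basis element $\B{R}_T$. Once the identity $YT = T$ for $Y \leq T^\omega$ is verified, everything reduces to the standard fact that $\sum_{Y \in [a,b]}\mu(a,Y)$ vanishes unless $a = b$, so I expect no real obstacle beyond this bookkeeping.
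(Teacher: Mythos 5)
Your proof is correct, and it diverges from the paper's in one substantive way. For the basis claim and the $\B{H}_S\B{R}_T$ formula you do exactly what the paper does: apply \cref{prop:LRBG-R-basis} with $\SS = \TT$, after noting (which you spell out more carefully than the paper) that central idempotents make $E(\TT)$ a semilattice and $\sim$ trivial, so the support map is the identity and the $\B{Q}^\circ$-basis is the unique CSoPOI of $\kk E(\TT)$; this is precisely what makes the definition $\B{R}_S = \B{H}_S \B{Q}^\circ_{S^\omega}$ fit the hypotheses of that proposition. Where you differ is the formula for $\B{R}_S\B{R}_T$. The paper argues via centrality head-on: each $\B{Q}^\circ_X$ is a linear combination of the central elements $\B{H}_Y$ with $Y \in E(\TT)$, so $\B{Q}^\circ_{S^\omega}$ commutes past $\B{H}_T$, and then orthogonality $\B{Q}^\circ_{S^\omega}\B{Q}^\circ_{T^\omega} = \delta_{S^\omega,T^\omega}\,\B{Q}^\circ_{S^\omega}$ together with $(ST)^\omega = S^\omega T^\omega$ finishes in one line. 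You instead expand $\B{Q}^\circ_{S^\omega} = \sum_{Y \geq S^\omega}\mu(S^\omega,Y)\B{H}_Y$, apply the $\B{H}_Y\B{R}_T$ rule term by term, use the absorption $YT = Y T^\omega T = T^\omega T = T$ for idempotent $Y \leq T^\omega$, and collapse the interval sum with $\sum_{S^\omega \leq Y \leq T^\omega}\mu(S^\omega,Y) = \delta_{S^\omega,T^\omega}$; all of these steps check out, including the empty-sum case $S^\omega \not\leq T^\omega$. Your route is more computational but buys something: the $\B{R}\B{R}$ rule is deduced purely from left-multiplication facts (the $\B{H}\B{R}$ rule plus the semilattice structure), with centrality entering only through the preliminary identifications. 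The paper's argument is shorter and makes visible exactly what centrality of idempotents is for — letting the $\B{Q}^\circ$'s commute — which is the conceptual point of working in $\TT$ rather than $\SS$.
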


\begin{proof}
    The first statement follows by applying \cref{prop:LRBG-R-basis} with $\SS = \TT$,
    or by showing directly that the $\B{H}$-basis is in their span
    (compare with \cref{LRBG-H-basis-to-R-basis}):
    \[
        \B{H}_S =
        \B{H}_S \B{H}_{S^\omega} =
        \B{H}_S \sum_{Y \in E(\TT) \,:\, Y \geq S^\omega} \B{R}_Y =
        \sum_{Y \in E(\TT) \colon Y \geq S^\omega} \B{R}_{YS},
    \]
    where in the last step we used $(YS)^\omega = Y S^\omega = Y$, which implies
    $\B{H}_S \B{R}_Y = \B{H}_S \B{H}_Y \B{R}_Y = \B{H}_{YS} \B{R}_Y = \B{R}_{YS}$.

    For the first product formula, note that $\B{Q}^\circ_{X}$, for $X \in E(\TT)$, are
    central since $\TT$ has central idempotents and $\B{Q}^\circ_{X}$ is a linear
    combination of idempotents (because $\B{Q}^\circ_{X} \in \kk E(\TT)$). Thus,
    \[
        \B{R}_S \B{R}_T =
        (\B{H}_S \B{Q}^\circ_{S^\omega}) (\B{H}_T \B{Q}^\circ_{T^\omega}) =
        \B{H}_S \B{H}_T \B{Q}^\circ_{S^\omega} \B{Q}^\circ_{T^\omega} =
        \begin{cases}
            \B{H}_{ST} \B{Q}^\circ_{S^\omega} & \text{if } S^\omega = T^\omega,\\
            0                           & \text{if } S^\omega \neq T^\omega.
        \end{cases}
    \]
    If $S^\omega = T^\omega$, then $(ST)^\omega = S^\omega T^\omega = T^\omega$
    by \cref{LRBG-supp-and-omega}, from which it follows that
    \begin{equation*}
        \B{H}_{ST} \B{Q}^\circ_{S^\omega} = \B{H}_{ST} \B{Q}^\circ_{(ST)^\omega} = \B{R}_{ST}.
    \end{equation*}
    The second product formula follows directly from \cref{LRBG-HR-product-formula}.
\end{proof}

\begin{cor}
    \label{cor:semisimplicity-of-kT}
    Let $\TT$ be a LRBG with central idempotents
    and let $\{ \B{R}_{S} \}_{S \in \TT}$ be the $\B{R}$-basis of $\kk \TT$.
    For each $X \in E(\TT)$, the map
    $\kk G_{X} \xrightarrow{} \kk \TT \B{R}_{X}: \B{H}_{S} \mapsto \B{H}_{S} \B{R}_{X}$
    is an isomorphism of algebras, and
    \begin{equation}\label{eq:T-prod-gps}
        \kk \TT
        = \prod_{X \in E(\TT)} \kk \TT \B{R}_{X}
        \cong \prod_{X \in E(\TT)} \kk G_{X}.
    \end{equation}
\end{cor}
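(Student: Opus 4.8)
The plan is to recognize $\{\B{R}_X\}_{X \in E(\TT)}$ as a complete system of orthogonal central idempotents of $\kk\TT$ and then appeal to the standard fact that an algebra equipped with such a system decomposes as the direct product of the two-sided ideals it cuts out. Concretely, once I know that the $\B{R}_X$ (for $X \in E(\TT)$) are central, pairwise orthogonal, idempotent, and sum to $1$, it follows formally that $\kk\TT = \prod_{X} \kk\TT\,\B{R}_X$ with each $\kk\TT\,\B{R}_X$ a subalgebra having unit $\B{R}_X$; the remaining work is then to identify this ideal with $\kk G_X$.

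For the first step I would read off the needed properties from \cref{T-R-basis-product-formula}. Since $X^\omega = X$ and $Y^\omega = Y$ for $X,Y \in E(\TT)$, the product formula gives $\B{R}_X\B{R}_Y = \delta_{X,Y}\,\B{R}_X$, so the $\B{R}_X$ are orthogonal idempotents; and they are central because $\B{R}_X = \B{Q}^\circ_X$ lies in $\kk E(\TT)$, is a linear combination of idempotents, and $\TT$ has central idempotents (as already used in the proof of \cref{T-R-basis-product-formula}). For completeness I would specialize the expansion $\B{H}_S = \sum_{Y \ge S^\omega}\B{R}_{YS}$ established in that same proof to the case where $S$ is the identity of $\TT$, whose support is the bottom element $\hat{0}$ of the lattice $E(\TT)$; then every $Y \in E(\TT)$ satisfies $Y \ge \hat{0}$ and $Y\hat{0} = Y$, giving $1 = \B{H}_{\hat{0}} = \sum_{Y \in E(\TT)}\B{R}_Y$.

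For the isomorphism I would first pin down the ideal. Using $\B{H}_S\B{R}_X = \B{R}_{SX}$ when $S^\omega \le X$ (and $0$ otherwise), together with the fact that $T \mapsto T^\omega$ is a semigroup morphism on $\TT$ (\cref{LRBG-supp-and-omega}), one gets $(SX)^\omega = S^\omega X = X$, so $SX \in G_X$; conversely $\B{R}_T = \B{H}_T\B{R}_X$ for every $T \in G_X$. Hence $\kk\TT\,\B{R}_X = \mathrm{span}\{\B{R}_T : T \in G_X\}$, and the map $\kk G_X \to \kk\TT\,\B{R}_X$, $\B{H}_S \mapsto \B{H}_S\B{R}_X = \B{R}_S$, sends the basis $\{\B{H}_S\}_{S \in G_X}$ bijectively to the basis $\{\B{R}_S\}_{S \in G_X}$, so it is a linear isomorphism. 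It is multiplicative because for $S,T \in G_X$ we have $S^\omega = T^\omega = X$, so $\B{R}_S\B{R}_T = \B{R}_{ST}$ by \cref{T-R-basis-product-formula}, while $ST \in G_X$ forces $\B{H}_{ST}\B{R}_X = \B{R}_{ST}$; and it is unital since the identity $\B{H}_X$ of $\kk G_X$ maps to $\B{R}_X$.

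The individual calculations are all short, so the only real care needed is bookkeeping: ensuring that the subalgebra $\kk E(\TT)$ and $\kk\TT$ share the same identity element (so that $\sum_X \B{R}_X$ is genuinely the unit of $\kk\TT$ and not merely of $\kk E(\TT)$), and verifying that the two-sided ideal $\kk\TT\,\B{R}_X$ coincides with the span of the $\B{R}_T$ over $G_X$ rather than being strictly larger. Both points follow from the $\omega$-morphism property in $\TT$ and the product formulas already in hand, so I expect no essential obstacle beyond organizing these identifications cleanly.
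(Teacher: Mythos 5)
Your proposal is correct and follows essentially the same route as the paper: identify $\{\B{R}_X\}_{X \in E(\TT)}$ as a complete system of orthogonal central idempotents (the paper does this by noting $\B{R}_X = \B{Q}^\circ_X$ is the CSoPOI of $\kk E(\TT)$, central since $\TT$ has central idempotents), and then check that $\B{H}_S \mapsto \B{H}_S \B{R}_X = \B{R}_S$ carries the basis of $\kk G_X$ to a basis of $\kk\TT\B{R}_X$ and is multiplicative via \cref{T-R-basis-product-formula}. Your explicit verifications of completeness (evaluating the $\B{H}$-to-$\B{R}$ expansion at the identity) and of the identification $\kk\TT\B{R}_X = \mathrm{span}\{\B{R}_T : T \in G_X\}$ are details the paper leaves implicit, but they do not constitute a different argument.
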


\begin{proof}
    By \Cref{R-basis-contains-Q0-basis}, we have that
    $\B{R}_{X} = \B{Q}^\circ_{X}$ for all $X \in E(\TT)$.
    Thus, $\{ \B{R}_{X} \}_{X \in E(\TT)}$ is a complete system of primitive
    orthogonal idempotents for $\kk E(\TT)$ which are central in $\kk \TT$
    since $\TT$ has central idempotents. Thus, as algebras, we have
    $\kk \TT = \prod_{X \in E(\TT)} \kk \TT \B{R}_{X}$.

    Let $\varphi: \kk G_{X} \xrightarrow{} \kk \TT \B{R}_{X}$
    be the map defined on the $\B{H}$-basis by $\varphi(\B{H}_{S}) = \B{H}_{S} \B{R}_{X}$.
    By \cref{T-R-basis-product-formula},
    if $S \in G_{X}$, then $\B{H}_{S} \B{R}_{X} = \B{R}_{SX} = \B{R}_{S}$
    so that $\varphi(\B{H}_{S}) = \B{R}_{S}$.
    It follows that $\varphi$ is an isomorphism of vector spaces.
    To see that $\varphi$ is a map of algebras, note that
    for $S, T \in G_{X}$ we have, again by \cref{T-R-basis-product-formula},
    \begin{equation*}
        \varphi(\B{H}_{S} \B{H}_{T})
        = \varphi(\B{H}_{ST})
        = \B{R}_{ST}
        = \B{R}_{S} \B{R}_{T}
        = \varphi(\B{H}_{S}) \varphi(\B{H}_{T}).
        \qedhere
    \end{equation*}
\end{proof}

Note that \cref{cor:semisimplicity-of-kT} implies  that $\supp : \kk\SS \to \kk\TT$ is the semisimple quotient of $\kk
\SS$ described by Margolis and Steinberg in \cite[\S~5.2.1]{ms11HomologySemigroups}.

\section{Idempotents for LRBG algebras}\label{sec:idemforLRBGs}

Let $\SS$ be a LRBG.
In this section we construct complete systems of primitive orthogonal idempotents for the semigroup algebra $\kk \SS$.
Recall that $\kk \SS$ is the $\kk$-algebra with linear basis $\{ \B{H}_s \}_{s \in \SS}$ and multiplication defined on this basis by
\[
    \B{H}_s\B{H}_t = \B{H}_{st}
    \qquad
    \text{for all } s,t \in \SS.
\]
Producing a CSoPOI for $\kk \SS$ is already difficult even when $E(\SS)$ consists of a single element, i.e. when $\SS$ is a group.
We review this case first.

\subsection{Idempotents for group algebras}\label{ss:isotypic}

We restrict to group algebras $\kk G$ where $G$ is finite. The idempotent theory of $\kk G$ is integral to understanding $\kk G$ as an algebra, and thus the representation theory of $G$ itself.

There are no closed formulas describing a complete system of primitive orthogonal idempotents for an arbitrary group algebra $\kk G$
although some case-by-case constructions can be found in the literature.
For example, the idempotent elements inside the Young seminormal basis for the symmetric group algebra $\kk \mathfrak{S}_n$ form a CSoPOI.
See \cite[Theorem~1.5]{Murphy1983}.

The general theory of characters allows us to construct a complete family of orthogonal idempotents that lie in the center $Z(\kk G)$ of the group algebra.
This collection of idempotents is a CSoPOI if and only if $G$ is abelian.
Let $\chi$ be a character of an irreducible complex representation of $G$.
The \defn{isotypic projector} $\B{E}_\chi \in \kk G$ is the element
\[
    \B{E}_\chi := \frac{\chi(1_G)}{|G|} \sum_{g \in G} \ol{\chi(g)} \B{H}_g,
\]
where $\ol{\chi(g)}$ denotes the complex conjugate of $\chi(g) \in \C^{\times}$.
The name isotypic projector comes from the fact that for any $\C G$-module $V$, the action of $\B{E}_{\chi}$ is the projection onto the isotypic component of the irreducible representation with character $\chi$.
In particular,
\[
    \B{E}_\chi \B{E}_\phi = \begin{cases}
        \B{E}_\chi      & \text{if } \chi = \phi,\\
        0               & \text{otherwise.}
    \end{cases} 
\]
When the group $G$ is abelian, there are as many irreducible representations as group elements.
It follows that in this case, the collection of isotypic projectors is the unique complete system of primitive orthogonal idempotents of $\kk G$.

\begin{example}
    Suppose $G = C_n = \{ 1, a, a^2, \cdots, a^{n-1} \}$ is the cyclic group of order $n$, and let $\omega = \omega_n \in \C$ be a primitive $n$-th root of unity.
    Then, the characters of the different irreducible representations of $G$ are $\{ \chi_0,\dots, \chi_{n-1} \}$, where $\chi_j$ is completely determined by $\chi_j(a) = \omega^j$ for all $0 \leq j \leq n-1$.
    Thus,
    \[
        \B{E}_{\chi_j}
        = \frac{1}{n} \Big( \B{H}_1 + \omega^{-j} \B{H}_{a} + \omega^{-2j} \B{H}_{a^2} + \dots +  + \omega^{-(n-1)j} \B{H}_{a^{n-1}} \Big)
        \in  \C G .
    \]
\end{example}

\subsection{Idempotents for semilattice of groups algebras}\label{ss:idemp-lat-gps}

Let $\TT$ be a LRBG with central idempotents.
Recall from \cref{cor:semisimplicity-of-kT} that
$\kk E(\TT)$ admits a basis $\{ \B{Q}^\circ_{X} \}_{X \in E(\TT)}$
such that the map
\begin{eqnarray*}
    \prod_{X \in E(\TT)} \kk G_{X}
    & \xrightarrow{} &
    \kk \TT
\end{eqnarray*}
defined on each $\kk G_{X}$ by $\B{H}_{S} \mapsto \B{H}_{S} \B{Q}^\circ_{X}$ is an
isomorphism of algebras.
By applying this map to a CSoPOI for each of the algebras $\kk G_{X}$,
we obtain a CSoPOI for $\kk \TT$.

\begin{prop}\label{prop:CSoPOI-SemLatGps}
    Let $\TT$ be a LRBG with central idempotents.
    For each $X \in E(\TT)$, fix a complete system of primitive orthogonal idempotents
    $\{ \B{E}_X^{(i)} \}_{i \in I_{X}}$ of the group algebra $\kk G_X$ for some index set $I_{X}$.
    Then, $\{ \B{E}_X^{(i)} \B{Q}^\circ_X \}_{X \in E(\TT), i \in I_{X}}$ is a CSoPOI of $\kk \TT$.
    Moreover, all CSoPOIs arise in this manner.
\end{prop}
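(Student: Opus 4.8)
The plan is to transport the problem across the algebra isomorphism established in \cref{cor:semisimplicity-of-kT}. Recall from \cref{R-basis-contains-Q0-basis} that $\B{R}_X = \B{Q}^\circ_X$ for each $X \in E(\TT)$, so that corollary provides an isomorphism of algebras
\[
    \Phi: \prod_{X \in E(\TT)} \kk G_X \xrightarrow{~\sim~} \kk\TT,
\]
restricting on each factor to the map $\kk G_X \to \kk\TT\,\B{Q}^\circ_X$, $\B{H}_S \mapsto \B{H}_S \B{Q}^\circ_X$. Since CSoPOIs are preserved under algebra isomorphisms, it suffices to understand CSoPOIs of a finite product of algebras and then apply $\Phi$.

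First I would record how $\Phi$ acts on the given idempotents. As $\B{E}_X^{(i)} = \sum_{S \in G_X} c_S \B{H}_S$ lies in $\kk G_X$, its image in the factor $\kk\TT\,\B{Q}^\circ_X$ is $\sum_S c_S \B{H}_S \B{Q}^\circ_X = \B{E}_X^{(i)} \B{Q}^\circ_X$, where on the right we regard $\B{E}_X^{(i)}$ as an element of $\kk\TT$ via the inclusion $\kk G_X \subseteq \kk\TT$. Thus $\Phi$ carries the disjoint union $\bigsqcup_{X} \{ \B{E}_X^{(i)} \}_{i}$ precisely onto the proposed family $\{ \B{E}_X^{(i)} \B{Q}^\circ_X \}$. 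Note also that $\Phi(1_{\kk G_X}) = \B{H}_X \B{Q}^\circ_X = \B{R}_X = \B{Q}^\circ_X$, so the factor identities of the product correspond to the orthogonal central idempotents $\B{Q}^\circ_X$ of $\kk\TT$ summing to $1$.

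The remaining content is the standard fact that, for a finite product $\prod_X A_X$ of unital algebras, a family is a CSoPOI if and only if it is the disjoint union (via the factor inclusions) of a CSoPOI for each $A_X$. The forward implication is routine: completeness and orthogonality are checked factorwise, and the key observation is that an idempotent $\ee$ supported in a single factor $A_X = c_X A$ (where $c_X$ is the central factor identity) is primitive in the product exactly when it is primitive in $A_X$ --- any orthogonal decomposition $\ee = \ee_1 + \ee_2$ in the product satisfies $\ee_i = \ee_i \ee = c_X \ee_i$, so it already takes place inside $A_X$. Applying $\Phi$ then shows $\{ \B{E}_X^{(i)} \B{Q}^\circ_X \}$ is a CSoPOI of $\kk\TT$, proving the first assertion.

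The step I expect to be the main obstacle is the converse, that every CSoPOI arises this way, since it requires confining each primitive idempotent to a single factor. Given any CSoPOI $\{ f_j \}$ of $\kk\TT$, I would write $f_j = \sum_X f_j \B{Q}^\circ_X$; because the $\B{Q}^\circ_X$ are central orthogonal idempotents, this is an orthogonal decomposition of $f_j$ into idempotents, so primitivity of $f_j$ forces all but one summand to vanish and hence $f_j \in \kk\TT\,\B{Q}^\circ_X$ for a unique $X$. Grouping the $f_j$ by the factor they occupy and pulling back along $\Phi^{-1}$ yields, for each $X$, a complete orthogonal system of primitive idempotents of $\kk G_X$; by the factorwise primitivity argument above this is a CSoPOI of $\kk G_X$, exhibiting $\{ f_j \}$ in the claimed form.
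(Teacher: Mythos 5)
Your proposal is correct and is exactly the argument the paper intends: the paper states this proposition without a separate proof, regarding it as an immediate consequence of the algebra isomorphism $\prod_{X \in E(\TT)} \kk G_X \cong \kk\TT$ from \cref{cor:semisimplicity-of-kT}, which is precisely the isomorphism you transport along. Your write-up simply makes explicit the two standard facts the paper leaves implicit (CSoPOIs correspond under isomorphisms, and CSoPOIs of a finite product of algebras are disjoint unions of factorwise CSoPOIs, with the converse following from centrality of the factor identities $\B{Q}^\circ_X$ and primitivity).
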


\subsection{Idempotents for LRBG algebras}

In this section we intertwine the construction of CSoPOI for LRB algebras in
\cref{ss:CSoPOI-LRB} and the ideas of \cref{ss:idemp-lat-gps}. Our main result is \cref{thm:CSoPOI-LRBG} below.

\begin{theorem}\label{thm:CSoPOI-LRBG}
    Let $\SS$ be a LRBG with support map $\supp: \SS \to \TT$
    and let $\{ \ee^\circ_X \}_{X \in E(\TT)}$ be a complete system of primitive orthogonal idempotents
    of the LRB algebra $\kk E(\SS)$.
    For each $X \in E(\TT)$,
    fix an element $x \in E(\SS)$ with $\supp(x) = X$ and
    a CSoPOI $\{ \B{E}^{(i)}_X \}_{i}$ of $\kk G_x$.
    Define elements $\ee_{(X,i)} \in \kk \SS$ by
    \[
        \ee_{(X,i)} = \ee^\circ_X \B{E}^{(i)}_X \ee^\circ_X.
    \]
    Then the collection $\{ \ee_{(X,i)} \}_{X,i}$ forms a complete system of primitive orthogonal idempotents
    of $\kk \SS$.
\end{theorem}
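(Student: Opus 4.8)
The plan is to view the construction as a \emph{conjugation} of group-algebra idempotents by the left regular band idempotents $\ee^\circ_X$, and to split the verification into a computational core (orthogonality and idempotency) and a structural core (completeness and primitivity) handled through the support map $\supp:\C\SS\to\C\TT$. Throughout I will freely use that $\B{H}_u=\B{H}_u\B{H}_{u^\omega}$ for every $u\in\SS$, since $u^\omega$ is the identity of the group $G_{u^\omega}\ni u$, and that $\B{H}_x\B{H}_s=\B{H}_s$, $\B{H}_s\B{H}_x=\B{H}_s$ for $x\in E(\SS)$ and $s\in G_x$.

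The heart of the argument is the claim that, for each fixed $X$, the linear map $\theta_X\colon \C G_x\to\C\SS$ defined by $\theta_X(a)=\ee^\circ_X\,a\,\ee^\circ_X$ is multiplicative, that is $\ee^\circ_X\,a\,\ee^\circ_X\,b\,\ee^\circ_X=\ee^\circ_X\,ab\,\ee^\circ_X$. Reducing to basis elements $a=\B{H}_s,\ b=\B{H}_t$ with $s,t\in G_x$ and inserting $1=\sum_Y\ee^\circ_Y$ between $\B{H}_s$ and $\B{H}_t$, the claim is equivalent to showing $\ee^\circ_X\B{H}_s\ee^\circ_Y\B{H}_t\ee^\circ_X=0$ for every $Y\neq X$. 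I would prove this by a dichotomy on $Y$. If $X\not\le Y$, then $\B{H}_s\ee^\circ_Y=\B{H}_s\B{H}_x\ee^\circ_Y=0$ by \cref{lem:Saliola's}, because $\supp(x)=X\not\le Y$. If instead $X<Y$, write $\ee^\circ_Y$ in the $\B{H}$-basis as a combination of $\B{H}_z$ with $\supp(z)\ge Y$ (clear from the recursion \cref{eq:saliolarecursion}); for each such $z$ one has $\supp\big((zt)^\omega\big)=\supp(z)\vee\supp(x)=\supp(z)\not\le X$ by \cref{LRBG-supp-and-omega}, so $\B{H}_{zt}\ee^\circ_X=\B{H}_{zt}\B{H}_{(zt)^\omega}\ee^\circ_X=0$ again by \cref{lem:Saliola's}, whence $\ee^\circ_Y\B{H}_t\ee^\circ_X=0$. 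Since the two cases exhaust $Y\neq X$, the claim follows. This vanishing is the main obstacle: it is precisely where left regularity (through \cref{lem:Saliola's}) must be combined with the good behaviour of the $\omega$-map under $\supp$ (through \cref{LRBG-supp-and-omega}).

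Granting the claim, orthogonality and idempotency are immediate. For $X\neq Y$ the product $\ee_{(X,i)}\ee_{(Y,j)}$ contains the adjacent factor $\ee^\circ_X\ee^\circ_Y=0$; for $X=Y$ one computes $\ee_{(X,i)}\ee_{(X,j)}=\theta_X(\B{E}^{(i)}_X)\,\theta_X(\B{E}^{(j)}_X)=\theta_X(\B{E}^{(i)}_X\B{E}^{(j)}_X)=\delta_{ij}\,\ee_{(X,i)}$, using that $\{\B{E}^{(i)}_X\}_i$ is a CSoPOI of $\C G_x$. Thus the $\ee_{(X,i)}$ are orthogonal idempotents.

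For completeness and primitivity I would pass to the semisimple quotient $\supp\colon\C\SS\to\C\TT$, whose kernel is $\rad(\C\SS)$ by \cref{cor:semisimplicity-of-kT}. A downward induction on $\lat$ applied to \cref{eq:saliolarecursion} (using that on the lattice the homogeneous section is forced to be $\uu_X=\B{H}_X$, so that $\B{Q}^\circ_X$ satisfies the same recursion) gives $\supp(\ee^\circ_X)=\B{Q}^\circ_X$. Since $\supp|_{G_x}\colon G_x\xrightarrow{\sim}G_X$ carries $\{\B{E}^{(i)}_X\}_i$ to a CSoPOI of $\C G_X$, and $\B{Q}^\circ_X$ is central in $\C\TT$, I obtain $\supp(\ee_{(X,i)})=\B{Q}^\circ_X\,\supp(\B{E}^{(i)}_X)\,\B{Q}^\circ_X=\supp(\B{E}^{(i)}_X)\,\B{Q}^\circ_X$; by \cref{prop:CSoPOI-SemLatGps} the family $\{\supp(\ee_{(X,i)})\}_{X,i}$ is a CSoPOI of $\C\TT$. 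Now $e:=\sum_{X,i}\ee_{(X,i)}$ is idempotent (a sum of orthogonal idempotents) with $\supp(e)=1$, so $1-e$ is an idempotent lying in $\rad(\C\SS)$ and therefore $1-e=0$, proving completeness. Finally, since $\C\TT$ is the semisimple quotient of $\C\SS$, an idempotent of $\C\SS$ is primitive if and only if its image in $\C\TT$ is primitive; as each $\supp(\ee_{(X,i)})$ is primitive in $\C\TT$, each $\ee_{(X,i)}$ is primitive in $\C\SS$, completing the plan.
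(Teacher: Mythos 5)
Your proof is correct, but it takes a genuinely different route from the paper's at two points. The paper's computational core is \cref{p:alt-eXi}, proved by downward induction on the lattice: using the recursion \cref{eq:saliolarecursion} it shows $\ee^\circ_X \uu_Y v \ee^\circ_Y = 0$ for $X \neq Y$ and, as a byproduct, produces the alternative expression $\ee_{(X,i)} = \uu_X \B{E}^{(i)}_X \ee^\circ_X$ in terms of the homogeneous section; orthogonality and idempotency are then read off from that expression. You instead prove, without induction, that conjugation $a \mapsto \ee^\circ_X a \ee^\circ_X$ is multiplicative on $\C G_x$, via the two-case vanishing $\ee^\circ_X \B{H}_s \ee^\circ_Y \B{H}_t \ee^\circ_X = 0$: your case $X \not\leq Y$ is essentially the paper's use of \cref{lem:Saliola's}, while your case $X < Y$ uses the support filtration of $\ee^\circ_Y$ (it is a combination of $\B{H}_z$ with $\supp(z) \geq Y$, which does follow from \cref{eq:saliolarecursion}) together with \cref{LRBG-supp-and-omega}; both steps check out, and note that your argument, like the paper's, implicitly relies on the converse half of \cref{thm:CSoPOIforLRB} to know that the given CSoPOI comes equipped with a section and its recursion. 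For completeness the paper simply sums, $\sum_{X,i}\ee_{(X,i)} = \sum_X \ee^\circ_X \B{H}_x \ee^\circ_X = \sum_X \ee^\circ_X = 1$, a two-line computation; you instead argue that $1 - \sum_{X,i}\ee_{(X,i)}$ is an idempotent lying in $\ker(\supp) = \rad(\C\SS)$ and hence vanishes, which is correct but leans on the Margolis--Steinberg identification of the radical --- a fact the paper also invokes, though only for primitivity, where your argument and the paper's coincide (push to the semisimple quotient $\C\TT$ and recognize the images among the primitive idempotents of \cref{prop:CSoPOI-SemLatGps}). What the paper's route buys is the identity $\ee_{(X,i)} = \uu_X \B{E}^{(i)}_X \ee^\circ_X$, which is reused later (e.g.\ in the proof of \cref{t:CSoPOI-LRBaG}); what yours buys is a shorter, induction-free verification whose only structural inputs are Saliola's lemma, the compatibility of the $\omega$-map with $\supp$, and the support filtration of the LRB idempotents.
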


Observe that these elements are independent of the choice of $x \in E(\SS)$ with $\supp(x) = X$,
since if $x' \in E(\SS)$ is any other element with the same support, then
$\{ \B{H}_{x'} \B{E}^{(i)}_X \}$ is a CSoPOI of $\kk G_{x'}$ by
\cref{left-multiplication-induces-group-morphism},
and $\ee^\circ_X (\B{H}_{x'} \B{E}^{(i)}_X) \ee^\circ_X = (\ee^\circ_X \B{H}_{x})
\B{H}_{x'} \B{E}^{(i)}_X \ee^\circ_X = \ee^\circ_X \B{E}^{(i)}_X \ee^\circ_X
= \ee_{(X,i)}$ for all $i$.

To prove \cref{thm:CSoPOI-LRBG}, we fix the following notation
to be used throughout this section:
\begin{itemize}
    \item
        $\SS$ denotes a LRBG with support map $\supp: \SS \to \TT$; 
    \item
        $\{ \ee^\circ_X \}_{X \in E(\TT)}$ denotes a CSoPOI of the LRB algebra $\kk E(\SS)$;
    \item
        $\{ \uu_X \}_{X \in E(\TT)}$ is the homogeneous section associated with
        $\{ \ee^\circ_X \}_{X \in E(\TT)}$ (see \cref{thm:CSoPOIforLRB}).
\end{itemize}

\begin{prop}\label{p:alt-eXi}
With the above notation,
for all $X,Y \in E(\TT)$, $y \in E(\SS)$ with $\supp(y) = Y$, and $v \in \kk G_y$,
\[
    \ee^\circ_X \uu_Y v \ee^\circ_Y =
    \begin{cases}
        \uu_Y v \ee^\circ_Y & \text{if } X = Y,\\
        0                       & \text{if } X \neq Y.
    \end{cases}
\]
In particular, for all $X$ and $i$,
\[\ee_{(X,i)} = \uu_X \B{E}^{(i)}_X \ee^\circ_X.\]
\end{prop}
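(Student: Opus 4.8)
The plan is to reduce both assertions to a single annihilation lemma:

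Annihilation Lemma. For any $v \in \kk G_y$ with $\supp(y) = Y$ and any $Z \in E(\TT)$ with $Z \not\leq Y$, one has $\ee^\circ_Z\, v\, \ee^\circ_Y = 0$.

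Granting this, both claims fall out quickly. First I would record the consequence of the recursion \eqref{eq:saliolarecursion} that $\uu_Y = \ee^\circ_Y + \sum_{Z > Y} \uu_Y \ee^\circ_Z$. Then for $X \neq Y$ I would expand
\[ \ee^\circ_X \uu_Y v \ee^\circ_Y = \ee^\circ_X \ee^\circ_Y\, v\, \ee^\circ_Y + \sum_{Z > Y} \ee^\circ_X \uu_Y\, (\ee^\circ_Z\, v\, \ee^\circ_Y). \]
The first summand vanishes by orthogonality of the $\ee^\circ$'s, and every term of the sum vanishes by the Annihilation Lemma, since $Z > Y$ forces $Z \not\leq Y$. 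The case $X = Y$ then comes for free from completeness, $\uu_Y v \ee^\circ_Y = \sum_{X} \ee^\circ_X \uu_Y v \ee^\circ_Y = \ee^\circ_Y \uu_Y v \ee^\circ_Y$. Finally, the ``in particular'' statement is the same computation with $Y = X$ and $v = \B{E}^{(i)}_X$: writing $\uu_X - \ee^\circ_X = \sum_{Z > X} \uu_X \ee^\circ_Z$ and recalling $\ee_{(X,i)} = \ee^\circ_X \B{E}^{(i)}_X \ee^\circ_X$, I obtain $\uu_X \B{E}^{(i)}_X \ee^\circ_X - \ee_{(X,i)} = \sum_{Z>X} \uu_X (\ee^\circ_Z \B{E}^{(i)}_X \ee^\circ_X) = 0$ by the lemma.

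So the real work is the Annihilation Lemma, which I would prove using the $\B{R}$-basis. Since every $s \in G_y$ satisfies $\supp(s^\omega) = \supp(y) = Y$, the definition \eqref{eq:LRBG-R-basis} gives $\B{H}_s \ee^\circ_Y = \B{R}_s$, so $v \ee^\circ_Y$ is a linear combination of the $\B{R}_s$ with $\supp(s^\omega) = Y$. It therefore suffices to show $\ee^\circ_Z \B{R}_s = 0$ whenever $Z \not\leq Y = \supp(s^\omega)$. Expanding $\ee^\circ_Z$ in the $\B{H}$-basis of $\kk E(\SS)$ and applying the product rule \eqref{LRBG-HR-product-formula} of \cref{prop:LRBG-R-basis} (with an idempotent $w \in E(\SS)$ on the left, so $w^\omega = w$), each term $\B{H}_w \B{R}_s$ is either $\B{R}_{ws}$ when $\supp(w) \leq Y$ or $0$ otherwise.

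The crux is thus a support estimate on the idempotents themselves: $\ee^\circ_Z$ is a linear combination of $\B{H}_w$ with $w \in E(\SS)$ and $\supp(w) \geq Z$. I would establish this by downward induction on $Z$ in $E(\TT)$ using \eqref{eq:saliolarecursion}: the base case ($Z$ maximal) is $\ee^\circ_Z = \uu_Z$, which is homogeneous of support $Z$ by \cref{thm:CSoPOIforLRB}; in the inductive step each product $\uu_Z \ee^\circ_{Z'}$ with $Z' > Z$ is a combination of $\B{H}_x \B{H}_w = \B{H}_{xw}$ with $\supp(x) = Z$ and, inductively, $\supp(w) \geq Z'$, whence $\supp(xw) = Z \vee \supp(w) \geq Z$. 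Combining this estimate with the product rule, the only potentially surviving terms of $\ee^\circ_Z \B{R}_s$ would require $Z \leq \supp(w) \leq Y$, which is impossible when $Z \not\leq Y$; hence $\ee^\circ_Z \B{R}_s = 0$ and the lemma follows.

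I expect the support estimate to be the main obstacle, not because it is deep but because it is the one ingredient not already packaged in the cited results: \cref{lem:Saliola's} controls left multiplication $\B{H}_x \ee^\circ_Y$, and \cref{prop:LRBG-R-basis} controls $\B{H}_w \B{R}_s$, but neither directly bounds the $\B{H}$-support of $\ee^\circ_Z$. Everything else is bookkeeping with orthogonality, completeness, and the recursion; the one genuinely new observation is that the decomposition $\uu_Y = \ee^\circ_Y + \sum_{Z>Y}\uu_Y\ee^\circ_Z$ lets the Annihilation Lemma dispose of all off-diagonal terms simultaneously, so that no separate treatment of the cases $X \not\leq Y$ and $X < Y$ is needed.
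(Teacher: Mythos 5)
Your proposal is correct, and it takes a genuinely different route from the paper's proof. The paper proves the identity by induction on $X$ in the lattice $E(\TT)$, applied directly to the sandwiched expression $\ee^\circ_X \uu_Y v \ee^\circ_Y$: the case $X \not\leq Y$ is a direct computation (insert $\B{H}_x$ on the right of $v$ via \eqref{eq:lrbg2} and kill it with \cref{lem:Saliola's}), while the cases $X = Y$ and $X < Y$ unfold $\ee^\circ_X$ or $\ee^\circ_Y$ through the recursion \eqref{eq:saliolarecursion} and invoke the inductive hypothesis. You instead isolate the cleaner statement $\ee^\circ_Z v \ee^\circ_Y = 0$ for all $Z \not\leq Y$ (with no $\uu_Y$ sandwiched in), prove it from the product rule \eqref{LRBG-HR-product-formula} of \cref{prop:LRBG-R-basis} together with the support estimate that $\ee^\circ_Z$ lies in the span of $\{\B{H}_w : \supp(w) \geq Z\}$, and then obtain both claims of the proposition by pure bookkeeping: the decomposition $\uu_Y = \ee^\circ_Y + \sum_{Z>Y}\uu_Y\ee^\circ_Z$, orthogonality for the off-diagonal terms, and completeness for the diagonal one. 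There is no circularity, since \cref{prop:LRBG-R-basis} is established before this point and does not depend on the proposition. What your route buys is modularity and the elimination of the case analysis on how $X$ compares with $Y$ — all the inductive content is confined to the support estimate, which is a reusable fact about Saliola's idempotents not explicitly recorded in the paper; what the paper's route buys is economy of means, needing only \cref{lem:Saliola's} and the recursion with no auxiliary basis. Both arguments ultimately rest on the same two pillars, the recursion and Saliola's lemma, the latter entering your proof implicitly through the $\B{R}$-basis product formula.
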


\begin{proof}
Note that by linearity, we can assume that $v = \B{H}_g$ for some $g \in G_y$.
The proof is by induction on $X$.

\medskip

\noindent\textbf{Base case.}
Let $X = \top$, the top element of the lattice $E(\TT)$.
If $Y = \top$, then $\ee^\circ_X = \ee^\circ_Y = \uu_\top$ and the result follows because $\uu_\top$ is idempotent.
If $Y \neq \top$, then for any $x \in E(\SS)$ with $\supp(x) = X$ we have
\begin{equation}\label{eq:aux-eHe-uHe}
    \ee^\circ_X \uu_Y \B{H}_g \ee^\circ_Y                       =
    \ee^\circ_X \B{H}_x \uu_Y \B{H}_g \ee^\circ_Y               \overset{\cref{eq:lrbg2}}{=}
    \ee^\circ_X \B{H}_x \uu_Y \B{H}_g \B{H}_x \ee^\circ_Y       \overset{\rm (\cref{lem:Saliola's})}{=}
    0 .
\end{equation}
Observe that this computation holds as long as the hypotheses of \cref{lem:Saliola's} are satisfied; that is, whenever $X \not\leq Y$. We will use this fact below.

\medskip

\noindent\textbf{Inductive step.}
Assume the result is true for all $Z > X$.
If $X = Y$, we use the recursive definition of the $\ee^\circ_Y$ in \Cref{eq:saliolarecursion}:
\[
      \ee^\circ_Y \uu_Y \B{H}_g \ee^\circ_Y
    = \Big( \uu_Y - \sum_{Z > Y} \uu_Y \ee^\circ_Z \Big) \uu_Y \B{H}_g \ee^\circ_Y
    = \uu_Y \uu_Y \B{H}_g \ee^\circ_Y - \sum_{Z > Y} \uu_Y (\ee^\circ_Z \uu_Y \B{H}_g \ee^\circ_Y)
    = \uu_Y \B{H}_g \ee^\circ_Y.
\]
The elements $Z$ in the sum satisfy $Z > X$, so the last equality follows from the induction hypothesis.
If $X \not\leq Y$, then the computation in \cref{eq:aux-eHe-uHe} holds.
Finally, if $X < Y$, we use the induction hypothesis again to deduce:
\begin{align*}
          \ee^\circ_X \uu_Y \B{H}_g \ee^\circ_Y 
    & = \Big( \uu_X - \sum_{Z > X} \uu_X \ee^\circ_Z \Big) \uu_Y \B{H}_g \ee^\circ_Y
    = \uu_X \uu_Y \B{H}_g \ee^\circ_Y - \sum_{Z > X} \uu_X (\ee^\circ_Z \uu_Y \B{H}_g \ee^\circ_Y)\\
    &
    = \uu_X \uu_Y \B{H}_g \ee^\circ_Y - \uu_X (\ee^\circ_Y \uu_Y \B{H}_g \ee^\circ_Y)
    = \uu_X \uu_Y \B{H}_g \ee^\circ_Y - \uu_X \uu_Y \B{H}_g \ee^\circ_Y
    = 0.
\end{align*}
This completes the proof of the first identity.
Finally, for all $X$ and $i$ we have
\[
    \ee_{(X,i)} = 
    \ee^\circ_X \B{E}^{(i)}_X \ee^\circ_X = 
    \ee^\circ_X \uu_X \B{E}^{(i)}_X \ee^\circ_X = 
    \uu_X \B{E}^{(i)}_X \ee^\circ_X. \qedhere
\]
\end{proof}

We can prove \cref{thm:CSoPOI-LRBG} using \cref{p:alt-eXi}.
\begin{proof}[Proof of \cref{thm:CSoPOI-LRBG}]
    \noindent\textbf{Idempotent and orthogonal.}
        For $X \neq Y$, the orthogonality of $\ee^\circ_X$ and $\ee^\circ_Y$ implies the orthogonality of $\ee_{(X,i)}$
        and $\ee_{(Y,j)}$.
        For $X = Y$, we have
        \[
            \ee_{(X,i)} \ee_{(X,j)} =
            (\ee^\circ_X \B{E}^{(i)}_X \ee^\circ_X) (\ee^\circ_X \B{E}^{(j)}_X \ee^\circ_X)
            = \uu_{X} \B{E}^{(i)}_X \uu_X \B{E}^{(j)}_X \ee^\circ_X
            = \uu_{X} \B{E}^{(i)}_X \B{E}^{(j)}_X \ee^\circ_X
            = \begin{cases}
                \ee^\circ_X \B{E}^{(i)}_X \ee^\circ_X   & \text{if } i = j,\\
                0                                   & \text{if } i \neq j.
            \end{cases}
        \]
        In the second step we used $\B{E}^{(i)}_X \uu_X = \B{E}^{(i)}_X \B{H}_x \uu_X = \B{E}^{(i)}_X \B{H}_x = \B{E}^{(i)}_X$.

    \medskip
    \noindent\textbf{Complete.}
        This follows from completeness of $\{ \ee^\circ_X \}_X$ and each of the $\{ \B{E}^{(i)}_X \}_i$:
        \[
            \sum_{X,i} \ee^\circ_X \B{E}^{(i)}_X \ee^\circ_X = \sum_X \ee^\circ_X \Big( \sum_i \B{E}^{(i)}_X \Big) \ee^\circ_X = \sum_X \ee^\circ_X \B{H}_x \ee^\circ_X = \sum_X \ee^\circ_X = 1.
        \]
        Above, we use that $\B{H}_x$ is the unit of $\kk G_x$ and that $\ee^\circ_X \B{H}_x = \ee^\circ_X$, since $\supp(x) = X$.

    \medskip
    \noindent\textbf{Primitive.}
        Recall that if $e$ is an idempotent in a finite dimensional algebra $A$ and
        $e + \rad(A)$ is a primitive idempotent in $A/\rad(A)$,
        then $e$ is also primitive
        (see, for instance, \cite[Lemma~D.28]{am17} or \cite[Corollary~1.7.4]{Benson1998:I}).
        Since $\rad(\kk \SS) = \ker(\supp)$,
        it suffices to prove $\supp(\ee^\circ_X \B{E}^{(i)}_X \ee^\circ_X)$
        is primitive in $\kk \TT$.

        To this end, observe that
        \[
            \supp(\ee^\circ_X \B{E}^{(i)}_X \ee^\circ_X) = 
            \B{Q}^\circ_X \supp(\B{E}^{(i)}_X) \B{Q}^\circ_X =
            \supp(\B{E}^{(i)}_X) \B{Q}^\circ_X,
        \]
        and so it
        is one of the primitive idempotents of $\kk \TT$ in \cref{prop:CSoPOI-SemLatGps}.
        Indeed, recall that the restriction of the support map to $G_x \subseteq \SS$ is an isomorphism to $G_X \subseteq \TT$, and therefore $\supp(\B{E}^{(i)}_X)$ is a primitive idempotent of $\kk G_X$ and $\supp(\B{E}^{(i)}_X) \B{Q}^\circ_X$ is a primitive idempotent of $\kk \TT$.
\end{proof}

This construction requires having a CSoPOI for each of the group algebras $\kk G_x$ which, as discussed in \cref{ss:isotypic}, is already a very difficult problem.
In the following section we restrict to the case in which each of the maximal subgroups $G_x$ is abelian.

\subsection{Left regular bands of abelian groups}\label{sec:LRBaG}

Let us assume that $\SS$ is a \defn{left regular band of abelian groups (LRBaG)}: a LRBG in which each maximal subgroup $G_x$ is abelian. We will give a more explicit construction for a CSoPOI in this case in \cref{t:CSoPOI-LRBaG}. To do so, we will use the dual group $\wh{G}$ of an abelian group $G$.

\begin{definition}\label{def:S-hat}
    Let $\SS$ be a LRBaG. Define
    \[
        \wh{\SS} := \bigsqcup_{x \in E(\SS)} \wh{G_x},
    \]
    where $\wh{G} = {\rm Hom}(G,\kk^\times)$ denotes the dual group of $G$
    consisting of group morphisms $\phi: G \to \kk^\times$ with multiplication given by
    $(\phi \phi')(g) = \phi(g) \phi'(g)$ for all $\phi, \phi': G \to \kk^\times$ and $g \in G$.

    Given $\phi \in \wh{\SS}$, we let $\abs{\phi}$ denote the idempotent $x \in
    E(\SS)$ such that $\phi \in \wh{G_{x}}$.
    Thus, $\phi \in \wh{G_{\abs{\phi}}}$.
\end{definition}

Although each $\phi \in \wh{\SS}$ is a function
defined on the subgroup $G_{|\phi|}$, we extend it to a function on all of $\SS$ as follows.

\begin{definition}\label{def:evaluation}
    Given $s \in \SS$ and $\phi \in \wh{\SS}$ the \defn{evaluation} of $\phi$ on $s$ is 
    \[
        \phi(s) := \begin{cases}
            \phi(s) & \text{if } s \in G_{\abs{\phi}},\\
            0 & \text{otherwise.}
        \end{cases}
    \]
    Note that $s \in G_{\abs{\phi}}$ if and only if $s^\omega = \abs{\phi}$.
\end{definition}

We use the partial order $\leq$ (resp. equivalence relation $\sim$) on $E(\SS)$ to define a partial order $\chleq$ (resp. equivalence relation $\sim$) on $\wh{\SS}$.
Recall that $yx = y$ whenever $x \leq y$ or $x \sim y$.
In that case, we have group morphisms $\lambda_{y,x} : G_x \to G_y$ with dual maps $\lambda_{y,x}^* : \wh{G_y} \to \wh{G_x}$
(see \cref{decomposition-into-maximal-subgroups}).
Explicitly, if $\psi \in \wh{G_y}$, then $\lambda_{y,x}^*(\psi) \in \wh{G_x}$ is defined by
\[
    \lambda_{y,x}^*(\psi)(s) = \psi(\lambda_{y,x}(s)) = \psi(y s) \quad\text{for all } s \in G_x.
\]

\begin{definition}\label{def:chleq}
    For $\phi,\psi \in \wh{\SS}$, with $x = \abs{\phi}$ and $y = \abs{\psi}$, define
    \begin{align}
        \label{eq:poset-S-hat} \phi \chleq \psi         \qqiff &          x \leq y \text{ and } \phi = \lambda_{y,x}^*(\psi);\\
        \label{eq:equiv-S-hat} \phi \sim \psi         \qqiff &          x \sim y \text{ and } \phi = \lambda_{y,x}^*(\psi).
    \end{align}
\end{definition}
Since $\lambda_{x,x}^* = {\rm id}_{\wh{G_x}}$ for all $x$, and
$\lambda_{z,y}^* \circ \lambda_{y,x}^* = \lambda_{z,x}^*$ whenever $x \leq y \leq z$ or $x \sim y \sim z$ (because $zyx = zx = z$), these relations are reflexive and transitive. Moreover, the antisymmetry of $\leq$ implies that of $\chleq$.

We let $\wh{\TT} := \wh{\SS}/\mathord{\sim}$ denote the collection of equivalence classes and let $\supp : \wh{\SS} \to \wh{\TT}$ denote the map that sends an element $\phi \in \wh{\SS}$ to its equivalence class $\supp(\phi) \in \wh{\TT}$.

\begin{example}
    Fix a finite abelian group $G$ and let $\SS = \hsiao$ be the semigroup of $G$-compositions of $[n]$ in \cref{ss:Hsiao's}.
    We naturally identify $\wh{\SS}$ with the collection $\Sigma_n[\wh{G}]$ of $\wh{G}$-compositions of~$[n]$:
    tuples $\phi = \big( (S_1,\phi_1),\dots,(S_k,\phi_k) \big)$ with $(S_1,\dots,S_k) \in \Sigma_n$ and $\phi_i \in \wh{G}$ for all $i$.
    Under the identification $E(\hsiao) \cong \Sigma_n$, we have that $\abs{\phi}$ is the underlying set composition $(S_1,\dots,S_k)$.
    With $\phi$ as above, the evaluation of $\phi$ on $s = \big( (S_1,g_1),\dots,(S_k,g_k) \big) \in G_{\abs{\phi}}$ is simply
    \[
        \phi(s) = \phi_1(g_1) \phi_2(g_2) \dots \phi_k(g_k).
    \]
    Observe that $\phi(s)$ is only defined if $\abs{s} = \abs{\phi}$, i.e. if $s$ and $\phi$ have the same underlying set composition.

    The partial order $\chleq$ on $\wh{G}$-compositions is determined by
    \[
        \big( (S_1,\phi_1),\dots,(S_k,\phi_k) \big) \chleq \big( (T_1,\psi_1),\dots,(T_\ell,\psi_\ell) \big)
    \]
    if and only if
    \begin{enumerate}
        \item
            $(S_1,\dots,S_k)$ is refined by $(T_1,\dots,T_\ell)$, and
        \item
            whenever $S_m = T_{i_m} \sqcup \dots  \sqcup T_{j_m}$, we have $\phi_m = \psi_{i_m}  \dots \psi_{j_m}$.
    \end{enumerate}
    In other words, the cover relations of the partial order $\chleq$ on $\wh{G}$-compositions are of the form
    \begin{align*}
        \big( (S_1,\phi_1) , \dots, (S_i \sqcup S_{i+1} ,\phi_i\phi_{i+1}) ,\dots , (S_k,\phi_k) \big)
        \chleq 
        \big( (S_1,\phi_1) , \dots, (S_i ,\phi_i) , (S_{i+1} ,\phi_{i+1}) ,\dots , (S_k,\phi_k) \big).
    \end{align*}

    On the other hand, the relation $\sim$ on $\wh{G}$-compositions satisfies
    \[
        \big( (S_1,\phi_1),\dots,(S_k,\phi_k) \big) \sim \big( (T_1,\psi_1),\dots,(T_\ell,\psi_\ell) \big)
    \]
    if and only if $k = \ell$ and there is a permutation $\sigma \in \mathfrak{S}_k$ such that $(S_i , \phi_i) = (T_{\sigma(i)} , \psi_{\sigma(i)})$ for all $i$.
    Thus, $\wh{\TT}$ is identified with $\Pi_n[\wh{G}]$, the collection of $\wh{G}$-partitions of $[n]$, and
    \[
        \supp\big( ((S_1,\phi_1),\dots,(S_k,\phi_k)) \big) = \big\{ (S_1,\phi_1),\dots,(S_k,\phi_k) \big\}.
    \]
    See \cref{fig:hsiao-C2-hat-relations} for an example with $n=2$ and $G = \ztwo$.
    \begin{figure}[ht]
        \begin{tikzpicture}[yscale=0.7, xscale=0.7]
            \node (node_0) at (-50bp,  0bp) {$\big( 12^{\trv} \big)$};
            \node (node_1) at ( 50bp,  0bp) {$\big( 12^{\sgn} \big)$};
            
            \node (node_00l) at (-290bp,  80bp) {$\big( 1^{\trv} | 2^{\trv} \big)$};
            \node (node_11l) at (-210bp,  80bp) {$\big( 1^{\sgn} | 2^{\sgn} \big)$};
            \node (node_01l) at (-130bp,  80bp) {$\big( 1^{\trv} | 2^{\sgn} \big)$};
            \node (node_10l) at ( -50bp,  80bp) {$\big( 1^{\sgn} | 2^{\trv} \big)$};
            
            \node (node_00r) at (  50bp,  80bp) {$\big( 2^{\trv} | 1^{\trv} \big)$};
            \node (node_11r) at ( 130bp,  80bp) {$\big( 2^{\sgn} | 1^{\sgn} \big)$};
            \node (node_01r) at ( 210bp,  80bp) {$\big( 2^{\sgn} | 1^{\trv} \big)$};
            \node (node_10r) at ( 290bp,  80bp) {$\big( 2^{\trv} | 1^{\sgn} \big)$};

            \draw [blue] (-330bp, 60bp) rectangle (-10bp,100bp); \node [blue] at (-260bp, 40bp) {$\wh{G_{(1|2)}}$};
            \draw [blue] ( 330bp, 60bp) rectangle ( 10bp,100bp); \node [blue] at ( 260bp, 40bp) {$\wh{G_{(2|1)}}$};
            \draw [blue] ( -90bp,-20bp) rectangle ( 90bp, 20bp); \node [blue] at (     0,-40bp) {$\wh{G_{(12)}}$};
            
            \draw [black,thick] (node_0) -- (node_00l);
            \draw [black,thick] (node_0) -- (node_11l);
            \draw [black,thick] (node_0) -- (node_00r);
            \draw [black,thick] (node_0) -- (node_11r);
            \draw [black,thick] (node_1) -- (node_10l);
            \draw [black,thick] (node_1) -- (node_01l);
            \draw [black,thick] (node_1) -- (node_10r);
            \draw [black,thick] (node_1) -- (node_01r);
            \draw [red,dashed] (node_00l) to [bend left=30] (node_00r);
            \draw [red,dashed] (node_11l) to [bend left=30] (node_11r);
            \draw [red,dashed] (node_01l) to [bend left=30] (node_01r);
            \draw [red,dashed] (node_10l) to [bend left=30] (node_10r);
        \end{tikzpicture}
        \caption{
        The ten elements in $\Sigma_2[\wh{C_2}]$, where ${\trv}$ denotes the trivial character of $\ztwo$ (${\trv}(+1) = {\trv}(-1) = 1$) and ${\sgn}$ denotes the sign representation of $\ztwo$ (${\sgn}(+1) = 1$ and ${\sgn}(-1) = -1$).
        To shorten notation, we write $\big( 12^{\trv} \big)$ for $\big( (\{1,2\} , {\trv} ) \big)$ and 
        $\big( 1^{\sgn} | 2^{\trv} \big)$ for $\big( (\{1\} , {\sgn} ), ( \{2\} , {\trv} ) \big)$. 
        The solid black lines correspond to the partial order $\chleq$.
        The dashed red lines correspond to the equivalence relation $\sim$.
        The boxes group elements according to what the maximal subgroup they are characters of.}
        \label{fig:hsiao-C2-hat-relations}
    \end{figure}
\end{example}

For $\phi \in \wh{\SS}$, let $\B{E}_\phi \in \kk G_{\abs{\phi}} = \kk \{ \B{H}_s \}_{s^\omega = \abs{\phi}}$ be the isotypic projector of $\phi$, as reviewed in \cref{ss:isotypic}.
Explicitly:
\begin{equation}
    \B{E}_\phi := \dfrac{1}{|G_{\abs{\phi}}|} \sum_{s \in G_{\abs{\phi}}} \ol{\phi(s)} \B{H}_s.
\end{equation}
Since the groups $G_x$ are abelian, $\{ \B{E}_\phi \}_{\abs{\phi} = x}$ is a linear basis of $\kk G_x$ for all $x \in E(\SS)$, and $\{ \B{E}_\phi \}_{\phi \in \wh{\SS}}$ is a linear basis of $\kk \SS$.
We call this the \defn{basis of locally orthogonal idempotents} since for all $\phi,\psi \in \wh{G_x}$ we have
\begin{equation}
    \B{E}_\phi \B{E}_\psi =
    \begin{cases}
        \B{E}_\phi 	& 	\text{if } \phi = \psi,\\
        0			&	\text{otherwise.}
    \end{cases}
\end{equation}

\begin{prop}\label{prop:multiplyEandH}
    Let $\SS$ be a LRBaG. For all $x,y \in E(\SS)$ and $\phi \in \wh{\SS}$ with $\abs{\phi} = x$,
    \[
        \B{H}_y \B{E}_\phi = \sum_{\substack{\psi \in \wh{G_{yx}}\\ \lambda_{y,x}^*(\psi) = \phi}} \B{E}_\psi.
    \]
    In particular, if $x \leq y$ 
    \begin{equation} \label{eq:EandHmultiplication}
        \B{H}_y \B{E}_\phi =
        \sum_{\substack{\psi \in \wh{G_y} \\ \phi \chleq \psi}} \B{E}_\psi,
    \end{equation}
    and if $x \sim x'$,
    \begin{equation} \label{eq:EandHmultiplication2}
        \B{H}_{x'} \B{E}_\phi = \B{E}_{\phi'},
    \end{equation}
    where $\phi' \in \wh{G_{x'}}$ is the unique character with $\supp(\phi') = \supp(\phi)$.
\end{prop}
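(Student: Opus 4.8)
The plan is to prove the general identity first and then specialize. Throughout, write $z = yx$ and let $\lambda = \lambda_{y,x}\colon G_x \to G_z$ be the group morphism from \cref{left-multiplication-induces-group-morphism}, so that $\B{H}_y\B{H}_s = \B{H}_{ys} = \B{H}_{\lambda(s)}$ with $\lambda(s) \in G_z$ for every $s \in G_x$. The first step is simply to expand the isotypic projector and push $\B{H}_y$ through it:
\[
    \B{H}_y \B{E}_\phi
    = \frac{1}{|G_{x}|} \sum_{s \in G_x} \ol{\phi(s)}\, \B{H}_{\lambda(s)},
\]
which shows that the whole computation lands inside $\kk G_z$.

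Next I would rewrite everything in the isotypic basis of $\kk G_z$. Since $G_z$ is abelian, the projectors $\{\B{E}_\psi\}_{\psi \in \wh{G_z}}$ form a CSoPOI of $\kk G_z$, and inverting their defining formula gives the Fourier expansion $\B{H}_t = \sum_{\psi \in \wh{G_z}} \psi(t)\,\B{E}_\psi$ for $t \in G_z$. Substituting $t = \lambda(s)$ and recognizing $\psi(\lambda(s)) = \lambda_{y,x}^*(\psi)(s)$ (the definition of the dual map) turns the display into
\[
    \B{H}_y \B{E}_\phi
    = \sum_{\psi \in \wh{G_z}} \Bigl( \frac{1}{|G_x|} \sum_{s \in G_x} \ol{\phi(s)}\, \lambda_{y,x}^*(\psi)(s) \Bigr) \B{E}_\psi.
\]
The inner coefficient is the inner product of the two characters $\lambda_{y,x}^*(\psi)$ and $\phi$ of the abelian group $G_x$; by orthogonality of irreducible characters it equals $1$ when $\lambda_{y,x}^*(\psi) = \phi$ and $0$ otherwise. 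This collapses the sum to exactly $\sum_{\psi\colon \lambda_{y,x}^*(\psi) = \phi} \B{E}_\psi$, proving the general formula.

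It then remains to read off the two special cases. For $x \le y$ I would first check $yx = y$: on idempotents $x \le y$ means $xy = y$, and since $E(\SS)$ is a LRB (\cref{E(S)-is-a-LRB}) the identity $yxy = yx$ gives $yx = yxy = y(xy) = y$. Hence $z = y$, and the indexing condition $\lambda_{y,x}^*(\psi) = \phi$ with $\psi \in \wh{G_y}$ is, by \cref{def:chleq}, precisely $\phi \chleq \psi$, giving \eqref{eq:EandHmultiplication}. For $x \sim x'$ I would take $y = x'$, so that $z = x'x = x'$; by \cref{left-multiplication-induces-group-morphism} the map $\lambda_{x',x}$ is a group isomorphism, hence its dual $\lambda_{x',x}^*\colon \wh{G_{x'}} \to \wh{G_x}$ is a bijection and the sum has a single surviving term $\B{E}_{\phi'}$ with $\lambda_{x',x}^*(\phi') = \phi$. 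Finally $\phi \sim \phi'$ by \cref{def:chleq}, which is exactly $\supp(\phi') = \supp(\phi)$, giving \eqref{eq:EandHmultiplication2}.

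The computation is short once set up, so I do not anticipate a genuine obstacle; the one point requiring care is that in the general identity $\lambda_{y,x}$ need be neither injective nor surjective, so the right-hand sum may be empty or have several terms. The virtue of the character-orthogonality argument is that it handles every case uniformly, since the coefficient $\langle \lambda_{y,x}^*(\psi), \phi\rangle_{G_x}$ automatically detects whether $\phi$ lies in the image of $\lambda_{y,x}^*$, and keeping the direction of the dual map straight is the main bookkeeping to watch.
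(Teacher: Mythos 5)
Your proof is correct, and it establishes the key step by a genuinely different argument from the paper's. Both proofs reduce the general identity to the same fact about a morphism $\lambda\colon H \to G$ of finite abelian groups, namely that the induced algebra map sends $\B{E}_\phi$ to $\sum_{\psi \circ \lambda = \phi} \B{E}_\psi$; but the paper proves this structurally, while you prove it by direct computation. The paper's argument: an algebra morphism sends idempotents to idempotents, so $\lambda(\B{E}_\phi)$ must be a sum of isotypic projectors of $G$ (these form the unique CSoPOI of the commutative semisimple algebra $\kk G$), and one detects which projectors occur by letting $\lambda(\B{E}_\phi)$ act on each one-dimensional simple $G$-module pulled back to an $H$-module along $\lambda$, where a trace computation shows $\B{E}_\psi$ survives exactly when $\psi \circ \lambda = \phi$. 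Your argument instead expands $\B{E}_\phi$ in the $\B{H}$-basis, pushes $\B{H}_y$ through via $\lambda_{y,x}$, re-expands using Fourier inversion $\B{H}_t = \sum_{\psi \in \wh{G_z}} \psi(t)\,\B{E}_\psi$ inside $\kk G_{yx}$, and evaluates the coefficient $\frac{1}{|G_x|}\sum_{s}\ol{\phi(s)}\,\lambda_{y,x}^*(\psi)(s)$ by the first orthogonality relation. The paper's route is coordinate-free and explains conceptually why only compatible characters appear; yours is more elementary and self-contained, needing only the two standard orthogonality relations, and as you note it handles uniformly the cases where the sum is empty (when $\phi$ is nontrivial on $\ker\lambda_{y,x}$) or has several terms. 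Your treatment of the two specializations is also more detailed than the paper's, which simply cites \cref{def:chleq}: you verify $yx = y$ when $x \leq y$ using the LRB axioms, and observe that bijectivity of $\lambda_{x',x}^*$ forces a single surviving term when $x \sim x'$; both checks are correct and worth making explicit.
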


\begin{proof}
    \Cref{eq:EandHmultiplication,eq:EandHmultiplication2} follow from the first claim by the definition of the partial order $\chleq$ and the support relation $\supp$ in \cref{def:chleq}.
    The first claim is an instance of the following result about abelian groups:
    if $\lambda : H \to G$ is a morphism of abelian groups and $\phi \in \wh{H}$, then
    \[
        \lambda(\B{E}_\phi) = \sum_{\psi \circ \lambda = \phi} \B{E}_\psi \in \kk G.
    \]
    Indeed, an algebra morphism sends idempotents to idempotents, so $\lambda(\B{E}_\phi)$ is necessarily a sum of isotypic projectors of $G$.
    To see which isotypic projectors appear in the sum, we study the action of $\lambda(\B{E}_\phi)$ on the simple $G$-modules. The element $\B{E}_\psi$ appears in the sum if and only if $\lambda(\B{E}_\phi)$ acts trivially on the simple module with character $\psi$.
    Take a simple $G$-module $M$ with character $\psi$, and view it as an $H$-module via $h \cdot m := \lambda(h) \cdot m$ for all $h \in H$ and $m \in M$.
    With this definition, the action of $\lambda(\B{E}_\phi)  \in \kk G$ on $M$ is the same as the action of $\B{E}_\phi \in \kk H$ on $M$.
    Since $M$ is one-dimensional, $M$ is a simple $H$-module, with character
    \[
        {\rm trace}(M \xrightarrow{{h} \cdot } M)
        = {\rm trace}(M \xrightarrow{{\lambda(h)} \cdot } M)
        = \psi(\lambda(h)).
    \]
    Thus, $\lambda(\B{E}_\phi)$ acts trivially on $M$ if and only if $\B{E}_\phi$ acts trivially on $M$ if and only if $\phi = \psi \circ \lambda$.
\end{proof}

\begin{theorem}\label{t:CSoPOI-LRBaG}
    Let $\SS$ be a LRBaG and let
    $\{ \B{Q}^\circ_x \}_{x \in E(\SS)}$ be the $\B{Q}$-basis of $\kk E(\SS)$
    with corresponding homogeneous section $\{ \uu_X \}_{X \in E(\TT)}$.
    For~$\phi \in \wh{\SS}$, and $\Phi \in \wh{\TT}$ with $\supp(\phi) = \Phi$, define
    \begin{equation}\label{eq:CSoPOI-LRBaG}
        \B{Q}_\phi := \B{E}_\phi \B{Q}^\circ_{\abs{\phi}}
        \qqand
        \ee_\Phi := \uu_{\abs{\Phi}} \B{Q}_\phi.
    \end{equation}
    Then $\{\B{Q}_{\phi}\}_{\phi \in \wh{\SS}}$ is a basis of primitive idempotents of $\kk \SS$
    and $\{ \ee_\Phi \}_{\Phi \in \wh{\TT}}$ is a complete system of primitive orthogonal
    idempotents of $\kk \SS$.
\end{theorem}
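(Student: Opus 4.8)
The plan is to deduce both assertions from \cref{thm:CSoPOI-LRBG} and \cref{p:alt-eXi}, using the fact (\cref{ss:isotypic}) that for an abelian group $G_x$ the isotypic projectors $\{\B{E}_\phi\}_{\abs{\phi}=x}$ already form a CSoPOI of $\kk G_x$. Throughout write $x=\abs{\phi}$ and $X=\supp(x)=\abs{\Phi}$, and record the simplification $\B{Q}_\phi=\B{E}_\phi\B{Q}^\circ_{x}=\B{E}_\phi\B{H}_x\ee^\circ_X=\B{E}_\phi\ee^\circ_X$, which uses $\B{E}_\phi\B{H}_x=\B{E}_\phi$ (as $\B{H}_x$ is the unit of $\kk G_x$). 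First I would settle the statement about $\{\ee_\Phi\}$. By definition $\ee_\Phi=\uu_X\B{Q}_\phi=\uu_X\B{E}_\phi\ee^\circ_X$, and \cref{p:alt-eXi} (with $v=\B{E}_\phi$) identifies this with $\ee^\circ_X\B{E}_\phi\ee^\circ_X$. Since $\{\B{E}_\phi\}_{\abs{\phi}=x}$ is a CSoPOI of $\kk G_x$, this is precisely the family produced by \cref{thm:CSoPOI-LRBG}, under the bijection $\wh{\TT}\leftrightarrow\{(X,\phi):X\in E(\TT),\ \phi\in\wh{G_x}\}$ sending $\Phi=\supp(\phi)$ to $(X,\phi)$; hence $\{\ee_\Phi\}_{\Phi\in\wh{\TT}}$ is a CSoPOI of $\kk\SS$. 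I would also verify that $\ee_\Phi$ does not depend on the chosen representative $\phi$ of $\Phi$: if $\phi\sim\phi'$ with $\abs{\phi'}=x'$, then $\B{E}_{\phi'}=\B{H}_{x'}\B{E}_\phi$ by \cref{eq:EandHmultiplication2}, and the computation in the observation following \cref{thm:CSoPOI-LRBG} gives $\ee^\circ_X\B{E}_{\phi'}\ee^\circ_X=\ee^\circ_X\B{E}_\phi\ee^\circ_X$.

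Next, for $\{\B{Q}_\phi\}$, I would establish the basis property by relating it to the $\B{R}$-basis of \cref{prop:LRBG-R-basis}. Fourier inversion on the abelian group $G_x$ gives $\B{H}_s=\sum_{\phi\in\wh{G_x}}\phi(s)\B{E}_\phi$ for $s\in G_x$, so that $\B{R}_s=\B{H}_s\ee^\circ_X=\sum_{\phi\in\wh{G_x}}\phi(s)\B{Q}_\phi$. Because $\SS=\bigsqcup_x G_x$ and $\wh{\SS}=\bigsqcup_x\wh{G_x}$, the change of basis from $\{\B{R}_s\}$ to $\{\B{Q}_\phi\}$ is block diagonal with blocks given by the invertible character tables of the groups $G_x$; since $\{\B{R}_s\}_{s\in\SS}$ is a basis, so is $\{\B{Q}_\phi\}_{\phi\in\wh{\SS}}$.

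It remains to show each $\B{Q}_\phi$ is a primitive idempotent. The engine is a short list of one-sided absorption identities: $\B{Q}_\phi\ee^\circ_X=\B{Q}_\phi$ (immediate from idempotency of $\ee^\circ_X$), $\ee^\circ_X\B{Q}_\phi=\ee_\Phi$ (from the first paragraph), and $\B{Q}_\phi\ee_\Phi=\B{Q}_\phi$. The last is obtained by expanding $\ee_\Phi=\uu_X\B{E}_\phi\ee^\circ_X$, applying \cref{p:alt-eXi} to get $\ee^\circ_X\uu_X\B{E}_\phi\ee^\circ_X=\ee_\Phi$, and then using $\B{E}_\phi\uu_X=\B{E}_\phi$ together with $\B{E}_\phi^2=\B{E}_\phi$. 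Idempotency follows with no circularity: $\B{Q}_\phi=\B{Q}_\phi\ee_\Phi=\B{Q}_\phi(\ee^\circ_X\B{Q}_\phi)=(\B{Q}_\phi\ee^\circ_X)\B{Q}_\phi=\B{Q}_\phi^2$, and consequently $\ee_\Phi\B{Q}_\phi=\ee^\circ_X\B{Q}_\phi^2=\ee_\Phi$. For primitivity I would avoid constructing an explicit module isomorphism and instead observe that these relations force the equality of left ideals $\kk\SS\,\B{Q}_\phi=\kk\SS\,\ee_\Phi$: indeed $\B{Q}_\phi=\B{Q}_\phi\ee_\Phi\in\kk\SS\,\ee_\Phi$ and $\ee_\Phi=\ee_\Phi\B{Q}_\phi\in\kk\SS\,\B{Q}_\phi$. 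Since $\ee_\Phi$ is primitive, $\kk\SS\,\ee_\Phi$ is indecomposable, hence so is $\kk\SS\,\B{Q}_\phi$, and therefore $\B{Q}_\phi$ is primitive.

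I expect the main obstacle to be organizational rather than conceptual: keeping the chain of absorption identities non-circular—in particular proving $\B{Q}_\phi\ee_\Phi=\B{Q}_\phi$ and $\ee^\circ_X\B{Q}_\phi=\ee_\Phi$ before invoking idempotency of $\B{Q}_\phi$—and correctly matching the index sets $\wh{\TT}$ and $\{(X,\phi)\}$ so that the appeal to \cref{thm:CSoPOI-LRBG} is legitimate and $\ee_\Phi$ is well defined. Once these are in hand, the clean observation that $\B{Q}_\phi$ and $\ee_\Phi$ generate the same left ideal delivers primitivity essentially for free.
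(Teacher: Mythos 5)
Your proposal is correct, and its first half coincides with the paper's argument: the paper likewise rewrites $\B{Q}_\phi = \B{E}_\phi \ee^\circ_{\supp(\abs{\phi})}$ and invokes \cref{p:alt-eXi} to recognize $\ee_\Phi = \uu_X \B{E}_\phi \ee^\circ_X = \ee^\circ_X \B{E}_\phi \ee^\circ_X$ as exactly the idempotents of \cref{thm:CSoPOI-LRBG} built from the isotypic projectors of the abelian groups $G_x$, with the same bijection between $\wh{\TT}$ and the pairs $(X,i)$ and the same well-definedness check via \cref{eq:EandHmultiplication2}. Your Fourier-inversion argument for the basis property is also the paper's (it is recorded, in the inverse direction, as \cref{eq:LRBaG-R-to-Q}). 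Where you genuinely diverge is the primitivity of $\B{Q}_\phi$: the paper applies $\supp$ to land on one of the primitive idempotents of the semisimple quotient $\kk\TT$ from \cref{prop:CSoPOI-SemLatGps} and then lifts primitivity across the radical using \cite[Lemma~D.28]{am17} (together with $\rad(\kk\SS) = \ker(\supp)$), whereas you prove the absorption identities $\B{Q}_\phi \ee^\circ_X = \B{Q}_\phi$, $\ee^\circ_X \B{Q}_\phi = \ee_\Phi$, $\B{Q}_\phi \ee_\Phi = \B{Q}_\phi$, deduce idempotency non-circularly, and conclude via the equality of left ideals $\kk\SS\,\B{Q}_\phi = \kk\SS\,\ee_\Phi$ and the standard equivalence between primitivity of an idempotent $e$ and indecomposability of $Ae$. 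Each route has a payoff: the paper's is shorter given its standing machinery (the identification of the radical and the semilattice-of-groups classification), while yours is more self-contained, avoids the radical entirely, and makes explicit the idempotency of $\B{Q}_\phi$ --- a fact the paper's own appeal to the lifting lemma tacitly requires but never verifies. The only point you assert without proof is $\B{E}_\phi \uu_X = \B{E}_\phi$; this is legitimate, as it is exactly the identity $\B{E}^{(i)}_X \uu_X = \B{E}^{(i)}_X \B{H}_x \uu_X = \B{E}^{(i)}_X \B{H}_x = \B{E}^{(i)}_X$ established inside the paper's proof of \cref{thm:CSoPOI-LRBG}, but it deserves a one-line justification in your write-up.
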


It follows that
\begin{equation}\label{eq:LRBaG-R-to-Q}
    \B{Q}_\phi =
    \dfrac{1}{|G_{\abs{\phi}}|} \Big( \sum_{s^\omega = \abs{\phi}} \ol{\phi(s)} \B{H}_s  \Big) \B{Q}^\circ_{\abs{\phi}} =
    \dfrac{1}{|G_{\abs{\phi}}|} \sum_{s^\omega = \abs{\phi}} \ol{\phi(s)} \B{R}_s.
\end{equation}
Note that $\ee_\Phi$ is independent of the chosen $\phi$ since, by \Cref{eq:EandHmultiplication2},
$\B{Q}_{\phi'} = \B{H}_{\abs{\phi'}} \B{Q}_{\phi}$
whenever $\supp(\phi') = \supp(\phi)$.

\begin{proof}[Proof of \cref{t:CSoPOI-LRBaG}]
    Since $\B{E}_\phi = \B{E}_\phi \B{H}_{\abs{\phi}}$ and $\B{Q}^\circ_{\abs{\phi}} = \B{H}_{\abs{\phi}} \ee^\circ_{\supp(\abs{\phi})}$, we have $\B{Q}_\phi = \B{E}_\phi \ee^\circ_{\supp(\abs{\phi})}$.
    In view of \cref{p:alt-eXi}, the elements in \cref{eq:CSoPOI-LRBaG} are precisely the ones in the CSoPOI of \cref{thm:CSoPOI-LRBG}.

    Moreover, since
    \[
        \supp(\B{Q}_\phi) = \supp(\B{E}_\phi \B{Q}^\circ_{\abs{\phi}}) = \B{E}_{\supp(\phi)} \B{Q}^\circ_{\abs{\supp(\phi)}}
    \]
    is one of the primitive idempotents in \cref{prop:CSoPOI-SemLatGps}, $\B{Q}_\phi$ is primitive \cite[Lemma~D.28]{am17}.
\end{proof}

\begin{example}
    Let $\SS = \Sigma_2[C_2]$ and consider the following homogeneous section:
    \[
        \uu_{\{1|2\}} = \B{H}_{(1|2)}
        \hspace{5em}
        \uu_{\{12\}} = \B{H}_{(12)}.
    \]
    The corresponding CSoPOI of $\kk E(\SS)$ is
    \[
        \ee^\circ_{\{1|2\}} = \B{H}_{(1|2)}
        \hspace{5em}
        \ee^\circ_{\{12\}} = \B{H}_{(12)} - \B{H}_{(1|2)},
    \]
    and the CSoPOI of $\kk \SS$ is
    \begin{align*}
        \ee_{\{ 1^{\trv} \mid 2^{\trv} \}} =
        \B{E}_{( 1^{\trv} \mid 2^{\trv} )} & =
                    \dfrac{1}{4} \Big( \B{H}_{(1^+ \mid 2^+)} + \B{H}_{(1^- \mid 2^+)} + \B{H}_{(1^+ \mid 2^-)} + \B{H}_{(1^- \mid 2^-)} \Big) \\
        \ee_{\{ 1^{\trv} \mid 2^{\sgn} \}} =
        \B{E}_{( 1^{\trv} \mid 2^{\sgn} )} & =
                    \dfrac{1}{4} \Big( \B{H}_{(1^+ \mid 2^+)} + \B{H}_{(1^- \mid 2^+)} - \B{H}_{(1^+ \mid 2^-)} - \B{H}_{(1^- \mid 2^-)} \Big) \\
        \ee_{\{ 1^{\sgn} \mid 2^{\trv} \}} =
        \B{E}_{( 1^{\sgn} \mid 2^{\trv} )} & =
                    \dfrac{1}{4} \Big( \B{H}_{(1^+ \mid 2^+)} - \B{H}_{(1^- \mid 2^+)} + \B{H}_{(1^+ \mid 2^-)} - \B{H}_{(1^- \mid 2^-)} \Big) \\
        \ee_{\{ 1^{\sgn} \mid 2^{\sgn} \}} =
        \B{E}_{( 1^{\sgn} \mid 2^{\sgn} )} & =
                    \dfrac{1}{4} \Big( \B{H}_{(1^+ \mid 2^+)} - \B{H}_{(1^- \mid 2^+)} - \B{H}_{(1^+ \mid 2^-)} + \B{H}_{(1^- \mid 2^-)} \Big)  \\
        \ee_{\{ 12^{\trv} \}} =
        \B{E}_{( 12^{\trv} )} \ee^\circ_{\{12\}} & =
                    \B{E}_{( 12^{\trv} )} - \B{E}_{( 1^{\trv} \mid 2^{\trv} )} - \B{E}_{( 1^{\sgn} \mid 2^{\sgn} )} \\ & =
                    \dfrac{1}{2} \Big( \B{H}_{(12^+)} + \B{H}_{(12^-)} - \B{H}_{(1^+ \mid 2^+)} - \B{H}_{(1^- \mid 2^-)} \Big)  \\   
        \ee_{\{ 12^{\sgn} \}} =
        \B{E}_{( 12^{\sgn} )} \ee^\circ_{\{12\}} & =
                    \B{E}_{( 12^{\sgn} )} - \B{E}_{( 1^{\sgn} \mid 2^{\trv} )} - \B{E}_{( 1^{\sgn} \mid 2^{\trv} )} \\ & =
                    \dfrac{1}{2} \Big( \B{H}_{(12^+)} - \B{H}_{(12^-)} - \B{H}_{(1^+ \mid 2^+)} + \B{H}_{(1^- \mid 2^-)} \Big).
    \end{align*}
\end{example}

\begin{remark} 
Unlike the case of LRBs, not all CSoPOI of $\kk \SS$ are of the form in \cref{t:CSoPOI-LRBaG}. For example, consider the following LRBaG $\SS$ with 5 elements.
\[
    \begin{gathered}
        \begin{tabular}{c|ccccc}
              & $1$    & $l^+$ & $l^-$ & $r^+$ & $r^-$ \\
        \hline                                                                                 
        $1$   & $1$    & $l^+$ & $l^-$ & $l^+$ & $r^-$ \\
        $l^+$ & $l^+$  & $l^+$ & $l^-$ & $l^+$ & $l^-$ \\
        $l^-$ & $l^-$  & $l^-$ & $l^+$ & $l^-$ & $l^+$ \\
        $r^+$ & $r^+$  & $r^+$ & $r^-$ & $r^+$ & $r^-$ \\
        $r^-$ & $r^-$  & $r^-$ & $r^+$ & $r^-$ & $r^+$ 
        \end{tabular}
    \end{gathered}
    \qquad\qquad\qquad
    \begin{gathered}
        \begin{tikzpicture}
            \draw[<->] (-60bp,0) -- (60bp,0);
            \draw [fill=black] (0,0) circle (.1);
            
            \node [] at (  0bp, -15bp) {$1$};
            \node [] at (-40bp,  15bp) {$l^+ \quad l^-$};
            \node [] at ( 40bp,  15bp) {$r^+ \quad r^-$};
    
            \draw [blue] (-60bp,  5bp) rectangle (-20bp, 25bp); \node [blue] at (-40bp,  35bp) {$G_{l^+} \cong \ztwo$};
            \draw [blue] ( 60bp,  5bp) rectangle ( 20bp, 25bp); \node [blue] at ( 40bp,  35bp) {$\ztwo \cong G_{r^+}$};
            \draw [blue] (-10bp, -5bp) rectangle ( 10bp,-25bp); \node [blue] at (-20bp, -15bp) {$G_{1}$};
        \end{tikzpicture}
    \end{gathered}
\]
Write $\wh{G_{l^+}} = \{ l^{\trv} , l^{\sgn} \}$ and $\wh{G_{r^+}} = \{ r^{\trv} , r^{\sgn} \}$.
Then, the following is a CSoPOI that is not of the form in \cref{t:CSoPOI-LRBaG}:
\[
    \B{E}_{l^{\trv}} = \dfrac{1}{2} \big( \B{H}_{l^+} + \B{H}_{l^-} \big)
    \qquad\qquad
    \B{E}_{r^{\sgn}} = \dfrac{1}{2} \big( \B{H}_{r^+} - \B{H}_{r^-} \big)
    \qquad\qquad
    \B{H}_1 - \B{E}_{l^{\trv}} - \B{E}_{r^{\sgn}}.
\]
\end{remark}

\section{LRBGs with symmetry}\label{sec:LRBGsymmetry}
We will now consider the case of LRBGs where there is a natural group action. 

\subsection{Motivation}
Studying LRBGs with a natural group action is motivated by a beautiful result of Bidigare \cite{bidigare} relating the face algebra of a hyperplane arrangement to Solomon's Descent algebra. Let $\A_W$ be the \defn{reflection arrangement} associated to a finite Coxeter group $W$, meaning that the hyperplanes $H \in \A_W$ are the fixed space of the reflections in $W$. Then $W$ acts on $\A_W$, and this action extends to the faces $\Sigma_{\A_W}$.
Moreover, this action satisfies
\[ w \cdot ({\sf F \cdot G}) = (w \cdot {\sf F}) \cdot (w \cdot {\sf G}). \]
Thus the algebra $\kk \Sigma_{\A_W}$ is a $W$-module, where 
\[ w \cdot \B{H}_x = \B{H}_{w\cdot x}.\]
On the other hand, Solomon proved in \cite{solomon} that any finite Coxeter group defines a subalgebra $\sol(W)$ of the group algebra $\kk W$ spanned by sums of elements in $W$ with the same \defn{Coxeter descent set}. 

In his thesis, Bidigare studied the subalgebra of $W$-invariants $\left( \kk \Sigma_{\A_W} \right)^W$, proving the following:
\begin{theorem}[Bidigare]\label{thm:bidigare}
    For any finite Coxeter group $W$, the algebra $\left( \kk \Sigma_{\A_W} \right)^W$ is anti-isomorphic to Solomon's Descent algebra $\sol(W)$.
\end{theorem}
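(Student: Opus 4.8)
\emph{Proof plan.} The plan is to realize both algebras as operators on the space $\kk\mathcal{C}$ spanned by the chambers (maximal faces) of $\A_W$, exploiting that $W$ acts simply transitively on the set $\mathcal{C}$ of chambers. First I would record that the Tits product restricts to a left action of the semigroup $\Sigma_{\A_W}$ on $\mathcal{C}$: for a face ${\sf F}$ and a chamber ${\sf C}$, the product ${\sf F}\cdot{\sf C}$ is again a chamber, since its support is the top element of the lattice of flats. Extending linearly yields an algebra homomorphism $\rho\colon \kk\Sigma_{\A_W}\to \operatorname{End}_{\kk}(\kk\mathcal{C})$. Because the $W$-action is compatible with the product, $w\cdot({\sf F}\cdot{\sf C})=(w\cdot{\sf F})\cdot(w\cdot{\sf C})$, the restriction of $\rho$ to the invariant subalgebra lands in the $W$-equivariant endomorphisms, giving $\rho\colon (\kk\Sigma_{\A_W})^W\to \operatorname{End}_{\kk W}(\kk\mathcal{C})$.

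Next I would fix a fundamental chamber ${\sf C}_0$ and use simple transitivity to identify $\kk\mathcal{C}\cong \kk W$ as left $\kk W$-modules via $w{\sf C}_0 \leftrightarrow w$; under this identification $\kk\mathcal{C}$ is the regular representation. A standard fact then gives $\operatorname{End}_{\kk W}(\kk W)\cong (\kk W)^{\mathrm{op}}$, where a module endomorphism sending $1\mapsto r$ is identified with right multiplication by $r$; since composing right multiplications reverses their order, this is precisely the source of the anti-isomorphism. Thus $\rho$ becomes an algebra map $\Psi\colon (\kk\Sigma_{\A_W})^W\to (\kk W)^{\mathrm{op}}$, equivalently an algebra anti-homomorphism into $\kk W$.

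The heart of the argument is an explicit computation of $\Psi$ on a natural basis of the invariants. The $W$-orbits of faces are indexed by subsets $J\subseteq S$ of simple reflections (the \emph{type} of a face), giving a basis $\{x_J\}_{J\subseteq S}$ of $(\kk\Sigma_{\A_W})^W$, where $x_J$ is the sum of all $\B{H}_{\sf F}$ with ${\sf F}$ of type $J$. By $W$-equivariance, $\Psi(x_J)$ is determined by its value on ${\sf C}_0$, namely $\sum_{\type {\sf F}=J} {\sf F}\cdot{\sf C}_0$. Using the coset model of the Coxeter complex (faces of type $J$ correspond to cosets $w W_J$) and the characterization of minimal coset representatives $W^J=\{w:\Des(w)\cap J=\varnothing\}$, one computes that this sum equals $\sum_{w\in W^J} w$ (up to the standard reindexing $J\leftrightarrow S\setminus J$). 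These are exactly the sums $\sum_{\Des(w)\subseteq K} w$ over descent-closed sets $K$, which form a basis of Solomon's descent algebra $\sol(W)$.

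Finally I would conclude: the $2^{|S|}$ elements $\Psi(x_J)$ are linearly independent and span $\sol(W)$, so $\Psi$ restricts to a linear isomorphism of $(\kk\Sigma_{\A_W})^W$ onto $\sol(W)$; since $\Psi$ is an algebra homomorphism into $(\kk W)^{\mathrm{op}}$, it is an algebra anti-isomorphism onto the subalgebra $\sol(W)\subseteq \kk W$. The main obstacle is the combinatorial computation of ${\sf F}\cdot{\sf C}_0$ in the third step: one must translate the geometric Tits product into the coset model and match the resulting chambers to descent classes. This is where the descent structure of $W$ genuinely enters, and where the precise left/right descent conventions and the reindexing $J\leftrightarrow S\setminus J$ must be pinned down carefully.
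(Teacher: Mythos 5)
The paper offers no proof of this statement to compare against: it is quoted verbatim from Bidigare's thesis \cite{bidigare}, and the surrounding text treats it as a known result (even suggesting one may take it as the \emph{definition} of $\sol(W)$). Your proposal is a correct outline of precisely the classical argument that the citation refers to: the face semigroup acts on chambers, simple transitivity of $W$ on chambers identifies $\kk\mathcal{C}$ with the regular representation, the identification $\operatorname{End}_{\kk W}(\kk W)\cong(\kk W)^{\mathrm{op}}$ is the source of the anti-isomorphism, and the $W$-orbit sums of faces of type $J$ map to the descent-class sums $\sum_{\Des(w)\subseteq S\setminus J} w$, whose linear independence pins down the image as $\sol(W)$. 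All steps are sound; the genuine content lies where you say it does, in showing that for a face ${\sf F}$ corresponding to the coset $wW_J$ (with $w$ of minimal length) the Tits projection satisfies ${\sf F}\cdot{\sf C}_0 = w{\sf C}_0$, which is what produces the reindexing $J\leftrightarrow S\setminus J$ between face types and descent sets.
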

In fact, we may take Bidigare's theorem as definition; through this lens, $\sol(W)$ is the precisely the opposite algebra of $\left( \kk \Sigma_{\A_W} \right)^W$.

Hsiao showed that studying the $\sym_n$-invariants of 
the algebra $\kk\hsiao$ yields a parallel relationship to an algebra called the \defn{Mantaci--Reutenauer algebra} $\MR_n[G]$, to be defined and discussed in \cref{sec:MRalg}. Before doing so, we will define notation and conventions for studying the invariant algebras of any LRBG algebra $\kk \SS$ with symmetry. 

\subsection{Orbits and the type map}\label{sec:orbitstypemap}

Let $\SS$ be a LRBG with a left action of a finite group $W$ satisfying
\begin{equation}\label{eq:action-endo}
    w \cdot (s t) = (w \cdot s)(w \cdot t)
    \qquad\qquad
    \text{for all $s,t \in \SS$ and $w \in W$.}
\end{equation}
As we will see shortly, this property induces actions of $W$ on the semigroups $E(\SS)$, $\TT$, and $E(\TT)$,
as well as on the sets $\wh{\SS}$ and $\wh{\TT}$ when $\SS$ is a LRBaG.
These actions are compatible with the corresponding support maps and yield commutative diagrams
\begin{equation}\label{eq:typesuppDiagrams}
    \begin{tikzcd}
        {{\SS}\,\,\!_{\red{s,t}}} \arrow[d, "\supp"'] \arrow[r, "\type"] & {{\SS}^W\,\,\!_{\red{p,q}}} \arrow[d, "\supp"] \\
        {{\TT}\,\,\!_{\red{S,T}}} \arrow[r, "\type"']                    & {{\TT}^W\,\,\!_{\,}}                          
    \end{tikzcd}
    \hspace{2em}
    \begin{tikzcd}
    E(\SS)\,\,\!_{\red{x,y}} \arrow[r, "\type"] \arrow[d, "\supp"']         &       E(\SS)^W \arrow[d, "\supp"] \\
    E(\TT)\,\,\!_{\red{X,Y}} \arrow[r, "\type"']                              &       E(\TT)^W
    \end{tikzcd}
    \hspace{2em}
    \begin{tikzcd}
        {\wh{\SS}\,\,\!_{\red{\phi,\psi}}} \arrow[d, "\supp"'] \arrow[r, "\type"] & {\wh{\SS}^W\,\,\!_{\red{\alpha,\beta}}} \arrow[d, "\supp"] \\
        {\wh{\TT}\,\,\!_{\red{\Phi,\Psi}}} \arrow[r, "\type"']                    & {\wh{\TT}^W\,\,\!_{\red{\lambda}}}
    \end{tikzcd}
\end{equation}
where the superscript $W$ denotes the set of $W$-orbits and the maps labeled
$\type$ send an element to its $W$-orbit. We have indicated in red the symbols
that we use for a typical element of each set.

We discuss some consequences of the property \cref{eq:action-endo}.
First, we have $(w \cdot s)^2 = w \cdot s^2$, so the action of $W$ on $\SS$ restricts to an action on the LRB $E(\SS)$.
Moreover, since the support relation $\sim$ is defined in terms of the semigroup product (\cref{def:sLRBGrelations}),
the action of $W$ descends to the quotients $\TT = \SS/\mathord{\sim}$ and $E(\TT) = E(\SS)/\mathord{\sim}$, and it is compatible with the support maps.
Also, each $w \in W$ yields group isomorphisms $G_x \to G_{w \cdot x} : s \mapsto w \cdot s$ for all $x \in E(\SS)$.

When $\SS$ is a LRBaG,
the action of $W$ on $\wh{\SS}$ is defined as follows: given $x \in E(\SS)$, $\phi \in \wh{G_x}$, and $w \in W$, the element $w \cdot \phi \in \wh{G_{w \cdot x}}$ is the image of $\phi$ under the map dual to the isomorphism $G_{w \cdot x} \to G_x : s \mapsto w^{-1} \cdot s$.
That is,
\[
    (w \cdot \phi)(s) = \phi(w^{-1} \cdot s) \qquad\text{for all } s \in G_{w \cdot x}.
\]
That this action is compatible with $\supp: \wh{\SS} \to \wh{\TT}$ follows from the identity
\[
    w \cdot (\phi \circ \lambda_{y,x}) = (w \cdot \phi) \circ \lambda_{w \cdot y , w \cdot x},
\]
where $\lambda_{y, x} : G_{x} \to G_{yx}$ is the group morphism
induced by left-multiplication by $y$ (\cref{left-multiplication-induces-group-morphism}).
Finally, the partial order relations $\leq$ of $\SS$ and $\chleq$ of $\wh{\SS}$ are compatible with the $W$-action.
Thus, there are induced partial orders on the orbit spaces $\SS^W$ and $\wh{\SS}^W$ defined by
\begin{align*}
    p \leq q &\qqiff \text{there are $s \leq t$ in $\SS$ with $\type(s) = p$ and $\type(t) = q$},\\
    \alpha \chleq \beta &\qqiff \text{there are $\phi \chleq \psi$ in $\wh{\SS}$ with $\type(\phi) = \alpha$ and $\type(\psi) = \beta$}.
\end{align*}

\begin{example}\label{ex:braidtype}
    Perhaps the most illustrative example of the $\type$ map comes from considering the LRB of set compositions $\Sigma_n$ and its support lattice $\Pi_n$, the lattice of set partitions. The symmetric group $\sym_n$ naturally acts on the subsets of $[n]$, and this action extends to both $\Sigma_n$ and $\Pi_n$:
    \[
        w \cdot ( S_1 , \dots , S_k ) = ( w \cdot S_1 , \dots , w \cdot S_k )
        \qqand
        w \cdot \{ S_1 , \dots , S_k \} = \{ w \cdot S_1 , \dots , w \cdot S_k \}.
    \]
    Two compositions $( S_1 , \dots , S_k )$ and $( T_1 , \dots , T_\ell )$ are in the same orbit if and only if $k = \ell$ and $|S_i| = |T_i|$ for all $i$.
    Thus, we identify the orbits $(\Sigma_n)^{\sym_n}$ with the set $\Gamma_n$ of \defn{(integer) compositions of $n$},
    and define $\type : \Sigma_n \to \Gamma_n$ by
    \begin{equation*}
        \type\big(( S_1 , \dots , S_k )\big) = ( |S_1| , \dots , |S_k| );
    \end{equation*}
    for example,
    \[ \type\big( (678 | 12 | 345) \big) = (3,2,3). \]
    We similarly identify the orbits $(\Pi_n)^{\sym_n}$ with the set $\Lambda_n$ of \defn{(integer) partitions of $n$}, and define the corresponding type map $\type : \Pi_n \to \Lambda_n$ in an analogous way.
    Continuing with the example above, we have
    \begin{align*}
        \supp\big( (678 | 12 | 345)\big) = \{ 678 | 345 | 12 \}
        \qqand
        \type\big(\{ 678 | 345 | 12 \}\big) = \supp\big((3,2,3)\big) = \{3, 3, 2\}.
    \end{align*}
\end{example}

\begin{example} \label{ex:invariantshsiao}
    We now consider the semigroup $\hsiao$ of $G$-compositions of $[n]$ from \cref{ss:Hsiao's}.
    The symmetric group $\sym_n$ acts on $\hsiao$ by acting on the underlying set composition:
    \[
        w \cdot \big( (S_1,g_1), \cdots, (S_k, g_k) \big) := \big( (w \cdot S_1,g_1), \cdots, (w \cdot S_k, g_k) \big).
    \]
    Observe that the group elements are not modified.
    
    The symmetric group $\sym_n$ acts on $\gsetpartition$, $\Sigma_n[\wh{G}]$, and $\Pi_n[\wh{G}]$ in the same way:
    by acting on the underlying set composition/partition and leaving the group elements/characters intact.
    These actions are precisely the ones arising from the semigroup structure of $\hsiao$.
    
    Extending \cref{ex:braidtype}, we identify the orbits
    $(\hsiao)^{\sym_n}$ with the set $\gcomposition$ of $G$-compositions of~$n$;
    $(\gsetpartition)^{\sym_n}$ with the set $\gpartition$ of $G$-partitions of $n$;
    $(\Sigma_n[\wh{G}])^{\sym_n}$ with the set $\Gamma_n[\wh{G}]$ of $\wh{G}$-compositions of $n$;
    $(\Pi_n[\wh{G}])^{\sym_n}$ with the set $\Lambda_n[\wh{G}]$ of $\wh{G}$-partitions of $n$.
    \cref{fig:hsiaotypediagram} illustrates these sets and the maps between them.
    We will return to this example in \cref{sec:hsiaosalgebra}.

    \begin{figure}[!ht]
    \begin{center}
        \begin{tikzcd}[column sep=-2ex]
            \begin{array}{@{}c@{}} \gsetcomposition\!:\!{\footnotesize  \text{$G$-compositions of $[n]$}} \\ {\scriptsize\red\text{$ \big( 246^{g_1} \mid 3^{g_2} \mid 15^{g_3} \big)$}} \end{array}
                \arrow[rr, "\type"]
                \arrow[dd, "\supp"']
                \arrow[rd, "\abs{\cdot}"'] &&
            \begin{array}{@{}c@{}} \gcomposition\!:\!{\footnotesize  \text{$G$-compositions of $n$}} \\ {\scriptsize\red\text{$ \big( 3^{g_1} , 1^{g_2} , 2^{g_3} \big)$}} \end{array}
                \arrow[dd]
                \arrow[rd] \\
            &
            \begin{array}{@{}c@{}} \Sigma_n\!:\!{\footnotesize  \text{compositions of $[n]$}} \\ {\scriptsize\red\text{$ \big( 246 \mid 3 \mid 15 \big)$}} \end{array}
                \arrow[rr, crossing over] &&
            \begin{array}{@{}c@{}} \Gamma_n\!:\!{\footnotesize  \text{compositions of $n$}} \\ {\scriptsize\red\text{$ (3,1,2)$}} \end{array}
                \arrow[dd] \\
            \begin{array}{@{}c@{}} \gsetpartition\!:\!{\footnotesize  \text{$G$-partitions of $[n]$}} \\ {\scriptsize\red\text{$ \big\{ 15^{g_3} \mid 246^{g_1} \mid 3^{g_2} \big\}$}} \end{array}
                \arrow[rr]
                \arrow[rd] &&
            \begin{array}{@{}c@{}} \gpartition\!:\!{\footnotesize  \text{$G$-partitions of $n$}} \\ {\scriptsize\red\text{$ \big\{ 3^{g_1} , 2^{g_3} , 1^{g_2} \big\}$}} \end{array}
                \arrow[rd] \\
            &
            \begin{array}{@{}c@{}} \Pi_n\!:\!{\footnotesize  \text{partitions of $[n]$}} \\ {\scriptsize\red\text{$ \big\{ 15 \mid 246 \mid 3 \big\}$}} \end{array}
                \arrow[rr]
                \arrow[from=2-2, crossing over] &&
            \begin{array}{@{}c@{}} \Lambda_n\!:\!{\footnotesize  \text{partitions of $n$}} \\ {\scriptsize\red\text{$ \{3,2,1\}$}} \end{array}
        \end{tikzcd}
     \end{center}
     \caption{  The diagram shows the different semigroups arising from $\hsiao$ (on the left)
                and the corresponding sets of $\sym_n$-orbits (on the right).
                In red we show the image of the element $\big( 246^{g_1} \mid 3^{g_2} \mid 15^{g_3} \big) \in \Sigma_6[G]$ across the different combinations of support, type, and $\abs{\cdot}$ maps. We can construct a similar diagram where we replace $G$ by $\wh{G}$ (and $g_1,g_2,g_3 \in G$ by $\phi_1,\phi_2,\phi_3 \in \wh{G}$).
            }
     \label{fig:hsiaotypediagram}
    \end{figure}
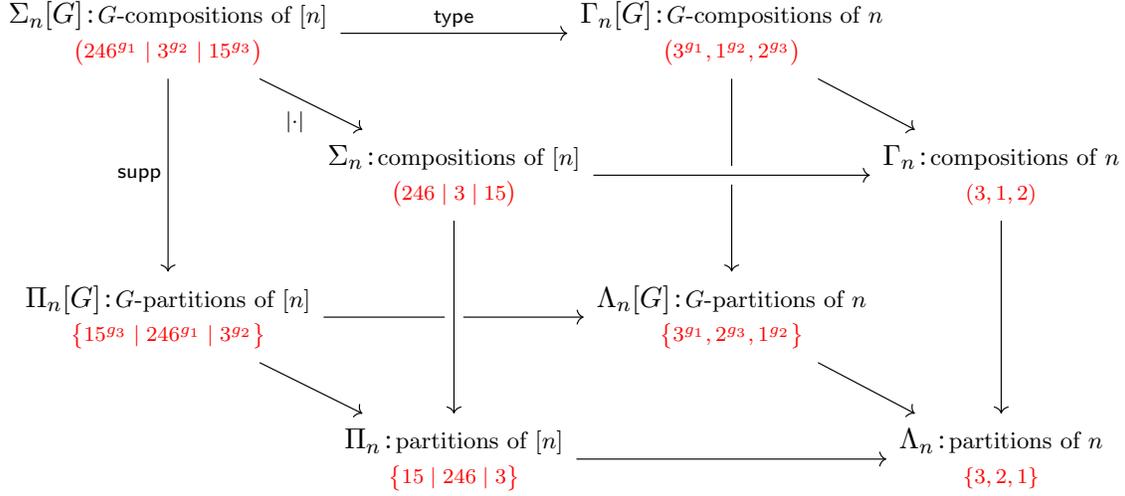
\end{example}

\subsection{The invariant subalgebra of a LRBaG}\label{sec:invarLRBaG}

Let $W$ be a finite group acting on a LRBaG $\SS$.
We extend this action to the semigroup algebra $\kk \SS$ by permuting the $\B{H}$-basis:
\[
    w \cdot \B{H}_s := \B{H}_{w \cdot s} \qquad\text{ for all } s \in \SS.
\]
The basis of local idempotents $\{\B{E}_\phi\}_{\phi \in \wh{\SS}}$ is also permuted by the $W$-action:
\[
    w \cdot \B{E}_\phi
    = w \cdot \Big( \dfrac{1}{|G_{\abs{\phi}}|} \sum_{s \in G_{\abs{\phi}}} \ol{\phi(s)} \B{H}_s \Big)
    = \dfrac{1}{|G_{\abs{\phi}}|} \sum_{s \in G_{\abs{\phi}}} \ol{\phi(s)} \B{H}_{w \cdot s}
    = \B{E}_{w \cdot \phi},
\]
where $w \cdot \phi$ corresponds to the $W$ action on $\wh{\SS}$,
and we used that $\phi(s) = (w \cdot \phi)(w \cdot s)$ for all $\phi \in \wh{\SS}$ and $s \in G_{\abs{\phi}}$.
Therefore, summing over $W$-orbits of the $\B{H}$-basis and the $\B{E}$-basis,
we obtain two different bases for the invariant subalgebra $(\kk \SS)^W$, one indexed by $\SS^W$ and one indexed by $\wh{\SS}^W$.
Explicitly, define $\{\SB{H}_p\}_{p \in \SS^W}$ and $\{ \SB{E}_\alpha \}_{\alpha \in \wh{\SS}^W}$ by:
\begin{equation}\label{eq:def-symH-basis}
        \SB{H}_p := \sum_{\type(s) = p} \B{H}_s
    \qqand
    \SB{E}_\alpha := \sum_{\type(\phi) = \alpha} \B{E}_\phi.
\end{equation}
\begin{example}
    In $\Sigma_3[G]$, we have
    \[
        \SB{H}_{( 2^g , 1^h )} =
        \B{H}_ {( 12^g \mid 3^h )} + 
        \B{H}_ {( 13^g \mid 2^h )} + 
        \B{H}_ {( 23^g \mid 1^h )},
    \]
    and
    \[
        \SB{E}_{( 2^\phi , 1^\psi )} =
        \B{E}_ {( 12^\phi \mid 3^\psi )} + 
        \B{E}_ {( 13^\phi \mid 2^\psi )} + 
        \B{E}_ {( 23^\phi \mid 1^\psi )}.
    \]
\end{example}

In order to make the construction of idempotents in \cref{thm:CSoPOI-LRBG} compatible with the $W$-action, we need to carefully choose the homogeneous section $\uu$.
We say that $\uu$ is a \defn{$W$-invariant homogeneous section of the support map} if it is a homogeneous section
$\{\uu_X\}_{X \in E(\TT)} \subseteq \kk E(\SS)$ such that
\[
    w \cdot \uu_X = \uu_{w \cdot X}
    \qquad
    \text{for all }w \in W \text{ and } X \in E(\TT).
\]
This is equivalent to the map $\uu : \kk E(\TT) \to \kk E(\SS)$ being $W$-equivariant.

Let $\uu$ be a $W$-invariant homogeneous section.
It then follows from the definitions that the corresponding elements $\ee^\circ$, $\B{Q}^\circ$, $\B{R}$, and $\B{Q}$ satisfy
\begin{equation}
    w \cdot \ee^\circ_X = \ee^\circ_{w \cdot X},
    \qquad
    w \cdot \B{Q}^\circ_x = \B{Q}^\circ_{w \cdot x},
    \qquad
    w \cdot \B{R}_s = \B{R}_{w \cdot s},
    \qqand
    w \cdot \B{Q}_\phi = \B{Q}_{w \cdot \phi},
\end{equation}
for all $w \in W$, $X \in E(\TT)$, $x \in E(\SS)$, $s \in \SS$, and $\phi \in \wh{\SS}$.
Thus we may define two more bases $\{ \SB{R}_p \}_{p \in \SS^W}$ and $\{ \SB{Q}_\alpha \}_{\alpha \in \wh{\SS}^W}$ of $(\kk \SS)^W$ by:
\[
    \SB{R}_p := \sum_{\type(s) = p} \B{R}_s
    \qqand
    \SB{Q}_\alpha := \sum_{\type(\phi) = \alpha} \B{Q}_\phi.
\]
The action of $W$ also permutes the CSoPOI $\{ \ee_\Psi \}_{\Psi \in \wh{\TT}}$ from \cref{eq:CSoPOI-LRBaG}:
\[
    w \cdot \ee_\Phi =
    w \cdot (\uu_{\abs{\Phi}} \B{Q}_\phi) = 
    (w \cdot  \uu_{\abs{\Phi}}) (w \cdot \B{Q}_\phi) = 
    \uu_{w \cdot\abs{\Phi}} \B{Q}_{w \cdot \phi} =
    \ee_{w \cdot \Phi}.
\]

Note that the sum of orthogonal idempotents is itself an idempotent. The following result is a consequence of \cref{t:CSoPOI-LRBaG}.

\begin{theorem}\label{thm:CSoPOI-invar-subalg}
    Let $\SS$ be a LRBaG with a group action by $W$. Given a $W$-invariant homogeneous section $\uu$, the corresponding collection 
    $\{ \ee_\lambda \}_{\lambda \in \wh{\TT}^W}$, where
    \begin{equation}\label{eq:CSoPOI-invariant}
        \ee_\lambda := \sum_{\type(\Phi) = \lambda} \ee_\Phi,
    \end{equation}
    is a complete system of primitive orthogonal idempotents of the invariant subalgebra $(\kk \SS)^W$.
\end{theorem}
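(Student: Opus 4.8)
The plan is to bootstrap from \cref{t:CSoPOI-LRBaG}, using that the $W$-invariance of the section $\uu$ forces the $W$-action to permute the idempotents $\ee_\Phi$ exactly as it permutes their indices, i.e.\ $w \cdot \ee_\Phi = \ee_{w \cdot \Phi}$ (as recorded just before the statement). The elementary properties then follow at once. Since $W$ permutes each orbit $\lambda$ bijectively, $w \cdot \ee_\lambda = \sum_{\type(\Phi) = \lambda} \ee_{w \cdot \Phi} = \ee_\lambda$, so $\ee_\lambda \in (\kk\SS)^W$. Because the orbits partition $\wh{\TT}$ and $\{\ee_\Phi\}_{\Phi \in \wh{\TT}}$ is a CSoPOI of $\kk\SS$, expanding products gives
\[
    \ee_\lambda \ee_\mu
    = \sum_{\substack{\type(\Phi) = \lambda \\ \type(\Psi) = \mu}} \ee_\Phi \ee_\Psi
    = \delta_{\lambda\mu}\, \ee_\lambda
    \qqand
    \sum_{\lambda \in \wh{\TT}^W} \ee_\lambda = \sum_{\Phi \in \wh{\TT}} \ee_\Phi = 1,
\]
so $\{\ee_\lambda\}$ is a complete system of orthogonal idempotents of $(\kk\SS)^W$.

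The real content is primitivity, which I would verify modulo the radical of $(\kk\SS)^W$, mirroring the proof of \cref{thm:CSoPOI-LRBG}. The structural input I need is that
\[
    \rad\big((\kk\SS)^W\big) = (\kk\SS)^W \cap \rad(\kk\SS)
    \qqand
    (\kk\SS)^W / \rad\big((\kk\SS)^W\big) \cong (\kk\TT)^W.
\]
One inclusion is clear: $(\kk\SS)^W \cap \rad(\kk\SS)$ is a nilpotent two-sided ideal of $(\kk\SS)^W$, hence sits inside its radical. For the reverse, I apply the $W$-equivariant surjection $\supp : \kk\SS \to \kk\TT$, whose kernel is $\rad(\kk\SS)$ by \cref{cor:semisimplicity-of-kT}; the averaging operator $\tfrac{1}{|W|}\sum_{w \in W} w$ shows $\supp\big((\kk\SS)^W\big) = (\kk\TT)^W$, and $(\kk\TT)^W$ is semisimple, so $\rad\big((\kk\SS)^W\big)$ must already lie in the kernel $(\kk\SS)^W \cap \rad(\kk\SS)$. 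By the mod-radical criterion \cite[Lemma~D.28]{am17}, it then suffices to show $\supp(\ee_\lambda) = \sum_{\type(\Phi)=\lambda} \supp(\ee_\Phi)$ is primitive in $(\kk\TT)^W$.

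Here the abelian hypothesis is decisive. Since each $G_x$, and hence each $G_X$, is abelian, \cref{cor:semisimplicity-of-kT} gives $\kk\TT \cong \prod_{X \in E(\TT)} \kk G_X \cong \prod_{\Phi \in \wh{\TT}} \kk$, a \emph{commutative} split semisimple algebra. By the proof of \cref{t:CSoPOI-LRBaG} the elements $\supp(\ee_\Phi)$ are primitive idempotents forming a CSoPOI of $\kk\TT$; as $\kk\TT$ is commutative this CSoPOI is unique and coincides with the set of coordinate idempotents, so $\Phi \mapsto \supp(\ee_\Phi)$ is a $W$-equivariant bijection onto these coordinates. Consequently $W$ merely permutes the one-dimensional simple factors of $\kk\TT$, whence $(\kk\TT)^W \cong \big(\prod_\Phi \kk\big)^W \cong \prod_{\lambda \in \wh{\TT}^W} \kk$, whose primitive idempotents are exactly the orbit-indicator sums $\sum_{\type(\Phi)=\lambda} \supp(\ee_\Phi) = \supp(\ee_\lambda)$. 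This proves each $\supp(\ee_\lambda)$, and therefore each $\ee_\lambda$, is primitive.

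The crux, and the step I expect to be the genuine obstacle, is this last reduction. The naive statement that orbit sums of an arbitrary $W$-permuted CSoPOI are primitive in the invariants is \emph{false} in general: it already fails for $C_2$ acting on $M_2(\kk)$ by conjugating with a transposition matrix, where the two diagonal idempotents merge into a non-primitive identity. What saves the LRB\emph{a}G setting is precisely that the semisimple quotient $\kk\TT$ is commutative, so $W$ can only permute its simple factors and never amalgamate idempotents lying in a common matrix block; pinning down this commutativity together with the accompanying radical identity is where the argument must be made carefully.
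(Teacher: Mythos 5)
Your proposal is correct, and it does considerably more than the paper itself, which states \cref{thm:CSoPOI-invar-subalg} with no proof at all: the text merely observes that $w \cdot \ee_\Phi = \ee_{w \cdot \Phi}$, remarks that sums of orthogonal idempotents are idempotent, and declares the theorem ``a consequence of'' \cref{t:CSoPOI-LRBaG}. The routine verifications you give (invariance, orthogonality, completeness of the orbit sums) are exactly what that remark covers. The substance of your write-up --- the identity $\rad\big((\kk\SS)^W\big) = (\kk\SS)^W \cap \rad(\kk\SS)$ obtained by averaging and by pushing the radical through the surjection $\supp \colon (\kk\SS)^W \twoheadrightarrow (\kk\TT)^W$, followed by the identification of $(\kk\TT)^W$ with $\prod_{\lambda \in \wh{\TT}^W} \kk$ and of the orbit sums $\supp(\ee_\lambda)$ with its coordinate idempotents --- is precisely the justification the paper leaves implicit; it mirrors the primitivity step in the paper's own proof of \cref{thm:CSoPOI-LRBG}, where primitivity is checked modulo the radical via \cite[Lemma~D.28]{am17}. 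Your cautionary counterexample ($C_2$ acting on $M_2(\kk)$ by conjugation with a transposition matrix, where the orbit sum $e_{11}+e_{22}=1$ is not primitive in the invariants) is valid and pinpoints exactly why the paper's one-line justification is too quick: orbit sums of a $W$-permuted CSoPOI are \emph{not} primitive in the invariant algebra in general, and what rescues the statement here is that the semisimple quotient $\kk\TT \cong \prod_{X} \kk G_X$ is commutative and split (this is where the \emph{abelian} hypothesis enters), so $W$ can only permute one-dimensional simple factors and can never merge idempotents inside a matrix block. In short: your argument is the proof the paper should arguably have included, and it correctly isolates the one non-formal point.
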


We will see in \cref{sec:MRalg} that \cref{thm:CSoPOI-invar-subalg} has interesting consequences for the Mantaci--Reutenauer algebra. 

\begin{example}[Uniform section]\label{ex:uniform-general}
    Recall the definition of the uniform section of a LRB from \cref{ex:uniformsection}.
    Explicitly, the uniform section is determined by
    \[
        \uu_X = \dfrac{1}{c^X} \sum_{\supp(x) = X} \B{H}_x,
    \]
    where $c^X := |\{ x \in E(\SS) : \supp(x) = X \}|$.
    Thus, for all $\Phi \in \wh{\TT}$, \Cref{eq:CSoPOI-LRBaG} gives
    \[
        \ee_\Phi = \dfrac{1}{c^{\abs{\Phi}}} \sum_{\supp(\phi) = \Phi} \B{Q}_\phi.
    \]
   Therefore
    \[
        \ee_\lambda = \dfrac{1}{c^\lambda} \sum_{\type(\supp(\phi)) = \lambda} \B{Q}_\phi = \dfrac{1}{c^\lambda} \sum_{\supp(\alpha) = \lambda} \SB{Q}_\alpha
    \]
    for all $\lambda \in \wh{\TT}^W$, where $c^\lambda = c^{\abs{\Phi}}$ for any (all) $\Phi$ of type $\lambda$.
\end{example}

\section{Hsiao's algebra}\label{sec:hsiaosalgebra}

We specialize the previous results to Hsiao's LRBG $\SS = \hsiao$ with its natural $\sym_n$-action.
While we will work with an arbitrary finite group $G$, we will give special attention to the context where:
\begin{itemize}[wide]
    \item $G$ is abelian, in which case $\SS$ is a LRBaG and
        we have a natural identification $\wh{\SS} = \Sigma_n[\wh{G}]$; and
    \item the homogeneous section $\uu$ is the uniform section.
\end{itemize}
For clarity, \Cref{fig:notationHsiao} illustrates the diagrams in \cref{eq:typesuppDiagrams} for Hsiao's semigroup, and recalls the various bases and CSoPOIs that have appeared throughout the document.

We begin by making explicit the partial order relations $\leq$ and $\chleq$ from \cref{sec:orbitstypemap} specialized to Hsiao's algebra.

\begin{definition}\label{def:orderforMR}
    For $p, q \in \Gamma_n[G]$, 
    \[
        p \leq q
        \qqiff
        \text{$q$ is a $G$-preserving refinement of $p$.}
    \]
    That is, the blocks of $q$ refining a given block of $p$ must be labeled by the same group element.
    For example,
    \[
        \big( 5^g , 1^h , 1^g \big) \leq \big( 2^g , 3^g , 1^h , 1^g \big),
    \]
    and, if $g \neq h$, the first element is minimal with respect to this order.
    
    For $\alpha, \beta \in \Gamma_n[\wh{G}]$, 
    \[
        \alpha \chleq \beta
        \qqiff
        \text{$\beta$ is a $\wh{G}$-multiplicative refinement of $\alpha$.}
    \]
    That is, the blocks of $\beta$ refining a given block of $\alpha$ must be labeled by characters whose product is the character labeling the given block of $\alpha$.
    For example,
    \[
        \big( 7^{\phi\psi\chi\eta} \big) \chleq \big( 5^{\phi\psi} , 1^\chi , 1^\eta \big) \chleq \big( 2^\phi , 3^\psi , 1^\chi , 1^\eta \big).
    \]
    In contrast to $\Gamma_n[G]$, the minimal elements of $\Gamma_n[\wh{G}]$ always have length one.

    Recall that for $p \in \Gamma_n[G]$ (resp. $\alpha \in \Gamma_n[\wh{G}]$), $\abs{p}$ (resp. $\abs{\alpha}$) denotes its underlying composition.
    A simple but important observation is that
    \[
        p \leq q \textrm{ implies } \abs{p} \leq \abs{q}
        \qquad\qquad
        \alpha \chleq \beta \textrm{ implies } \abs{\alpha} \leq \abs{\beta}.
    \]
\end{definition}

\begin{figure}[!ht]

\begin{centering}
    \begin{tikzcd}
        {{\hsiao}\,\,\!_{\red{s,t}}} \arrow[d, "\supp"'] \arrow[r, "\type"]   & {{\gcomposition}\,\,\!_{\red{p,q}}} \arrow[d, "\supp"] \\
        {{\gsetpartition}\,\,\!_{\red{S,T}}} \arrow[r, "\type"']              & {{\gpartition}\,\,\!_{\,}}                          
    \end{tikzcd}
    \hspace{2em}
    \begin{tikzcd}
        {{\Sigma_n}\,\,\!_{\red{x,y}}} \arrow[d, "\supp"'] \arrow[r, "\type"]   & {{\Gamma_n}} \arrow[d, "\supp"] \\
        {{\Pi_n}\,\,\!_{\red{X,Y}}} \arrow[r, "\type"']              & {{\Lambda_n}}                          
    \end{tikzcd}
    \hspace{2em}
    \begin{tikzcd}
        {\Sigma_n[\wh{G}]\,\,\!_{\red{\phi,\psi}}} \arrow[d, "\supp"'] \arrow[r, "\type"] & {\Gamma_n[\wh{G}]\,\,\!_{\red{\alpha,\beta}}} \arrow[d, "\supp"] \\
        {\Pi_n[\wh{G}]\,\,\!_{\red{\Phi,\Psi}}} \arrow[r, "\type"']                    & {\Lambda_n[\wh{G}]\,\,\!_{\red{\lambda}}}                        
    \end{tikzcd}

    \bigskip
    \bigskip

    \begin{tabular}{cccccc}
        \toprule
        Algebra                  & $\B{H}$-basis  & $\B{R}$-basis & $\B{E}$-basis & $\B{Q}$-basis & CSoPOI \\ [-0.5ex]
                                 & \footnotesize (Canonical) & & \footnotesize (Isotypic)               & \footnotesize (Primitive)              & \\ \midrule
        $\kk \Pi_n$              & $\B{H}_X$       &                                                  &                 & $\B{Q}^\circ_X := \sum_{Y \geq X} \B{H}_Y$                      & $\B{Q}^\circ_X$     \\
        $\kk \Sigma_n$           & $\B{H}_x$       &                                                  & $\B{H}_x$       & $\B{Q}^\circ_x : = \B{H}_x \ee^\circ_{\supp(x)}$          & $\ee^\circ_X$ \\
        $\kk \Pi_n[G]$           & $\B{H}_S$       & $\B{R}_S := \B{H}_S \B{Q}^\circ_{S^\omega}$            & $\B{E}_\Phi$    & $\B{Q}_\Phi := \B{E}_\Phi \B{Q}^\circ_{\abs{\Phi}}$             & $\B{Q}_\Phi$  \\
        $\kk\hsiao$              & $\B{H}_s$       & $\B{R}_s := \B{H}_s \ee^\circ_{\supp(s^\omega)}$ & $\B{E}_\phi$    & $\B{Q}_\phi := \B{E}_\phi \ee^\circ_{\supp(\abs{\phi})}$  & $\ee_\Phi$    \\
        $(\kk\hsiao)^{\sym_n}$   & $\SB{H}_p$      & $\SB{R}_p$                                       & $\SB{E}_\alpha$ & $\SB{Q}_\alpha$                                           & $\ee_\lambda$ \\ \bottomrule
    \end{tabular} 
\end{centering}
\caption{
        Typical elements of various semigroups/sets related to $\hsiao$, they index bases and CSoPOI of the different algebras listed in the table.
        Several of these elements depend on the choice of a homogeneous section $\{\uu_X\}_{X \in \Pi_n}$ of $\supp : \Sigma_n \to \Pi_n$.}
\label{fig:notationHsiao}
\end{figure}

In \cref{s:change-of-basis-hsiao}, we will give formulas for the elements in the table shown in \cref{fig:notationHsiao} in the case that $\{\uu_X\}_{X \in \Pi_n}$ is the uniform section.
We need the following definition first, which will  follow the notation in \cite{am17}.

\begin{definition}\label{def:deg}
   
    Given two integer compositions $p,q \in \Gamma_n$, with $q$ refining $p$, let $(q/p)_i$ denote the composition consisting of the contiguous blocks of $q$ refining the $i$-th block of $p$.
    Define  
    \[
        \deg(q/p) := \prod_i \ell( (q/p)_i) \qquad\text{and}\qquad \deg!(q/p) := \prod_i \ell( (q/p)_i)!
    \]
    We extend this definition to all $G$/$\wh{G}$/unlabelled set/integer compositions by applying it to their underlying integer compositions (using the $\type$ and $\abs{\cdot}$ maps if necessary).
\end{definition}

For instance, if $p = (2,6)$, and $q = (1,1,2,1,3)$, then $(q/p)_1 = (1,1)$, $(q/p)_2 = (2,1,3)$,
\[
    \deg(q/p) = 2 \cdot 3 = 6
    \qqand
    \deg!(q/p) = 2! \cdot 3! = 12.
\]
Similarly, if $\phi = \big( 14^{\trv} \mid 235678^{\trv} \big)$ and
$\psi = \big( 1^{\sgn} \mid 4^{\sgn} \mid 25^{\sgn} \mid 8^{\trv} \mid 367^{\sgn} \big)$,
then
\begin{equation*}
    (\phi/\psi)_{1} = \big( 1^{\sgn} \mid 4^{\sgn} \big)
    \qqand
    (\phi/\psi)_{2} = \big( 25^{\sgn} \mid 8^{\trv} \mid 367^{\sgn} \big)
\end{equation*}
so that
\[
    \deg(\psi/\phi) = 2 \cdot 3 = 6
    \qqand
    \deg!(\psi/\phi) = 2! \cdot 3! = 12.
\]
Observe that,
since $\abs{\phi} \leq \abs{\psi}$, ${\sgn} \cdot {\sgn} = {\trv}$ and ${\sgn} \cdot {\trv} \cdot {\sgn} = {\trv}$,
we have $\phi \chleq \psi$. Moreover, $\type(\abs{\phi}) = (2,6)$ and $\type(\abs{\psi}) = (1,1,2,1,3)$.

\subsection{Change of basis and idempotents in Hsiao's algebra} \label{s:change-of-basis-hsiao}

Let $\{\uu_X\}_{X \in \Pi_n}$ be the uniform section, as in \cref{ex:uniform-general}.
For Hsiao's algebra, this can be written explicitly as 
\[
    \uu_X = \dfrac{1}{\ell(X)!} \sum_{\supp(x) = X} \B{H}_x.
\]
Let $\{\B{Q}^\circ_x\}_{x \in \Sigma_n}$ be the $\B{Q}$-basis of the LRB algebra $\kk \Sigma_n$ associated to the uniform section.
Aguiar and Mahajan \cite[Lemma 12.74]{am17} give the following explicit change of basis formulas between the $\B{H}$-basis and $\B{Q}$-basis of $\kk \Sigma_n$.
For all $x \in \Sigma_n$,
\begin{equation}\label{eq:Q0-basis}
    \B{H}_x = \sum_{y \geq x} \dfrac{1}{\deg!(y/x)} \B{Q}^\circ_y
    \qqand
    \B{Q}^\circ_x = \sum_{y \geq x} \dfrac{(-1)^{\ell(y) - \ell(x)}}{\deg(y/x)} \B{H}_y.
\end{equation}

We will show that similar formulas give the change of basis between the $\B{H}$ and $\B{R}$ bases, and between the $\B{E}$ and $\B{Q}$ bases of $\kk \hsiao$.

\begin{prop}
    Let $G$ be a finite group. Let $\{\B{R}_s\}_{s \in \hsiao}$ be the $\B{R}$-basis of $\kk \hsiao$ associated to the uniform section.
    Then, for all $s \in \hsiao$,
    \begin{equation}\label{eq:Hsiao-H-to-R}
        \B{H}_s =
            \sum_{t \geq s} \dfrac{1}{\deg!(t/s)} \B{R}_t
        \qqand
        \B{R}_s =
            \sum_{t \geq s} \dfrac{(-1)^{\ell(t) - \ell(s)}}{\deg(t/s)} \B{H}_t.
    \end{equation}
\end{prop}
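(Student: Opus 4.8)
The plan is to reduce both identities to the already-established change-of-basis formulas \cref{eq:Q0-basis} for the LRB algebra $\kk\Sigma_n$, simply by left-multiplying them by $\B{H}_s$. Two facts make this work. First, $\B{H}_s \B{H}_{\abs{s}} = \B{H}_s$, which is \cref{eq:lrbg1} applied to $s^\omega = \abs{s}$; consequently $\B{R}_s = \B{H}_s\,\ee^\circ_{\supp(\abs{s})} = \B{H}_s \B{H}_{\abs{s}}\,\ee^\circ_{\supp(\abs{s})} = \B{H}_s \B{Q}^\circ_{\abs{s}}$. Second, for every idempotent $y \in \Sigma_n$ with $y \geq \abs{s}$ one has $\B{H}_s \B{Q}^\circ_y = \B{R}_{sy}$; this follows from $\B{Q}^\circ_y = \B{R}_y$ for $y \in E(\SS)$ together with the product rule \cref{LRBG-HR-product-formula}, since $y \geq \abs{s}$ gives $\supp(s^\omega) = \supp(\abs{s}) \leq \supp(y) = \supp(y^\omega)$.

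The key step I would set up first is the bijection underlying the whole computation. For $y \in \Sigma_n$ with $\abs{s} \leq y$, the LRB order gives $\abs{s}\cdot y = y$, so the underlying composition of $sy$ is $\abs{sy} = \abs{s}\cdot y = y$; moreover each block of $sy$ inherits the label $g_i$ of the block $S_i$ of $s$ containing it, whence $sy \geq s$ by the description in \cref{ex:hsiao-order}. Conversely, every $t \geq s$ has its labels forced to agree with those of $s$, so $t$ is recovered from $y := \abs{t} \geq \abs{s}$ as $t = sy$. Thus $y \mapsto sy$ is a bijection from $\{\,y \in \Sigma_n : y \geq \abs{s}\,\}$ onto $\{\,t \in \hsiao : t \geq s\,\}$, with inverse $t \mapsto \abs{t}$. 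Since $\ell$, $\deg$, and $\deg!$ are defined on underlying integer compositions (\cref{def:deg}), this bijection satisfies $\ell(sy) = \ell(y)$, $\ell(s) = \ell(\abs{s})$, $\deg(sy/s) = \deg(y/\abs{s})$, and $\deg!(sy/s) = \deg!(y/\abs{s})$.

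With this in hand both formulas fall out by a one-line computation. Starting from \cref{eq:Q0-basis} with $x = \abs{s}$, I would multiply on the left by $\B{H}_s$. For the first identity, $\B{H}_s = \B{H}_s \B{H}_{\abs{s}} = \B{H}_s \sum_{y \geq \abs{s}} \tfrac{1}{\deg!(y/\abs{s})}\B{Q}^\circ_y = \sum_{y \geq \abs{s}} \tfrac{1}{\deg!(y/\abs{s})}\B{R}_{sy}$; for the second, $\B{R}_s = \B{H}_s \B{Q}^\circ_{\abs{s}} = \B{H}_s \sum_{y \geq \abs{s}} \tfrac{(-1)^{\ell(y)-\ell(\abs{s})}}{\deg(y/\abs{s})}\B{H}_y = \sum_{y \geq \abs{s}} \tfrac{(-1)^{\ell(y)-\ell(\abs{s})}}{\deg(y/\abs{s})}\B{H}_{sy}$, where I used $\B{H}_s \B{H}_y = \B{H}_{sy}$. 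Reindexing each sum by $t = sy$ through the bijection and substituting the matching statistics gives exactly \cref{eq:Hsiao-H-to-R}. The only genuine content is the bijection and the two product identities above; once these are in place the manipulation is formal, and in particular no fresh inversion argument is required, since the mutual invertibility of the two triangular matrices is inherited directly from the corresponding statement in $\kk\Sigma_n$. The main thing to be careful about is verifying $\abs{sy} = y$ and the label/support matching that make $\B{H}_s \B{Q}^\circ_y = \B{R}_{sy}$ hold, but these are immediate from the LRB order relation and \cref{LRBG-HR-product-formula}.
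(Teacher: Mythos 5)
Your proof is correct and follows essentially the same route as the paper: both arguments multiply the LRB change-of-basis formulas \cref{eq:Q0-basis} on the left by $\B{H}_s$ and then reindex via the bijection $y \mapsto sy$ between $\{\,y \in \Sigma_n : y \geq \abs{s}\,\}$ and $\{\,t \in \hsiao : t \geq s\,\}$. The only (harmless) difference is how the key identity $\B{H}_s \B{Q}^\circ_y = \B{R}_{sy}$ is justified: you use $\B{Q}^\circ_y = \B{R}_y$ together with the general product formula \cref{LRBG-HR-product-formula}, whereas the paper first notes the commutation $sy = ys$ for idempotents $y \geq s^\omega$ (a feature of Hsiao's strict LRBG, cf.\ \cref{lem:xf=fx}) and computes $\B{H}_s \B{Q}^\circ_y = \B{H}_{ys}\B{Q}^\circ_y = \B{R}_{ys}$ directly.
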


\begin{proof}
    We derive the formulas by multiplying the equations in
    \cref{eq:Q0-basis} by $\B{H}_s$ on the left, where $x = s^\omega$.

    Fix $s \in \hsiao$ and let $x = s^\omega$.
    One can verify directly that for all $y \in E(\SS)$ refining $x$, $sy = ys$ (alternatively, see \cref{lem:xf=fx}).
    Thus, for all such $y$ we have
    \[
        \B{H}_s \B{Q}^\circ_y = \B{H}_s \B{H}_y \B{Q}^\circ_y = \B{H}_{ys} \B{Q}^\circ_y = \B{R}_{ys},
    \]
    the last equality since $(ys)^\omega = y s^\omega = y$.
    Therefore,
    \[
        \B{H}_s = \B{H}_s \B{H}_x = \sum_{y \geq x} \dfrac{1}{\deg!(y/x)} \B{H}_s \B{Q}^\circ_y = \sum_{y \geq x} \dfrac{1}{\deg!(y/x)} \B{R}_{ys}.
    \]
    The characterization of the partial order $\leq$ on $\hsiao$ in \cref{ex:hsiao-order} shows that the elements $ys$ with $x \leq y$ are exactly those $t$ with $s \leq t$ (alternatively, see \cref{lem:sLRBG-leq}).
    This completes the proof of the first formula in \cref{eq:Hsiao-H-to-R}, the second follows by the same reasoning.
\end{proof}

\begin{prop}
    Let $G$ be a finite abelian group. Let $\{\B{Q}_\phi\}_{\phi \in \Sigma_n[\wh{G}]}$ be the $\B{Q}$-basis of $\kk \hsiao$ associated to the uniform section.
    Then, for all $\phi \in \Sigma_n[\wh{G}]$,
    \begin{equation}\label{eq:Hsiao-E-to-Q}
        \B{E}_\phi =
            \sum_{\psi \chgeq \phi} \dfrac{1}{\deg!(\psi/\phi)} \B{Q}_\psi
        \qqand
        \B{Q}_\phi =
            \sum_{\psi \chgeq \phi} \dfrac{(-1)^{\ell(\psi) - \ell(\phi)}}{\deg(\psi/\phi)} \B{E}_\psi.
    \end{equation}
\end{prop}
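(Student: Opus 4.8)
The plan is to imitate the proof of the preceding proposition (the $\B{H}$-to-$\B{R}$ change of basis), replacing left multiplication by $\B{H}_s$ with left multiplication by the isotypic projector $\B{E}_\phi$. Fix $\phi \in \Sigma_n[\wh{G}]$ and put $x = \abs{\phi} \in \Sigma_n = E(\SS)$, so that $\B{E}_\phi \in \kk G_x$ and $\B{E}_\phi = \B{E}_\phi \B{H}_x$. Both identities of \cref{eq:Hsiao-E-to-Q} will come from multiplying the $\kk\Sigma_n$ change-of-basis formulas of \cref{eq:Q0-basis} on the left by $\B{E}_\phi$, together with the observation $\B{Q}_\phi = \B{E}_\phi \B{Q}^\circ_{x}$ coming from \cref{eq:CSoPOI-LRBaG}.

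The computational heart of the argument is the identity
\[
    \B{E}_\phi \B{Q}^\circ_y = \sum_{\substack{\psi \in \wh{G_y} \\ \phi \chleq \psi}} \B{Q}_\psi
    \qquad\text{for every } y \in E(\SS) \text{ with } y \geq x.
\]
To establish it, I would first use that $y$ is idempotent to write $\B{E}_\phi \B{Q}^\circ_y = \B{E}_\phi \B{H}_y \B{Q}^\circ_y$. Since $y$ refines $x$, every element of $G_x$ commutes with $y$ (the same commutation $sy = ys$ invoked in the previous proof via \cref{lem:xf=fx}), so $\B{E}_\phi \B{H}_y = \B{H}_y \B{E}_\phi$; this is exactly the product computed in \cref{prop:multiplyEandH}, and \cref{eq:EandHmultiplication} evaluates it as $\sum_{\psi \in \wh{G_y},\, \phi \chleq \psi} \B{E}_\psi$. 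Multiplying back on the right by $\B{Q}^\circ_y$ and using $\B{E}_\psi \B{Q}^\circ_y = \B{E}_\psi \B{Q}^\circ_{\abs{\psi}} = \B{Q}_\psi$ (valid because $\abs{\psi} = y$ for $\psi \in \wh{G_y}$) gives the displayed formula.

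With this lemma in hand, multiplying $\B{H}_x = \sum_{y \geq x} \frac{1}{\deg!(y/x)} \B{Q}^\circ_y$ on the left by $\B{E}_\phi$ yields $\B{E}_\phi = \sum_{y \geq x}\frac{1}{\deg!(y/x)}\sum_{\psi \in \wh{G_y},\, \phi \chleq \psi}\B{Q}_\psi$, while multiplying $\B{Q}^\circ_x = \sum_{y \geq x} \frac{(-1)^{\ell(y)-\ell(x)}}{\deg(y/x)}\B{H}_y$ on the left by $\B{E}_\phi$ yields $\B{Q}_\phi = \B{E}_\phi\B{Q}^\circ_x = \sum_{y \geq x}\frac{(-1)^{\ell(y)-\ell(x)}}{\deg(y/x)}\sum_{\psi \in \wh{G_y},\, \phi \chleq \psi}\B{E}_\psi$. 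The remaining step is to collapse each double sum into a single sum over $\{\psi : \psi \chgeq \phi\}$: by \cref{def:chleq} every $\psi$ with $\phi \chleq \psi$ has a unique underlying idempotent $\abs{\psi} = y \geq x$, so the inner and outer sums combine. The main (and only genuinely fiddly) point is the coefficient bookkeeping: I must check that $\deg!(y/x) = \deg!(\psi/\phi)$, $\deg(y/x) = \deg(\psi/\phi)$, and $\ell(y) - \ell(x) = \ell(\psi) - \ell(\phi)$ after reindexing, all of which hold because these quantities depend only on the underlying integer compositions $\type(\abs{\psi})$ and $\type(\abs{\phi})$ by \cref{def:deg}. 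This matching delivers exactly the two formulas of \cref{eq:Hsiao-E-to-Q}.
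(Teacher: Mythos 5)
Your proposal is correct and follows essentially the same route as the paper: both multiply the $\kk\Sigma_n$ change-of-basis formulas \cref{eq:Q0-basis} on the left by $\B{E}_\phi$, use the commutation $\B{E}_\phi \B{H}_y = \B{H}_y \B{E}_\phi$ (valid since $\B{E}_\phi$ is supported on $G_x$ and $y \geq x$) together with \cref{eq:EandHmultiplication} to get $\B{E}_\phi \B{Q}^\circ_y = \sum_{\psi \chgeq \phi,\, \abs{\psi}=y} \B{Q}_\psi$, and then collapse the double sum using that $\deg$, $\deg!$, and $\ell$ depend only on the underlying integer compositions. Your explicit coefficient bookkeeping at the end just spells out what the paper leaves as "following the same ideas of the previous proof."
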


\begin{proof}
    The formulas are obtained from \cref{eq:Q0-basis} by multiplying both sides of the equations on the left by $\B{E}_\phi$, where $x = \abs{\phi}$,
    and following the same ideas of the previous proof. An important detail here is that $\B{E}_\phi$ is a linear combination of elements $\B{H}_s$ with $s^\omega = x$.
    Thus, for all $y \geq x$, 
    \[
        \B{E}_\phi \B{H}_y          =
        \B{H}_y \B{E}_\phi          \overset{\cref{eq:EandHmultiplication}}{=}
        \sum_{ \substack{ \psi \chgeq \phi \\ \abs{\psi} = y } } \B{E}_\psi
        \qqand
        \B{E}_\phi \B{Q}^\circ_y          =
        \B{E}_\phi \B{H}_y\B{Q}^\circ_y          =
        \sum_{ \substack{ \psi \chgeq \phi \\ \abs{\psi} = y } } \B{E}_\psi \B{Q}^\circ_y =
        \sum_{ \substack{ \psi \chgeq \phi \\ \abs{\psi} = y } } \B{Q}_\psi. \qedhere
    \]
\end{proof}

Recall that for any LRBaG and any homogeneous section, the $\B{Q}$-basis can be written in the $\B{R}$-basis as follows (see \Cref{eq:LRBaG-R-to-Q})
\[
    \B{Q}_\phi = 
            \dfrac{1}{|G_{\abs{\phi}}|} \sum_{\abs{s} = \abs{\phi}} \ol{\phi(s)} \B{R}_s,
\]
where $\phi(s)$ is the evaluation of the character $\phi \in \wh{G_{\abs{\phi}}}$ on the group element $s \in G_{\abs{\phi}}$ (see \cref{def:evaluation}), and the bar denotes complex conjugation.

Using \cref{eq:Hsiao-H-to-R} to expand $\B{R}_s$ in the $\B{H}$-basis, we deduce the following.

\begin{prop}
    Let $G$ be a finite abelian group. Let $\{\B{Q}_\phi\}_{\phi \in \Sigma_n[\wh{G}]}$ be the $\B{Q}$-basis of $\kk \hsiao$ associated to the uniform section.
    Then, for all $\phi \in \Sigma_n[\wh{G}]$,
    \begin{equation}\label{eq:Hsiao-H-to-Q}
        \B{Q}_\phi =
            \dfrac{1}{|G|^{\ell(\phi)}} \sum_{\abs{s} = \abs{\phi}} \ol{\phi(s)} \sum_{t \geq s} \dfrac{(-1)^{\ell(t)-\ell(s)}}{\deg(t/s)} \B{H}_t.
    \end{equation}
\end{prop}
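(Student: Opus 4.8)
The plan is to assemble the formula from two identities already established, with no computation beyond a direct substitution. By \Cref{eq:LRBaG-R-to-Q}, specialized to Hsiao's LRBaG $\SS = \hsiao$, we have
\[
    \B{Q}_\phi = \dfrac{1}{|G_{\abs{\phi}}|} \sum_{\abs{s} = \abs{\phi}} \ol{\phi(s)} \, \B{R}_s,
\]
so the only two ingredients left to supply are the order of the maximal subgroup $G_{\abs{\phi}}$ and an expansion of each $\B{R}_s$ back into the canonical $\B{H}$-basis.

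First I would record that $|G_{\abs{\phi}}| = |G|^{\ell(\phi)}$. Indeed, if $\abs{\phi} = (S_1,\dots,S_k) \in \Sigma_n$ is the underlying set composition of $\phi$, then the maximal subgroup at the corresponding idempotent consists of all $G$-compositions sharing this underlying set composition; equivalently, it is the group $G^{k}$ obtained by independently decorating each of the $k = \ell(\phi)$ blocks by an element of $G$. Hence $|G_{\abs{\phi}}| = |G|^{\ell(\phi)}$, and the prefactor in the displayed formula becomes $1/|G|^{\ell(\phi)}$. Here I would also note that $\ell(\phi)$, $\ell(\abs{\phi})$, and the number of blocks of the underlying set composition all coincide, so the exponent in the prefactor is exactly the number of blocks being independently $G$-decorated.

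Next I would substitute the second identity of \cref{eq:Hsiao-H-to-R}, namely
\[
    \B{R}_s = \sum_{t \geq s} \dfrac{(-1)^{\ell(t)-\ell(s)}}{\deg(t/s)} \, \B{H}_t,
\]
into the expression for $\B{Q}_\phi$ above. The outer sum over $\{s : \abs{s} = \abs{\phi}\}$ and the inner sum over $\{t : t \geq s\}$ simply nest, yielding the claimed double sum verbatim; nothing cancels or collapses. Consequently there is no genuine obstacle in this argument: the only point requiring even minor attention is the first step, the identification $|G_{\abs{\phi}}| = |G|^{\ell(\phi)}$, and everything else is a mechanical substitution of \cref{eq:Hsiao-H-to-R} into \Cref{eq:LRBaG-R-to-Q}.
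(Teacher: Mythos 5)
Your proposal is correct and is essentially the paper's own proof: the paper likewise obtains the formula by substituting the expansion of $\B{R}_s$ from \cref{eq:Hsiao-H-to-R} into \Cref{eq:LRBaG-R-to-Q}, with the identification $|G_{\abs{\phi}}| = |G|^{\ell(\phi)}$ (via $G_{\abs{\phi}} \cong G^{\ell(\phi)}$) handled implicitly. Your explicit justification of that prefactor is a welcome, if minor, addition.
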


\begin{remark}
    If instead we use the second equation in \cref{eq:Hsiao-E-to-Q} and expand $\B{E}_\psi$ in $\B{H}$-basis, we obtain a different expression for~$\B{Q}_\phi$ in the $\B{H}$-basis:
    \[
        \B{Q}_\phi = \sum_{\psi \chgeq \phi} \dfrac{(-1)^{\ell(\psi) - \ell(\phi)}}{\deg(\psi/\phi)} \dfrac{1}{|G|^{\ell(\psi)}} \sum_{\abs{t} = \abs{\psi}} \ol{\psi(t)} \B{H}_t.
    \]
    A direct proof that these two different expressions for $\B{Q}_\phi$ are equal can be deduced using the following property of the characters of finite abelian groups.
    If $G$ is an abelian group, $H \leq G$ a subgroup, and $\phi \in \wh{H}$, then
    \[
        \sum_{\psi \in \wh{G} : \psi|_H = \phi} \psi(g) =
        \begin{cases}
            [G:H] \phi(g)      & \text{if } g \in H,\\
            0                           & \text{otherwise.}
        \end{cases}
    \]
\end{remark}

Finally, we obtain formulas for $\ee_\Phi$ in terms of the $\B{R}$ and $\B{E}$ bases. 
\begin{cor}
    Let $G$ be a finite abelian group. Let $\{\B{R}_s\}_{s \in \hsiao}$ be the $\B{R}$-basis of $\kk \hsiao$ associated to the uniform section.
    Then, for all and $\Phi \in \Pi_n[\wh{G}]$,
    \[
        \ee_\Phi = 
            \dfrac{1}{\ell(\Phi)! \, |G|^{\ell(\Phi)}} \sum_{\abs{\supp(s)} = \abs{\Phi}} \ol{\Phi(\supp(s))} \B{R}_s
        \qqand
        \ee_\Phi = 
            \dfrac{1}{\ell(\Phi)!} \sum_{\supp(\phi) = \Phi} \sum_{\psi \chgeq \phi} \dfrac{(-1)^{\ell(\psi) - \ell(\phi)}}{\deg(\psi/\phi)} \B{E}_\psi.
    \]
\end{cor}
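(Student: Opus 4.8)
The plan is to start from the uniform-section description of $\ee_\Phi$ established in \cref{ex:uniform-general}, namely $\ee_\Phi = \tfrac{1}{c^{\abs{\Phi}}} \sum_{\supp(\phi) = \Phi} \B{Q}_\phi$, and to specialize the constant $c^{\abs{\Phi}}$ to Hsiao's setting. Since the idempotents $x \in E(\hsiao) \cong \Sigma_n$ with $\supp(x) = \abs{\Phi}$ are exactly the $\ell(\Phi)!$ orderings of the blocks of the set partition $\abs{\Phi}$, we have $c^{\abs{\Phi}} = \ell(\Phi)!$, so $\ee_\Phi = \tfrac{1}{\ell(\Phi)!} \sum_{\supp(\phi) = \Phi} \B{Q}_\phi$. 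Both displayed formulas will be obtained from this identity by inserting the two change-of-basis expansions of $\B{Q}_\phi$ proved earlier in this section.

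The second formula is then immediate: substituting the expansion $\B{Q}_\phi = \sum_{\psi \chgeq \phi} \tfrac{(-1)^{\ell(\psi)-\ell(\phi)}}{\deg(\psi/\phi)} \B{E}_\psi$ from \cref{eq:Hsiao-E-to-Q} into the sum over $\phi$ with $\supp(\phi) = \Phi$ yields the claimed expression in the $\B{E}$-basis verbatim, with no further manipulation. For the first formula I would instead substitute \cref{eq:LRBaG-R-to-Q}, $\B{Q}_\phi = \tfrac{1}{|G_{\abs{\phi}}|} \sum_{\abs{s} = \abs{\phi}} \ol{\phi(s)} \B{R}_s$. Here $|G_{\abs{\phi}}| = |G|^{\ell(\phi)} = |G|^{\ell(\Phi)}$ since $\ell(\phi) = \ell(\Phi)$, and the summation condition $s^\omega = \abs{\phi}$ is the condition $\abs{s} = \abs{\phi}$ because $s^\omega = \abs{s}$ in Hsiao's semigroup. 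This produces the double sum $\ee_\Phi = \tfrac{1}{\ell(\Phi)!\,|G|^{\ell(\Phi)}} \sum_{\supp(\phi) = \Phi} \sum_{\abs{s} = \abs{\phi}} \ol{\phi(s)} \B{R}_s$, and the remaining work is to recognize this as a single sum over $G$-compositions $s$ with $\abs{\supp(s)} = \abs{\Phi}$.

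The main (and essentially only) obstacle is this reindexing step. Because the blocks of the partition $\abs{\Phi}$ are distinct subsets of $[n]$, the orderings $\phi$ of the $\wh{G}$-partition $\Phi$ are in bijection with the orderings $\abs{\phi}$ of the set partition $\abs{\Phi}$; consequently every $G$-composition $s$ with $\abs{\supp(s)} = \abs{\Phi}$ arises from exactly one pair $(\phi, s)$ in the double sum, namely the one with $\abs{\phi} = \abs{s}$. It then remains to check that the evaluations agree, i.e.\ $\phi(s) = \Phi(\supp(s))$ for that unique $\phi$. This follows from the explicit description of evaluation in Hsiao's algebra (\cref{ex:invariantshsiao}): both sides equal $\prod_i \phi_i(g_i)$, where $\phi_i$ is the character and $g_i$ the group element attached to the common block $S_i$, and this product is manifestly insensitive to the ordering of the blocks, hence computed identically by $\phi$ on $s$ and by $\Phi$ on $\supp(s)$.

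Substituting $\ol{\phi(s)} = \ol{\Phi(\supp(s))}$ and collapsing the double sum to the single sum over $\{ s : \abs{\supp(s)} = \abs{\Phi}\}$ yields the first formula. I would present the argument so that the two substitutions are carried out in parallel from the common starting identity, emphasizing that the $\B{E}$-basis formula needs no reindexing while the $\B{R}$-basis formula is exactly the $\B{E}$-free route through \cref{eq:LRBaG-R-to-Q} followed by the evaluation-compatibility bookkeeping above.
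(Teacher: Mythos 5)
Your proposal is correct and takes essentially the same route the paper intends for this corollary: combining the uniform-section expression $\ee_\Phi = \tfrac{1}{\ell(\Phi)!}\sum_{\supp(\phi)=\Phi}\B{Q}_\phi$ from \cref{ex:uniform-general} with the two expansions of $\B{Q}_\phi$ in \cref{eq:LRBaG-R-to-Q} and \cref{eq:Hsiao-E-to-Q}. The reindexing of the double sum and the identity $\phi(s)=\Phi(\supp(s))$ that you verify are exactly the bookkeeping the paper leaves implicit (it only remarks afterward that $\Phi(\supp(s))$ is well-defined because the blocks are unambiguously paired).
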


Note that the evaluation $\Phi(\supp(s))$ is well-defined since $\Phi$ and $\supp(s)$ are $G$-partitions and $\wh{G}$-partitions of $[n]$, and thus their blocks (sets) are unambiguously paired. The same would not be true for $G$-partitions and $\wh{G}$-partitions of $n$.

We conclude this section with some formulas for the $\SB{R}$ and $\SB{Q}$ bases of the invariant subalgebra $(\kk\hsiao)^{\sym_n}$.
These will play an important role in \cref{sec:MRalg}.

\begin{prop}\label{p:SH-to-SR}
    Let $G$ a finite group. Let $\{\SB{R}_p\}_{p \in \Gamma_n[G]}$ be the $\SB{R}$-basis of $(\kk\hsiao)^{\sym_n}$ associated to the uniform section.
    Then, for all $p \in \Gamma_n[G]$,
    \begin{equation}\label{SH-to-SR}
        \SB{R}_p = \sum_{q \geq p} \dfrac{(-1)^{\ell(q)-\ell(p)}}{\deg(p/q)} \SB{H}_q.
    \end{equation}
\end{prop}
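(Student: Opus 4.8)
The plan is to push the LRBG change-of-basis formula \cref{eq:Hsiao-H-to-R} through the orbit-sum map defining the invariant bases. Starting from the definition $\SB{R}_p = \sum_{\type(s) = p} \B{R}_s$ and substituting $\B{R}_s = \sum_{t \geq s} \frac{(-1)^{\ell(t)-\ell(s)}}{\deg(t/s)} \B{H}_t$, I would write
\[
    \SB{R}_p = \sum_{\type(s) = p} \sum_{t \geq s} \dfrac{(-1)^{\ell(t)-\ell(s)}}{\deg(t/s)} \B{H}_t
\]
and then regroup the right-hand side by the $\sym_n$-orbit of $t$. Since $\SB{R}_p \in (\kk\hsiao)^{\sym_n}$, its expansion in the $\B{H}$-basis has coefficients that are constant on $\sym_n$-orbits; hence the coefficient of $\SB{H}_q = \sum_{\type(t) = q} \B{H}_t$ equals the coefficient of any single $\B{H}_t$ with $\type(t) = q$.

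Next I would fix such a representative $t$ with $\type(t) = q$. The coefficient of $\B{H}_t$ in $\SB{R}_p$ is the sum of $\frac{(-1)^{\ell(t)-\ell(s)}}{\deg(t/s)}$ over those $s$ with $\type(s) = p$ and $s \leq t$. The quantities $\ell(s) = \ell(p)$, $\ell(t) = \ell(q)$, and $\deg(t/s) = \deg(q/p)$ depend only on the types $p$ and $q$, by the convention extending $\deg$ through $\type$ in \cref{def:deg}. So this coefficient equals $N(q,p)\cdot\frac{(-1)^{\ell(q)-\ell(p)}}{\deg(q/p)}$, where $N(q,p) := \#\{\, s \in \hsiao : \type(s) = p,\ s \leq t \,\}$; note that $N(q,p)$ is independent of the chosen $t$ by the $\sym_n$-equivariance of $\leq$ and $\type$.

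The main step — and the only genuinely nontrivial point — is to show that $N(q,p) = 1$ whenever $q \geq p$ in $\Gamma_n[G]$ and $N(q,p) = 0$ otherwise. Recall from \cref{ex:hsiao-order} that $s \leq t$ means $s$ is obtained from $t$ by merging contiguous runs of blocks carrying a common group label. An $s$ of type $p = (p_1^{g_1},\dots,p_k^{g_k})$ thus corresponds to grouping the ordered blocks of $t$ into $k$ contiguous runs whose block-size sums are $p_1,\dots,p_k$ and whose labels are constant on each run. Because the parts of $q$ are positive, their prefix sums are strictly increasing, so there is at most one way to cut the block sequence of $t$ into contiguous runs with the prescribed sums $p_1,\dots,p_k$; this is the unique candidate coarsening. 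That candidate produces a valid $s \leq t$ of type $p$ exactly when the labels are constant on each run, which is precisely the condition that $q$ is a $G$-preserving refinement of $p$, i.e. $q \geq p$ (\cref{def:orderforMR}). Hence $N(q,p) = 1$ if $q \geq p$ and $0$ otherwise, and collecting terms yields precisely the stated expansion of $\SB{R}_p$, with the coefficient of $\SB{H}_q$ equal to $\frac{(-1)^{\ell(q)-\ell(p)}}{\deg(q/p)}$ (the same degree factor appearing in the LRB change of basis \cref{eq:Hsiao-H-to-R}). I expect the uniqueness of the coarsening to be the crux, but the strict monotonicity of the prefix sums of a composition settles it immediately.
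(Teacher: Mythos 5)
Your proof is correct and follows essentially the same route as the paper's: substitute the $\B{H}$-to-$\B{R}$ change of basis into $\SB{R}_p = \sum_{\type(s)=p}\B{R}_s$, regroup by $\sym_n$-orbits, and observe that each $t$ with $\type(t) = q \geq p$ admits exactly one coarsening $s \leq t$ with $\type(s) = p$, while no such $s$ exists when $q \not\geq p$. Your prefix-sum argument for the uniqueness of the cut simply makes explicit what the paper phrases as merging the blocks of $t$ corresponding to adjacent blocks of $q$ refining a common block of $p$, and your denominator $\deg(q/p)$ is the correct reading of the statement (whose $\deg(p/q)$ is a typo, as only $\deg(q/p)$ is defined when $q$ refines $p$).
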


\begin{proof}
    Substituting \cref{eq:Hsiao-H-to-R} in the definition of $\SB{R}_p$ one obtains
    \[
        \SB{R}_p = \sum_{\type(s) = p} \sum_{t \geq s} \dfrac{(-1)^{\ell(t)-\ell(s)}}{\deg(t/s)} \B{H}_t.
    \]
    Observe that for each $q \in \Gamma_n[G]$ with $q \geq p$ and each $t \in \hsiao$ with $\type(t) = q$, there exists exactly one $s \in \hsiao$ with $s \leq t$ and $\type(s) = p$: if
    two adjacent blocks of $q$ refine the same block of $p$, then merge the corresponding blocks of $t$. Thus, for each $q \in \Gamma_n[G]$ with $q \geq p$, the coefficient of $\B{H}_t$ in the expression for $\SB{R}_p$ above is equal for all $t$ with $\type(t) = q$.
    Grouping terms and using that $\ell(s) = \ell(\type(s))$ and $\deg(t/s) = \deg(\type(t)/\type(s))$ yields the result.
\end{proof}

A similar argument proves the following.

\begin{prop}\label{p:SR-to-SQ}
    Let $G$ be a finite abelian group. Let $\{\SB{R}_p\}_{p \in \Gamma_n[G]}$ and $\{\SB{Q}_\phi\}_{\phi \in \Gamma_n[\wh{G}]}$ be the $\SB{R}$ and $\SB{Q}$ bases of $(\kk\hsiao)^{\sym_n}$ associated to the uniform section.
    Then, for all $\alpha \in \Gamma_n[\wh{G}]$,
    \begin{equation}\label{eq:HsiaoSymQbasis}
        \SB{Q}_\alpha =
    \dfrac{1}{|G|^{\ell(\alpha)}} \sum_{\abs{p} = \abs{\alpha}} \ol{\alpha(p)} \SB{R}_p
    \qqand
        \SB{Q}_\alpha = \sum_{\beta \chgeq \alpha} \dfrac{(-1)^{\ell(\beta) - \ell(\alpha)}}{\deg(\beta/\alpha)} \SB{E}_\beta.
    \end{equation}
\end{prop}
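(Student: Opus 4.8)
The plan is to derive both identities by applying the $\type$ map (that is, summing over $\sym_n$-orbits) to the two non-symmetric expansions of $\B{Q}_\phi$ already established for Hsiao's algebra: the $\B{R}$-expansion \eqref{eq:LRBaG-R-to-Q} and the $\B{E}$-expansion \eqref{eq:Hsiao-E-to-Q}. Since $\SB{Q}_\alpha = \sum_{\type(\phi) = \alpha} \B{Q}_\phi$ by definition, each identity reduces to a regrouping of the terms indexed by $\phi$ with $\type(\phi) = \alpha$, exactly in the spirit of the proof of \cref{p:SH-to-SR}. The essential input in both cases is that the combinatorial data attached to each summand (the quantity $\deg$, the signs, and the character evaluations) depends only on the underlying orbit types, together with an orbit-level uniqueness bijection.

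For the first identity, I would start from $\SB{Q}_\alpha = \sum_{\type(\phi)=\alpha} \frac{1}{|G|^{\ell(\phi)}} \sum_{\abs{s} = \abs{\phi}} \ol{\phi(s)} \B{R}_s$ and factor out $|G|^{-\ell(\alpha)}$, using that $\ell(\phi) = \ell(\alpha)$ whenever $\type(\phi) = \alpha$. The key observation is that for a fixed $s$ with $\abs{\type(s)} = \abs{\alpha}$, there is a unique $\phi$ with $\type(\phi) = \alpha$ and $\abs{\phi} = \abs{s}$: namely, the $\wh{G}$-composition obtained by decorating the set composition $\abs{s}$, block by block, with the characters recorded by $\alpha$. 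For that $\phi$, the block-wise product form of the evaluations in \cref{ex:invariantshsiao} gives $\phi(s) = \alpha(\type(s))$. Swapping the order of summation and grouping the $\B{R}_s$ according to $p = \type(s)$ then yields $\SB{Q}_\alpha = |G|^{-\ell(\alpha)} \sum_{\abs{p} = \abs{\alpha}} \ol{\alpha(p)} \SB{R}_p$.

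For the second identity, I would collect the coefficient of each $\B{E}_\psi$ in $\sum_{\type(\phi)=\alpha} \sum_{\psi \chgeq \phi} \frac{(-1)^{\ell(\psi)-\ell(\phi)}}{\deg(\psi/\phi)} \B{E}_\psi$. The crucial combinatorial step, parallel to the merging argument in \cref{p:SH-to-SR}, is that for each $\psi$ with $\type(\psi) = \beta$ there is exactly one $\phi$ with $\phi \chleq \psi$ and $\type(\phi) = \alpha$ when $\alpha \chleq \beta$, and none otherwise. This $\phi$ is produced by merging the contiguous blocks of $\psi$ according to the refinement $\abs{\alpha} \leq \abs{\beta}$ and multiplying the corresponding characters; the condition $\alpha \chleq \beta$ from \cref{def:orderforMR} guarantees $\type(\phi) = \alpha$, and the construction forces $\phi \chleq \psi$. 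Since $\ell(\psi) = \ell(\beta)$, $\ell(\phi) = \ell(\alpha)$, and $\deg(\psi/\phi) = \deg(\beta/\alpha)$ depend only on types (\cref{def:deg}), the coefficient of every $\B{E}_\psi$ with $\type(\psi) = \beta$ equals $\frac{(-1)^{\ell(\beta)-\ell(\alpha)}}{\deg(\beta/\alpha)}$; grouping these into $\SB{E}_\beta$ gives the claimed formula.

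The main obstacle I anticipate is establishing the two orbit-level uniqueness bijections rigorously, especially verifying that the merging pattern recovering $\phi$ from $\psi$ is forced by the contiguity of blocks in the refinement order on compositions, and that the character products match so that $\phi \chleq \psi$ and $\type(\phi) = \alpha$ hold simultaneously. Everything else is bookkeeping: once the invariance of $\deg$, the signs, and the evaluations under $\type$ is in hand, the regrouping is routine and mirrors \cref{p:SH-to-SR}.
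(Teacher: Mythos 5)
Your proposal is correct and matches the paper's intended argument: the paper proves this proposition by saying "a similar argument" to \cref{p:SH-to-SR}, namely substituting the non-symmetric expansions (\cref{eq:LRBaG-R-to-Q} and \cref{eq:Hsiao-E-to-Q}) into the orbit sum defining $\SB{Q}_\alpha$ and regrouping via exactly the orbit-level uniqueness bijections you describe (decorating $\abs{s}$ with the characters of $\alpha$, and merging contiguous blocks of $\psi$ according to the unique grouping forced by $\abs{\alpha} \leq \abs{\beta}$). Your verification that the signs, lengths, $\deg$, and character evaluations depend only on types is precisely the bookkeeping the paper leaves implicit, so there is no gap.
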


Finally, we can use the previous results to write the CSoPOI $\{ \ee_\lambda \}_{\lambda \in \Lambda[\wh{G}]} $ in the $\SB{H}$-basis.

\begin{cor}\label{cor:uniform-CSoPOI-Hsiao}
    Let $G$ be a finite abelian group. Let $\{\ee_\lambda\}_{\lambda \in \Lambda[\wh{G}]}$ be the complete system of primitive orthogonal idempotents of $(\kk \hsiao)^{\sym_n}$ associated to the uniform section.
    Then, for all $\lambda \in \Lambda[\wh{G}]$,
    \begin{align*}
        \ee_\lambda &= \frac{1}{\ell(\lambda)!} \sum_{\substack{ \alpha \in \Gamma_n[\wh{G}] \\ \supp(\alpha) = \lambda}} \sum_{\substack{\beta \in \Gamma_n[\wh{G}] \\ \alpha \ \chleq \ \beta}} \dfrac{(-1)^{\ell(\beta)-\ell(\alpha)}}{\deg(\beta/\alpha)} \cE_\beta\\
        &=\dfrac{1}{|G|^{\ell(\lambda)} \ell(\lambda)!} \sum_{\substack{ \alpha \in \Gamma_n[\wh{G}] \\ \supp(\alpha) = \lambda}} \sum_{\substack{p \in \Gamma_n[G] \\ \abs{p} = \abs{\alpha} }} \ol{\alpha(p)} \sum_{\substack{q \in \Gamma_n[G] \\ p \leq q }} \dfrac{(-1)^{\ell(q)-\ell(p)}}{\deg(q/p)} \SB{H}_q.
    \end{align*}
\end{cor}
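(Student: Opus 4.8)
The plan is to assemble both formulas by chaining together the change-of-basis results already established for the invariant subalgebra, starting from the expression for $\ee_\lambda$ attached to the uniform section. By \cref{thm:CSoPOI-invar-subalg} together with the computation in \cref{ex:uniform-general}, for the uniform section one has
\[
    \ee_\lambda = \frac{1}{c^\lambda} \sum_{\supp(\alpha) = \lambda} \SB{Q}_\alpha,
\]
where $c^\lambda$ counts the set compositions with a fixed underlying set partition of type $\lambda$. For Hsiao's algebra $E(\SS) = \Sigma_n$ and $E(\TT) = \Pi_n$, so $c^X$ is the number of orderings of the $\ell(X)$ blocks of a set partition $X$; hence $c^X = \ell(X)!$ and $c^\lambda = \ell(\lambda)!$. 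This recovers the leading scalar $\tfrac{1}{\ell(\lambda)!}$ appearing in both displayed formulas.

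For the first formula, I would substitute the second identity of \cref{p:SR-to-SQ}, namely $\SB{Q}_\alpha = \sum_{\beta \chgeq \alpha} \frac{(-1)^{\ell(\beta)-\ell(\alpha)}}{\deg(\beta/\alpha)} \SB{E}_\beta$, directly into the display above. Since $\SB{E}_\beta = \cE_\beta$, this immediately yields
\[
    \ee_\lambda = \frac{1}{\ell(\lambda)!} \sum_{\substack{\alpha \in \Gamma_n[\wh{G}] \\ \supp(\alpha) = \lambda}} \sum_{\substack{\beta \in \Gamma_n[\wh{G}] \\ \alpha \chleq \beta}} \frac{(-1)^{\ell(\beta)-\ell(\alpha)}}{\deg(\beta/\alpha)} \cE_\beta,
\]
with no further manipulation needed.

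For the second formula, I would instead insert the first identity of \cref{p:SR-to-SQ}, namely $\SB{Q}_\alpha = \frac{1}{|G|^{\ell(\alpha)}} \sum_{\abs{p} = \abs{\alpha}} \ol{\alpha(p)} \SB{R}_p$, and then expand each $\SB{R}_p$ in the $\SB{H}$-basis using \cref{p:SH-to-SR}, i.e.\ $\SB{R}_p = \sum_{q \geq p} \frac{(-1)^{\ell(q)-\ell(p)}}{\deg(q/p)} \SB{H}_q$. The one point requiring care is the factor $|G|^{-\ell(\alpha)}$: because $\supp$ only forgets the ordering of blocks, every $\alpha$ with $\supp(\alpha) = \lambda$ has the same length $\ell(\alpha) = \ell(\lambda)$, so this factor is constant over the outer sum and can be pulled out as $|G|^{-\ell(\lambda)}$. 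Combining it with $c^\lambda = \ell(\lambda)!$ gives the global prefactor $\tfrac{1}{|G|^{\ell(\lambda)}\,\ell(\lambda)!}$, and the successive substitutions produce the triple sum in the stated form.

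Since every step is a direct substitution of a previously proved identity, there is no genuine obstacle here; the argument is purely bookkeeping. The only subtleties worth flagging explicitly are the evaluation $c^\lambda = \ell(\lambda)!$ and the observation that $\ell(\alpha)$ is constant on the fiber $\supp^{-1}(\lambda)$, which together justify factoring the scalar prefactors out of the summations.
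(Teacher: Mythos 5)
Your proposal is correct and takes essentially the same route the paper intends: the corollary is presented as an immediate consequence of \cref{ex:uniform-general}, \cref{p:SR-to-SQ}, and \cref{p:SH-to-SR}, chained by direct substitution exactly as you describe. The two bookkeeping points you flag --- that $c^\lambda = \ell(\lambda)!$ for Hsiao's algebra and that $\ell(\alpha) = \ell(\lambda)$ is constant on the fiber $\supp^{-1}(\lambda)$, allowing the prefactor $|G|^{-\ell(\lambda)}$ to be pulled out --- are precisely the details the paper leaves implicit.
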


\subsection{External product on Hsiao's algebra}\label{sec:externalproductonHsiao}

For any finite group $G$, we define an external product on the space $\kk\Sigma[G]$ of all $G$-compositions:
\[
    \kk\Sigma[G] := \bigoplus_{n \geq 0} \kk\Sigma_n[G].
\]

Given a set composition $x$ of a nonempty subset $S \subseteq \mathbb{N}$ of size $n$, its \defn{standardization} ${\rm std}(x) \in \Sigma_n$ is the composition of $[n]$ induced by the only order preserving bijection $S \to [n]$. For example,
\[
    {\rm std}\Big( \big( 368 \mid 15 \mid 9 \big) \Big) = \big( 245 \mid 13 \mid 6\big).
\]
We extend this definition to $G$-compositions (resp. $\wh{G}$-compositions) by leaving the group elements (resp. characters) unchanged.
For example,
\[
    {\rm std}\Big( \big( 368^{g_1} \mid 15^{g_2} \mid 9^{g_3} \big) \Big) = \big( 245^{g_1} \mid 13^{g_2} \mid 6^{g_3} \big).
\]
Given two $G$-compositions $s,t$ (resp. $\wh{G}$-compositions $\phi,\psi$) of disjoint subsets $S,T \subseteq \mathbb{N}$, we let $(s,t)$ (resp. $(\phi,\psi)$) denote their concatenation.
For example, if
\[
    \phi = \big( 368^{\phi_1} \mid 15^{\phi_2} \big)
    \text{~and~}
    \psi = \big( 29^{\psi_1} \mid 7^{\psi_2} \big),
    \text{~then~}
    (\phi,\psi) = \big( 368^{\phi_1} \mid 15^{\phi_2} \mid 29^{\psi_1} \mid 7^{\psi_2} \big).
\]

\begin{definition}\label{def:staronhsiao}
    Given $s \in \Sigma_n[G]$ and $t \in \Sigma_m[G]$, define
    \[
	\B{H}_s \star \B{H}_t := \sum_{ \substack{ (u,v) \in \Gamma_{n+m}[G] \\ {\rm std}(u)=s,\, {\rm std}(v)=t } } \B{H}_{(u,v)}.
    \]
\end{definition}

\begin{example}
    With $G = \ztwo$, we have
    \[		
        \B{H}_{({13}^-,2^+)} \star \B{H}_{(1^-)} = \B{H}_{(13^-,2^+,4^-)} + \B{H}_{(14^-,2^+,3^-)} + \B{H}_{(14^-,3^+,2^-)} + \B{H}_{(24^-,3^+,1^-)}.
    \]
\end{example}

\begin{prop}
    Extending the $\star$ product linearly endows $\kk \Sigma[G]$ with the structure of a graded associative unital algebra.
\end{prop}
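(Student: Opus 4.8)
The plan is to verify the three defining properties separately, treating associativity as the crux. First, grading and the unit are immediate. Since ${\rm std}(u)=s$ forces $u$ to be a $G$-composition of an $n$-element subset and ${\rm std}(v)=t$ forces $v$ to be a $G$-composition of an $m$-element subset, every term $\B{H}_{(u,v)}$ occurring in $\B{H}_s \star \B{H}_t$ lies in $\kk\Sigma_{n+m}[G]$; hence $\star$ respects the grading. For the unit I would take $\mathbf{1}=\B{H}_\emptyset$, the unique basis element of $\kk\Sigma_0[G]$: in $\B{H}_\emptyset \star \B{H}_t$ the only admissible choice is $u=\emptyset$ and $v$ the unique $G$-composition of $[m]$ with ${\rm std}(v)=t$, namely $v=t$, so $\B{H}_\emptyset \star \B{H}_t=\B{H}_t$, and symmetrically on the other side.

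For associativity I would reindex the product by the underlying subsets. For $s\in\Sigma_n[G]$ and $t\in\Sigma_m[G]$, a term $\B{H}_{(u,v)}$ in $\B{H}_s\star\B{H}_t$ is determined by the size-$n$ set $S\subseteq[n+m]$ occupied by the blocks of $u$: given $S$ there is a unique $G$-composition $u$ of $S$ with ${\rm std}(u)=s$ and a unique $G$-composition $v$ of $[n+m]\setminus S$ with ${\rm std}(v)=t$. I would then show that, for $s,t,r$ of degrees $n,m,p$, both $(\B{H}_s\star\B{H}_t)\star\B{H}_r$ and $\B{H}_s\star(\B{H}_t\star\B{H}_r)$ equal the single triple sum
\[
    \sum_{[n+m+p]=A\sqcup B\sqcup C} \B{H}_{(u,v,w)},
\]
where the sum ranges over ordered triples of disjoint sets with $|A|=n$, $|B|=m$, $|C|=p$, and $u,v,w$ are the unique $G$-compositions of $A,B,C$ with standardizations $s,t,r$ respectively. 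Both bracketings visibly range over the same index set of triples $(A,B,C)$, so the only content is that each bracketing produces exactly the basis element $\B{H}_{(u,v,w)}$.

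The one nontrivial ingredient — and the main obstacle — is that standardization depends only on the relative order of the underlying letters, making it compatible with two-step splitting. Concretely I would prove that for any order-preserving bijection $f\colon X\to X'$ of finite subsets of $\mathbb{N}$ and any $G$-composition $w$ of $X$ one has ${\rm std}(f\cdot w)={\rm std}(w)$; consequently, if $w$ is a $G$-composition of $D\subseteq\mathbb{N}$ whose first blocks occupy a subset $A\subseteq D$, then standardizing $w$ into $\Sigma_{|D|}[G]$ and splitting there agrees with splitting $w$ into its $A$-part and $(D\setminus A)$-part and standardizing each. Granting this, expanding the left bracketing first yields terms $\B{H}_z$ with $z\in\Sigma_{n+m}[G]$, and re-expanding each $z$ over size-$(n+m)$ sets $D\subseteq[n+m+p]$; the compatibility lemma lets me refactor the $D$-part of each resulting $G$-composition as the concatenation of its $A$-part (standardizing to $s$) and $B$-part (standardizing to $t$), identifying the term with $\B{H}_{(u,v,w)}$ for $A\sqcup B=D$ and $C=[n+m+p]\setminus D$. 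The right bracketing is handled symmetrically by splitting off $C$ first. The delicate point throughout is the bookkeeping of these nested standardizations — confirming that relative orders are preserved at each stage — after which associativity is a direct comparison of index sets.
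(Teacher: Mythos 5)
Your proof is correct and follows essentially the same route as the paper: the unit is the empty $G$-composition, and associativity is verified directly on the $\B{H}$-basis using the compatibility of standardization with concatenation and splitting — your order-preserving-bijection lemma is precisely the paper's key observation that if $(s,t) = {\rm std}(u,v)$ with matching block counts, then ${\rm std}(u) = {\rm std}(s)$. Your reindexing of both bracketings by ordered triples of disjoint subsets is just a more explicit bookkeeping of the same argument.
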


\begin{proof}
    The unit of $\kk \Sigma[G]$ under the $\star$ product is $\B{H}_{()}$, where $() \in \Sigma_0[G]$ denotes the only $G$-composition of the empty set.
    The associativity is easily verified in the $\B{H}$-basis since (i.) concatenation of $G$-compositions is associative and (ii.) if $(s,t) = {\rm std}(u,v)$, where $\ell(s) = \ell(u)$ and $\ell(t) = \ell(v)$, and $r = {\rm std}(s)$, then $r = {\rm std}(u)$.
\end{proof}

\begin{remark}
    When $G$ is the trivial group, $\kk\Sigma[G]$ is the dual of the coalgebra of set compositions introduced by Chapoton \cite{Chapoton00} and which has been further studied in relation with Coxeter groups and Species in \cite{am06,am10}.
\end{remark}

When $G$ is abelian, the $\star$ product also behaves well with respect to the $\B{E}$-basis.

\begin{prop}
    Let $G$ be a finite abelian group. Then, for all $\phi \in \Sigma_n[\wh{G}]$ and $\psi \in \Sigma_m[\wh{G}]$,
    \begin{equation}\label{eq:star-E-basis}
        \B{E}_{\phi} \star \B{E}_{\psi} = \sum_{ \substack{ (\varphi,\chi) \in \Gamma_{n+m}[\wh{G}] \\ {\rm std}(\varphi)=\phi,\, {\rm std}(\chi)=\phi } } 
	\B{E}_{(\varphi,\chi)}.
    \end{equation}
\end{prop}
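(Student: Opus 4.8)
The plan is to establish the identity by expanding both sides in the $\B{H}$-basis and comparing the coefficient of each $\B{H}_w$, where $w$ ranges over $G$-compositions of $[n+m]$. Recall from \cref{ss:isotypic} (and \cref{def:evaluation}) that, since $\abs{G_{\abs{\phi}}} = \abs{G}^{\ell(\phi)}$,
\[
    \B{E}_\phi = \dfrac{1}{\abs{G}^{\ell(\phi)}} \sum_{\substack{s \in \Sigma_n[G] \\ \abs{s} = \abs{\phi}}} \ol{\phi(s)}\, \B{H}_s,
\]
where $\phi(s) = \prod_i \phi_i(g_i)$ when $s = ((S_1,g_1),\dots,(S_k,g_k))$ satisfies $\abs{s}=\abs{\phi}$. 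First I would substitute this expansion for both $\B{E}_\phi$ and $\B{E}_\psi$ into the left-hand side and apply \cref{def:staronhsiao}, obtaining
\[
    \B{E}_\phi \star \B{E}_\psi =
    \dfrac{1}{\abs{G}^{\ell(\phi) + \ell(\psi)}}
    \sum_{\substack{s \in \Sigma_n[G],\ \abs{s}=\abs{\phi} \\ t \in \Sigma_m[G],\ \abs{t}=\abs{\psi}}}
    \ol{\phi(s)}\,\ol{\psi(t)}
    \sum_{\substack{(u,v) \\ {\rm std}(u)=s,\,{\rm std}(v)=t}} \B{H}_{(u,v)}.
\]

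The key combinatorial observation is that, for a fixed target $G$-composition $w$ of $[n+m]$, at most one term on each side contributes $\B{H}_w$. On the left, $\B{H}_{(u,v)} = \B{H}_w$ forces the concatenation $(u,v)$ to equal $w$; since ${\rm std}(u)=s$ and ${\rm std}(v)=t$ force $\ell(u)=\ell(\phi)$ and $\ell(v)=\ell(\psi)$, this splits $w$ uniquely into its first $\ell(\phi)$ blocks $u$ and its remaining $\ell(\psi)$ blocks $v$, and is possible only when $\ell(w)=\ell(\phi)+\ell(\psi)$, the first $\ell(\phi)$ blocks cover exactly $n$ letters, ${\rm std}(\abs{u})=\abs{\phi}$, and ${\rm std}(\abs{v})=\abs{\psi}$. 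The pair $(s,t)=({\rm std}(u),{\rm std}(v))$ is then determined. Because standardization leaves the group labels unchanged, $u$ and $s$ (resp. $v$ and $t$) carry the same group elements blockwise, so if $k_1,\dots,k_{\ell(w)}$ are the labels of $w$ then $\phi(s)=\prod_{i\le\ell(\phi)}\phi_i(k_i)$ and $\psi(t)=\prod_j \psi_j(k_{\ell(\phi)+j})$. Hence the coefficient of $\B{H}_w$ on the left equals $\abs{G}^{-\ell(\phi)-\ell(\psi)}\,\ol{\phi(s)}\,\ol{\psi(t)}$ when $w$ has this shape, and $0$ otherwise.

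On the right I would run the parallel analysis. The index $(\varphi,\chi)$ is a $\wh{G}$-composition of $[n+m]$ whose first $\ell(\phi)$ blocks standardize to $\phi$ and whose last $\ell(\psi)$ blocks standardize to $\psi$; expanding $\B{E}_{(\varphi,\chi)}$ by the same formula, $\B{H}_w$ occurs only when $\abs{(\varphi,\chi)}=\abs{w}$. Since standardization fixes the characters, this pins down $(\varphi,\chi)$ uniquely --- its underlying set composition is $\abs{w}$ and its characters are $(\phi_1,\dots,\phi_{\ell(\phi)},\psi_1,\dots,\psi_{\ell(\psi)})$ --- subject to exactly the same shape constraint on $w$ as above. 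The resulting coefficient is $\abs{G}^{-\ell(\phi)-\ell(\psi)}\,\ol{(\varphi,\chi)(w)}$, and since
\[
    (\varphi,\chi)(w) = \prod_{i\le\ell(\phi)}\phi_i(k_i)\,\prod_j \psi_j(k_{\ell(\phi)+j}) = \phi(s)\,\psi(t),
\]
this matches the left-hand coefficient. The main obstacle is purely bookkeeping: verifying that the ``shape'' conditions singling out which $w$ receive nonzero coefficients coincide on the two sides, and that standardization preserves the group-element/character data so the two products of evaluations agree. Once these are checked, comparing coefficients of $\B{H}_w$ for every $w$ completes the proof.
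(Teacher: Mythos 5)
Your proposal is correct and takes essentially the same approach as the paper: both expand the two sides in the $\B{H}$-basis via the definitions of $\B{E}_\phi$ and $\star$, and then match terms using exactly the two observations you isolate — that a concatenation $(u,v)=w$ splits uniquely, and that standardization preserves the group labels/characters, so $\varphi(u)=\phi({\rm std}(u))$ and the evaluations agree. The only cosmetic difference is that you compare coefficients of each fixed $\B{H}_w$, while the paper phrases the same matching as a bijection between the two index sets.
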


\begin{proof}
    Note that $G_{\abs{\phi}} = \{ s \in \SS : \abs{s} = \abs{\phi} \} \cong G^{\ell(\phi)}$. Thus,
    \[
        \B{E}_\phi \star \B{E}_\psi =
        \dfrac{1}{|G|^{\ell(\phi)+\ell(\psi)}} \sum_{ \substack{ s,t: \\ \abs{s} = \abs{\phi} \\ \abs{t} = \abs{\psi} } } \ol{\phi(s)}  \, \ol{\psi(t)} \B{H}_s \star \B{H}_t =
        \dfrac{1}{|G|^{\ell(\phi)+\ell(\psi)}} \sum_{ \substack{ u,v: \\ \abs{ {\rm std}(u) } = \abs{\phi} \\ \abs{{\rm std}(v)} = \abs{\psi} } } \ol{\phi({\rm std}(u))}  \, \ol{\psi({\rm std}(v))} \B{H}_{(u,v)}.
    \]
    On the other hand,
    \[
        \sum_{ \substack{ \varphi,\chi : \\ {\rm std}(\varphi) = \phi \\ {\rm std}(\chi) = \psi } }  \B{E}_{(\varphi,\chi)} = 
        \dfrac{1}{|G|^{\ell(\phi)+\ell(\psi)}} \sum_{\substack{ \varphi,\chi,u,v : \\ {\rm std}(\varphi) = \phi \\ {\rm std}(\chi) = \psi \\ \abs{u} = \abs{\varphi} \\ \abs{v} = \abs{\chi} }} \ol{\varphi(u)}  \, \ol{\chi(v)} \B{H}_{(u,v)}.
    \]
    Finally, observe that the sums in the two previous equations are the same since (i.) $u$ and $\phi$ (resp. $v$ and $\psi$) completely determine $\varphi$ (resp. $\chi$) and (ii.) $\varphi(u) = \phi({\rm std}(u))$ (resp. $\chi(v) = \psi({\rm std}(v))$).
\end{proof}

\begin{prop}\label{prop:invar-star-subalg}
    Let $G$ be a finite group.
    The subspace of invariants 
    \[
        (\kk \Sigma[G])^{\sym} := \displaystyle\bigoplus_{n \geq 0} (\kk\Sigma_n[G])^{\sym_n}
    \]
    is a subalgebra of $\kk\Sigma[G]$ under the $\star$ product.
    In fact, for all $p \in \Gamma_n[G]$ and $q \in \Gamma_m[G]$,
    \[
        \SB{H}_{p} \star \SB{H}_{q} = \SB{H}_{pq},
    \]
    where $pq$ denotes the concatenation of (integer) $G$-compositions $p$ and $q$.
\end{prop}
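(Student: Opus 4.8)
The plan is to reduce the identity $\SB{H}_p \star \SB{H}_q = \SB{H}_{pq}$ to a bijection between index sets, after which both the subalgebra claim and the product formula follow at once. First I would expand the left-hand side using bilinearity of $\star$ and the definition $\SB{H}_p = \sum_{\type(s)=p}\B{H}_s$ together with \cref{def:staronhsiao}, obtaining
\[
    \SB{H}_p \star \SB{H}_q
    = \sum_{\substack{\type(s)=p\\ \type(t)=q}} \ \sum_{\substack{(u,v)\\ {\rm std}(u)=s,\ {\rm std}(v)=t}} \B{H}_{(u,v)},
\]
where in the inner sum $(u,v)$ ranges over all ways of splitting $[n+m]$ into disjoint subsets $S,T$ with $\abs{S}=n$, $\abs{T}=m$ and placing a $G$-composition $u$ on $S$ with ${\rm std}(u)=s$ and a $G$-composition $v$ on $T$ with ${\rm std}(v)=t$.

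The key observation is that each term $\B{H}_{(u,v)}$ occurring in this triple sum has $\type\big((u,v)\big) = pq$: since ${\rm std}(u)=s$ has type $p$, the block sizes and group labels of $u$ are exactly those recorded by $p$, and likewise for $v$ and $q$, so concatenating yields type $pq$. Conversely I would show that every $w \in \Sigma_{n+m}[G]$ with $\type(w)=pq$ arises exactly once. Because $p$ has a fixed length $k = \ell(p)$, the $G$-composition $w$ admits a \emph{unique} decomposition $w=(u,v)$ in which $u$ consists of its first $k$ blocks and $v$ of its remaining $\ell(q)$ blocks; then $u$ lives on an $n$-element subset and $v$ on its complement, and setting $s := {\rm std}(u)$ and $t := {\rm std}(v)$ recovers the unique pair $(s,t)$ — automatically satisfying $\type(s)=p$ and $\type(t)=q$ — together with the unique inner index for which $w=(u,v)$.

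Thus the assignment $(s,t,(u,v)) \mapsto (u,v)$ is a bijection onto $\{\, w : \type(w)=pq \,\}$, and the triple sum collapses to $\sum_{\type(w)=pq}\B{H}_w = \SB{H}_{pq}$, proving the product formula. Since $\SB{H}_{pq}$ again lies in $(\kk\Sigma_{n+m}[G])^{\sym_{n+m}}$, this simultaneously shows that $(\kk\Sigma[G])^{\sym}$ is closed under $\star$; as the unit $\B{H}_{()}$ is manifestly $\sym_0$-invariant, it is a subalgebra. I do not expect a genuine obstacle here: the entire content is the uniqueness of the first-$k$-blocks decomposition of $w$, so the only point requiring care is confirming that cutting $w$ after its $k$-th block — rather than after any intermediate block — is forced by $\ell(p)=k$, and that standardization leaves block sizes and group labels undisturbed.
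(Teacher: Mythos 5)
Your proof is correct and follows essentially the same route as the paper's: both expand in the $\B{H}$-basis and use that $\type$ is unchanged by standardization and multiplicative under concatenation, so the double sum re-indexes as the sum over all $G$-compositions of type $pq$. Your explicit bijection, cutting $w$ after its $\ell(p)$-th block, merely spells out the re-indexing step that the paper's chain of equalities leaves implicit.
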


\begin{proof}
    The type of a $G$-composition of any subset $S \subseteq \mathbb{N}$ of size $n$ is
    \[
        \type\big( (S_1,g_1),\dots,(S_k,g_k) \big) = \big( (|S_1|,g_1),\dots,(|S_k|,g_k) \big) \in \gcomposition.
    \]
    In this manner, the type of a $G$-composition is the same as the type of its standardization,
    and the type of a concatenation of $G$-compositions is the concatenation of their types.
    That is,
    \[ 
        \type(s) = \type({\rm std}(s))
        \qqand
        \type((s,t)) = \type(s)\type(t).
    \]
    Thus, using the definition of the $\SB{H}$-basis in \Cref{eq:def-symH-basis}, we have
    \[
        \SB{H}_p \star \SB{H}_q = 
        \sum_{ \substack{ \type(s) = p \\ \type(t) = q } } \B{H}_s \star \B{H}_t =
        \sum_{ \substack{ \type({\rm std}(s)) = p \\ \type({\rm std}(t)) = q } } \B{H}_{(s,t)} =
        \sum_{ \type((s,t)) = p q } \B{H}_{(s,t)} = 
        \SB{H}_{p q}. \qedhere
    \]
\end{proof}

When $G$ is abelian, the $\star$ product also has a nice formula in the $\SB{E}$-basis of $(\kk \Sigma[G])^{\sym}$.
We deduce the formula below following the argument in the last proof and using \cref{eq:star-E-basis}.

\begin{prop}
   Let $G$ be a finite abelian group.
    For all for all $\alpha \in \Gamma_n[\wh{G}]$ and $\beta \in \Gamma_m[\wh{G}]$,
    \[
        \SB{E}_\alpha \star \SB{E}_\beta = \SB{E}_{\alpha\beta},
    \]
    where $\alpha\beta$ denotes the concatenation of the (integer) $\wh{G}$-compositions $\alpha$ and $\beta$.
\end{prop}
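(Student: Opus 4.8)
The plan is to reduce the $\SB{E}$-basis statement to the already-established $\B{E}$-basis formula in \cref{eq:star-E-basis}, exactly paralleling how \cref{prop:invar-star-subalg} reduces the $\SB{H}$-basis formula to the definition of $\star$ on the $\B{H}$-basis. The key structural fact I would exploit is that the $\SB{E}$-basis is obtained by summing the $\B{E}$-basis over $\sym_n$-orbits, i.e. $\SB{E}_\alpha = \sum_{\type(\phi) = \alpha} \B{E}_\phi$ from \cref{eq:def-symH-basis}, and that the $\type$ map interacts well with both standardization and concatenation.

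First I would record the two compatibility properties of $\type$ that drive the computation, mirroring those established in the proof of \cref{prop:invar-star-subalg}: for any $\wh{G}$-composition $\phi$ of a finite subset of $\mathbb{N}$ one has $\type(\phi) = \type({\rm std}(\phi))$, and for $\wh{G}$-compositions $\phi,\psi$ of disjoint subsets one has $\type\big((\phi,\psi)\big) = \type(\phi)\,\type(\psi)$ (concatenation of integer $\wh{G}$-compositions). These follow immediately because $\type$ only reads off the block sizes and the characters, both of which are preserved by standardization and simply juxtaposed under concatenation.

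Next I would expand $\SB{E}_\alpha \star \SB{E}_\beta$ by summing \cref{eq:star-E-basis} over orbits:
\[
    \SB{E}_\alpha \star \SB{E}_\beta
    = \sum_{\substack{\type(\phi) = \alpha \\ \type(\psi) = \beta}} \B{E}_\phi \star \B{E}_\psi
    = \sum_{\substack{\type(\phi) = \alpha \\ \type(\psi) = \beta}} \;
      \sum_{\substack{ (\varphi,\chi) \\ {\rm std}(\varphi)=\phi,\, {\rm std}(\chi)=\psi }}
      \B{E}_{(\varphi,\chi)}.
\]
Using ${\rm std}(\varphi)=\phi$ together with $\type(\phi)=\alpha$ gives $\type(\varphi) = \type({\rm std}(\varphi)) = \type(\phi) = \alpha$, and likewise $\type(\chi) = \beta$; conversely each pair $(\varphi,\chi)$ with $\type(\varphi)=\alpha$ and $\type(\chi)=\beta$ arises for a unique $(\phi,\psi)$, namely $\phi = {\rm std}(\varphi)$, $\psi = {\rm std}(\chi)$. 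Hence the double sum collapses to a single sum over all concatenations $(\varphi,\chi)$ with $\type(\varphi)=\alpha$ and $\type(\chi)=\beta$. By the concatenation property of $\type$, these are exactly the $\wh{G}$-compositions $\rho$ with $\type(\rho) = \alpha\beta$, and summing $\B{E}_\rho$ over this orbit yields $\SB{E}_{\alpha\beta}$ by definition.

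I do not anticipate a serious obstacle: the only point requiring care is the bijection between the indexing sets of the iterated sum and the single sum, i.e. checking that the reindexing $(\phi,\psi,\varphi,\chi) \leftrightarrow (\varphi,\chi)$ is one-to-one and onto. This is precisely where standardization being a two-sided inverse to the orbit data is used, and it is the same bookkeeping that appears in the proof of \cref{prop:invar-star-subalg} and in the proof of \cref{eq:star-E-basis}. Since the statement explicitly invokes those two results, the argument is essentially a matter of carefully matching index sets rather than any new idea.
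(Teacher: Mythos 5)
Your proof is correct and is exactly the argument the paper intends: the paper gives no separate proof for this proposition, stating only that it follows ``following the argument in the last proof'' (the proof of \cref{prop:invar-star-subalg}) combined with \cref{eq:star-E-basis}, which is precisely your reduction. Your expansion over orbits, the two compatibility properties $\type(\phi)=\type({\rm std}(\phi))$ and $\type((\varphi,\chi))=\type(\varphi)\type(\chi)$, and the reindexing bijection are the same bookkeeping the paper performs for the $\SB{H}$-basis, transported verbatim to the $\B{E}$-basis.
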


We now consider the collection of uniform sections, one for each value of $n$, to define the $\SB{R}$-basis and, when $G$ is abelian, the $\SB{Q}$-basis of $(\kk \Sigma[G])^{\sym}$.

\begin{prop}
    Let $G$ be a finite group.
    For each $n$, let $\{\SB{R}_p\}_{p \in \Gamma_n[{G}]}$ be the $\SB{R}$-basis of $(\kk\hsiao)^{\sym_n}$ associated to the uniform section.
    Then, for all $p \in \Gamma_n[{G}]$ and $q \in \Gamma_m[{G}]$,
    \[
        \SB{R}_p \star \SB{R}_q = \SB{R}_{pq}.
    \]
\end{prop}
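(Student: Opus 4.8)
The plan is to expand both factors in the $\SB{H}$-basis, multiply them using the concatenation formula for the $\SB{H}$-basis, and recognize the outcome as the $\SB{H}$-expansion of $\SB{R}_{pq}$. Concretely, \cref{SH-to-SR} gives $\SB{R}_p = \sum_{a \geq p} \frac{(-1)^{\ell(a)-\ell(p)}}{\deg(a/p)} \SB{H}_a$ and an analogous expression for $\SB{R}_q$, while \cref{prop:invar-star-subalg} gives $\SB{H}_a \star \SB{H}_b = \SB{H}_{ab}$. Expanding the product then yields
\[
    \SB{R}_p \star \SB{R}_q
    = \sum_{\substack{a \geq p \\ b \geq q}}
        \frac{(-1)^{(\ell(a)-\ell(p)) + (\ell(b)-\ell(q))}}{\deg(a/p)\,\deg(b/q)}\, \SB{H}_{ab}.
\]

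The combinatorial heart of the argument is a bijection between the refinements $c \geq pq$ in $\Gamma_{n+m}[G]$ and the pairs $(a,b)$ with $a \geq p$ in $\Gamma_n[G]$ and $b \geq q$ in $\Gamma_m[G]$. The forward map sends $c$ to the pair $(a,b)$, where $a$ (resp. $b$) is the $G$-composition formed by the blocks of $c$ that refine the first $\ell(p)$ blocks (resp. the last $\ell(q)$ blocks) of $pq$; its inverse is $(a,b) \mapsto ab$, the concatenation. I would verify that this is well defined by noting that refinement only subdivides blocks and never merges across the boundary between the $p$-part and the $q$-part of $pq$, and that the $G$-preserving condition on each side matches the condition on $c$. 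Under this bijection lengths add, $\ell(c) = \ell(a) + \ell(b)$ with $\ell(pq) = \ell(p) + \ell(q)$, so the sign exponent satisfies $\ell(c) - \ell(pq) = (\ell(a)-\ell(p)) + (\ell(b)-\ell(q))$; and since $\deg$ is, by \cref{def:deg}, a product over the blocks of the coarser composition, it factors as $\deg(c/(pq)) = \deg(a/p)\,\deg(b/q)$.

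Substituting this bijection into the double sum and using $\SB{H}_{ab} = \SB{H}_c$ then gives
\[
    \SB{R}_p \star \SB{R}_q
    = \sum_{c \geq pq} \frac{(-1)^{\ell(c)-\ell(pq)}}{\deg(c/(pq))}\, \SB{H}_c
    = \SB{R}_{pq},
\]
the last equality again by \cref{SH-to-SR}. The only genuine content beyond bookkeeping is establishing that $c \mapsto (a,b)$ is a bijection intertwining the $G$-preserving refinement orders on $\Gamma_{n+m}[G]$ with those on $\Gamma_n[G]$ and $\Gamma_m[G]$, and I expect this order-compatibility check to be the main (though mild) obstacle; once it is in place, the additivity of $\ell$ and multiplicativity of $\deg$ follow immediately from their block-wise definitions, and the coefficients match term by term. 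Notably, this approach works for arbitrary finite $G$, since it never passes through the $\SB{E}$-basis.
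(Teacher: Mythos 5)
Your proposal is correct and follows essentially the same route as the paper: expand both factors in the $\SB{H}$-basis via \cref{SH-to-SR}, multiply using $\SB{H}_a \star \SB{H}_b = \SB{H}_{ab}$ from \cref{prop:invar-star-subalg}, and match coefficients through the bijection $c \leftrightarrow (a,b)$ between refinements of $pq$ and pairs of refinements of $p$ and $q$, using additivity of $\ell$ and multiplicativity of $\deg$. The paper states the same bijection and bookkeeping facts (for pairs $p \leq q$, $p' \leq q'$) a bit more tersely, but the argument is identical.
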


\begin{proof}
    Suppose we have four integer ${G}$-compositions satisfying $p \leq q$ and $p' \leq q'$.
    It follows that $p p' \leq q q'$ and
    \[
        \deg(q q' / p p') = \deg(q / p) \deg(q' / p')
        \qquad
        \ell(q q') - \ell(p p') = 
        (\ell(q) - \ell(p)) + (\ell(q') - \ell(p')).
    \]
    Moreover, each integer ${G}$-composition refining $p p'$ is of the form $q q'$ above.
    Applying this and $\SB{H}_p \star \SB{H}_q = \SB{H}_{pq}$ to \cref{SH-to-SR} yields the result.
\end{proof}

When $G$ is abelian, the same argument above, but using that $\SB{E}_\alpha \star \SB{E}_\beta = \SB{E}_{\alpha\beta}$ and the second formula in \cref{eq:HsiaoSymQbasis} in the last step, yields the following result.

\begin{prop}\label{prop:SymQ-basis-star}
    Let $G$ be a finite abelian group.
    For each $n$, let $\{\SB{Q}_\alpha\}_{\alpha \in \Gamma_n[\wh{G}]}$ be the $\SB{Q}$-basis of $(\kk\hsiao)^{\sym_n}$ associated to the uniform section.
    Then, for all $\alpha \in \Gamma_n[\wh{G}]$ and $\beta \in \Gamma_m[\wh{G}]$,
    \[
        \SB{Q}_\alpha \star \SB{Q}_\beta = \SB{Q}_{\alpha \beta}.
    \]
\end{prop}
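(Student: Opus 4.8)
The plan is to mimic the proof of the immediately preceding proposition (the one establishing $\SB{R}_p \star \SB{R}_q = \SB{R}_{pq}$), substituting the $\SB{E}$-to-$\SB{Q}$ change of basis for the $\SB{H}$-to-$\SB{R}$ change of basis. The key inputs are already in hand: the product formula $\SB{E}_\alpha \star \SB{E}_\beta = \SB{E}_{\alpha\beta}$ from the preceding proposition on the $\SB{E}$-basis, and the second formula in \cref{eq:HsiaoSymQbasis}, namely $\SB{Q}_\alpha = \sum_{\beta \chgeq \alpha} \frac{(-1)^{\ell(\beta)-\ell(\alpha)}}{\deg(\beta/\alpha)} \SB{E}_\beta$, which expresses $\SB{Q}$ in terms of $\SB{E}$ via a Möbius-type sum over the refinement order $\chleq$ on $\wh{G}$-compositions.

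First I would record the combinatorial compatibility of concatenation with the order $\chleq$ and with the statistics $\ell$ and $\deg$. Specifically, for $\wh{G}$-compositions with $\alpha \chleq \gamma$ and $\beta \chleq \delta$, one has $\alpha\beta \chleq \gamma\delta$, together with the multiplicativity $\deg(\gamma\delta/\alpha\beta) = \deg(\gamma/\alpha)\deg(\delta/\beta)$ and the additivity $\ell(\gamma\delta) - \ell(\alpha\beta) = (\ell(\gamma)-\ell(\alpha)) + (\ell(\delta)-\ell(\beta))$. These are the exact $\wh{G}$-analogues of the facts used in the $\SB{R}$-basis proof; they hold because $\chleq$ is a $\wh{G}$-multiplicative refinement order that decomposes block-by-block, so a refinement of the concatenation $\alpha\beta$ is uniquely a concatenation of a refinement of $\alpha$ and a refinement of $\beta$ (there is no interaction across the boundary). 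I would then expand both $\SB{Q}_\alpha$ and $\SB{Q}_\beta$ via \cref{eq:HsiaoSymQbasis}, distribute $\star$ using $\SB{E}_\gamma \star \SB{E}_\delta = \SB{E}_{\gamma\delta}$, and collect the resulting double sum over pairs $(\gamma,\delta)$ with $\gamma \chgeq \alpha$, $\delta \chgeq \beta$.

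The main step is to recognize that, after substituting the multiplicative/additive identities above, the double sum $\sum_{\gamma \chgeq \alpha}\sum_{\delta \chgeq \beta} \frac{(-1)^{\ell(\gamma)-\ell(\alpha)+\ell(\delta)-\ell(\beta)}}{\deg(\gamma/\alpha)\deg(\delta/\beta)} \SB{E}_{\gamma\delta}$ reindexes exactly as the single sum $\sum_{\varepsilon \chgeq \alpha\beta} \frac{(-1)^{\ell(\varepsilon)-\ell(\alpha\beta)}}{\deg(\varepsilon/\alpha\beta)} \SB{E}_{\varepsilon}$, which is precisely $\SB{Q}_{\alpha\beta}$ by \cref{eq:HsiaoSymQbasis} again. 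The bijection $\varepsilon \leftrightarrow (\gamma,\delta)$ sending a refinement of $\alpha\beta$ to the pair obtained by splitting at the boundary between the $\alpha$-part and the $\beta$-part is what makes the signs and degree factors match termwise.

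The only place requiring genuine care—the expected obstacle—is verifying that this boundary-splitting reindexing is truly a bijection in the $\wh{G}$-colored setting, i.e.\ that no refinement of $\alpha\beta$ can ``merge across'' the boundary. Because $\chleq$ only ever refines blocks (a cover relation splits one block into two whose characters multiply to the original), and concatenation places the blocks of $\alpha$ strictly before those of $\beta$, every block of a refinement $\varepsilon$ lies entirely within either the $\alpha$-region or the $\beta$-region, so the split is well-defined and invertible. This is exactly analogous to the boundary argument already invoked for the $\SB{R}$-basis and for $\SB{E}_\alpha \star \SB{E}_\beta = \SB{E}_{\alpha\beta}$, so once that observation is made explicit the remaining manipulation is the routine termwise matching described above.
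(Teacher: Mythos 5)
Your proposal is correct and takes essentially the same route as the paper: the paper's proof is exactly the argument used for $\SB{R}_p \star \SB{R}_q = \SB{R}_{pq}$, rerun with $\SB{E}_\alpha \star \SB{E}_\beta = \SB{E}_{\alpha\beta}$ and the second formula of \cref{eq:HsiaoSymQbasis}, i.e., the M\"obius-type expansion combined with the concatenation compatibilities ($\deg$ multiplicative, $\ell$-differences additive) and the boundary-splitting bijection you describe. The only difference is that you spell out the verification that no refinement of $\alpha\beta$ can merge across the boundary, which the paper leaves implicit.
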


\section{The Mantaci--Reutenauer algebra and Vazirani's idempotents}\label{sec:MRalg}
In this section, we will study the Mantaci--Reutenauer algebra, which Hsiao related to the invariant algebra $(\kk \hsiao)^{\sym_n}$. We will first define the Mantaci--Reutenauer algebra in \cref{s:MRalgebra} for an arbitrary finite group $G$. We will then specialize to the case that $G = \ztwo$, which is of particular interest, and appears in the work of \cite{vazirani, DouTom2018decomposition, brauner2023type,BauHoh2008solomon}.

\subsection{The Mantaci--Reutenauer algebra}\label{s:MRalgebra}

In \cite{hsiao2007semigroup}, Hsiao established a connection between his algebra $\kk \hsiao$ and a combinatorial algebra called the \defn{Mantaci--Reutenauer algebra} $\MR_n[G]$, which was introduced by Mantaci and Reutenauer in \cite{MR95} and is a subalgebra of $\kk \sym_n[G]$.
Intuitively, $\MR_n[G]$ can be thought of as a ``colored'' analogue of Solomon's Descent algebra $\sol(\sym_n)$, consisting of elements of $\sol(\sym_n)$ ``colored'' by the elements of $G$.
Though originally defined only when $G$ is abelian, more recently, Baumann--Hohlweg \cite{BauHoh2008solomon} and Hsiao \cite{hsiao2007semigroup} gave generalizations for any finite group.

Recall that the wreath product $\sym_n[G]$ is a group with underlying set $\sym_n \times G^n$. Write a generic element in $\sym_n[G]$ as $(\sigma,g) = \big( (\sigma_1,g_1), (\sigma_2, g_2), \cdots, (\sigma_n, g_n) \big)$, so $\sigma = (\sigma_1, \cdots, \sigma_n) \in \sym_n$ is in one-line notation. Multiplication is then given by
\[ \big( (\sigma_1,g_1),  \cdots, (\sigma_n, g_n) \big) \cdot \big( (\tau_1,h_1), \cdots, (\tau_n, h_n) \big) = ((\sigma_{\tau_1},g_{\tau_1}h_1), \cdots, (\sigma_{\tau_n},g_{\tau_n}h_n)).\]

For $(\sigma, g) \in \sym_n[G]$, let $\co(\sigma,g) = \big((a_1,h_1), \cdots, (a_k,h_k)\big) \in \gcomposition$ be the coarsest $G$-composition of $n$ such that
\begin{align*}
    \sigma_1 < \cdots < \sigma_{a_1} & \hspace{2em} g_1 = g_2 = \cdots g_{a_1} = h_1, \\
    \sigma_{a_{1}+1}< \cdots < \sigma_{a_{1}+a_{2}} & \hspace{2em} g_{a_{1}+1} = \cdots = g_{a_{1}+a_{2}} = h_2, \\
    \vdots \\
    \sigma_{a_{1}+\cdots a_{k-1}+1} < \cdots < \sigma_n & \hspace{2em} g_{a_{1}+\cdots a_{k-1}+1} = \cdots = g_n =h_k.
\end{align*}
That $\co(\sigma,g)$ is the coarsest composition with this property means that: $h_1 \neq h_2$ or $\sigma_{a_1} > \sigma_{a_1+1}$; $h_2 \neq h_3$ or $\sigma_{a_2} > \sigma_{a_2+1}$; and so on.
Thus if $G$ is trivial, $\co(\sigma,g)$ encodes the descents of $\sigma$, which are the positions $i \in [n-1]$ such that $\sigma_i > \sigma_{i+1}$.  

\begin{example}
    Take $G = \ztwo$.
    We identify the wreath product $\sym_n[\ztwo]$ with group of signed permutations $\mathfrak{B}_n$ (or type B Coxeter group). We will write signed permutations $\sigma \in \mathfrak{B}_n$ in one-line notation as permutations of the set 
    \[ [n]^{\pm}:= \{ 1, \ol{1}, \cdots, n, \ol{n} \},\]
subject to the conditions that $\ol{\ol{i}} = i$ and if $\sigma(i) = j$, then $\sigma(\ol{i}) = \ol{j}$. For example, the permutation $\ol{1}\ol{3}2$ sends $1\mapsto \ol{1}$, $2 \mapsto \ol{3}$ and $3\mapsto 2$.
In this language, the longest element of $\mathfrak{B}_n$ is written as $w_0 = \ol{1}\ol{2}\cdots \ol{n}$.
  
Then for instance, if 
\[      w = 3 5 \ol{7} \ol{8} \ol{2} 4 1 6 \in \mathfrak{B}_8
    \]
    we have
    \[
        \co(w) = \big( 2^{+}, 2^{-} , 1^{-} , 1^{+} , 2^{+} \big) \in \Gamma_8[\ztwo].
    \]
\end{example}

Recall the order relation $\leq$ on elements of $\Gamma_n[G]$ from \cref{def:orderforMR}. Given a $G$-composition $p \in \Gamma_n[G]$, define elements $Y_p$ and $X_p$ of the group algebra $\kk \sym_n[G]$ by
\[
    Y_{p} := \sum_{\substack{(\sigma,g) \in \sym_n[G]\\ \co(\sigma,g) = p}} (\sigma,g) 
    \qqand
    X_{p} := \sum_{\substack{q \in \Gamma_n[G] \\ q \leq p}} Y_{q}.
\]
Möbius inversion shows that both $\{ Y_{p} \}_{p \in \Gamma_n[G]}$ and $\{ X_{p} \}_{p \in \Gamma_n[G]}$ linearly span the same subspace of $\kk \sym_n[G]$.
It turns out that this subspace is a subalgebra.

\begin{theorem}[Mantaci--Reutenauer, \cite{MR95}]\label{thm/def:mralg}
    Let  $\MR_n[G]$ be the $\kk$-subspace of linear combinations of the $Y_{p}$ (equiv. $X_{p}$) for $p \in \Gamma_n[G]$.
    Then $\MR_n[G]$ is a subalgebra of $\kk \sym_n[G]$.
\end{theorem}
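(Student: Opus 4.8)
The plan is to deduce the theorem from Hsiao's correspondence between $(\kk\hsiao)^{\sym_n}$ and $\MR_n[G]$, rather than from a direct double-coset computation inside $\kk\sym_n[G]$. The key observation is that $(\kk\hsiao)^{\sym_n}$ is \emph{already} a subalgebra of $\kk\hsiao$: the $\sym_n$-action of \cref{ex:invariantshsiao} is by algebra automorphisms (property \eqref{eq:action-endo}), so the invariants are closed under the semigroup product and carry the $\SB{H}$-basis $\{\SB{H}_p\}_{p\in\gcomposition}$ of \eqref{eq:def-symH-basis}. It therefore suffices to produce a linear isomorphism from this subalgebra onto $\operatorname{span}\{X_p : p \in \gcomposition\}$ that respects or reverses multiplication; an (anti-)isomorphism carries a subalgebra to a subalgebra, forcing $\MR_n[G]$ to be closed under the product of $\kk\sym_n[G]$.

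First I would realize $\kk\hsiao$ as acting on the chambers of the semigroup. The maximal elements of $\hsiao$ under $\leq$ are the $G$-compositions all of whose blocks are singletons; such a chamber records a linear order on $[n]$ together with a $G$-colour on each letter, and hence is naturally identified with an element of the wreath product $\sym_n[G]$. Left Tits multiplication sends a chamber to a chamber, so $\kk\hsiao$ acts on $\kk\sym_n[G]$ by $\B{H}_s\cdot c = sc$. Fixing the base chamber $c_0 = (1^{1_G}\mid 2^{1_G}\mid\cdots\mid n^{1_G})$, the assignment $a\mapsto a\cdot c_0$ defines Hsiao's map $\Theta\colon\kk\hsiao\to\kk\sym_n[G]$, which I would then restrict to the invariants $(\kk\hsiao)^{\sym_n}$.

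The substance is to verify three things. First, $\Theta$ is injective on $(\kk\hsiao)^{\sym_n}$, which would follow from the fact that the chambers lying above a face determine that face, so distinct invariants have distinct images. Second, the image is exactly $\operatorname{span}\{X_p\} = \MR_n[G]$: here one matches the descent datum $\co(\sigma,g)$ with the Tits-product structure, computing $\SB{H}_p\cdot c_0 = \sum_{\type(s)=p} sc_0$ and checking that collecting the resulting chambers $(\sigma,g)$ according to $\co$ and the refinement order $\leq$ on $\gcomposition$ (\cref{def:orderforMR}) reproduces $X_p$ (equivalently, that the $\B{R}$-type basis is carried to the $Y_p$). Third, $\Theta$ reverses products: since $\B{H}_s\cdot(\B{H}_t\cdot c_0) = \B{H}_{st}\cdot c_0$ while the induced multiplication on images reads off in the opposite order, $\Theta$ is an anti-homomorphism, exactly as in Bidigare's \cref{thm:bidigare}.

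The main obstacle is the second and third points taken together: translating the combinatorial descent statistic $\co$ and the order $\leq$ on $\gcomposition$ into the semigroup language, so that a product such as $\SB{H}_q\,\SB{H}_p$ in $(\kk\hsiao)^{\sym_n}$ is carried precisely to $X_pX_q$ in $\kk\sym_n[G]$. This matching is the technical heart of Hsiao's theorem; once it is in place the conclusion is immediate, as $\MR_n[G] = \Theta\big((\kk\hsiao)^{\sym_n}\big)$ is the image of a subalgebra under an anti-isomorphism and hence a subalgebra of $\kk\sym_n[G]$. I would also note a self-contained alternative that avoids Hsiao's map entirely, namely Mantaci and Reutenauer's original argument: show directly that each product $X_pX_q$ expands as a non-negative integer combination of the $X_r$ by a counting argument over double cosets, generalizing Solomon's proof for $\sol(\sym_n)$.
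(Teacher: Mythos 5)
The paper does not actually prove this statement: it is quoted from Mantaci--Reutenauer \cite{MR95}, and the anti-isomorphism you want to exploit is likewise quoted, without proof, as \cref{thm:hsiao}. So there is no internal proof to match; your plan is essentially a reconstruction of Hsiao's argument from \cite{hsiao2007semigroup} (the wreath analogue of Bidigare's proof of \cref{thm:bidigare}), while your fallback---expanding $X_pX_q$ directly by a counting argument---is the cited original route. Your framing is also the right way to avoid circularity: \cref{thm:hsiao} as stated already presupposes that $\MR_n[G]$ is an algebra, so one must, as you propose, build an injective product-reversing linear map $\Theta\colon(\kk\hsiao)^{\sym_n}\to\kk\sym_n[G]$ from scratch, identify its image with $\operatorname{span}\{X_p : p \in \gcomposition\}$, and let the subalgebra property fall out of the fact that the image of a subalgebra under an anti-homomorphism is a subalgebra.

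As a proof, however, your sketch has two genuine gaps, and they are exactly the two points you defer. First, the anti-homomorphism step is not ``exactly as in Bidigare.'' Bidigare's argument uses that $W$ acts simply transitively on chambers and that, for $W$-invariant $a$, one has $a\cdot(w\cdot c_0)=w\cdot(a\cdot c_0)$; both facts are needed to turn left face-multiplication into right group-multiplication. In $\hsiao$ the group acting by semigroup automorphisms in \eqref{eq:action-endo} is only $\sym_n$, and $\sym_n$ is \emph{not} transitive on chambers: it permutes the underlying singletons but never changes the sequence of $G$-labels. To run the argument you must first construct an action of the full wreath product $\sym_n[G]$ on the set of chambers, compatible with the identification of chambers with group elements, and prove the equivariance $a\cdot\big((\sigma,g)\cdot c\big)=(\sigma,g)\cdot(a\cdot c)$ for $\sym_n$-invariant $a$; none of this is in your sketch, and it is where the subtleties of Hsiao's product (note the order reversal $h_j g_i$ in the block labels) actually live. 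Second, the identification $\Theta(\SB{H}_p)=X_p$---that the chambers occurring in $\SB{H}_p\cdot c_0$ are precisely those $(\sigma,g)$ with $\co(\sigma,g)\le p$---is the other half of the theorem's content, and you defer it wholesale. The architecture you describe is sound and would yield the theorem, but with both verifications missing it remains a plan rather than a proof.
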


From our perspective, a more natural characterization of $\MR_n[G]$ comes from work of Hsiao relating it to the LRBG $\hsiao$.
Recall from \cref{ex:invariantshsiao} that $\sym_n$ acts on $\kk \hsiao$ and that $\{ \SB{H}_p \}_{p \in \Gamma_n[G]}$ is a basis of the invariant subalgebra $(\kk \hsiao)^{\sym_n}$.

\begin{theorem}[Hsiao, \cite{hsiao2007semigroup}]\label{thm:hsiao} 
    Given any finite group $G$, the linear map
    \begin{align*}
        f : (\kk \hsiao)^{\sym_n}   & \longrightarrow   \MR_n[G] \\
                \SB{H}_{p}     &  \longmapsto     X_{p}
    \end{align*}
    is an anti-isomorphism of algebras.
\end{theorem}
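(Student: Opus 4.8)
The plan is to realize $(\kk\hsiao)^{\sym_n}$ as an algebra of operators on the span of the \emph{chambers} of $\hsiao$ and to identify these operators with right multiplications by the elements $X_p$ inside $\kk\sym_n[G]$; the order reversal inherent in ``left module plus right multiplication'' is exactly what produces the \emph{anti}-isomorphism. Since $\{\SB{H}_p\}_{p \in \Gamma_n[G]}$ is a basis of $(\kk\hsiao)^{\sym_n}$ and $\{X_p\}_{p \in \Gamma_n[G]}$ is a basis of $\MR_n[G]$ by \cref{thm/def:mralg}, the map $f$ is already a linear isomorphism, so it suffices to prove that $f$ is an algebra anti-homomorphism, i.e. $f(\SB{H}_p \cdot \SB{H}_q) = X_q X_p$ for all $p,q$.

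Let $\mathcal{C} \subseteq \hsiao$ be the set of $G$-compositions all of whose blocks are singletons. Sending $\big(i_1^{g_1}\mid\cdots\mid i_n^{g_n}\big)$ to the colored permutation with value sequence $(i_1,\dots,i_n)$ and position-colors $(g_1,\dots,g_n)$ gives a bijection $\mathcal{C} \cong \sym_n[G]$. Because the Tits product of any face with a chamber is again a chamber, left multiplication makes $\kk\mathcal{C}$ a left $\kk\hsiao$-module, hence a left $(\kk\hsiao)^{\sym_n}$-module. Inspecting the wreath-product multiplication shows that reordering the (value, color) pairs of a chamber while twisting the colors on the right is precisely right multiplication in $\sym_n[G]$. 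This is the structural fact that lets us interpret the Tits action of an invariant element as a right multiplication.

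The heart of the argument is the computation of $\SB{H}_p \cdot \B{H}_c$ for a chamber $c = (\sigma,g)$. For a face $s$ with value-blocks $(S_1,\dots,S_k)$ of type $p$, the product $sc$ lists the singletons grouped by the blocks of $s$ (in order), refined within each block by the order in $c$, with the block colors multiplied on the right; thus $\B{H}_s\B{H}_c = \B{H}_{c\,\rho}$, where $\rho = (\tau_J,h_J)$ is determined by the ordered set partition of \emph{positions} $J_m = \sigma^{-1}(S_m)$ and the colors of $p$. Crucially, $\rho$ depends only on $J$ and $p$, not on $c$. As $s$ runs over the faces of type $p$ with $c$ fixed, $J$ runs bijectively over all ordered set partitions of $[n]$ with block sizes $p_m$, so
\[
    \SB{H}_p \cdot \B{H}_c = \B{H}_c \cdot Z_p,
    \qquad
    Z_p := \sum_{J}\,(\tau_J,h_J),
\]
with $Z_p \in \kk\sym_n[G]$ independent of $c$. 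The key combinatorial lemma is then $Z_p = X_p$: the assignment $J \mapsto (\tau_J,h_J)$, where $\tau_J$ lists each block of $J$ increasingly (blocks in order) and $h_J$ colors the $m$-th block range by the $m$-th color of $p$, is a bijection from these ordered set partitions onto $\{(\rho,\ell) : \co(\rho,\ell)\le p\}$. Indeed $(\tau_J,h_J)$ is increasing and constant-colored on each block range, so $\co(\tau_J,h_J)\le p$ in the order of \cref{def:orderforMR}; conversely, for $(\rho,\ell)$ with $\co(\rho,\ell)\le p$ the colors $\ell$ are forced to agree with $p$ and $\rho$ is increasing on each $p$-range, recovering $J$ uniquely.

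Granting $\SB{H}_p \cdot \B{H}_c = \B{H}_c\cdot X_p$ for every chamber $c$, the anti-homomorphism property follows formally: because $\kk\mathcal{C}$ is a \emph{left} module, $\SB{H}_p\SB{H}_q$ acts by right multiplication by $X_q$ followed by right multiplication by $X_p$, that is, by right multiplication by $X_qX_p$; evaluating at the identity chamber, where right multiplication is injective, gives $f(\SB{H}_p\SB{H}_q) = X_qX_p = f(\SB{H}_q)f(\SB{H}_p)$. Combined with the linear isomorphism already in place, this proves $f$ is an anti-isomorphism. The main obstacle is the key lemma $Z_p = X_p$, and within it the independence of $Z_p$ from $c$: one must verify that summing the Tits-product reordering of a chamber over a single type class reproduces exactly the descent-class sum defining $X_p$, and that the outcome does not depend on the chosen chamber. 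This independence is what makes the operator a genuine right multiplication, and it is the crux of Hsiao's theorem.
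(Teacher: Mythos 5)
Your proposal is correct. Note that the paper itself gives no proof of this statement: it is quoted from Hsiao's work \cite{hsiao2007semigroup}, so there is no internal argument to compare against. What you have written is essentially a reconstruction of Hsiao's own proof, which in turn is the wreath-product version of Bidigare's classical argument (\cref{thm:bidigare}): realize the invariant subalgebra as operators on the span of chambers, identify the chambers with the group $\sym_n[G]$, and observe that each invariant element acts as right multiplication by a fixed element of the group algebra, the left/right switch being the source of the anti-isomorphism. Your two key verifications are sound: (i) for a face $s$ of type $p$ and a chamber $c=(\sigma,g)$, the Tits product $sc$ equals $c\cdot(\tau_J,h_J)$ where $J_m=\sigma^{-1}(S_m)$, and this element depends only on $J$ and the colors of $p$ (the color conventions $h_jg_i$ in $\hsiao$ and $g_{\tau_j}h_j$ in $\sym_n[G]$ do match up, which is the one place a sign-of-convention error could hide); and (ii) $J\mapsto(\tau_J,h_J)$ is a bijection onto $\{(\rho,\ell):\co(\rho,\ell)\le p\}$, giving $Z_p=X_p$. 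Evaluating at the identity chamber then yields $f(\SB{H}_p\SB{H}_q)=X_qX_p$, as required.
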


\begin{remark} \rm \label{rmk:descentscompositions}
    
    When $G$ is the trivial group, $\MR_n[\{1\}] = \sol(\sym_n)$ and \cref{thm:hsiao} recovers the case $W = \sym_n$ of \cref{thm:bidigare}.
    Recall that the descent set of a permutation $\sigma = (\sigma_1, \cdots, \sigma_n) \in \sym_n$ is 
    \[
        \Des(\sigma) := \{ i \in [n-1]: \sigma_i > \sigma_{i+1} \}.
    \]
    The definition of the elements $\{ Y_{p} \}_{p \in \Gamma_n}$ and $\{ X_{p} \}_{p \in \Gamma_n}$ can be rephrased
    using the usual bijection between subsets of $[n-1]$ (written as $2^{[n-1]}$) and integer compositions of $n$:
    \begin{align*}
        \varphi: \Gamma_n &\longrightarrow 2^{[n-1]}\\
        (p_1, \ldots, p_k) & \longmapsto \{ p_1 \,,\, p_1 + p_2 \,,\, \ldots \,,\, p_1 + \cdots + p_{k-1} \}.
    \end{align*}
    For example, $\varphi((n)) = \emptyset$ and $\varphi((1,\dots,1)) = [n-1]$.
    Note that $\varphi$ is order-preserving, in the sense that $p \leq q$ if and only if $\varphi(p) \subseteq \varphi(q)$.
    Thus we can write $X_{p}$ and $Y_{p}$ as
    \[
         Y_{p} = \sum_{\substack{w \in \sym_n \\ \Des(w) = \varphi(p) }} w
         \qqand
         X_{p} = \sum_{\substack{w \in \sym_n \\ \Des(w) \subseteq \varphi(p)}} w.
    \]
    
    It follows that the Mantaci--Reutenauer algebra $\MR_n[G]$ always contains a copy of Solomon's Descent algebra: $\sol(\sym_n) = \MR_n[\{1_G\}] \subseteq \MR_n[G]$,
    where for $p \in \Gamma_n[G]$, the elements $X_{p}, Y_{p}$ are in $\sol(\sym_n)$ if and only if the labels of $p$ are all $1_G$.
\end{remark}

Importantly, \cref{thm:hsiao} implies that the image of any complete system of primitive orthogonal idempotents of $(\kk \hsiao)^{\sym_n}$ under the map $f$ is also a complete system of primitive orthogonal idempotents of the Mantaci--Reutenauer algebra $\MR_n[G]$.
When $G$ is abelian, we apply the $f$ map to the idempotents in \cref{cor:uniform-CSoPOI-Hsiao} to obtain the following closed form expressions for idempotents in $\MR_n[G]$.

\begin{theorem}\label{cor:CSoPOIforMR}
    Let $G$ be a finite abelian group.
    The collection $\{ \mathfrak{f}_{\lambda} \}_{\lambda \in \Lambda_n[\wh{G}]} \subseteq \MR_n[G]$ determined by
    \[
        \mathfrak{f}_{\lambda} = \frac{1}{|G|^{\ell(\lambda)} \ell(\lambda)!} \sum_{\substack{ \alpha \in \Gamma_n[\wh{G}] \\ \supp(\alpha) = \lambda}} \sum_{\substack{p \in \Gamma_n[G] \\ \abs{p} = \abs{\alpha} }} \ol{\alpha(p)} \sum_{\substack{q \in \Gamma_n[G] \\ p \leq q }} \dfrac{(-1)^{\ell(q)-\ell(p)}}{\deg(q/p)} X_q,
    \]
    is a complete system of primitive orthogonal idempotents of the Mantaci--Reutenauer algebra.
\end{theorem}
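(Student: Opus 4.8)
The plan is to deduce the statement directly from Hsiao's anti-isomorphism (\cref{thm:hsiao}) together with the closed-form CSoPOI of $(\kk\hsiao)^{\sym_n}$ furnished by \cref{cor:uniform-CSoPOI-Hsiao}. First I would record the structural fact that a bijective anti-homomorphism of unital algebras carries a complete system of primitive orthogonal idempotents to another one. Concretely, if $\{\ee_\lambda\}$ is such a system and $f$ is the anti-isomorphism, then $f(\ee_\lambda)^2 = f(\ee_\lambda \ee_\lambda) = f(\ee_\lambda)$, while for $\lambda \neq \mu$ the two-sided orthogonality $\ee_\lambda \ee_\mu = \ee_\mu \ee_\lambda = 0$ yields $f(\ee_\lambda) f(\ee_\mu) = f(\ee_\mu \ee_\lambda) = 0$ (and symmetrically); completeness transfers via $\sum_\lambda f(\ee_\lambda) = f(1) = 1$; and primitivity is preserved since $f^{-1}$ is again an anti-isomorphism, so any splitting of $f(\ee_\lambda)$ into orthogonal idempotents would pull back to a splitting of $\ee_\lambda$.

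Second, I would apply $f$ to the idempotents $\{\ee_\lambda\}_{\lambda \in \Lambda_n[\wh{G}]}$ of \cref{cor:uniform-CSoPOI-Hsiao}. Because $f$ is linear and sends $\SB{H}_q \mapsto X_q$, applying it term-by-term to the $\SB{H}$-expansion of $\ee_\lambda$ in that corollary simply replaces each $\SB{H}_q$ by $X_q$, leaving all scalar coefficients untouched; the result is precisely the element $\mathfrak{f}_\lambda$ in the statement. Combining the two steps, $\{\mathfrak{f}_\lambda\}_\lambda = \{f(\ee_\lambda)\}_\lambda$ is a complete system of primitive orthogonal idempotents of $\MR_n[G]$.

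There is no substantial obstacle here: the statement is the formal combination of \cref{thm:hsiao} and \cref{cor:uniform-CSoPOI-Hsiao}, whose proofs contain all the real content. The only points demanding care are bookkeeping ones, namely using \emph{two-sided} orthogonality of the $\ee_\lambda$ so that reversing the order of multiplication does not disturb orthogonality, and checking that $f$ is unital so that completeness survives. The passage of the explicit formula through $f$ is entirely mechanical once $f(\SB{H}_q) = X_q$ is invoked.
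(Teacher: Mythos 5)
Your proposal is correct and matches the paper's own argument exactly: the paper likewise observes that \cref{thm:hsiao} transports any complete system of primitive orthogonal idempotents of $(\kk \hsiao)^{\sym_n}$ to one of $\MR_n[G]$, and then applies $f$ term-by-term to the idempotents of \cref{cor:uniform-CSoPOI-Hsiao}, replacing each $\SB{H}_q$ by $X_q$ to obtain $\mathfrak{f}_\lambda$. The only difference is that you spell out the routine verification that a unital anti-isomorphism preserves idempotency, two-sided orthogonality, completeness, and primitivity, which the paper leaves implicit.
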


\begin{example}
    Consider the case $n=2$ and $G =\ztwo$. Then, the CSoPOI of $\MR_2[\ztwo]$ consists of five elements
    \[
        \mathfrak{f}_{\{2^{\trv}\}}\qquad\mathfrak{f}_{\{2^{\sgn}\}}\qquad\mathfrak{f}_{\{1^{\trv},1^{\trv}\}}\qquad\mathfrak{f}_{\{1^{\trv},1^{\sgn}\}}\qquad\mathfrak{f}_{\{1^{\sgn},1^{\sgn}\}}.
    \]
    We expand the first of these elements in the $X$-basis of $\MR_2[\ztwo]$.
    For $\lambda = \{2^{\trv}\}$, the only $\alpha \in \Gamma_2[\wh{\ztwo}]$ of support $\lambda$ is $\alpha = (2^{\trv})$, and the $p \in \Gamma_2[\ztwo]$ having the same underlying composition are $(2^+)$ and $(2^-)$. Since $\alpha$ is labeled by the trivial character, in both cases we have $\ol{\alpha(p)} = 1$, thus
    \[
        \mathfrak{f}_{\{2^{\trv}\}} = 
        \dfrac{1}{2} \Big( X_{(2^+)} + \dfrac{-1}{2} X_{(1^+,1^+)} + X_{(2^-)} + \dfrac{-1}{2} X_{(1^-,1^-)} \Big) =
        \dfrac{1}{4} \Big( 2 X_{(2^+)} + 2 X_{(2^-)} - X_{(1^+,1^+)} - X_{(1^-,1^-)} \Big).
    \]
\end{example}

We are not aware of other work constructing idempotents for $\MR_n[G]$ for
arbitrary finite abelian groups with the unique exception being that of Vazirani
\cite{vazirani} constructing idempotents for $\MR_n[C_2]$
(also known as the \defn{Type $B$ Mantaci--Reutenauer algebra}, to be discussed
in \cref{sec:definingvazidem}).
We will show in the subsequent section that her idempotents are precisely the ones from \cref{cor:CSoPOIforMR}.

\subsection{Specializing to Type B }\label{sec:definingvazidem} \label{s:typeBMR}
In this section, we will turn to the Type $B$ Mantaci--Reutenauer algebra, $\MR_n[\ztwo]$, which has been studied in a variety of contexts; see for instance \cite{abn04peak, brauner2023type, DouTom2018decomposition, pang2021eigenvalues, vazirani}. 

Note that in the case that $G = \ztwo$, there is a canonical isomorphism
between $G$ and $\wh{G}$ identifying $+1$ with $\trv$ and $-1$ with $\sgn$.
Thus in this case, all indexing sets for $\Sigma_n[\ztwo]$ coincide. To
simplify notation and to be compatible with the notation in \cite{vazirani},
we will identify the elements of $\ztwosetcomposition, \ztwosetcompositiondual$ and
$\ztwosetpartition, \ztwosetpartitiondual$ with set compositions and set partitions of
\[ [n]^{\pm}= \{ 1, \ol{1}, \cdots, n, \ol{n} \},\]
by writing the blocks $(S, g)$ in a labelled composition as $\ol{S}$ if $g
\in \{-1, \sgn\}$ and as $S$ if $g \in \{1, \trv\}$. For example
\[
    \big((\{1,2,3 \}, -1), (\{4,5 \}, 1)\big)
    \longleftrightarrow
    \big({\{\ol{1},\ol{2},\ol{3} \}}, \{ 4, 5 \}\big)
    \longleftrightarrow
    \big(\ol{\{1,2,3 \}}, \{ 4, 5 \}\big).
\]
If a block $S \subseteq [\ol{n}]:= \{ \ol{1}, \cdots, \ol{n} \}$, we say it is negative; if $S \subseteq [n]$, we say it is positive. 
We will write the elements of the sets $\ztwocomposition, \ztwocompositiondual$ and $\ztwopartition, \ztwopartitiondual$ in an analogous way. For instance, the element $((\{1,2,3 \}, -1), (\{4,5 \}, 1))$ above has type $(\ol{3},2)$. Recall that $\ol{\ol{i}} = i$. 

\begin{remark}
Note that $\MR_n[\ztwo]$ is a subalgebra of the group algebra of $\sym_n[\ztwo] \cong \mathfrak{B}_n$, the hyperoctahedral group. As an element of $\C \mathfrak{B}_n$, we have that $X_{(n)} = 12\cdots n$, the identity permutation, while $X_{(\ol{n})} = \ol{12\cdots n}$, the longest permutation.
\end{remark}

Our first goal is to define Vazirani's construction of idempotents for $\MR_n[\ztwo]$. These can be defined using an algebra structure on $\MR[\ztwo] := \bigoplus_n \MR_n[\ztwo]$, which is the restriction of a well-studied Hopf algebra structure on signed permutations. To describe this algebra structure, we will follow the notation of \cite{abn04peak}.

\begin{definition}\label{def:staronmr}
  
Let $\kk \mathfrak{B} = \bigoplus_n \kk \mathfrak{B}_n$. For $u \in \mathfrak{B}_n$ and $v \in \mathfrak{B}_m$, define
\[
    u \star v = \sum_{w \in {\sf Sh}(n,m)} w \cdot (u \times v)
\]
where $u \times v$ is seen as a permutation in $\mathfrak{B}_{n+m}$ via the usual inclusion $\mathfrak{B}_n \times \mathfrak{B}_m \hookrightarrow \mathfrak{B}_{n+m}$, and
\[
    {\sf Sh}(n,m) = \{ w \in \sym_{n+m} \,:\, w_1 < \dots < w_n \,,\, w_{n+1} < \dots < w_{n+m} \}.
\]  
\end{definition}
The subspace $\MR[\ztwo] := \bigoplus_n \MR_n[\ztwo]$ is closed under the $\star$ product, see for instance \cite[\S 8.2]{abn04peak}.
Moreover,
\begin{equation}\label{eq:starMR-X-basis}
    X_p \star X_q = X_{pq}.
\end{equation}
Compare this with \cref{prop:invar-star-subalg}.
We deduce the following.

\begin{cor}\label{c:isoexternalprod}
    The direct sum of the (anti)-isomorphisms $\kk \Sigma_n[\ztwo] \to \MR_n[\ztwo] : \SB{H}_p \to X_p$ in \cref{thm:hsiao} is an isomorphism of algebras $\kk \Sigma[\ztwo] \to \MR[\ztwo]$ under the $\star$ product.
\end{cor}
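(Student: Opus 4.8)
The plan is to reduce everything to a one-line check on the distinguished bases. Note that, despite the shorthand in the statement, the source of each graded map is the invariant subalgebra $(\kk\Sigma[\ztwo])^{\sym} = \bigoplus_n (\kk\Sigma_n[\ztwo])^{\sym_n}$, which carries the restriction of the $\star$ product by \cref{prop:invar-star-subalg}; and $f := \bigoplus_n f_n$, where $f_n(\SB{H}_p) = X_p$, is the direct sum of the maps from \cref{thm:hsiao}. The whole corollary should then fall out of the two concatenation formulas already established for $\star$.

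First I would observe that $f$ is a linear bijection. Each $f_n$ is a linear isomorphism of graded pieces by \cref{thm:hsiao}, sending the basis $\{\SB{H}_p\}_{p \in \Gamma_n[\ztwo]}$ to the basis $\{X_p\}_{p \in \Gamma_n[\ztwo]}$, so the graded direct sum $f$ is again a linear bijection $(\kk\Sigma[\ztwo])^{\sym} \to \MR[\ztwo]$. Since both $\star$ products are bilinear and $f$ is linear, it suffices to verify multiplicativity on the $\SB{H}$-basis.

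Next I would carry out the core computation. For $p \in \Gamma_n[\ztwo]$ and $q \in \Gamma_m[\ztwo]$, \cref{prop:invar-star-subalg} gives $\SB{H}_p \star \SB{H}_q = \SB{H}_{pq}$, hence $f(\SB{H}_p \star \SB{H}_q) = f(\SB{H}_{pq}) = X_{pq}$; while \cref{eq:starMR-X-basis} gives $f(\SB{H}_p) \star f(\SB{H}_q) = X_p \star X_q = X_{pq}$. The two sides agree, so $f$ is multiplicative for $\star$, and being a linear bijection it is an isomorphism of algebras.

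The only conceptual point worth flagging — and what might superficially look like an obstacle — is the transition from \emph{anti}-isomorphism to isomorphism: each $f_n$ reverses the internal semigroup product on its graded piece, yet $f$ preserves the external $\star$ product. The resolution, which I would spell out explicitly, is that $\star$ is realized on both sides by the very same concatenation $(p,q) \mapsto pq$ (this is precisely the content of \cref{prop:invar-star-subalg} together with \cref{eq:starMR-X-basis}), so no order reversal is triggered; the anti-isomorphism property of \cref{thm:hsiao} pertains only to the internal product and is invisible to $\star$. Consequently there is no genuine obstacle here: all the real work was done in establishing the two concatenation formulas, and the corollary is immediate once they are in hand.
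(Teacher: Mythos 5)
Your proof is correct and follows exactly the paper's route: the paper deduces the corollary immediately by comparing the two concatenation formulas $\SB{H}_p \star \SB{H}_q = \SB{H}_{pq}$ (\cref{prop:invar-star-subalg}) and $X_p \star X_q = X_{pq}$ (\cref{eq:starMR-X-basis}), which is precisely your basis computation, together with the observation that each $f_n$ is a linear bijection. Your explicit remarks---that the domain is really the invariant subalgebra $(\kk\Sigma[\ztwo])^{\sym}$ and that the anti-isomorphism property concerns only the internal product and so does not interfere with $\star$---are sound clarifications of points the paper leaves implicit, not a different argument.
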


There will be three key objects of interest for us. 
First, is the 
 \defn{Reutenauer idempotent}, introduced by Reutenauer in \cite[\S 8.4]{reutenauer2001free}, and which appears in both the study of $\sol(\sym_n)$ and $\MR[\ztwo]$:
\[ r_{n} := \sum_{p \in \Gamma_n}  \frac{(-1)^{\ell(p)-1}}{\ell(p)} X_{p}.\]
Note that because $p \in \Gamma_n$, it follows that $r_{n}\in \sol(\sym_n)$. Analogously, define 
\[ \ol{r_n}:=  \sum_{p \in \Gamma_n}  \frac{(-1)^{\ell(p)-1}}{\ell(p)} X_{\ol{p}}.\]

We will take 
\[ r_{p}:= r_{p_1} \star r_{p_2} \dots \star r_{p_k}\]
for $p = (p_1, p_2, \cdots, p_k) \in \Gamma_n[\ztwo]$. 

The second important object will be the $\I$-basis of $\MR[\ztwo]$, defined as follows. Let 
\[ \I_n:= \I_{(n)}= \frac{1}{2} (r_n + \ol{r_n}), \qquad \text{  } \qquad \I_{\ol{n}}:= \I_{(\ol{n})}= \frac{1}{2}(r_n - \ol{r_n}),\]
and for $\alpha = (\alpha_1, \cdots, \alpha_k) \in  \ztwocompositiondual$ define
\[ \I_\alpha := \I_{\alpha_1} \star \I_{\alpha_2} \star \dots \star \I_{\alpha_k}\]
with $\star$ as in \cref{def:staronmr}.

Finally, we may define our third object of interest:  Vazirani's idempotents.
\begin{definition} \label{def:vazidem}
For $\lambda \in \ztwopartitiondual$ the \defn{Vazirani idempotents} are given by 
\begin{align*}
     \frakg_{\lambda} &:= \frac{1}{\ell( \lambda) !} \sum_{\substack{\alpha \in \ztwocompositiondual\\ \supp(\alpha) = \lambda}} \I_\alpha.
    \end{align*}
\end{definition}
\begin{remark}
\cref{def:vazidem} is a slight reformulation of the original definition given by Vazirani in \cite{vazirani}. In particular, her paper does not explicitly use the Hopf algebra structure, but is equivalent. Douglass and Tomlin give an alternate definition in \cite{DouTom2018decomposition} that does not use the Hopf structure and instead realizes the $\frakg_{\lambda}$ directly in $\Q \mathfrak{B}_n $.
\end{remark}
\begin{theorem}[Vazirani]
    The collection $\{\frakg_{\lambda}\}_{\lambda \in \ztwopartitiondual}$ is a complete system of primitive orthogonal idempotents for $\MR_n[\ztwo]$.
\end{theorem}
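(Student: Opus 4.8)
The plan is to show that Vazirani's idempotents coincide with the image under Hsiao's anti-isomorphism $f$ of the idempotents $\ee_\lambda$ of \cref{cor:uniform-CSoPOI-Hsiao}; that is, to prove $\frakg_\lambda = \mathfrak{f}_\lambda$ with $\mathfrak{f}_\lambda$ as in \cref{cor:CSoPOIforMR}. Since \cref{thm:hsiao} and \cref{cor:CSoPOIforMR} already tell us that $\{\mathfrak{f}_\lambda\}$ is a CSoPOI of $\MR_n[\ztwo]$, this identification immediately yields the theorem. Comparing the formula $\ee_\lambda = \tfrac{1}{\ell(\lambda)!}\sum_{\supp(\alpha)=\lambda}\SB{Q}_\alpha$ (which follows from \cref{cor:uniform-CSoPOI-Hsiao} together with the second equation of \cref{eq:HsiaoSymQbasis}, noting that $\cE_\beta = \SB{E}_\beta$) with the definition $\frakg_\lambda = \tfrac{1}{\ell(\lambda)!}\sum_{\supp(\alpha)=\lambda}\I_\alpha$ in \cref{def:vazidem}, the whole statement reduces to the single identity $f(\SB{Q}_\alpha) = \I_\alpha$ for all $\alpha \in \ztwocompositiondual$.

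First I would reduce this identity to single-block compositions. By \cref{prop:SymQ-basis-star} we have $\SB{Q}_\alpha = \SB{Q}_{\alpha_1}\star\cdots\star\SB{Q}_{\alpha_k}$ for $\alpha = (\alpha_1,\dots,\alpha_k)$, while $\I_\alpha = \I_{\alpha_1}\star\cdots\star\I_{\alpha_k}$ by definition. Because $f$ is an isomorphism for the external product $\star$ by \cref{c:isoexternalprod}, it then suffices to verify $f(\SB{Q}_{(n^{\trv})}) = \I_n$ and $f(\SB{Q}_{(n^{\sgn})}) = \I_{\ol{n}}$ for every $n$, after which the general case follows by applying $f$ to each $\star$-factor.

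The base case is the main computational step, and the part that will take the most care. Specializing the first equation of \cref{eq:HsiaoSymQbasis} to $\alpha = (n^{\trv})$ and $\alpha = (n^{\sgn})$ (with $|G|=2$ and $\ell(\alpha)=1$) gives $\SB{Q}_{(n^{\trv})} = \tfrac12(\SB{R}_{(n^+)} + \SB{R}_{(n^-)})$ and $\SB{Q}_{(n^{\sgn})} = \tfrac12(\SB{R}_{(n^+)} - \SB{R}_{(n^-)})$, using that $\trv$ is constant and $\sgn(-1)=-1$. Expanding $\SB{R}_{(n^+)}$ in the $\SB{H}$-basis via \cref{SH-to-SR} and applying $f$, the all-positive $G$-compositions $q \geq (n^+)$ run over $(q_1^+,\dots,q_k^+)$ with $(q_1,\dots,q_k)\in\Gamma_n$ and $\deg(q/(n^+)) = \ell(q)$, so that
\[
    f(\SB{R}_{(n^+)}) = \sum_{p \in \Gamma_n} \frac{(-1)^{\ell(p)-1}}{\ell(p)}\, X_{p^+} = r_n,
\]
where the last equality uses the identification of all-positive elements $X_{p^+}$ with Solomon descent-algebra elements in \cref{rmk:descentscompositions} and the definition of $r_n$. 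The identical computation with negative labels gives $f(\SB{R}_{(n^-)}) = \ol{r_n}$. Hence $f(\SB{Q}_{(n^{\trv})}) = \tfrac12(r_n + \ol{r_n}) = \I_n$ and $f(\SB{Q}_{(n^{\sgn})}) = \tfrac12(r_n - \ol{r_n}) = \I_{\ol{n}}$, as needed.

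Putting the pieces together yields $f(\SB{Q}_\alpha) = \I_\alpha$ for all $\alpha$, hence $\frakg_\lambda = f(\ee_\lambda) = \mathfrak{f}_\lambda$ for every $\lambda \in \ztwopartitiondual$; since $f$ is an anti-isomorphism it carries the CSoPOI $\{\ee_\lambda\}$ of $(\kk\Sigma_n[\ztwo])^{\sym_n}$ to the CSoPOI $\{\mathfrak{f}_\lambda\}$ of $\MR_n[\ztwo]$, finishing the argument. The one subtlety I would flag explicitly is keeping the two products straight: the equality $f(\SB{Q}_\alpha) = \I_\alpha$ is an identity of \emph{vectors} established through the external product $\star$, whereas the CSoPOI conclusion concerns the \emph{internal} product of $\MR_n[\ztwo]$. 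Once the vectors are identified, the conclusion follows from the internal-product anti-isomorphism of \cref{thm:hsiao}, so no circularity arises from using both structures.
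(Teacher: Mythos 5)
Your proposal is correct and follows essentially the same route the paper takes: the paper (which cites Vazirani for the statement itself) reproves it via its subsequent theorem showing $f(\SB{R}_{(n)}) = r_n$, $f(\SB{R}_{(\ol{n})}) = r_{\ol{n}}$ from \cref{p:SH-to-SR}, then $f(\SB{Q}_{(n)}) = \SB{I}_{(n)}$, $f(\SB{Q}_{(\ol{n})}) = \SB{I}_{(\ol{n})}$ from \cref{p:SR-to-SQ}, extending to all $\alpha$ by $\star$-multiplicativity (\cref{c:isoexternalprod}, \cref{prop:SymQ-basis-star}) and concluding $f(\ee_\lambda) = \frakg_\lambda$, exactly as you do. Your explicit remark distinguishing the external-product identification of vectors from the internal-product anti-isomorphism is a point the paper leaves implicit, but it does not change the argument.
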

\begin{example} 
When $n=2$, the idempotents $\frakg_{\lambda}$ in $\MR_2[\ztwo]$ are
\begin{align*}
    \frakg_{(2)} & 
        = \frac{1}{4} \left( -Y_{(1,1)}  - Y_{(\ol{1},\ol{1})} + Y_{(2)} + Y_{(\ol{2})}\right) 
        = \frac{1}{4} \left( -X_{(1,1)}  - X_{(\ol{1},\ol{1})} + 2X_{(2)} + 2X_{(\ol{2})}\right)\\
    \frakg_{(1,1)} &
        = \frac{1}{8} \left( Y_{(1,1)} + Y_{(1, \ol{1})} + Y_{(\ol{1}, 1)} + Y_{(\ol{1},\ol{1})} + Y_{(2)} + Y_{(\ol{2})} \right) 
        =\frac{1}{8} \left( X_{(1,1)} + X_{(1, \ol{1})} + X_{(\ol{1}, 1)} + X_{(\ol{1},\ol{1})}\right)  \\
    \frakg_{(1, \ol{1})} &
        = \frac{1}{4} \left( Y_{(1,1)} - Y_{(\ol{1},\ol{1})} + Y_{(2)} - Y_{(\ol{2})} \right) 
        = \frac{1}{4} \left( X_{(1,1)} - X_{(\ol{1},\ol{1})}\right)  \\
    \frakg_{(\ol{1},\ol{1})} &
        = \frac{1}{8} \left( Y_{(1,1)} - Y_{(1, \ol{1})} - Y_{(\ol{1}, 1)} + Y_{(\ol{1},\ol{1})} + Y_{(2)} + Y_{(\ol{2})} \right) 
        = \frac{1}{8} \left( X_{(1,1)} - X_{(1, \ol{1})} - X_{(\ol{1}, 1)} + X_{(\ol{1},\ol{1})}\right) \\
    \frakg_{(\ol{2})} &
        =  \frac{1}{4} \left( -Y_{(1,1)}  + Y_{(\ol{1},\ol{1})} + Y_{(2)} - Y_{(\ol{2})} \right) 
        = \frac{1}{4} \left( -X_{(1,1)}  + X_{(\ol{1},\ol{1})} + 2X_{(2)}  -2X_{(\ol{2})}\right)
\end{align*}
\end{example}
Our goal going forward is to identify via Hsiao's map $f$ the following objects:
\begin{itemize}
    \item the Reutenauer idempotents with the $\SB{R}$-basis,
    \item the $\SB{I}$-basis with the $\SB{Q}$-basis, and  
    \item the $\frakg_\lambda$ with $\ee_{\lambda}$.
\end{itemize}

It turns out that the results developped in \cref{s:change-of-basis-hsiao} will make short work of this. 

\begin{theorem}
Let $\SB{Q}_{\alpha}$ and $\ee_{\lambda}$ be determined by the uniform section. Then we have the following evaluations under Hsiao's map: 
\begin{align*}
f: (\kk\Sigma_n[\ztwo])^{\sym_n} &\longrightarrow \MR_n[\ztwo] \\
  \SB{R}_p &\longmapsto  r_p  \\ 
   \SB{Q}_{\alpha} &\longmapsto \SB{I}_{\alpha}\\
   \ee_{\lambda} &\longmapsto \frakg_{\lambda}.
\end{align*}
for all $p \in \Gamma_n[\ztwo] $, $\alpha \in \Gamma_n[\wh{\ztwo}] \in $ and $\lambda \in \Lambda_n[\wh{\ztwo}] $.
\end{theorem}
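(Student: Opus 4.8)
The plan is to reduce all three identities to single-block computations, leaning on the fact that $f$ is an \emph{isomorphism} (not merely an anti-isomorphism) for the external product $\star$ by \cref{c:isoexternalprod}, together with the $\star$-multiplicativity of each family in play. Concretely, the external-product formula $\SB{R}_p \star \SB{R}_q = \SB{R}_{pq}$ proved above gives $\SB{R}_{(p_1,\dots,p_k)} = \SB{R}_{(p_1)} \star \cdots \star \SB{R}_{(p_k)}$ by induction, while by definition $r_p = r_{p_1} \star \cdots \star r_{p_k}$. Applying $f$ and using multiplicativity, the identity $f(\SB{R}_p) = r_p$ therefore follows for all $p$ once I check it on single signed parts, i.e. $f(\SB{R}_{(n)}) = r_n$ and $f(\SB{R}_{(\ol{n})}) = \ol{r_n}$. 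The same remark applies verbatim to the $\B{Q}$-line: \cref{prop:SymQ-basis-star} gives $\SB{Q}_\alpha = \SB{Q}_{(\alpha_1)} \star \cdots \star \SB{Q}_{(\alpha_k)}$ and $\I_\alpha = \I_{\alpha_1} \star \cdots \star \I_{\alpha_k}$ by definition, so $f(\SB{Q}_\alpha) = \I_\alpha$ reduces to $f(\SB{Q}_{(n)^{\trv}}) = \I_n$ and $f(\SB{Q}_{(n)^{\sgn}}) = \I_{\ol{n}}$.

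For the $\SB{R}$-basis on a single positive block, I would expand $\SB{R}_{(n)}$ using \cref{p:SH-to-SR}. Since the lone block carries the color $+$, $G$-preserving refinement forces every part of each refining $q$ to be positive, so $q$ ranges over ordinary compositions of $n$ and $\deg(q/(n)) = \ell(q)$. Then $f(\SB{H}_q) = X_q$ (\cref{thm:hsiao}) yields $f(\SB{R}_{(n)}) = \sum_{q \in \Gamma_n} \frac{(-1)^{\ell(q)-1}}{\ell(q)} X_q = r_n$; the negative block is identical with every part negative, giving $\sum_q \frac{(-1)^{\ell(q)-1}}{\ell(q)} X_{\ol{q}} = \ol{r_n}$. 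For the $\B{Q}$-basis, I would use the first formula of \cref{p:SR-to-SQ}: over the single block the only $G$-compositions $p$ with $\abs{p} = (n)$ are $(n)$ and $(\ol{n})$, so $\SB{Q}_{(n)^{\trv}} = \tfrac12(\SB{R}_{(n)} + \SB{R}_{(\ol{n})})$ (the trivial character evaluates to $1$ on both group elements) and $\SB{Q}_{(n)^{\sgn}} = \tfrac12(\SB{R}_{(n)} - \SB{R}_{(\ol{n})})$ (the sign character evaluates to $+1$ on $(n)$ and $-1$ on $(\ol{n})$). Pushing these through $f$ and invoking the $\SB{R}$-computation gives $\tfrac12(r_n + \ol{r_n}) = \I_n$ and $\tfrac12(r_n - \ol{r_n}) = \I_{\ol{n}}$, as desired.

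The idempotent statement is then immediate. Under the uniform section, \cref{ex:uniform-general} (matching the $\SB{E}$-expansion in \cref{cor:uniform-CSoPOI-Hsiao}) expresses $\ee_\lambda = \frac{1}{\ell(\lambda)!} \sum_{\supp(\alpha) = \lambda} \SB{Q}_\alpha$, using that $c^\lambda = \ell(\lambda)!$ for Hsiao's semigroup. Applying the linear map $f$ and the second step gives $f(\ee_\lambda) = \frac{1}{\ell(\lambda)!} \sum_{\supp(\alpha) = \lambda} \I_\alpha$, which is exactly $\frakg_\lambda$ as in \cref{def:vazidem}.

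I expect no deep obstacle here: once the $\star$-multiplicativity is in place, everything propagates from the single-part cases. The only point demanding genuine care is bookkeeping the $G = \ztwo$ dictionary ($\trv \leftrightarrow +1$, $\sgn \leftrightarrow -1$) when evaluating characters on the two group elements, and confirming that the defining $\star$-factorizations of $r_p$ and $\I_\alpha$ into signed single parts align with the factorizations of $\SB{R}_p$ and $\SB{Q}_\alpha$; a sign or color mismatch there is the most likely source of error.
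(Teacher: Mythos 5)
Your proposal is correct and follows essentially the same route as the paper: establish the single-block identities $f(\SB{R}_{(n)}) = r_n$, $f(\SB{R}_{(\ol{n})}) = \ol{r_n}$ via \cref{p:SH-to-SR}, deduce $f(\SB{Q}_{(n)}) = \I_n$ and $f(\SB{Q}_{(\ol{n})}) = \I_{\ol{n}}$ via \cref{p:SR-to-SQ}, propagate to general $p$ and $\alpha$ using the $\star$-multiplicativity of the $\SB{R}$- and $\SB{Q}$-bases together with \cref{c:isoexternalprod}, and finish with the uniform-section expression $\ee_\lambda = \frac{1}{\ell(\lambda)!}\sum_{\supp(\alpha)=\lambda}\SB{Q}_\alpha$ from \cref{ex:uniform-general}. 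The only cosmetic difference is that you state the reduction to single blocks up front while the paper computes the base cases first; the mathematical content is identical.
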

\begin{proof}
Observe that for $p \geq (n)$, $\deg(p/(n)) = \ell(p)$.
Then, by \cref{p:SH-to-SR} we have
\[
    \SB{R}_{(n)} = \sum_{p \geq (n)} \dfrac{(-1)^{\ell(p)-1}}{\ell(p)} \SB{H}_p
    \qqand
    \SB{R}_{(\ol{n})} = \sum_{p \geq (\ol{n})} \dfrac{(-1)^{\ell(p)-1}}{\ell(p)} \SB{H}_p.
\]

Hence,
\[
    f(\SB{R}_{(n)}) = r_n
    \qqand
    f(\SB{R}_{(\ol{n})}) = r_{\ol{n}}
\]
and by the compatibility of the $\star$ product from \cref{c:isoexternalprod}, it follows that $f(\SB{R}_p) = r_p$ for any $p \in \Gamma_n[\ztwo]$.

Moreover, from \cref{p:SR-to-SQ}, 
\[
    \SB{Q}_{(n)} =
    \dfrac{1}{2} \Big( \ol{{\trv}(+1)} \SB{R}_{(n)} + \ol{{\trv}(-1)} \SB{R}_{(\ol{n})} \Big) =
    \dfrac{1}{2} \Big( \SB{R}_{(n)} + \SB{R}_{(\ol{n})} \Big)
\]

and similarly,
\[
    \SB{Q}_{(\ol{n})} =
    \dfrac{1}{2} \Big( \ol{{\sgn}(+1)} \SB{R}_{(n)} + \ol{{\sgn}(-1)} \SB{R}_{(\ol{n})} \Big) =
    \dfrac{1}{2} \Big( \SB{R}_{(n)} - \SB{R}_{(\ol{n})} \Big).
\]

Therefore,
\[  
    f(\SB{Q}_{(n)}) = \SB{I}_{(n)}
    \qqand
    f(\SB{Q}_{(\ol{n})}) = \SB{I}_{(\ol{n})}.
\]

It then follows by \cref{prop:SymQ-basis-star} and the definition of $\SB{I}_\alpha$ that for all $\alpha \in \Gamma[\wh{\ztwo}]$,
\[
    f(\SB{Q}_\alpha) = \SB{I}_\alpha.
\]
Finally, since
\[
    \ee_\lambda = \dfrac{1}{\ell(\lambda)!} \sum_{\supp(\alpha) = \lambda} \SB{Q}_\alpha,
\]
see \cref{ex:uniform-general}, we have that for all $\lambda \in \Lambda[\wh{\ztwo}]$
\[
    f(\ee_\lambda) = \mathfrak{g}_\lambda. \qedhere
\]
\end{proof}

Using \cref{cor:uniform-CSoPOI-Hsiao}, we obtain the following formula for Vazirani's idempotents.

\begin{cor}\label{cor:vazclosedformexpression}
For any $\lambda \in \Lambda_n[\wh{\ztwo}]$, Vazirani's idempotents $\frakg_{\lambda}$ have the expression 
  \[  \frakg_{\lambda} = \dfrac{1}{2^{\ell(\lambda)} \cdot \ell(\lambda)!} \sum_{\substack{ \alpha \in \Gamma_n[\wh{\ztwo}] \\ \supp(\alpha) = \lambda}} \sum_{\substack{p \in \Gamma_n[\ztwo] \\ \abs{p} = \abs{\alpha} }} \ol{\alpha(p)} \sum_{\substack{q \in \Gamma_n[\ztwo] \\ p \leq q }} \dfrac{(-1)^{\ell(q)-\ell(p)}}{\deg(q/p)} X_q. \]
\end{cor}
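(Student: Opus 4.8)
The plan is to obtain this formula as an immediate consequence of two results already in hand: the explicit expression for the idempotents $\ee_\lambda \in (\kk\Sigma_n[G])^{\sym_n}$ in the $\SB{H}$-basis from \cref{cor:uniform-CSoPOI-Hsiao}, and the identification $f(\ee_\lambda) = \frakg_\lambda$ established in the preceding theorem. The strategy is simply to specialize the first of these to $G = \ztwo$ and then push the resulting identity through Hsiao's map $f$.

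First I would specialize \cref{cor:uniform-CSoPOI-Hsiao} to $G = \ztwo$. Since $|\ztwo| = 2$, the normalizing constant $|G|^{\ell(\lambda)} \cdot \ell(\lambda)!$ becomes $2^{\ell(\lambda)} \cdot \ell(\lambda)!$, and under the canonical identifications of $\Gamma_n[\wh{\ztwo}]$, $\Lambda_n[\wh{\ztwo}]$, and $\Gamma_n[\ztwo]$ fixed at the start of \cref{s:typeBMR}, all the indexing sets and the evaluation $\ol{\alpha(p)}$ are exactly as they appear in the statement.

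Next I would apply $f$ to both sides of that expression. Because $f$ is linear and the right-hand side is a $\kk$-linear combination of the basis elements $\SB{H}_q$ with $q \in \Gamma_n[\ztwo]$, applying $f$ replaces each $\SB{H}_q$ by $f(\SB{H}_q) = X_q$ while leaving the scalar coefficients untouched. Note that even though $f$ is an \emph{anti}-isomorphism, this causes no difficulty: no products of basis elements occur on the right-hand side, so only the linearity of $f$ is used. Combining this with $f(\ee_\lambda) = \frakg_\lambda$ from the preceding theorem yields
\[
\frakg_\lambda = f(\ee_\lambda) = \dfrac{1}{2^{\ell(\lambda)} \cdot \ell(\lambda)!} \sum_{\substack{\alpha \in \Gamma_n[\wh{\ztwo}] \\ \supp(\alpha) = \lambda}} \sum_{\substack{p \in \Gamma_n[\ztwo] \\ \abs{p} = \abs{\alpha}}} \ol{\alpha(p)} \sum_{\substack{q \in \Gamma_n[\ztwo] \\ p \leq q}} \dfrac{(-1)^{\ell(q)-\ell(p)}}{\deg(q/p)} X_q,
\]
which is precisely the claimed formula.

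There is no genuine obstacle here: the mathematical content lives entirely in \cref{cor:uniform-CSoPOI-Hsiao} and in the preceding theorem, and the corollary is a pure transcription through $f$. The only point requiring care is bookkeeping—confirming that the specialization $G = \ztwo$ produces exactly the stated constant $2^{\ell(\lambda)}$ and that the colored-composition indexing sets used here are the ones identified at the beginning of \cref{s:typeBMR}.
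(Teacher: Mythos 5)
Your proposal is correct and is essentially the paper's own argument: the paper likewise obtains this corollary by combining the $\SB{H}$-basis expression for $\ee_\lambda$ in \cref{cor:uniform-CSoPOI-Hsiao} (specialized to $G=\ztwo$, so $|G|^{\ell(\lambda)} = 2^{\ell(\lambda)}$) with the identification $f(\ee_\lambda)=\frakg_\lambda$ and $f(\SB{H}_q)=X_q$ from the preceding theorem and \cref{thm:hsiao}. Your remark that the anti-isomorphism property is harmless here, since only linearity of $f$ is invoked, is exactly the right bookkeeping point.
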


\bibliographystyle{plain}
\bibliography{bib}

\newpage

\appendix

\section{LRBGs and BGs}\label{s:appendix}

In the semigroup theory literature, one finds the notion of a \defn{band of
groups} \cite{clifford, cp61semigroupsI},
but it should be noted that \defn{left regular bands of groups},
as defined in \cite{ms11HomologySemigroups} and in this paper,
are not bands of groups as one might expect from the name.
This appendix serves to outline the differences and to characterize the left
regular bands of groups that are bands of groups
(which we call \defn{strict left regular bands of groups}).

\subsection{Bands of semigroups}
\label{bands-of-semigroups}

Let $\SS$ be a finite semigroup.
Let $\BB = \{\SS_\alpha\}_{\alpha \in I}$
be a family of disjoint subsemigroups of $\SS$
such that for every pair $\SS_\alpha, \SS_\beta \in \BB$,
there exists $\SS_\gamma \in \BB$
such that $\SS_\alpha \SS_\beta \subseteq \SS_{\gamma}$.
Then $\BB$ is a semigroup for the operation:
\begin{equation*}
    \SS_\alpha \ast \SS_\beta = \SS_\gamma
    \qqiff
    \SS_\alpha \SS_\beta \subseteq \SS_\gamma.
\end{equation*}
Since $\SS_\alpha \SS_\alpha \subseteq \SS_\alpha$, it follows that $\BB$
is a \defn{band} (i.e., a semigroup all of whose elements are idempotents),
and we call $\BB$ a \defn{band of semigroups}.
If in addition $\BB$ is commutative, then we call $\BB$
a \defn{semilattice of semigroups}, since a \defn{semilattice} is a semigroup
that is a commutative band.

A semigroup $\SS$ is a \defn{union of a band of semigroups} if there exists
a band of semigroups $\BB = \{\SS_\alpha\}_{\alpha \in I}$ in $\SS$ such that
\begin{equation*}
    \SS = \bigsqcup_{\SS_\alpha \in I} \SS_\alpha.
\end{equation*}
Similarly, one defines \defn{semilattices of semigroups}.
By an abuse of terminology, one sometimes encounters \emph{``$\SS$ is a band of
semigroups''} in place of \emph{``$\SS$ is a union of a band of semigroups''}.

If $\SS$ is the union of the band of semigroups $\BB
= \{\SS_\alpha\}_{\alpha \in I}$, then the function
$\varphi: \SS \xrightarrow{} \BB$ defined by
$\varphi(s) = \SS_\alpha$ for $s \in \SS_\alpha$ is a semigroup
morphism as $s \in \SS_\alpha$ and $t \in \SS_\beta$ imply that
$s t \in \SS_\alpha \SS_\beta \subseteq \SS_\alpha \ast \SS_\beta$.
Conversely, every union of a band of semigroups arises in this way: if
$\varphi: \SS \xrightarrow{} \BB$ is a surjective semigroup morphism
onto a band $\BB$, then $\SS$ is the union of the band of semigroups
$\{\varphi^{-1}(b)\}_{b \in \BB}$.

\subsection{Bands of groups}
\label{bands-of-groups}

We are mostly interested in the case where each of the semigroups in a band of
semigroups is a group, so we state this definition explicitly.

\begin{definition}
    A semigroup $\SS$ is a \defn{union of a band of groups} if there is a decomposition
    \begin{equation*}
        \SS = \bigsqcup_{\alpha \in I} G_\alpha
    \end{equation*}
    into disjoint subsemigroups of $\SS$ such that
    \begin{enumerate*}[label=(\roman*)]
        \item each $G_\alpha$ is a group, and
        \item for each pair $\alpha, \beta \in I$, there exists
              a unique $\gamma \in I$ such that $G_\alpha G_\beta \subseteq G_\gamma$.
    \end{enumerate*}
\end{definition}

\begin{theorem}[{\cite[Theorem 1]{clifford}}]
    \label{theorem-clifford-band-of-groups}
    A semigroup $\SS$ is a union of a band of groups if and only if there exists a band $\BB$ and a
    semigroup morphism $\SS \to \BB$ such that the preimage of each $b \in \BB$ is a group.
\end{theorem}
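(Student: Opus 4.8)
The plan is to reduce the statement to the general dictionary between unions of bands of semigroups and surjective morphisms onto bands that is already recorded at the end of \cref{bands-of-semigroups}, and then merely track the extra ``group'' hypothesis through that correspondence. With the dictionary in hand, both implications become short, and the only genuine verification is a compatibility check for the induced band operation.

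For the forward direction, suppose $\SS = \bigsqcup_{\alpha \in I} G_\alpha$ is a union of a band of groups. I would equip $\BB := \{G_\alpha\}_{\alpha \in I}$ with the operation $G_\alpha \ast G_\beta = G_\gamma \iff G_\alpha G_\beta \subseteq G_\gamma$, where $\gamma$ is the index guaranteed to exist and to be unique by the definition. The map $\varphi : \SS \to \BB$ sending $s \mapsto G_\alpha$ for $s \in G_\alpha$ is then a surjective semigroup morphism whose fiber over each $G_\alpha$ is exactly the group $G_\alpha$, giving the desired band and morphism. The one point needing care is that $\ast$ really is a band operation: idempotency is immediate from $G_\alpha G_\alpha \subseteq G_\alpha$, and associativity follows because, for any three indices, the nonempty subset $G_\alpha G_\beta G_\gamma \subseteq \SS$ (unambiguous by associativity in $\SS$) lies in a single block, so disjointness of the blocks forces $(\alpha \ast \beta)\ast \gamma = \alpha \ast (\beta \ast \gamma)$.

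For the reverse direction, suppose $\varphi : \SS \to \BB$ is a semigroup morphism onto a band such that each preimage $\varphi^{-1}(b)$ is a group. Since a group is nonempty, $\varphi$ is surjective onto the image, and after replacing $\BB$ by its image (a subsemigroup of a band, hence itself a band) I may assume surjectivity. Setting $G_b := \varphi^{-1}(b)$ yields a decomposition $\SS = \bigsqcup_{b \in \BB} G_b$ into disjoint groups. For the band-of-groups condition, $s \in G_b$ and $t \in G_c$ give $\varphi(st) = \varphi(s)\varphi(t) = bc$, whence $G_b G_c \subseteq G_{bc}$; disjointness of the fibers makes $\gamma = bc$ the unique index with this property. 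Hence $\SS$ is a union of a band of groups.

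The argument is essentially bookkeeping, so the ``hard part'' is conceptual rather than computational: recognizing that the general band-of-semigroups correspondence from \cref{bands-of-semigroups} already does the heavy lifting, and that the group conditions on the two sides are literally the fibers of one and the same morphism. The lone substantive verification is the associativity of $\ast$ in the forward direction, which I would dispatch via the disjointness-of-blocks observation above.
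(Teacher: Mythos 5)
Your proof is correct, and it follows the route the paper itself points to: the statement is quoted from \cite[Theorem 1]{clifford} without proof, and the dictionary you invoke is exactly the correspondence between unions of bands of semigroups and surjective morphisms onto bands recorded at the end of \cref{bands-of-semigroups}, specialized to the case where the blocks (equivalently, the fibers) are groups. The two details the paper leaves unstated are handled correctly in your write-up: associativity of the induced operation $\ast$ (your nonempty-product-lies-in-a-single-block argument is the right one, since $G_\alpha G_\beta G_\gamma$ sits inside both $G_{(\alpha\ast\beta)\ast\gamma}$ and $G_{\alpha\ast(\beta\ast\gamma)}$, which are disjoint unless equal), and the observation that group fibers are nonempty, so the morphism in the hypothesis is automatically surjective and the dictionary applies.
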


\begin{example}
    Every band $\BB$ is the union of a band of groups: consider the identity
    morphism $\BB \xrightarrow{} \BB$. In this case, the preimages are trivial
    groups.
\end{example}

\begin{example}
    Fix a group $G$ and a band $\BB$. Endow $\SS := \BB \times G$ with the
    coordinate-wise product. Then, $\SS$ is a the union of the band of groups
    $\{\SS_b\}_{b \in \BB}$, where $\SS_b = \{b\} \times G$.
    Note that $\SS_b \cong G$.
\end{example}

\begin{remark}
    Note that if $\SS$ is a union of a band of groups
    $\BB = \{G_\alpha\}_{\alpha \in I}$,
    then $E(\SS)$ is in bijection with $\BB$
    since each of the subgroups $G_\beta$ contains exactly one idempotent of
    $\SS$. However, $E(\SS)$ is not necessarily isomorphic to $\BB$;
    in fact, $E(\SS)$ is not necessarily a subsemigroup of $\SS$.
\end{remark}

\subsection{Left Regular Bands of Groups}
\label{appendix:LRBGs}

Recall that for each element $s$ in a finite semigroup $\SS$, there exists a unique idempotent that is a positive power of $s$ and it is denoted by $s^\omega$.
Moreover, we can pick $\omega \in \Z_{> 0}$ uniformly for all $s \in \SS$, for instance by setting $\omega = |\SS|!$ (see \cite[Remark 1.3]{steinberg16monoids}). To ease exposition, we will henceforth assume that $\omega = |\SS|!$, so that it does not depend on a particular $s \in \SS$. This assumption does not change any of the claims made.
Following \cite{ms11HomologySemigroups}, we define a
\defn{left regular band of groups (LRBG)} to be a semigroup $\SS$
such that for all $s,t \in \SS$:
\begin{align}
    \tag{LRBG1}     s^\omega s &= s,     \\
    \tag{LRBG2}     s t s^\omega &= s t.
\end{align}
If every element of a LRBG is idempotent, then we obtain a semigroup called
a \defn{left regular band}: more precisely,
a semigroup $\BB$ is a \defn{left regular band (LRB)} if for all $x, y \in
\BB$:
\begin{align}
    \tag{LRB1}  x^2 &= x, \\
    \tag{LRB2}  x y x &= x y.
\end{align}
LRBs form an important family of examples of LRBGs, and we will see that every
LRBG is obtained by attaching a group to each element of a LRB (subject to some
compatibility conditions).

\begin{lemma}
    Let $\SS$ be a LRBG and let $E(\SS)$ denote the set of idempotents of $\SS$.

    Then $E(\SS)$ is a subsemigroup of $\SS$. Moreover, $E(\SS)$ is a LRB.
\end{lemma}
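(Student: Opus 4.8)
The plan is to verify directly that $E(\SS)$ is closed under the semigroup product, and then to check the two defining identities of a left regular band. The one observation I would lean on throughout is that for an idempotent $x \in E(\SS)$ one has $x^\omega = x$, so that factors of $x^\omega$ may be freely inserted or deleted. This is precisely what lets me apply the LRBG axioms (LRBG1) and (LRBG2), which are phrased in terms of the $(-)^\omega$ operation, to honest elements of $E(\SS)$.

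First I would establish closure. Given $x, y \in E(\SS)$, I claim $xy$ is again idempotent. Expanding $(xy)^2 = xyxy$ and rewriting the trailing $x$ and $y$ as $x^\omega$ and $y^\omega$ (legitimate since both are idempotent) gives $xy\,x^\omega y^\omega$. Applying (LRBG2) with $s = x$ and $t = y$ collapses $xy\,x^\omega$ to $xy$, leaving $xy\,y^\omega$, which equals $xy$ since $y^\omega = y$ (equivalently by (LRBG1)). Hence $(xy)^2 = xy$, so $xy \in E(\SS)$ and $E(\SS)$ is a subsemigroup of $\SS$.

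It then remains to confirm that $E(\SS)$, with the inherited product, satisfies the LRB axioms. Axiom (LRB1) is immediate, as every element of $E(\SS)$ is by definition idempotent. For (LRB2), namely $xyx = xy$ for $x,y \in E(\SS)$, I would specialize (LRBG2) to $s = x$ and $t = y$, obtaining $xy\,x^\omega = xy$; since $x^\omega = x$, the left-hand side is exactly $xyx$. I do not expect a genuine obstacle here, as the whole argument is a short symbolic manipulation. The only point demanding care is the bookkeeping of where to substitute $x^\omega$ for $x$ (and $y^\omega$ for $y$) so that the LRBG identities become applicable; once those substitutions are arranged, both axioms follow in a single line each.
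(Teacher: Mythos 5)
Your proposal is correct and follows essentially the same argument as the paper: closure is shown by computing $(xy)^2 = xy\,x^\omega y^\omega = xy\,y^\omega = xy$ via \cref{eq:lrbg2} and idempotence, and the LRB axioms follow since (LRB1) is automatic and (LRB2) is \cref{eq:lrbg2} specialized to $s = x$, $t = y$ with $x^\omega = x$. No gaps; nothing further to add.
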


\begin{proof}
    We first prove $E(\SS)$ is a subsemigroup.
    Let $x, y \in E(\SS)$.
    Then $x^\omega = x$ and $y^\omega = y$,
    and so
    \begin{equation*}
        (x y)^2
        = x y x y
        = x y x^\omega y^\omega
        \overset{\cref{eq:lrbg2}}{=}
        x y y^\omega
        \overset{\cref{eq:lrbg1}}{=}
        x y.
    \end{equation*}
    It follows that $x y \in E(\SS)$.
    To see that $E(\SS)$ is a LRB, note that (LRB1) holds because every element
    of $E(\SS)$ is idempotent, and (LRB2) follows by setting $s = x$ and $t
    = y$ in \cref{eq:lrbg2}.
\end{proof}

The next result identifies the maximal subgroups of $\SS$ at an idempotent $x
\in E(\SS)$ with the set of elements in $\SS$ that are mapped to $x$ by the
function $s \mapsto s^\omega$.

\begin{lemma}
    \label{appendix:preimages-of-omega-map-are-maximal-subgroups}
    Let $\SS$ be a LRBG and define $\varphi: \SS \xrightarrow{} E(\SS)$ by
    $\varphi(s) = s^\omega$.
    Then for each $x \in E(\SS)$, the preimage
    $\varphi^{-1}(\{x\}) = \{ s \in \SS : s^\omega = x \}$ is the maximal
    subgroup $G_x$ of $\SS$ at $x$.
\end{lemma}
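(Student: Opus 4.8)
The plan is to prove the equality $G_x = \varphi^{-1}(\{x\})$ by double inclusion, leaning on the structural facts recalled at the start of the section: that $x\SS x$ is a submonoid of $\SS$ whose identity element is $x$, and that $G_x = (x\SS x)^\times$ is precisely its group of units. Both inclusions are short, so the work is really just bookkeeping with \cref{eq:lrbg1}.

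First I would establish $G_x \subseteq \varphi^{-1}(\{x\})$. Take $s \in G_x$. Since $G_x$ is a finite group with identity element $x$, some positive power of $s$ equals $x$. On the other hand, $s^\omega$ is \emph{defined} as the unique idempotent that is a positive power of $s$; as $x$ is itself idempotent, the two must agree, giving $s^\omega = x$, i.e. $s \in \varphi^{-1}(\{x\})$.

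For the reverse inclusion, I would start with an arbitrary $s$ satisfying $s^\omega = x$ and verify membership in $(x\SS x)^\times$ in two steps. Since $s^\omega$ is a power of $s$ it commutes with $s$, so \cref{eq:lrbg1} yields both $x s = s^\omega s = s$ and $s x = s\,s^\omega = s^\omega s = s$; hence $s = x s x \in x\SS x$, placing $s$ in the monoid with identity $x$. To see that $s$ is a unit there, choose a positive integer $n$ with $s^n = s^\omega = x$; then $s\cdot s^{n-1} = s^n = x = s^{n-1}\cdot s$, so $s^{n-1}$ is a two-sided inverse of $s$ in $x\SS x$. Therefore $s \in G_x$. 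The displayed decomposition $\SS = \bigsqcup_{x \in E(\SS)} G_x$ then follows immediately, since each $s \in \SS$ has a well-defined value $s^\omega \in E(\SS)$ and thus lies in exactly one fiber $G_{s^\omega}$.

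There is no serious obstacle here; the only point demanding care is conceptual rather than computational. One must keep track of \emph{which} identity invertibility is taken with respect to: the inverse $s^{n-1}$ is a unit relative to the local identity $x$ of the submonoid $x\SS x$, not relative to any ambient identity of $\SS$. Making sure the one-sided identities $xs = s$ and $sx = s$ both hold (which is where the commutativity of $s$ with its own power $s^\omega$ enters) is what guarantees $s$ genuinely sits inside $x\SS x$ before the invertibility argument applies.
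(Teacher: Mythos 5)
Your proof is correct and follows essentially the same route as the paper's: the forward inclusion via uniqueness of the idempotent power, and the reverse inclusion by placing $s$ in the submonoid $x\SS x$ using \cref{eq:lrbg1} and exhibiting $s^{n-1}$ as the inverse. Your added remark that $s s^\omega = s^\omega s$ (since $s^\omega$ is a power of $s$) just makes explicit a step the paper leaves implicit.
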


\begin{proof}
    We first prove $G_x \subseteq \{s \in \SS : s^\omega = x\}$.
    If $s$ is an element of the finite group $G_x$,
    then some positive power of $s$ coincides with the
    identity element of $G_x$, which is $x$.
    Hence, $s^\omega = x$.

    Conversely, suppose $s^\omega = x$.
    By \cref{eq:lrbg1}, $x s = s^\omega s = s$ and
    $s x = s s^\omega = s$, which implies $s$
    belongs to the submonoid $x \SS x$ with identity element $x$.
    Moreover, $s$ is invertible in $x \SS x$ since if $n$ is a positive integer
    such that $s^n = s^\omega = x$, then the inverse of $s$ is $s^{n-1}$.
    Hence, $s \in G_x$.
\end{proof}

It follows from \cref{appendix:preimages-of-omega-map-are-maximal-subgroups} that
$\SS$ decomposes into a disjoint union of subgroups as follows:
\begin{equation}
    \label{LRBG-decomposition-union-of-groups}
    \SS = \bigsqcup_{x \in E(\SS)} G_x,
    \qquad\text{where~} G_x = \{s \in \SS: s^\omega = x\}.
\end{equation}
Therefore, every LRBG is a \emph{union of groups}. However, it is not
necessarily a \emph{union of a band of groups} because $G_x G_y$ might not be
contained in a single $G_z$, as illustrated by the following example.

\begin{example}
    \label{ex:not-strict}
    Consider the semigroup $\SS = \{x, s, y, z\}$ with the following multiplication table:
    \begin{equation*}
        \begin{array}{c|cccc}
            \SS & x & s & y & z \\ \hline
            x & x & s & y & z \\
            s & s & x & z & y \\
            y & y & y & y & y \\
            z & z & z & z & z
        \end{array}
    \end{equation*}
    Observe that:
    \begin{enumerate*}[label=(\roman*)]
        \item
            $\SS$ is a monoid with identity element $x$;

        \item
            $E(\SS) = \{x, y, z\}$ is a LRB;

        \item
            the maximal subgroups of $\SS$ are
            $G_{x} = \{x, s\}$, $G_{y} = \{y\}$, and $G_{z} = \{z\}$; and

        \item
            left multiplication by $s$ swaps $y$ and $z$.
    \end{enumerate*}
    It is straightforward to check that $\SS$ is a LRBG, but it is not a union
    of a band of groups because the $G_{x} G_{y} = \{y, z\}$ is not contained in
    any single maximal subgroup.
\end{example}

\begin{lemma}
    Let $\SS$ be a LRBG. Then for all $s, t \in \SS$,
    \begin{equation}
        \label{(s^omega-t)^omega}
        (s^\omega t)^\omega = s^\omega t^\omega.
    \end{equation}
\end{lemma}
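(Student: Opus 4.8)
The plan is to set $x := s^\omega$ and prove the cleaner auxiliary identity $(xt)^n = x\,t^{\,n}$ for every integer $n \geq 1$ and every $t \in \SS$, from which the claimed equality follows by specializing $n$ to the uniform exponent. First I would record the consequence of \cref{eq:lrbg2} that drives the induction. Since $x = s^\omega$ is idempotent we have $x^\omega = x$, so applying \cref{eq:lrbg2} with first factor $x$ and middle factor an arbitrary $u \in \SS$ gives $x u x = x u x^\omega = x u$. In particular $x\,t^{\,n}\,x = x\,t^{\,n}$ for all $n$.

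Next I would establish $(xt)^n = x\,t^{\,n}$ by induction on $n$. The base case $n=1$ is the tautology $xt = xt$. For the inductive step, assuming $(xt)^n = x\,t^{\,n}$, I compute
\[
    (xt)^{n+1} = (xt)^n (xt) = (x\,t^{\,n})(x t) = (x\,t^{\,n}\,x)\,t = (x\,t^{\,n})\,t = x\,t^{\,n+1},
\]
where the fourth equality is the instance $u = t^{\,n}$ of the observation above. This closes the induction.

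Finally, I would invoke the uniform choice of exponent $\omega = |\SS|!$ fixed at the start of this appendix, under which $(xt)^\omega = (xt)^{|\SS|!}$ and $t^\omega = t^{|\SS|!}$ hold simultaneously. Substituting $n = |\SS|!$ into the power identity then yields
\[
    (s^\omega t)^\omega = (xt)^\omega = x\,t^{\,|\SS|!} = x\,t^\omega = s^\omega\,t^\omega,
\]
which is the desired statement; as a byproduct this also exhibits $s^\omega t^\omega$ as idempotent, consistent with the left-hand side being $s^\omega t$ raised to its idempotent power.

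The argument is a short induction, so there is no genuine obstacle. The two points demanding care are (i) recognizing $x u x = x u$ as the instance of \cref{eq:lrbg2} in which the idempotent $x$ satisfies $x^\omega = x$, and (ii) the bookkeeping of the uniform exponent, which is precisely what allows a single value of $n$ to realize both $(xt)^\omega$ and $t^\omega$ at once. This computation is parallel to the cancellation $(ys)^\omega = y s^\omega$ appearing in the proof of \cref{left-multiplication-induces-group-morphism}; here I simply observe that the same manipulation persists when $t$ ranges over all of $\SS$ rather than over a single maximal subgroup.
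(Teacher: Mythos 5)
Your proof is correct and follows essentially the same route as the paper: both hinge on the power identity $(s^\omega t)^m = s^\omega t^m$, which the paper obtains by iteratively applying \cref{eq:lrbg2} to delete the repeated occurrences of $s^\omega$, and which you obtain by the equivalent induction based on $xux = xu$. The only (cosmetic) difference is the concluding step: the paper chooses $m$ with $t^m = t^\omega$, notes that $s^\omega t^\omega$ is idempotent because $E(\SS)$ is a subsemigroup, and invokes uniqueness of the idempotent power, whereas you substitute the uniform exponent $m = |\SS|!$ --- a convention the appendix has explicitly fixed, so your shortcut is legitimate and even yields the idempotency of $s^\omega t^\omega$ as a byproduct rather than using it as an input.
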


\begin{proof}
    Start with $s^\omega t s^\omega t \cdots s^\omega t$
    and iteratively apply (LRBG2) to delete all occurrences of $s^\omega$
    except the first: thus, $(s^\omega t)^m = s^\omega t^m$ for all positive $m
    \in \NN$.
    Taking $m$ such that $t^m = t^\omega$, one obtains
    $(s^\omega t)^m = s^\omega t^\omega \in E(\SS)$
    since $E(\SS)$ is subsemigroup of $\SS$.
    Thus, $(s^\omega t)^m$ is idempotent.
\end{proof}

\subsection{Strict Left Regular Bands of Groups}

Let $\SS$ be a LRBG and consider the function $\varphi: \SS \to E(\SS)$ defined
by $\varphi(s) = s^\omega$.
This function maps $\SS$ onto a band and the preimage of every element is a group.
By \cref{theorem-clifford-band-of-groups}, $\SS$ is the union of a band
of groups if $\varphi$ is a semigroup morphism.
We define a strict LRBG to be a LRBG for which $s \mapsto s^\omega$ is a semigroup morphism.

\begin{definition}
    A \defn{strict left regular band of groups (sLRBG)} is a LRBG $\SS$ such that
    \begin{align}
        \label{eq:strict}
        \tag{sLRBG}     (st)^\omega = s^\omega t^\omega
        \text{~for all $s,t \in \SS$.}
    \end{align}
\end{definition}

Observe that the LRBG of \cref{ex:not-strict} is not a sLRBG since
$(s y)^\omega = z \neq y = s^\omega y^\omega$.

\begin{prop}
    \label{strict-iff-BG}
    A LRBG is a union of a band of groups if and only if it is a strict LRBG.
\end{prop}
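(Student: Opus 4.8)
The plan is to prove both implications through Clifford's characterization (\cref{theorem-clifford-band-of-groups}), which says that $\SS$ is a union of a band of groups exactly when there is a surjective semigroup morphism from $\SS$ onto a band all of whose fibers are groups. The natural candidate for this morphism is always $\varphi \colon \SS \to E(\SS)$, $\varphi(s) = s^\omega$, whose fibers are the maximal subgroups $G_x$ by \cref{appendix:preimages-of-omega-map-are-maximal-subgroups}, and whose codomain $E(\SS)$ is a band since it is a LRB. The whole statement then hinges on recognizing that strictness is precisely the assertion that $\varphi$ is a morphism.

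For the implication that a strict LRBG is a union of a band of groups, I would simply verify the three hypotheses of \cref{theorem-clifford-band-of-groups} for $\varphi$. By definition of strict, $\varphi(st) = (st)^\omega = s^\omega t^\omega = \varphi(s)\varphi(t)$, so $\varphi$ is a semigroup morphism; its image $E(\SS)$ is a band; and each fiber $\varphi^{-1}(x) = G_x$ is a group. Clifford's theorem then immediately gives the conclusion, so this direction is essentially a one-line application once the pieces are assembled.

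For the converse, suppose $\SS = \bigsqcup_{\alpha} G_\alpha$ is a union of a band of groups, and write $e_\alpha$ for the identity of $G_\alpha$. The key preliminary observation is that every $s \in G_\alpha$ satisfies $s^\omega = e_\alpha$: all positive powers of $s$ remain in the group $G_\alpha$, whose only idempotent is $e_\alpha$, and $s^\omega$ is by definition an idempotent power of $s$. Now fix $s \in G_\alpha$ and $t \in G_\beta$, and let $\gamma$ be the unique index with $G_\alpha G_\beta \subseteq G_\gamma$. Then $st \in G_\gamma$, so $(st)^\omega = e_\gamma$ by the observation applied to $st$. On the other hand $s^\omega t^\omega = e_\alpha e_\beta$ lies in $G_\alpha G_\beta \subseteq G_\gamma$ and is idempotent because $E(\SS)$ is closed under products; being an idempotent of the group $G_\gamma$, it must equal $e_\gamma$. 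Hence $(st)^\omega = e_\gamma = e_\alpha e_\beta = s^\omega t^\omega$, which is exactly condition \eqref{eq:strict}, so $\SS$ is a strict LRBG.

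The only place that demands care is this converse, where one must first pin down $s^\omega$ as the identity of the block containing $s$ and then force both $st$ and $s^\omega t^\omega$ into the same block $G_\gamma$ using the band-of-groups axioms; once everything lands in $G_\gamma$, the uniqueness of the idempotent in a group closes the argument. There are no delicate estimates here — the proof is a bookkeeping of idempotents — so I do not expect a genuine obstacle beyond getting these identifications correct.
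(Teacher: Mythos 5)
Your proof is correct. The forward direction (strict $\Rightarrow$ union of a band of groups) is essentially identical to the paper's: both take $\varphi(s) = s^\omega$, note that strictness is exactly the statement that $\varphi$ is a morphism onto the band $E(\SS)$, and that its fibers are the maximal subgroups. The converse, however, is where you genuinely diverge. The paper invokes \cref{theorem-clifford-band-of-groups} to extract an abstract band $\BB$ and a surjective morphism $\psi: \SS \to \BB$ with group fibers, shows that $\psi|_{E(\SS)}$ is a bijective morphism onto $\BB$ (each fiber, being a group, contains exactly one idempotent), and then identifies $s \mapsto s^\omega$ with the composite $\psi|_{E(\SS)}^{-1} \circ \psi$, which is a morphism by construction. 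You instead work directly with the decomposition $\SS = \bigsqcup_\alpha G_\alpha$: you pin down $s^\omega = e_\alpha$ for $s \in G_\alpha$, place both $st$ and $e_\alpha e_\beta$ in the same block $G_\gamma$, and conclude $(st)^\omega = e_\gamma = e_\alpha e_\beta = s^\omega t^\omega$ from the uniqueness of the idempotent in a group. Your route is more elementary — it bypasses Clifford's characterization entirely in this direction and needs no inverse map — at the cost of one hypothesis you correctly flag: the product $e_\alpha e_\beta$ is idempotent only because $E(\SS)$ is closed under multiplication, which holds since $\SS$ is a LRBG (\cref{E(S)-is-a-LRB}); in an arbitrary union of a band of groups this would fail. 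The paper's route, by contrast, packages the conclusion conceptually as ``$s \mapsto s^\omega$ factors through an isomorphism of bands,'' which is the viewpoint reused elsewhere in the appendix.
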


\begin{proof}
    Let $\SS$ be a sLRBG. By \cref{E(S)-is-a-LRB} and \cref{eq:strict}, the map
    $\varphi: \SS \to E(\SS)$ is a semigroup morphism from $\SS$ onto the band
    $E(\SS)$. By \cref{appendix:preimages-of-omega-map-are-maximal-subgroups},
    the preimage $\varphi^{-1}(x)$ of each $x \in E(\SS)$ is
    a group. Therefore, $\SS$ is the union of the band of groups
    $\{\varphi^{-1}(x) : x \in E(\SS)\}$.

    Conversely, suppose $\SS$ is a LRBG that is a union of a band of groups.
    Then there exists a band $\BB$ and a surjective semigroup morphism $\psi:
    \SS \xrightarrow{} \BB$ such that the preimage of each $b \in \BB$ is
    a group.
    Since $E(\SS)$ is a subsemigroup of $\SS$, the restriction
    $\psi|_{E(\SS)}: E(\SS) \to \BB$ is a semigroup morphism.
    Moreover, it is a bijection because $\psi^{-1}(b)$ for $b \in \BB$ is
    a group and thus contains a unique idempotent (namely, the identity element
    of the group).
    Therefore, the composition
    $\psi|_{E(\SS)}^{-1} \circ \psi$
    is a semigroup morphism that maps each $s \in \SS$ to the identity element
    of the group containing $s$; i.e., $s \mapsto s^\omega$.
\end{proof}

The following is an important consequence of axiom \cref{eq:strict}.

\begin{lemma}\label{lem:xf=fx}
    Let $\SS$ be a strict LRBG.
    Let $s \in \SS$ and $y \in E(\SS)$ with $s^\omega \leq y$. Then,
    \[        sy = ys.      \]
\end{lemma}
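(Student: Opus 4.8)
Write $x = s^\omega$. Since $y \in E(\SS)$ we have $y^\omega = y$, so the hypothesis $s^\omega \le y$ unwinds (via the order in \cref{def:sLRBGrelations}) to the single identity $xy = y$. The plan is to show that both products $sy$ and $ys$ land inside the \emph{same} maximal subgroup $G_y$, and then to exploit that $y$ is the two-sided identity of $G_y$: the element $ysy$ can be read either as $(ys)y$ or as $y(sy)$, and associativity will force $sy = ys$.

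First I would pin down the two membership statements. For the left factor, strictness gives the clean computation
\[
    (sy)^\omega \overset{\cref{eq:strict}}{=} s^\omega y^\omega = xy = y,
\]
so $sy \in G_y$. For the right factor I first record the purely LRB fact that $yx = y$: applying \cref{eq:lrbg2} with $s \mapsto y$, $t \mapsto x$ yields $yxy^\omega = yx$, i.e.\ $yxy = yx$ since $y^\omega = y$, while on the other hand $yxy = y(xy) = y^2 = y$; comparing gives $yx = y$. Then, again by \cref{eq:strict},
\[
    (ys)^\omega = y^\omega s^\omega = yx = y,
\]
so $ys \in G_y$ as well. (Alternatively one can obtain $(ys)^\omega = yx$ from \cref{left-multiplication-induces-group-morphism} without strictness, but the strictness hypothesis is unavoidable for $sy$.)

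To finish, I would invoke \cref{preimages-of-omega-map-are-maximal-subgroups}, by which $G_y = \{t \in \SS : t^\omega = y\}$ is a group with identity element $y$. Hence $y g = g = g y$ for every $g \in G_y$; applying this to the elements $sy, ys \in G_y$ gives $y(sy) = sy$ and $(ys)y = ys$. Since $y(sy) = ysy = (ys)y$ by associativity, we conclude $sy = ys$.

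The only genuine obstacle is establishing $sy \in G_y$. Whereas $ys \in G_y$ comes essentially for free (left multiplication by an idempotent is always a group morphism, \cref{left-multiplication-induces-group-morphism}, and $yx = y$ is a one-line LRB identity), right multiplication by $y$ need not be a morphism, so there is no a priori reason for $sy$ to be idempotent-powering to $y$. This is precisely the point at which the strictness axiom \cref{eq:strict} is needed, and indeed it must be: in the non-strict LRBG of \cref{ex:not-strict} one has $s^\omega = x \le y$ yet $sy = z \neq y = ys$, so $sy \notin G_y$ and the conclusion fails. Everything else is a routine consequence of the group structure of $G_y$.
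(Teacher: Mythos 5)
Your proof is correct, and it isolates exactly the same crucial step as the paper's proof --- strictness applied to the product $sy$, giving $(sy)^\omega = s^\omega y^\omega = s^\omega y = y$ --- but it finishes along a different route. The paper's proof is a single chain of rewritings: by \cref{eq:lrbg1}, $sy = (sy)^\omega\, sy = y\,sy$ (which is your statement that $y$ is a left identity for $sy \in G_y$), and then \cref{eq:lrbg2} applied to the pair $(y,s)$ gives $ysy = ys$ immediately, with no need to locate $ys$ in any maximal subgroup. You instead prove the auxiliary LRB identity $yx = y$, use it (together with a second application of \cref{eq:strict}, or with \cref{left-multiplication-induces-group-morphism}) to show $ys \in G_y$ as well, and then conclude by associativity of $ysy$, using that $y$ is a two-sided identity of the group $G_y$ from \cref{preimages-of-omega-map-are-maximal-subgroups}. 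The trade-off: the paper's argument is shorter (four equalities and only one use of strictness), while yours is more structural --- it explains the lemma as saying that both products are forced into the same group $G_y$, where $y$ acts as a unit. Your closing observation that \cref{ex:not-strict} shows strictness is genuinely needed (there $s^\omega = x \leq y$ yet $sy = z \neq y = ys$) is a worthwhile point that the paper's proof leaves implicit.
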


\begin{proof}
    The claim follows from the following computation, where we use that $x^\omega f = f$:
    \begin{equation*}
        s y                         \overset{\cref{eq:lrbg1}}{=}
        (s y)^\omega s y            \overset{\cref{eq:strict}}{=}
        s^\omega y s y              \overset{(s^\omega \leq y)}{=}
        y s y                       \overset{\cref{eq:lrbg2}}{=}
        y s.                        \qedhere 
    \end{equation*}
\end{proof}

In particular, we obtain a new characterization of the partial order $\leq$ on a strict LRBG.

\begin{lemma}\label{lem:sLRBG-leq}
    Let $\SS$ be a strict LRBG, $s,t \in \SS$ and $x = s^\omega \in E(\SS)$.
    Then, $s \leq t$ if and only if $t = ys = sy$ for some $y \in E(\SS)$ with $x \leq y$.
\end{lemma}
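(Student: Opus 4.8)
The plan is to verify each implication directly from the definition $s \le t \iff s t^\omega = t$ in \cref{eq:poset-S}, exploiting the strictness axiom \cref{eq:strict} together with the commutation result \cref{lem:xf=fx}. Throughout I write $x = s^\omega$ and use that on the idempotents $E(\SS)$ the order specializes to $x \le y \iff xy = y$ (as noted after \cref{def:sLRBGrelations}). The key observation driving both directions is that the element $y$ promised by the statement is forced to be $t^\omega$.

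For the forward implication I would assume $s \le t$, so $t = s t^\omega$, and take $y := t^\omega \in E(\SS)$. The first task is to check $x \le y$. Applying $(\cdot)^\omega$ to $t = s t^\omega$ and invoking the strict axiom \cref{eq:strict} gives
\[
    t^\omega = (s t^\omega)^\omega = s^\omega (t^\omega)^\omega = s^\omega t^\omega,
\]
since $t^\omega$ is idempotent; this reads $xy = y$, i.e.\ $x \le y$. With the hypothesis of \cref{lem:xf=fx} now in place, the identity $t = s t^\omega = sy$ upgrades to $t = sy = ys$, which is exactly the desired conclusion.

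For the converse I would suppose $t = sy = ys$ for some $y \in E(\SS)$ with $x \le y$, i.e.\ $xy = y$, and compute $t^\omega$ using strictness once more:
\[
    t^\omega = (sy)^\omega = s^\omega y^\omega = xy = y,
\]
where $y^\omega = y$ because $y$ is idempotent. Then $s t^\omega = sy = t$, which is precisely $s \le t$.

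The argument is short, and the only genuine content is the interplay between the $(\cdot)^\omega$ map and the product. The step I expect to require the most care is the forward direction, where establishing $x \le y$ (equivalently $s^\omega \le t^\omega$) is exactly the point at which the strictness hypothesis \cref{eq:strict} is indispensable: without it, $(s t^\omega)^\omega$ need not factor as $s^\omega t^\omega$, so the identification $y = t^\omega$ breaks down. This is consistent with the fact that the commutation lemma \cref{lem:xf=fx}, which the conclusion $sy = ys$ relies on, is itself available only for strict LRBGs.
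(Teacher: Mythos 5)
Your proof is correct and takes essentially the same route as the paper's: in the forward direction both take $y = t^\omega$, use the strictness axiom to get $x y = (s t^\omega)^\omega = t^\omega = y$, and invoke \cref{lem:xf=fx} to upgrade $t = sy$ to $t = sy = ys$. The only cosmetic difference is in the converse, where you compute $(sy)^\omega = s^\omega y^\omega = xy = y$ directly from the hypothesis, while the paper computes $(ys)^\omega = yx$ and uses the LRB identity $yx = y$; both are immediate.
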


\begin{proof}
    If $s \leq t$, take $y = t^\omega$. Then, $t = s y$ by definition of $\leq$, and
    \[
        x y                 \overset{\cref{eq:strict}}{=}
        (s y)^\omega        \overset{(s \leq t)}{=}
        (t)^\omega = y,
    \]
    so $x \leq y$. Finally, by \cref{lem:xf=fx}, $t = ys = sy$.

    Conversely, if $t = ys = sy$ for some $y \in E(\SS)$ with $x \leq y$, then $t^\omega = (y s) ^\omega = y x = y$, so $s t^\omega = s t = t$ and $s \leq t$.
\end{proof}

\subsection{Presheaf construction}

In this section we present a construction to obtain a strict LRBG from a {\em presheaf of groups} over a LRB.
As it turns out, all strict LRBG arise in this manner.
When the underlying LRB is a semilattice, this construction was first
considered by Clifford; see for instance \cite[Theorem 4.11]{cp61semigroupsI}.

Let $\BB$ be a left regular band. We endow $\BB$ with the preorder $\preceq$
defined for $x, y \in \BB$ by
\begin{equation*}
    x \preceq y
    \qqiff
    y x = y.
\end{equation*}
This relation is reflexive and transitive, but not necessarily antisymmetric.
In particular, it is possible that $x \neq x'$ even though $x \preceq x'$ and $x' \preceq x$.
For example, when $\BB = \Sigma_{\A}$ is the semigroup of faces of a hyperplane
arrangement $\A$, we have ${\sf E} \preceq {\sf F}$ precisely when $\supp({\sf
E}) \subseteq \supp({\sf F})$, and any two faces ${\sf F}$ and ${\sf F'}$
with $\supp({\sf F}) = \supp({\sf F'})$ satisfy both
${\sf F} \preceq {\sf F'}$ and ${\sf F'} \preceq {\sf F}$.

Let $\mathfrak{B}$ be the thin category corresponding to this relation.
That is, $\mathfrak{B}$ is the category with objects the elements of $\BB$, and
there is exactly one arrow $x \to y$ whenever $x \preceq y$.
A \defn{presheaf of finite groups} on $\BB$ is a functor $\GG : \mathfrak{B} \to
\mathbf{gps}$, where $\mathbf{gps}$ is the category of finite groups.
More concretely, $\GG$ consists of the data of
\begin{enumerate}
    \item a finite group $\GG[x]$ for each $x \in \BB$, and
    \item a group morphism $\Delta_x^y : \GG[x] \to \GG[y]$ for each pair $x \preceq y$ in $\BB$.
    These morphisms satisfy:
    \begin{enumerate}[label=(\roman*)]
        \item
            for all $x \in \BB$, the map $\Delta_x^x$ is the identity morphism of $\GG[x]$; and
        \item
            whenever $x \preceq y$ and $y \preceq z$, one has $\Delta_y^z \circ \Delta_x^y = \Delta_x^z$.
    \end{enumerate}
\end{enumerate}
Observe that if $x \preceq x'$ and $x' \preceq x$,
then $\Delta_x^{x'}$ is an isomorphism with inverse $\Delta_{x'}^x$.

\begin{remark}
    When $\BB = \Sigma_{\A}$ is the semigroup of faces of a hyperplane
    arrangement $\A$, the data of a presheaf of finite groups on $\BB$
    is equivalent to that of a cocommutative $\A$-comonoid with values
    in the category of finite groups, in the sense  of Aguiar and Mahajan's \emph{Bimonoids for hyperplane arrangements} \cite{am20}.
\end{remark}

Given a presheaf of finite groups $\GG$ on $\BB$, we construct a semigroup
$\SS$ as follows.
As a set, $\SS$ is the disjoint union of all the groups in the image of $\GG$:
\begin{equation}
    \label{presheaf-construction}
    \SS = \bigsqcup_{x \in \BB} \GG[x].
\end{equation}
We endow $\SS$ with the following product: for $s \in \GG[x]$ and $t \in \GG[y]$,
\begin{equation}\label{eq:sh-to-lrgb}
    s \cdot t = \Delta_x^{xy}(s)\Delta_y^{xy}(t).
\end{equation}
The product on the right occurs inside the group $\GG[xy]$.
If two elements $s,t \in \SS$ belong to the same group $\GG[x]$, then the product $st$ in $\SS$ agrees with the product inside the group $\GG[x]$.

\begin{example}\label{ex:presheaf-hsiao}
    Let $\Sigma_n$ be the LRB of set compositions $[n]$ from \cref{ex:braidarrangement}
    and fix a finite group $G$.
    For a set composition $(S_1, \ldots, S_k) \in \Sigma_n$, let
    \[
        \GG[ (S_1,\dots,S_k) ] = (G^{op})^k,
    \]
    where $G^{op}$ denotes the opposite group of $G$.
    If ${\sf E} \preceq {\sf F}$, then
    \[
        \Delta_{\sf E}^{\sf F} \big( (g_1,\dots,g_k) \big) = (h_1,\dots,h_\ell)
    \]
    where $h_i = g_j$ if the $i^{th}$ block of ${\sf F}$ is contained in the $j^{th}$ block of ${\sf E}$.
    For example, if ${\sf E} = ( {\red 146} \mid {\blue 23578} )$ and ${\sf F} = ({\blue 25} \mid {\red 16} \mid {\blue 37} \mid {\blue 8} \mid {\red4})$, then
    \[
        \Delta_{\sf E}^{\sf F} \big( ({\red g_1},{\blue g_2}) \big) = ({\blue g_2},{\red g_1},{\blue g_2},{\blue g_2}, {\red g_1}).
    \]
    If ${\sf E} = (S_1,\dots,S_k)$ and ${\sf F} = (S_{\sigma(1)},\dots,S_{\sigma(k)})$ for some permutation $\sigma \in \mathfrak{S}_k$, then
    \[
        \Delta_{\sf E}^{\sf F} \big( (g_1,\dots,g_k) \big) = (g_{\sigma(1)},\dots,g_{\sigma(k)}).
    \]
    Let $\SS$ be the semigroup associated to $\GG$. The map $\SS \to \hsiao$ sending $(g_1 , \dots , g_k) \in \GG[(S_1 , \dots , S_k)]$ to $\big( (S_1,g_1) , \dots , (S_k,g_k) \big)$ is an isomorphism.

\end{example}

\begin{theorem}\label{thm:presheaf<->sLRBG}
    Let $\SS$ be the semigroup associated with a presheaf of finite groups
    $\GG$ on a LRB $\BB$. Then $\SS$ is a strict LRBG. Furthermore, every
    strict LRBG arises in this manner.
\end{theorem}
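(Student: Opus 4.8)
The plan is to verify the two assertions separately, treating the presheaf construction and its inverse in turn.

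For the first assertion, I would check directly that the semigroup $\SS$ defined by \cref{presheaf-construction,eq:sh-to-lrgb} satisfies \cref{eq:lrbg1,eq:lrbg2,eq:strict}. The starting observation is that, since $xx = x$ for $x \in \BB$ and $\Delta_x^x = \mathrm{id}$, the product of two elements of a single fiber $\GG[x]$ agrees with the group product of $\GG[x]$; hence the idempotents of $\SS$ are exactly the identities $1_x \in \GG[x]$, and $s^\omega = 1_x$ for every $s \in \GG[x]$. Associativity is the main bookkeeping step: for $s \in \GG[x]$, $t \in \GG[y]$, $u \in \GG[z]$, both $(st)u$ and $s(tu)$ should reduce to $\Delta_x^{xyz}(s)\,\Delta_y^{xyz}(t)\,\Delta_z^{xyz}(u)$, using that each transition map is a group morphism together with the cocycle condition $\Delta_v^w \circ \Delta_u^v = \Delta_u^w$. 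To invoke the latter one must check the relations $x \preceq xy \preceq xyz$ and $y, z \preceq xyz$ in $\BB$; these hold because the preorder $\preceq$ corresponds to the order $\leq$ on the support lattice of $\BB$ (so $u \preceq v$ iff $\supp(u) \leq \supp(v)$), together with the LRB identity $xyx = xy$. Finally, \cref{eq:lrbg1} follows from $1_x s = s$ inside $\GG[x]$; \cref{eq:lrbg2} follows since $(xy)x = xy$ forces $\Delta_{xy}^{(xy)x} = \mathrm{id}$ and $\Delta_x^{xy}(1_x) = 1_{xy}$; and \cref{eq:strict} is immediate because $(st)^\omega = 1_{xy} = 1_x \cdot 1_y = s^\omega t^\omega$.

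For the converse, given a strict LRBG $\SS$, I would take $\BB = E(\SS)$ — a LRB by \cref{E(S)-is-a-LRB} — set $\GG[x] := G_x$ for each $x \in \BB$, and, for $x \preceq y$ (that is, $yx = y$), define $\Delta_x^y := \lambda_{y,x} \colon G_x \to G_{yx} = G_y$, the left-multiplication morphism of \cref{left-multiplication-induces-group-morphism}. The functor axioms are quick: $\Delta_x^x(s) = xs = s$, and for $x \preceq y \preceq z$ one computes $\Delta_y^z(\Delta_x^y(s)) = z(ys) = (zy)s = zs = \Delta_x^z(s)$, using associativity in $\SS$ and $zy = z$. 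It then remains to show that the identity map on the common underlying set $\bigsqcup_x G_x$ (see \cref{LRBG-decomposition-union-of-groups}) is a semigroup isomorphism onto $\SS$. Writing out \cref{eq:sh-to-lrgb} for $s \in G_x$ and $t \in G_y$ yields the product $\big((xy)s\big)\big((xy)t\big)$ computed in $\SS$.

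The crux — and the step I expect to be the main obstacle — is the identity $\big((xy)s\big)\big((xy)t\big) = st$ in $\SS$. I would argue it in two moves. First, since $x = s^\omega \leq xy$ and $xy \in E(\SS)$, \cref{lem:xf=fx} gives $s(xy) = (xy)s$, so the left-hand side collapses to $(xy)\,st$ using associativity and the idempotency of $xy$. Second — and this is exactly where strictness is used — axiom \cref{eq:strict} yields $(st)^\omega = s^\omega t^\omega = xy$, so that $(xy)\,st = (st)^\omega\,(st) = st$ by \cref{eq:lrbg1}. This matches the two products and shows the identity map is an isomorphism, completing the proof. As a consistency check, the non-strict LRBG of \cref{ex:not-strict} fails to be a union of a band of groups, in agreement with \cref{strict-iff-BG} and \cref{theorem-clifford-band-of-groups}.
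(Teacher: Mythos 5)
Your proposal is correct and follows essentially the same route as the paper: verify the LRBG axioms and strictness for the presheaf product, build the presheaf from a strict LRBG via the left-multiplication morphisms $\lambda_{y,x}$, and close the loop with the key computation $\big((xy)s\big)\big((xy)t\big) = (xy)st = (st)^\omega st = st$, which is exactly where strictness enters in the paper's proof as well. The only (immaterial) variations are that you check \cref{eq:strict} directly in the forward direction where the paper invokes \cref{strict-iff-BG}, and you justify the collapse step via \cref{lem:xf=fx} where the paper applies \cref{eq:lrbg2} directly.
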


\begin{proof}
    We separate the proof in three parts.
    \begin{enumerate}[wide]
        \item
    Let $\SS$ be a strict LRBG with $E(\SS) = \BB$. Define a presheaf $\GG$ on $\BB$ by
    \[
        \GG[x] = G_x = \{ s \in \SS : s^\omega = x \} \quad\text{ for all } x \in \BB
    \]
    and
    \[
        \Delta_x^y(s) = ys \quad\text{for all } x \preceq y \in \BB \text{ and } s \in G_x.
    \]
    \cref{eq:lrbg2} guarantees that the maps $\Delta_x^y$ are group morphisms: for all $g,h \in G[x]$,
    \[
        \Delta_x^y(gh) = ygh = ygyh = \Delta_x^y(g)\Delta_x^y(h).
    \]
    Moreover, $\Delta_x^x$ is the identity map, since $x$ is the unit of $G_x$; and whenever $w \preceq x \preceq y$, we have
    \[
        \Delta_x^y ( \Delta_w^x (g) ) = y (x g) = (yx) g = yg = \Delta^y_w(g)
    \]
    for all $g \in G_w$. It follows that $\GG$ is a well-defined presheaf of finite groups.

    \item
    Let $\GG$ be a presheaf of finite groups on $\BB$
    and let $\SS$ be the semigroup associated with $\GG$.
    Note that the LRB property guarantees that $x \preceq xy$ and $y \preceq xy$ for all $x,y \in \BB$.
    We verify that $\SS$ is a semigroup (i.e. the product is associative), and that it satisfies the strict LRBG axioms.

    Given $s \in \GG[x]$, $t \in \GG[y]$, and $u \in \GG[z]$, functoriality implies
    \[
        (s \cdot t) \cdot u
        = \Delta_{xy}^{xyz}( \Delta_x^{xy}(s) \Delta_y^{xy}(t) ) \Delta_z^{xyz}(u)
        = \Delta_x^{xyz}(s) \Delta_y^{xyz}(t) \Delta_z^{xyz}(u).
    \]
    One similarly deduces that $s \cdot (t  \cdot u) = \Delta_x^{xyz}(s) \Delta_y^{xyz}(t) \Delta_z^{xyz}(u)$,
    which proves associativity.

    Observe that for $s, t \in \GG[x]$, the product $s \cdot t$ in $\SS$ agrees with
    the product $st$ in $\GG[x]$, so $s^\omega = 1_{\GG[x]}$ and
    $s^\omega \cdot s = s$.
    Now, for $s \in \GG[x]$ and $t \in \GG[y]$, we have, because $xyx = xy$ in $\BB$,
    \[
        s \cdot t \cdot s^\omega
        = \Delta_x^{xyx}(s) \Delta_y^{xyx}(t) \Delta_x^{xyx}(1_{\GG[x]})
        = \Delta_x^{xy}(s) \Delta_y^{xy}(t) 1_{\GG[xy]}
        = s \cdot t.
    \]
    Thus, $\SS$ is a LRBG.

    It follows from the definition of the product in $\SS$ that $\GG[x] \cdot
    \GG[y] \subseteq \GG[xy]$, therefore $\SS$ is LRBG that is a union
    of a band of groups, and hence is a strict LRBG by \cref{strict-iff-BG}.

    \item
    Finally, we show that these constructions are inverses of one another.
    Let $\SS$ be a strict LRBG and let $\GG$ the presheaf of groups defined in
    the first part of this proof.
    We show that the product in $\SS$ agrees with the product defined in \cref{eq:sh-to-lrgb}.
    Take any $s,t \in \SS$, and let $x = s^\omega, y = t^\omega$. Then,
    \[
        s \cdot t =
        \Delta_x^{xy}(s) \Delta_y^{xy}(t) = xys xyt
        \overset{\cref{eq:lrbg2}}{=}
        xy st
        =
        s^\omega t^\omega s t
        \overset{\cref{eq:strict}}{=}
        (st)^\omega st
        \overset{\cref{eq:lrbg1}}{=}
        st.
        \qedhere
    \]
    \end{enumerate}
 \end{proof}

\begin{remark}
    The last chain of equalities in the preceding proof cannot be deduced without assuming strictness.
    If $\SS$ is the LRBG in \cref{ex:not-strict}, the {\em modified} product on $\SS$
    defined in \cref{eq:sh-to-lrgb} satisfies
    \[
        s \cdot y = \Delta_{x}^{y}(s) \Delta_y^y(y) = ysy = y \neq s y.
    \]
    For a LRBG $\SS$, we can think of this modified product as a way of ``\emph{strictifying}'' its semigroup structure.
    Following the definitions, this product can directly be defined by:
    \[  s \cdot t = s^\omega t^\omega s t. \]
\end{remark}

\newpage

\section{Notation}

\subsection*{Notation for left regular bands of groups}

\subsubsection*{Notation for a left regular band of groups $\SS$}

\begin{itemize}[wide]
    \item
        Typical elements of $\SS$ are written $s$ and $t$,
        but elements in $E(\SS)$ are written $x$ and $y$.

    \item
        $\leq$ is the partial order on $\SS$ defined by $s \leq t$ if and only if $s t^\omega = t$.

    \item
        $\sim$ is the semigroup congruence on $\SS$ defined by $s \sim t$ if and only if $s^\omega t = s$ and $t^\omega s = t$.

    \item
        $G_{x} = \{ s \in \SS : s^\omega = x \}$ is the maximal subgroup of $\SS$ at $x \in E(\SS)$.

    \item
        $\lambda_{y, x}: G_{x} \to G_{yx}$ is the group morphism induced by left-multiplication by $y \in E(\SS)$.
\end{itemize}

\subsubsection*{Notation for the quotient $\TT := \SS/{\sim}$ (a semilattice of groups)}

\begin{itemize}[wide]
    \item
        $\supp: \SS \to \TT$ is the quotient map; it is a map of semigroups and of posets.

    \item
        Typical elements of $\TT$ are written $S$ and $T$, but
        elements of $E(\TT)$ are written as $X$ and $Y$.

    \item
        $\TT$ is a semilattice of groups; in particular, $\TT$ is a LRBG with central idempotents.

    \item
        $G_{X} = \{ S \in \TT : S^\omega = X \}$ is the maximal subgroup of $\TT$ at $X \in E(\TT)$.

    \item
        $E(\SS)$ is a LRB with support map $\supp: E(\SS) \to E(\TT)$.
\end{itemize}

\subsubsection*{Additional notation when $\SS$ is a left regular band of \emph{abelian} groups}

\begin{itemize}[wide]
    \item
        $\wh{\SS} := \bigsqcup_{x \in E(\SS)} \wh{G_x}$,
        where $\wh{G_x}$ is the dual group of $G_x$;
        and similarly
        $\wh{\TT} := \bigsqcup_{X \in E(\TT)} \wh{G_X}$.

    \item
        Elements of $\wh{\SS}$ are written as $\phi$ and $\psi$;
        elements of $\wh{\TT}$ are written as $\Phi$ and $\Psi$.

    \item
        $|\phi|$ is the element of $E(\SS)$ such that $\phi \in \wh{G_{|\phi|}}$.

    \item
        $\chleq$ is the partial order on $\wh{\SS}$ defined by
        $\phi \chleq \psi$ if and only if $|\phi| \leq |\psi|$ and $\phi = \lambda_{|\psi|,|\phi|}^*(\psi)$.

    \item
        $\sim$ is the relation on $\wh{\SS}$ defined by
        $\phi \sim \psi$ if and only if
        $|\phi| \sim |\psi|$ and $\phi = \lambda_{|\psi|,|\phi|}^*(\psi)$.
\end{itemize}

\subsection*{Notation for semigroup algebras of LRBGs and LRBaGs}

\subsubsection*{Notation for $\kk E(\SS)$}

\begin{itemize}[wide]
    \item
        $\{\B{H}_{x}\}_{x \in E(\SS)}$ is the canonical basis of $\kk E(\SS)$;
        it satisfies $\B{H}_{x} \B{H}_{y} = \B{H}_{xy}$ for all $x, y \in E(\SS)$.

    \item
        $\{\uu_{X}\}_{X \in E(\TT)}$ is a homogeneous section of $\supp: E(\SS) \to E(\TT)$.

    \item
        $\{\ee^\circ_{X}\}_{X \in E(\TT)}$ is the CSoPOI of $\kk E(\SS)$
        corresponding to $\{\uu_{X}\}_{X \in E(\TT)}$.

    \item
        $\{\B{Q}^{\circ}_{x} := \B{H}_{x} \ee^\circ_{\supp(x)} \}_{x \in E(\SS)}$
        is the $\B{Q}$-basis of $\kk E(\SS)$ corresponding to $\{\uu_{X}\}_{X \in E(\TT)}$.
\end{itemize}

\subsubsection*{Notation for $\kk E(\TT)$}

\begin{itemize}[wide]
    \item
        $\{\B{H}_{X}\}_{X \in E(\TT)}$ is the canonical basis of $\kk E(\TT)$;
        it satisfies $\B{H}_{X} \B{H}_{Y} = \B{H}_{XY}$ for all $X, Y \in E(\TT)$.

    \item
        $\{\B{Q}^{\circ}_{X} := \sum_{Y \geq X} \mu(X,Y) \B{H}_Y\}_{X \in E(\TT)}$
        is the \emph{$\B{Q}$-basis} of $\kk E(\TT)$; it is the \emph{unique} CSoPOI in $\kk E(\TT)$.
\end{itemize}

\subsubsection*{Notation for $\kk \SS$}

\begin{itemize}[wide]
    \item
        $\{\B{H}_{s}\}_{s \in \SS}$ is the canonical basis of $\kk \SS$;
        it satisfies $\B{H}_{s} \B{H}_{t} = \B{H}_{st}$ for all $s, t \in \SS$.

    \item
        $\{\B{R}_{s} := \B{H}_{s} \B{Q}^\circ_{s^\omega}\}_{s \in \SS}$
        is the $\B{R}$-basis of $\kk \SS$,
        where $\{\B{Q}^\circ_{x}\}_{x \in E(\SS)}$ is the $\B{Q}$-basis of $\kk E(\SS)$.

    \item
        The $\B{R}$-basis of $\kk \SS$ contains the
        $\B{Q}$-basis of $\kk E(\SS)$:
        if $x \in E(\SS)$, then $\B{R}_{x} = \B{Q}^\circ_{x}$.

    \item
        $\{\ee_{(X, i)}\}_{X, i}$ is the CSoPOI of $\kk \SS$
        defined in \cref{thm:CSoPOI-LRBG}.
\end{itemize}

\subsubsection*{Additional notation for $\kk \SS$ when $\SS$ is a left regular band of \emph{abelian} groups}

\begin{itemize}[wide]
    \item
        $\{\B{E}_{\phi} := \frac{1}{|G_{|\phi|}|} \sum_{s \in G_{|\phi|}}
        \overline{\phi(s)} \, \B{H}_{s} \}_{\phi \in \wh{\SS}}$ is the basis of
        locally orthogonal idempotents of $\kk \SS$.

    \item
        $\{\B{Q}_{\phi} := \B{E}_{\phi} \B{Q}^\circ_{|\phi|} \}_{\phi \in \wh{\SS}}$
        is the $\B{Q}$-basis of $\kk \SS$ corresponding to
        the $\B{Q}$-basis $\{\B{Q}^\circ_{x} \}_{x \in E(\SS)}$ of $\kk E(\SS)$.

    \item
        $\{\ee_{\Phi} := \uu_{|\Phi|} \B{Q}_{\phi} \}_{\Phi \in \wh{\TT}}$
        is the CSoPOI of $\kk \SS$ corresponding to
        the $\B{Q}$-basis $\{\B{Q}_{\phi} \}_{\phi \in \wh{\SS}}$ of $\kk \SS$.
\end{itemize}

\subsubsection*{Notation for $\kk \TT$}

\begin{itemize}[wide]
    \item
        $\{\B{H}_{S}\}_{S \in \TT}$ is the canonical basis of $\kk \TT$;
        it satisfies $\B{H}_{S} \B{H}_{T} = \B{H}_{ST}$ for all $S, T \in \TT$.

    \item
        $\{\B{R}_{S} := \B{H}_{S} \B{Q}^\circ_{S^\omega}\}_{S \in \TT}$
        is the $\B{R}$-basis of $\kk \TT$,
        where $\{\B{Q}^\circ_{X}\}_{X \in E(\TT)}$ is the $\B{Q}$-basis of $\kk E(\TT)$.

    \item
        The $\B{R}$-basis of $\kk \TT$ contains the
        $\B{Q}$-basis of $\kk E(\TT)$:
        if $X \in E(\TT)$, then $\B{R}_{X} = \B{Q}^\circ_{X}$.
\end{itemize}

\subsubsection*{Relationship between bases}

\begin{equation*}
    \begin{tikzcd}[row sep=3em, column sep=4em]
        \kk \SS \arrow[r, two heads, "\supp"] & \kk \TT \\
        \kk E(\SS) \arrow[u, hook, "id"] \arrow[r, two heads, "\supp"] & \kk E(\TT) \ar[u, hook, "id"']
    \end{tikzcd}
    \qquad
    \begin{tikzcd}[row sep=3em, column sep=4em]
           \B{H}_{s} \arrow[r, mapsto, "\supp"]
           &
           \B{H}_{\supp(s)} \\
           \B{H}_{x} \arrow[u, mapsto, "id"] \arrow[r, mapsto, "\supp"]
           &
           \B{H}_{\supp(x)} \arrow[u, mapsto, "id"']
    \end{tikzcd}
    \qquad
    \begin{tikzcd}[row sep=3em, column sep=4em]
           \B{R}_{s} \arrow[r, mapsto, "\supp"]
           &
           \B{R}_{\supp(s)} \\
           \B{Q}^\circ_{x} \arrow[u, mapsto, "id"] \arrow[r, mapsto, "\supp"]
           &
           \B{Q}^\circ_{\supp(x)} \arrow[u, mapsto, "id"']
    \end{tikzcd}
\end{equation*}

\subsection*{Notation for LRBGs with symmetry}\label{s:appendix-notation}

\subsubsection*{Additional notation for $\SS$ when $\SS$ is a LRBG with symmetry}

\begin{itemize}[wide]
    \item
        $W$ acts on $\SS$ via $w \cdot (st) = (w \cdot s)(w \cdot t)$ for all $s, t \in \SS$, $w \in W$.

    \item
        $\SS^W$ is the set of $W$-orbits and $\type: \SS \to \SS^W$ maps an element to its $W$-orbit.

    \item
        Elements of $\SS^W$ are written as $p$ and $q$.

    \item
        $p \leq q$ in $\SS^W$ if and only if there are $s \leq t$ in $\SS$
        with $\type(s) = p$ and $\type(t) = q$.
\end{itemize}

\subsubsection*{Additional notation for $\kk \SS$ when $\SS$ is a LRBG with symmetry}

\begin{itemize}[wide]
    \item
        $W$ acts on $\kk \SS$ via $w \cdot \B{H}_{s} = \B{H}_{w \cdot s}$ for all $w \in W$ and $s \in \SS$.

    \item
        $\{\uu_{X}\}_{X \in E(\TT)}$ is assumed to be a homogeneous section
        satisfying $w \cdot \uu_{X} = \uu_{w \cdot X}$.

    \item
        $(\kk \SS)^W = \{ a \in \kk \SS : w \cdot a = a \text{~for all~} w \in
        W \}$ is the invariant subalgebra of $\kk \SS$.

    \item
        $\{ \mathcal{H}_p := \sum_{\type(s) = p} \B{H}_{s} \}_{p \in \SS^W}$
        is the canonical basis of $(\kk \SS)^W$.

    \item
        $\{ \mathcal{R}_p := \sum_{\type(s) = p} \B{R}_{s} \}_{p \in \SS^W}$
        is the $\mathcal{R}$-basis of $(\kk \SS)^W$.
\end{itemize}

\subsubsection*{Additional notation for $\SS$ when $\SS$ is a LRBaG with symmetry}

\begin{itemize}[wide]
    \item
        $W$ induces an action on $\wh{\SS}$ given by $(w \cdot \phi) = \phi( w^{-1} \cdot t)$ for all $t \in G_{w \cdot x}$, $w \in W$.

    \item
        $\wh{\SS}^W$ is the set of $W$-orbits and
        $\type: \wh{\SS} \to \wh{\SS}^W$ maps an element to its $W$-orbit.

    \item
        Elements of $\wh{\SS}^W$ are written as $\alpha$ and $\beta$.

    \item
        $\alpha \chleq \beta$ in $\wh{\SS}^W$
        if and only if there are $\phi \chleq \psi$ in $\wh{\SS}$
        with $\type(\phi) = \alpha$ and $\type(\psi) = \beta$.
\end{itemize}

\subsubsection*{Additional notation for $\kk \SS$ when $\SS$ is a LRBaG with symmetries}

\begin{itemize}[wide]
    \item
        $\{ \mathcal{E}_\alpha := \sum_{\type(\phi) = \alpha} \B{E}_{\phi} \}_{\alpha \in \wh{\SS}^W}$
        is the $\mathcal{E}$-basis of $(\kk \SS)^W$,
        where $\{\B{E}_{\phi} \}_{\phi \in \wh{\SS}}$ is the $\B{E}$-basis of $\kk \SS$.

    \item
        $\{ \mathcal{Q}_\alpha := \sum_{\type(\phi) = \alpha} \B{Q}_{\phi} \}_{\alpha \in \wh{\SS}^W}$
        is the $\mathcal{Q}$-basis of $(\kk \SS)^W$,
        where $\{\B{Q}_{\phi} \}_{\phi \in \wh{\SS}}$ is the $\B{Q}$-basis of $\kk \SS$.

    \item
        $\{\ee_{\lambda} := \sum_{\type(\Phi) = \lambda} \ee_{\Phi}\}_{\lambda \in \wh{\TT}^W}$
        is a CSoPOI of $(\kk \SS)^W$,
        where $\{\ee_{\Phi} \}_{\Phi \in \wh{\TT}}$ is the CSoPOI of $\kk \SS$.
\end{itemize}

\subsection*{Notation for the LRBGs and LRBaGs of $G$-compositions}

\begin{center}
\begin{tabular}{llcc}
    \toprule
    Notation                                             & Name                           & Symbols for elements & Notation for elements                                        \\
    \midrule
    $\Sigma_n \cong E(\Sigma_n[G])$                      & compositions of $[n]$          & $x,y$                & $\big( 42 \mid 1 \mid 35 \big)$                              \\
    $\Pi_n \cong E(\Pi_n[G])$                            & partitions of $[n]$            & $X,Y$                & $\big\{ 42 \mid 1 \mid 35 \big\}$                            \\
    $\Gamma_n$                                           & compositions of $n$            &                      & $\big( 2 , 1 , 2 \big)$                                      \\
    $\Lambda_n$                                          & partitions of $n$              &                      & $\big\{ 2 , 1 , 2 \big\}$                                    \\
    \midrule
    $\Sigma_n[G]$                                        & $G$-compositions of $[n]$      & $s,t$                & $\big( 42^{g_1} \mid 1^{g_2} \mid 35^{g_3} \big)$            \\
    $\Pi_n[G]$                                           & $G$-partitions of $[n]$        & $S,T$                & $\big\{ 42^{g_1} \mid 1^{g_2} \mid 35^{g_3} \big\}$          \\
    $\Gamma_n[G] \cong (\Sigma_n[G])^{\sym_n}$           & $G$-compositions of $n$        & $p,q$                & $\big( 2^{g_1} , 1^{g_2} , 2^{g_3} \big)$                    \\
    $\Lambda_n[G] \cong (\Pi_n[G])^{\sym_n}$             & $G$-partitions of $n$          &                      & $\big\{ 2^{g_1} , 1^{g_2} , 2^{g_3} \big\}$                  \\
    \midrule
    $\Sigma_n[\wh{G}]$                                   & $\wh{G}$-compositions of $[n]$ & $\phi,\psi$          & $\big( 42^{\phi_1} \mid 1^{\phi_2} \mid 35^{\phi_3} \big)$   \\
    $\Pi_n[\wh{G}]$                                      & $\wh{G}$-partitions of $[n]$   & $\Phi,\Psi$          & $\big\{ 42^{\phi_1} \mid 1^{\phi_2} \mid 35^{\phi_3} \big\}$ \\
    $\Gamma_n[\wh{G}] \cong (\Sigma_n[\wh{G}])^{\sym_n}$ & $\wh{G}$-compositions of $n$   & $\alpha,\beta$       & $\big( 2^{\phi_1} , 1^{\phi_2} , 2^{\phi_3} \big)$           \\
    $\Lambda_n[\wh{G}] \cong (\Pi_n[\wh{G}])^{\sym_n}$   & $\wh{G}$-compositions of $n$   & $\lambda$            & $\big\{ 2^{\phi_1} , 1^{\phi_2} , 2^{\phi_3} \big\}$         \\
    \bottomrule
\end{tabular}
\end{center}

\end{document}